\documentclass[12pt,a4paper, oneside,reqno]{article}

\usepackage[utf8]{inputenc}
\usepackage[english]{babel}

\usepackage[left=1in,right=1in,top=1in,bottom=1in]{geometry}

\usepackage{amsfonts,amssymb,amsmath,amsthm}
\usepackage{verbatim}

\usepackage[symbol]{footmisc} 
\usepackage{longtable}
\usepackage{tasks}

\usepackage{enumerate}
\usepackage{cite}
\usepackage{hyperref}
\usepackage{tkz-graph}

\usepackage{fancyhdr} 
\theoremstyle{plain}
\newtheorem{theorem}{Theorem}
\newtheorem{lemma}[theorem]{Lemma}
\newtheorem{proposition}[theorem]{Proposition}
\newtheorem{remark}[theorem]{Remark}

\newtheorem{definition}[theorem]{Definition}
\newtheorem{corollary}[theorem]{Corollary}

\usetikzlibrary{arrows}

\usepackage{fouriernc} 

\begin{document}


\bigskip

\noindent{\Large
Unital $3$-dimensional structurable algebras: \\ classification, properties and $\rm{AK}$-construction}
 \footnote{
The    work is supported by 
FCT   2023.08031.CEECIND, 2023.08952.CEECIND, and
UID/00212/2025.}

 \bigskip

\begin{center}

 {\bf
Kobiljon Abdurasulov\footnote{CMA-UBI, University of  Beira Interior, Covilh\~{a}, Portugal;  \ 
Romanovsky Institute of Mathematics, Academy of Sciences of Uzbekistan, Tashkent, Uzbekistan; \ abdurasulov0505@mail.ru},
Maqpal Eraliyeva\footnote{Romanovsky Institute of Mathematics, Academy of Sciences of Uzbekistan, Tashkent, Uzbekistan; \ eraliyevamaqpal@gmail.com} 
\&      
Ivan Kaygorodov\footnote{CMA-UBI, University of  Beira Interior, Covilh\~{a}, Portugal; \    kaygorodov.ivan@gmail.com}  }

\end{center}

\noindent {\bf Abstract:}
{\it  This paper is devoted to the classification and studying properties of complex unital $3$-dimensional structurable algebras. 
We provide a complete list of non-isomorphic classes, identifying five algebras  for type $(2, 1)$ and two algebras  for type $(1, 2).$ For each obtained algebra, we describe the derivation algebra, the automorphism group,  the lattice of subalgebras and ideals, and functional identities of degree $2$. 
Furthermore, we investigate the Allison-Kantor   construction for the classified algebras. We determine the structure of the resulting $\mathbb{Z}$-graded Lie algebras, providing their dimensions and Levi decompositions. }

 \medskip 

\noindent {\bf Keywords}:
{\it 
structurable algebra, algebraic classification, derivation, automorphism, Allison-Kantor construction.}

\medskip

\noindent {\bf MSC2020}:  
17A30 (primary);
17A36,
14L30 (secondary).

	 \medskip

 
\tableofcontents

\newpage
\section*{Introduction}
Jordan algebras represent one of the most prominent classes of non-associative algebras.\footnote{Due to the many connections between associative and Lie algebras, we consider both as remaining outside the "non-associative world".} The first Jordan algebras appeared in a classical paper by Jordan, von Neumann, and Wigner in 1934 \cite{JVW}, and they have remained a subject of active investigation ever since.
 The most notable generalizations of Jordan algebras include: 
 non-commutative Jordan algebras, 
 terminal algebras, 
 commutative $\mathfrak{CD}$-algebras, 
 conservative and structurable algebras, among others. 
 The present paper is dedicated to the study of small-dimensional structurable algebras. 
 
 \medskip 
 
 A class of unital algebras with an involution, satisfying a specific quasi-identity depending on the fixed unit element and the involution, was introduced by Allison in 1978 \cite{A78}. 
 If this involution is the identity mapping, the definition of structurable algebras reduces to that of Jordan algebras. 
 The class of structurable algebras includes alternative algebras with an involution 
 $\big($for example, octonions, which are currently under active investigation \cite{L26,LR,CV25}$\big).$ 
 In his paper \cite{A78}, Allison provided important examples of simple structurable $\big($non-associative and non-Jordan$\big)$ algebras. 
 Arguably, the most significant example of non-trivial structurable algebras is constructed via the tensor product of two composition algebras. This construction is the subject of extensive research: Morandi, Pérez-Izquierdo, and Pumplün described derivations and automorphisms of tensor products of octonions in 2001 \cite{MPP01}; 
 Mondoc studied Kantor triple systems defined on tensor products of composition algebras \cite{M05,M06}; 
 Elduque and Okubo studied normal symmetric triality algebras, whose defining conditions reflect the properties of the tensor products of two symmetric composition algebras, in 2005/2007 \cite{O05,EO07}; 
 Blachar, Rowen, and Vishne studied identities of the tensor product of two octonions in 2023 \cite{BRV23}, among others. 
 Since its introduction, this class has been intensively studied.The class of structurable algebras is closely related to the class of conservative algebras introduced by Kantor in 1972 \cite{K72} and to various classes of ternary algebras. 
 A systematic study of structurable algebras began with the cited paper by Allison. 
 Isotopes of structurable algebras were studied by Allison and Hein in 1981 \cite{AH81}, and later by Allison and Faulkner in 1995 \cite{AF95}. 
 A Cayley-Dickson process for structurable algebras was investigated by Allison and Faulkner in 1984 \cite{AF84}. 
 Some fundamental results in the structure theory of structurable algebras were obtained by Schafer: he proved the Wedderburn principal theorem for structurable algebras in 1985 \cite{S85}; subsequently, he introduced structurable bimodules 
 $\big($using Eilenberg's general definition of a bimodule$\big)$ and proved a generalization of the first Whitehead lemma for Jordan algebras to the structurable case $\big($another generalization of the first Whitehead lemma for Jordan algebras was given in the context of $\delta$-derivations \cite{ZZ25}$\big),$ as well as the Malcev-Harish-Chandra theorem for Jordan algebras in \cite{S85-2}. 
 The third of his principal results concerning structurable algebras states that the radical of a finite-dimensional structurable algebra over a field of characteristic zero is nilpotent \cite{S86}. 
 Smirnov provided the complete classification of finite-dimensional simple and semisimple structurable algebras $\big($in the case of characteristic different from $2,3,5\big)$ in 1990 \cite{S90,S89}. 
 The case of characteristic equal to $5$ was considered by Stavrova in 2022 \cite{S22}. 
 Later, Pozhidaev, Shestakov, and Faulkner, in a series of papers (2010), classified all finite-dimensional simple structurable superalgebras over an algebraically closed field of characteristic zero \cite{F10,F102,PS}. 
 Structurable bimodules over algebras generated by skew-symmetric elements were studied by Smirnov in 1996 \cite{S96}. A systematic study of structurable algebras with skew-rank $1$ was initiated by Allison in 1990 \cite{A90} (see also \cite{DM19}). 
 Later, some interesting examples of structurable algebras of skew-rank $1$ were constructed in two papers by Pumplün in 2010/2011 \cite{P10,P11}. Cuypers and Meulewaeter proved the existence of a one-to-one correspondence (up to isomorphism) between simple structurable algebras of skew-dimension one (up to isotopy) and finite-dimensional Lie algebras generated by extremal elements that are not symplectic in 2021 \cite{CM21}. 
 Gradings on simple structurable algebras were actively studied by Elduque, Kochetov, Aranda-Orna, and Córdova-Martínez in 2014 and 2020 \cite{AEK14,AC20}. Kaygorodov and Okhapkina described all $\delta$-derivations of semisimple finite-dimensional structurable algebras from the Smirnov classification in 2014 \cite{KO}. 
 Elduque, Kamiya, and Okubo established a fundamental relation between left unital Kantor triple systems and structurable algebras in 2014 \cite{EKO14}. Structurable algebras play an important role in the theory of Moufang sets \cite{BD13,DBS19,DM24}. In particular, Boelaert and De Medts discovered a deep connection between quadrangular algebras and structurable algebras \cite{BD13}. A connection between structurable algebras and groups of type $E_6$ and $E_7$ was discussed in papers by Garibaldi and Alsaody in 2001/2024 \cite{G01,A24}. Let us also mention that in a series of papers by Kamiya, Mondoc, and Okubo, the theory of certain algebras generalizing structurable algebras $\big($such as pre-structurable, $\delta$-structurable, $(\alpha,\beta,\gamma)$-structurable, etc.$\big)$ was also studied \cite{KM13,KMO14,KO14}.\medskip 
 
 The present paper continues the study of structurable algebras and shifts the focus of investigation to small-dimensional algebras. 
 First, we obtain our principal result: the classification of $3$-dimensional structurable algebras $\big($Theorems \ref{Thcl1} and \ref{Thcl2}$\big)$. 
 The classification problem for non-associative algebras satisfying certain properties is one of the most central problems in non-associative algebra theory $\big($see \cite{akt, FKS25, FKS25-2, k23, MS, l24} and references therein$\big)$. 
 Second, we study the properties of the obtained algebras: we describe derivations (Section \ref{der}), automorphisms (Section \ref{aut}), and subalgebras (Section \ref{sub}). 
 The study of derivations and automorphisms is a classical problem in the theory of non-associative algebras; furthermore, the description of subalgebras provides a principal tool for future investigations of Rota-Baxter operators and other Rota-type operators on the obtained algebras. 
 Section \ref{ident} is dedicated to the description of involutive identities of unital $3$-dimensional structurable algebras. 
 The final Section presents an investigation of the Allison-Kantor construction for the classified algebras. We determine the structure of the resulting $\mathbb{Z}$-graded Lie algebras, providing their dimensions and Levi decompositions.

 \medskip

  \noindent  
{\bf Notations.}
In general, we are working with a complex field, but some results are correct for other fields.
We do not provide some well-known definitions
and refer the readers to consult previously published papers. 
For a set of vectors $S,$ we denote by  $\big\langle S \big\rangle$ the vector space generated by $S.$ 
The operator of left multiplication $L_x$ is defined by $L_x (y)=x\cdot y.$ 
We also   always  assume that the algebras  under our consideration are nontrivial  
$\big($i.e., they have nonzero multiplications$\big)$
and have an involution $\big($i.e., a linear map $^-$, such that $\overline{\overline{x}}=x$
and $\overline{x\cdot y}=\overline{y}\cdot \overline{x}\big).$
For a  set of some elements $\mathfrak{Set}$, we denote by $\overline{\mathfrak{Set}}$ the subset of elements that are invariant under the action of the involution $^-$
\ $\big($for example, 
$\overline{\mathfrak{Der}}$ is the set of derivations invariant under the involution, 
$\overline{\rm{Aut}}$ is the set of automorphisms  invariant under the involution, etc.$\big).$
All $n$-dimensional algebras with a basis $\{e_1, \ldots, e_n\}$ will be unital with unit $e_1,$
i.e., $e_1\cdot  e_i = e_i\cdot  e_1 = e_i,\     1 \leq i\leq n.$ 
Such multiplications $\big($and also zero multiplications$\big)$ will be omitted.
In the case of commutative and anticommutative algebras, the full multiplication table can be recovered due to  (anti)commutativity.

\newpage

\section{Classification and  properties} 
\subsection{Preliminaries}

\begin{definition}
    
 Let $\mathcal{A}$ be an algebra.  
A map $^- : \mathcal{A}\to \mathcal{A}$ is called an involution if, 
for all $x, y \in \mathcal{A}$, the following conditions hold:
\begin{center}
$\overline{\overline{x}} = x,  \quad\overline{x   y} = \overline{y}  \   \overline{x}.$
\end{center}
\end{definition}
For each   complex $n$-dimensional unital algebra $\mathcal{A}$ with involution $^-$, 
we have
\begin{center}
$\mathcal{A} = \mathcal{H} \oplus \mathcal{S},$
where 
$\mathcal{H} \ =\  \big\{ a \in \mathcal{A} \ | \ \overline{a} = a \big\}$ and 
$\mathcal{S} \ = \ \big\{ a \in \mathcal{A} \ | \ \overline{a} = -a\big\}$.
\end{center}If $\rm{dim}(\mathcal{H})=k$, then we call $\mathcal{A}$ an algebra of type $(k,n-k)$.

For $x,y\in\mathcal{A}$, define 
$V_{x,y}\in \rm{End}({\mathcal{A}})$ and $T_x=V_{x,e_1}$ as
\begin{longtable}{rcl}
$V_{x,y}(z) $&$=$&$ (x\overline{y})z + (z\overline{y})x-(z\overline{x})y,$\\
$T_x(z)$&$=$&$xz+zx-z\overline{x}.$
\end{longtable}
In  particular, if $a\in \mathcal{H}$, then $T_a=L_a.$  

\begin{definition}[see \cite{A78}] \label{identityunit}    
A unital algebra $\mathcal{A}$ with an involution $^-$ is structurable if it satisfies the following special identity
\begin{equation}\label{structurableunit}
T_{z} V_{x,y} - V_{x,y}T_{z}  \ =\  V_{{T_{z}(x),y}}-V_{{x,T_{\overline{z}}(y)}}.   
\end{equation}
   \end{definition}

Now, applying identity \eqref{structurableunit} to arbitrary elements $x, y, z$ and $t$, we obtain the following:
$$T_{z}V_{x,y}(t)-V_{x,y}T_z(t)=V_{{T_{z}(x),y}}(t)-V_{{x,T_{\overline{z}}(y)}}(t),$$
where 
\begin{longtable}{lclcl}
$T_{z}V_{x,y}(t)$&$=$&$z\big((x\overline{y})t\big)$&$+$&$z\big((t\overline{y})x\big)-z\big((t\overline{x})y\big)+\big((x\overline{y})t\big)z\ +$\\
&&&$+$&$\big((t\overline{y})x\big)z-\big((t\overline{x})y\big)z-\big((x\overline{y})t\big)\overline{z}-\big((t\overline{y})x\big)\overline{z}+\big((t\overline{x})y\big)\overline{z},$\\

$V_{x,y}T_z(t)$&$=$&$(x\overline{y})(zt)$&$+$&$(x\overline{y})(tz)-(x\overline{y})(t\overline{z})+\big((zt)\overline{y}\big)x\ +$\\
&&&$+$&$\big((tz)\overline{y}\big)x-\big((t\overline{z})\overline{y}\big)x-\big((zt)\overline{x}\big)y-\big((tz)\overline{x}\big)y+\big((t\overline{z})\overline{x}\big)y,$\\

$V_{{T_{z}(x),y}}(t)$&$=$&$\big((zx)\overline{y}\big)t$&$+$&$\big((xz)\overline y\big)t-\big((x\overline z)\overline y\big)t+(t\overline y)(zx)\ +$\\
&&&$+$&$(t\overline y)(xz)-(t\overline y)(x\overline z)-\big(t(\overline x\ \overline z)\big)y-\big(t(\overline z \ \overline x)\big)y+\big(t(z\overline x)\big)y,$\\

$V_{{x,T_{\overline{z}}(y)}}(t)$&$=$&$\big(x(\overline y z)\big)t$&$+$&$\big(x(z\overline y)\big)t-\big(x(\overline z\ \overline y)\big)t+\big(t(\overline yz)\big)x\ +$\\
&&&$+$&$\big(t(z\overline y)\big)x-\big(t(\overline z\ \overline y)\big)x-(t\overline x)(\overline zy)-(t \overline x)(y \overline z)+(t\overline x)(yz).$\\
\end{longtable}
Thus, identity \eqref{structurableunit} takes the following form:

${\rm ID}(x,y,z,t)\ :=$

$\Big(z\big((x \overline y)t\big)+z\big((t \overline y)x\big)-z\big((t\overline x)y\big)+\big((x\overline y)t\big)z+\big((t\overline y)x\big)z-\big((t\overline x)y\big)z-$
\begin{equation}\label{id_princ}
    \big((x\overline y)t\big)\overline z-\big((t\overline y)x\big)\overline z+\big((t\overline x)y\big)\overline z-(x\overline y)(zt)-(x\overline y)(tz)+(x\overline y)(t\overline z)-
\end{equation}

\begin{flushright}$\big((zt)\overline y\big)x-\big((tz)\overline y\big)x+\big((t\overline z)\overline y\big)x+\big((zt)\overline x\big)y+\big((tz)\overline x\big)y-\big((t\overline z)\overline x\big)y\ \Big)-$
\end{flushright}

$\Big(    \big((zx)\overline y\big)t+\big((xz)\overline y\big)t-\big((x\overline z)\overline y\big)t+(t\overline y)(zx)+(t\overline y)(xz)-(t\overline y)(x\overline z) -
$

\begin{center}
$\big(t(\overline x\ \overline z)\big)y-\big(t(\overline z\  \overline x)\big)y+\big(t(z\overline x)\big)y-\big(x(\overline y z)\big)t-\big(x(z\overline y)\big)t+\big(x(\overline z\ \overline y)\big)t -$
\end{center}

\begin{flushright}$\big(t(\overline yz)\big)x-\big(t(z\overline y)\big)x+\big(t(\overline{z}\ \overline y)\big)x+(t\overline x)(\overline zy)+(t \overline x)(y \overline z)-(t\overline x)(yz) \Big) \ = \ 0.$
\end{flushright}

\begin{definition}
    Let $(\mathcal{A},\cdot, ^-)$ and $(\mathcal{B},\star, \wideparen{ \ } \ )$ be structurable algebras.
$\varphi : \mathcal{A} \to \mathcal{B}$
is called an isomorphism of structurable algebras if it satisfies
\begin{center}
 $\varphi(x\cdot y)=\varphi(x) \star \varphi(y)$ \ and \  
 $\varphi(\overline{x})= \wideparen{\varphi(x)}$.
\end{center}
In this case,$(\mathcal{A},\cdot, ^-)$ and $(\mathcal{B},\star, \wideparen{ \ } \ )$ are said to be isomorphic, and we write $\mathcal{A} \cong \mathcal{B}.$
\end{definition}

\subsection{Classification of $3$-dimensional structurable algebras}

\begin{definition}
An $n$-dimensional algebra with a multiplication table  
$e_1e_i=e_ie_1=e_i$ for $1\leq i\leq n,$ 
and the involution \ $^-$ \   given as 
$\overline{e_{i_1}} = e_{i_1}$ for $1\leq i_1\leq k$ and 
$\overline{e_{i_2}} = -e_{i_2}$ for $k+1\leq i_2\leq n$ and 
is called a
 universal unital algebra of type $(k,n-k).$
\end{definition}
Obviously, each  universal unital algebra of type $(k,n-k)$ is structurable.

\begin{remark}
    Let $\mathcal{A}$ be a complex $3$-dimensional unital structurable algebra of type $(3,0)$. 
Then, it is a Jordan algebra with the identical involution, and according to {\rm  \cite{G}}, it is isomorphic  
to the universal unital algebra ${\rm J}_1$ of type $(3,0)$, 
or it is isomorphic to one of the following algebras:
    \begin{longtable}{lllllllllll}
${\rm J}_2$ & $:$&$ e_2e_2$&$=$&$e_2$&$e_3 e_3$&$=$&$e_3$ \\
${\rm J}_3$ &$:$&$ e_2e_2$&$=$&$e_2$&$  e_2 e_3$&$=$&$e_3$&$  e_3 e_2$&$=$&$e_3$ \\

${\rm J}_4$ &$:$&$e_2e_2$&$=$&$e_3$\\
${\rm J}_5$ &$:$&$e_2e_2$&$=$&$e_2$\\

${\rm J}_6$ &$:$&$e_2e_2$&$=$&$e_1$&$ e_3e_3$&$=$&$e_1$\\
\end{longtable}
\end{remark}

Below, we consider two nontrivial situations of structurable algebras with $\mathcal{S}\neq0,$
i.e., the involution will be non-identical. 
The following Lemma is trivial and well-known, but it will be very useful in our considerations.

\begin{lemma}\label{automor} Let $\mathcal{A}$ be a unital algebra and let 
$\varphi \in {\rm Aut} (\mathcal{A}),$ then $\varphi(e_1)=e_1.$
\end{lemma}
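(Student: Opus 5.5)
The argument uses only that an automorphism is a bijective algebra homomorphism, together with uniqueness of the two-sided unit. Let $\varphi \in {\rm Aut}(\mathcal{A})$. Since $\varphi$ is surjective, every $y\in\mathcal{A}$ can be written as $y=\varphi(x)$ for some $x\in\mathcal{A}$. Applying $\varphi$ to $e_1x=x=xe_1$ and using multiplicativity gives
$$\varphi(e_1)\,y=\varphi(e_1)\varphi(x)=\varphi(e_1x)=\varphi(x)=y,$$
and symmetrically $y\,\varphi(e_1)=y$. Hence $\varphi(e_1)$ is a two-sided unit of $\mathcal{A}$.

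Next we check that the unit is unique, which needs no associativity. If $u$ and $u'$ are both two-sided units, then $u=uu'=u'$: the first equality holds because $u'$ is a right unit, the second because $u$ is a left unit. Taking $u=\varphi(e_1)$ and $u'=e_1$ yields $\varphi(e_1)=e_1$.

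There is no genuine obstacle. The only point needing care is that surjectivity of $\varphi$ is used, so that $\varphi(e_1)$ acts as a unit on all of $\mathcal{A}$ and not just on its image. For an automorphism this is automatic; the same argument therefore works for any surjective homomorphism between unital algebras.

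For automorphisms of structurable algebras, which also commute with the involution, nothing further is required. One can, however, note the consistency check that $\overline{e_1}=e_1$, since $\overline{e_1}$ is likewise a two-sided unit: $\overline{e_1}\,x=\overline{\overline{x}\,e_1}=\overline{\overline{x}}=x$, and similarly on the right. So $e_1\in\mathcal{H}$, which agrees with $\varphi(e_1)=e_1$ and with $\varphi$ commuting with the involution.
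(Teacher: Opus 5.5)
Your argument is correct: surjectivity makes $\varphi(e_1)$ a two-sided unit, and two-sided units are unique without any associativity, so $\varphi(e_1)=e_1$. The paper gives no proof (it calls the lemma trivial and well-known), and yours is the standard argument it takes for granted; you are right to stress that surjectivity is essential, since the paper's definition of automorphism records only multiplicativity, and a non-surjective endomorphism such as the zero map would violate the conclusion.
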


\subsubsection{Type $(2,1)$}
Here  we consider  $3$-dimensional unital structurable algebras of type $(2,1)$. 
Let $\{e_1,e_2,e_3\}$ be a basis such that $e_1,e_2 \in \mathcal{H}$ and $e_3\in \mathcal{S}.$

\begin{theorem}\label{Thcl1}
Let  $\mathcal{A}$ be a  complex unital structurable algebra $\mathcal{A}$ of type $(2,1),$ then it is isomorphic to the universal unital algebra ${\rm A}_1$ of type $(2,1),$ or it is isomorphic to an algebra
with   an involution\  $^-$, given by $\overline{e_1}=e_1,$ 
$\overline{e_2}=e_2,$ 
$\overline{e_3}=-e_3,$ and the multiplication table given by one of the following:
    \begin{longtable}{lllllllllll}

${\rm A}_2:$&$e_3 e_3=e_2$  && \\ 
    
${\rm A}_3:$&$ e_2  e_2=e_2$\\ 
    
${\rm A}_4:$&$e_2  e_2=e_2 $&$ e_3  e_3=-e_1+e_2$\\

${\rm A}_5:$&$e_2  e_3=e_2$&$ e_3  e_2=-e_2$&$e_3  e_3=e_1$\\

\end{longtable}

\end{theorem}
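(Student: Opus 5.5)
We parametrize the algebra by its structure constants and let the involution cut them down. Since $e_1$ is the unit, only $e_2e_2$, $e_2e_3$, $e_3e_2$, $e_3e_3$ are unknown. Applying $\overline{xy}=\overline{y}\,\overline{x}$ to basis pairs gives:
\begin{itemize}
\item $\overline{e_2e_2}=e_2e_2$, so $e_2e_2\in\mathcal{H}=\langle e_1,e_2\rangle$;
\item $\overline{e_3e_3}=e_3e_3$, so $e_3e_3\in\mathcal{H}$;
\item $\overline{e_2e_3}=-e_3e_2$, so $e_2e_3$ determines $e_3e_2$.
\end{itemize}
Writing $e_2e_3=h+s$ with $h\in\mathcal{H}$ and $s\in\mathcal{S}$, we get $e_3e_2=h-s$. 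So we may write
$$e_2e_2=\alpha_1e_1+\alpha_2e_2,\qquad e_3e_3=\gamma_1e_1+\gamma_2e_2,\qquad e_2e_3=\beta_1e_1+\beta_2e_2+\beta_3e_3,\qquad e_3e_2=\beta_1e_1+\beta_2e_2-\beta_3e_3,$$
with eight complex parameters.

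Next we impose the identity \eqref{id_princ} on basis quadruples. It is multilinear in $x,y,z,t$, so basis vectors suffice. Any argument equal to $e_1$ gives few or trivial conditions: for example $T_{e_1}=\mathrm{id}$, and $V_{x,e_1}=T_x$. Most information should come from quadruples built from $e_2,e_3$. I expect the resulting polynomial system to force several relations, roughly:
\begin{itemize}
\item $\beta_1=0$;
\item relations between $\beta_2,\beta_3$ and the $\alpha$'s and $\gamma$'s, e.g.\ $\beta_3(\alpha_2-\ldots)=0$ and $\beta_2\beta_3$-type terms;
\item compatibility conditions tying $\gamma_1,\gamma_2$ to $\alpha_1,\alpha_2$.
\end{itemize}

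Then we normalize by change of basis. By Lemma~\ref{automor} any isomorphism fixes $e_1$, and it must commute with the involution, so it preserves $\mathcal{H}$ and $\mathcal{S}$. The admissible transformations are therefore
$$e_2\mapsto a e_1+b e_2,\qquad e_3\mapsto c e_3,\qquad b,c\neq0.$$
With these we split into cases. If $e_2e_2\neq0$ modulo $e_1$, shifting $e_2$ by a multiple of $e_1$ and rescaling makes $e_2e_2=e_2$ (alternatively $e_2e_2=\alpha_1 e_1$, which after a shift and scaling becomes $e_2e_2=e_1$ or $0$). The remaining parameters are then scaled via $c$. The expected outcomes:
\begin{itemize}
\item $\beta_3\neq0$ should force $e_2$ to be "nil-like" and give ${\rm A}_5$, where $e_3e_3=e_1$ after scaling $c$;
\item $\beta_3=0$ with $e_2e_2=e_2$ gives ${\rm A}_3$ or ${\rm A}_4$, according to whether $e_3e_3$ vanishes, with $c$ normalizing $\gamma$;
\item $e_2e_2=0$ gives ${\rm A}_2$ or ${\rm A}_1$;
\item the remaining case $e_2e_2=e_1$ (the $J_6$-like one) should either be excluded by the identity or reduce, via a shift of $e_2$, to a listed algebra.
\end{itemize}

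Finally we show the five algebras are pairwise non-isomorphic using invariants preserved by involution-compatible isomorphisms. Suitable ones are: whether the product is commutative (${\rm A}_5$ is not), $\dim\mathcal{A}^2$ modulo $e_1$, existence of nonzero nilpotents in $\mathcal{H}$, and whether $\mathcal{S}^2\subseteq\langle e_1\rangle$.

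The main obstacle is the second step: expanding \eqref{id_princ} over all relevant basis quadruples by hand is long and error-prone. I would organize it by the number of $e_3$'s among $x,y,z,t$, using parity with respect to the involution to predict which components vanish, and check ${\rm A}_4$ and ${\rm A}_5$ directly as a sanity test. The subtle point is ensuring that no parameter family survives, for instance a continuous parameter in $e_3e_3=\gamma_1e_1+\gamma_2e_2$ when $e_2e_2=e_2$. This needs care: the idempotent $e_2$ can be replaced by $e_1-e_2$, which should identify the cases $\gamma=(\gamma_1,\gamma_2)$ and force the specific normalization $-e_1+e_2$ in ${\rm A}_4$.
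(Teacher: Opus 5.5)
\textbf{There is a genuine gap.} The parametrization has a sign error, and the decisive step (imposing the identity) is left as an expectation.

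\textbf{The sign error.} From $\overline{e_2e_3}=\overline{e_3}\,\overline{e_2}=-e_3e_2$ and $e_2e_3=h+s$ you get $-e_3e_2=h-s$. Hence $e_3e_2=-h+s$, not $h-s$: the $\mathcal H$-part of $e_2e_3$ is antisymmetric and the $\mathcal S$-part is symmetric. Consequences:
\begin{enumerate}
\item Your family contains algebras on which $^-$ is not an anti-automorphism (e.g.\ $e_2e_3=e_3e_2=e_2$).
\item It misses ${\rm A}_5$ entirely, since there $e_2e_3=e_2=-e_3e_2$ in every admissible basis.
\item With the correct signs, the shift $e_2\mapsto e_2+ae_1$ adds $ae_3$ to both $e_2e_3$ and $e_3e_2$, so the common $e_3$-coefficient can be removed. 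That is the paper's first step, and it uses up the shift freedom you wanted for normalizing $e_2e_2$.
\item Your guess that ``$\beta_3\neq0$ gives ${\rm A}_5$'' therefore targets a removable coefficient. ${\rm A}_5$ comes from the antisymmetric $\mathcal H$-component $e_2e_3=-e_3e_2\in\langle e_2\rangle$.
\end{enumerate}

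\textbf{The missing identity step.} Both concrete guides you give for this step are off.
\begin{enumerate}
\item Only $z=e_1$ makes the identity trivial. The paper's four evaluations ${\rm ID}(e_3,e_1,e_2,e_2)$, ${\rm ID}(e_1,e_2,e_3,e_3)$, ${\rm ID}(e_1,e_2,e_3,e_2)$ and ${\rm ID}(e_3,e_3,e_3,e_1)$ all contain $e_1$. After the shift above they reduce the algebra to $e_2e_2=b_1e_2$, $e_2e_3=-e_3e_2=b_2e_2$, $e_3e_3=(b_2^2-b_1b_3)e_1+b_3e_2$ with $b_1b_2=b_2b_3=0$. In particular, in that frame the case $e_2e_2=e_1$ is excluded, since ${\rm ID}(e_3,e_1,e_2,e_2)$ equals $-2$ times the $e_1$-coefficient of $e_2e_2$, times $e_3$.
\item Your proposed way to kill the continuous parameter in $e_3e_3=\gamma_1e_1+\gamma_2e_2$ when $e_2e_2=e_2$ cannot work. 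The admissible maps preserving $e_2e_2=e_2$ are generated by $e_3\mapsto ce_3$ and $e_2\mapsto e_1-e_2$. These move $[\gamma_1:\gamma_2]$ only by the discrete involution $(\gamma_1,\gamma_2)\mapsto(\gamma_1+\gamma_2,-\gamma_2)$, so a one-parameter family would survive. The normalization $e_3e_3=-e_1+e_2$ in ${\rm A}_4$ is forced by the identity itself: ${\rm ID}(e_1,e_2,e_3,e_3)$ fixes the $e_1$-component of $e_3e_3$ as $b_2^2-b_1b_3$.
\end{enumerate}

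\textbf{What works.} Your final invariants (commutativity, nonzero nilpotents in $\mathcal H$, whether $\mathcal S^2\subseteq\langle e_1\rangle$) do separate the five algebras, a point the paper leaves implicit. However, ``$\dim\mathcal A^2$ modulo $e_1$'' is vacuous for a unital algebra.
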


\begin{proof}
It is easy to see that
\begin{equation}\label{(2,1)}
    \overline{e_2  e_2}=\overline{e_2} \  \overline{e_2}=e_2  e_2, \ 
    \overline{e_3  e_3}=\overline{ e_3}\   \overline{ e_3}= (-e_3)  (-e_3)=e_3 e_3, \ \overline{e_2 e_3}=\overline{ e_3 }\  \overline{e_2}=-e_3  e_2.\end{equation}

Using the relations \eqref{(2,1)}, we obtain the following table of multiplication in the algebra:
\begin{longtable}{lcrlcr}
$e_2  e_2$&$=$&$  \alpha_1 e_1+\beta_1e_2,$&$e_2 e_3 $&$= $&$ \alpha_2 e_1+\beta_2e_2+\gamma e_3,$\\ 
$e_3  e_3$&$=$&$\alpha_3 e_1+\beta_3e_2,$ &$
e_3  e_2$&$  =$&$ -\alpha_2e_1-\beta_2e_2+\gamma e_3.$
\end{longtable}


\noindent 
By choosing the new basis elements 
$e_1':=e_1, \ e_2':=e_2-\gamma e_1, \ e_3':=e_3,$ we can assume $\gamma\ =\ 0.$

Applying the identity \eqref{id_princ}  to some useful elements, we have 

\begin{longtable}{lcl}
  ${\rm ID}(e_3,e_1,e_2,e_2)$&$=$&$-2 \alpha_1 e_3,$ i.e.   $\alpha_1=0;$\\
  ${\rm ID}(e_1,e_2,e_3,e_3)$&$=$&$6 \alpha_2 \beta_2 e_1+8 \alpha_2 e_3+6\big(\beta_2^2-\alpha_3-\beta_1 \beta_3\big) e_2,$ i.e.   $\alpha_2=0,$  $\alpha_3=\beta_2^2-\beta_1\beta_3;$\\
  ${\rm ID}(e_1,e_2,e_3,e_2)$&$=$&$ -4\beta_1\beta_2 e_2,$ i.e.   $\beta_1\beta_2=0;$\\
  ${\rm ID}(e_3,e_3,e_3,e_1)$&$=$&$ 8\beta_2\beta_3 e_2,$ i.e.   $\beta_2\beta_3=0.$\\

\end{longtable}

\noindent
Now we have the following parametric family of algebras:
\begin{longtable}{lcllcl}
$e_2 e_2 $&$=$&$ \beta_1e_2,$  & $e_2 e_3 $&$=$&$ \beta_2e_2,$  \\
$e_3 e_3 $&$=$&$\big(\beta_2^2-\beta_1\beta_3\big) e_1+\beta_3e_2,$ &$e_3 e_2  $&$=$&$-\beta_2e_2.$
\end{longtable}

\noindent 
By choosing the new basis elements 
$e_1:=e_1,$ $e_2:=Ae_2,$ $e_3:=Ce_3,$ we can assume

\begin{center}
$e_2 e_2 \ = \ A\beta_1 e_2,$ \  \ 
$e_2 e_3 \ =\  C\beta_2e_2 \ = \ - e_3e_2,$ \ \ 
$e_3 e_3\ =\ C^2\big(\beta_2^2-\beta_1\beta_3\big) e_1+{C^2}{A^{-1}}\beta_3e_2.$
\end{center}

The condition $\beta_1\beta_2=\beta_2\beta_3=0$ leads us to the following cases.

\begin{enumerate}[(1)]
\item If $\beta_1=\beta_2=\beta_3=0,$ then we have the algebra $\rm A_1.$ 
\item If $\beta_1=\beta_2=0$ and $\beta_3\neq 0,$ then by choosing $A=\beta_3$ and $C=1,$ we obtain   $\rm A_2.$

\item If $\beta_1\neq 0$ and $\beta_2=\beta_3= 0,$ then by choosing $A= {\beta^{-1}_1},$ we obtain   $\rm A_3.$ 
\item If $\beta_1\neq0,$ $\beta_2=0,$ and $\beta_3\neq 0,$ then by choosing $A={\beta^{-1}_1}$ and $ C=\frac 1{\sqrt{\beta_1\beta_3}}$, we obtain   $\rm A_4.$ 

 \item If $\beta_1=\beta_3=0$ and  $\beta_2\ne 0$  then by choosing  $C={\beta^{-1}_2},$ we obtain   $\rm A_5.$ 

 \end{enumerate}
\end{proof}

\subsubsection{Type $(1,2)$}
Here  we consider  $3$-dimensional unital structurable algebras of type $(1,2)$. 
Let $\{e_1,e_2,e_3\}$ be a basis such that $e_1 \in \mathcal{H}$ and $e_2,e_3\in \mathcal{S}.$

\begin{theorem}\label{Thcl2}
Let  $\mathcal{A}$ be a  complex unital structurable algebra $\mathcal{A}$ of type $(1,2),$ then it is isomorphic to the universal unital algebra ${\rm S}_1$ of type $(1,2),$ or it is isomorphic to an algebra
with   an involution\  $^-\ ,$ given by $\overline{e_1}=e_1,$ 
$\overline{e_2}=-e_2,$ 
$\overline{e_3}=-e_3,$ and the multiplication table given by one of the following:

\begin{longtable}{llllllllllllllll}
 ${\rm S}_2$&$: $&$ e_2 e_3=e_2$&$e_3 e_2=  -e_2$&$e_3 e_3=e_1$ 
    \end{longtable}
\end{theorem}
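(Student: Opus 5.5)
The plan follows the proof of Theorem \ref{Thcl1}: use the involution to write down the most general multiplication table, impose \eqref{id_princ} on suitable basis quadruples, and then normalize by changes of basis. Since $\mathcal{S}=\langle e_2,e_3\rangle$ is two-dimensional, any $\varphi\in GL(\mathcal{S})$, extended by $\varphi(e_1)=e_1$ (compatible with Lemma \ref{automor}), preserves the involution. This gives more freedom than the diagonal rescalings used for type $(2,1)$.

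\textbf{Step 1: general multiplication table.} For $i,j\in\{2,3\}$ we have $\overline{e_ie_j}=(-e_j)(-e_i)=e_je_i$. Hence $e_2e_2$ and $e_3e_3$ lie in $\mathcal{H}=\langle e_1\rangle$. Moreover, the hermitian part of $e_2e_3$ equals that of $e_3e_2$, and their skew parts are opposite. So the table is
$$e_2e_2=a e_1,\quad e_3e_3=b e_1,\quad e_2e_3=c e_1+p e_2+q e_3,\quad e_3e_2=c e_1-p e_2-q e_3 .$$
Equivalently, $xy=B(x,y)e_1+\tfrac12[x,y]$ for $x,y\in\mathcal{S}$. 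Here $B$ is the symmetric bilinear form with Gram matrix $\begin{pmatrix} a&c\\ c&b\end{pmatrix}$, and $[\cdot,\cdot]$ is an anticommutative bracket on $\mathcal{S}$ with $[e_2,e_3]=2(pe_2+qe_3)$.

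\textbf{Step 2: the identity.} I would evaluate ${\rm ID}(x,y,z,t)$ on quadruples made of $e_1,e_2,e_3$, for example ${\rm ID}(e_1,e_2,e_3,e_3)$, ${\rm ID}(e_1,e_3,e_2,e_2)$, ${\rm ID}(e_2,e_3,e_2,e_1)$, ${\rm ID}(e_2,e_2,e_3,e_1)$ and ${\rm ID}(e_3,e_3,e_2,e_1)$. These give polynomial relations in $a,b,c,p,q$. The expected content of the relations is twofold. First, $B$ vanishes on the image of the bracket: if $w=pe_2+qe_3\neq0$, then $B(w,\cdot)=0$. Second, $B=0$ identically when the bracket is zero. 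The second claim says that the quadratic-form Jordan algebras (such as ${\rm J}_6$) do not become structurable when the involution is $-1$ on $\mathcal{S}$. I would check it directly on the commutative table with $p=q=0$ and expect to get $a=b=c=0$, giving ${\rm S}_1$.

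\textbf{Step 3: normalization when the bracket is nonzero.} Use $GL(\mathcal{S})$ to make $w$ proportional to $e_2$, so that $q=0$ and $p\neq0$. The relations from Step 2 then give $a=c=0$. Rescaling $e_3$ by $p^{-1}$ gives $e_2e_3=e_2=-e_3e_2$ and $e_3e_3=b'e_1$. I expect the identity to force $b'\neq0$, or at least to rule out $b'=0$; the case $b'=0$ is a point I would double-check. After confirming $b'\neq 0$, I would simultaneously rescale $e_3\mapsto \lambda e_3$ and $e_2\mapsto\lambda e_2$. This keeps the bracket relation $e_2e_3=e_2$ only if $\lambda=1$, so I would instead rescale $e_3$ by $\mu$ with $\mu^2b'=1$ and then absorb the resulting factor $\mu$ in the bracket by rescaling $e_2$. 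Combining these rescalings should give exactly ${\rm S}_2$.

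\textbf{Main obstacle.} The main obstacle is Step 2. The identity \eqref{id_princ} is long, and a poor choice of quadruples gives messy equations. I would first try the quadruples containing $e_1$, since $T_{e_1}$ and $V_{x,e_1}$ simplify a lot, in analogy with the computations for type $(2,1)$. I would then use a full symbolic check to make sure that no solution with $b'=0$ and nonzero bracket survives. That family, $e_2e_3=e_2=-e_3e_2$ with all other products of $e_2,e_3$ zero, is the one I am least sure is excluded, and it is the case most likely to need a separate argument.
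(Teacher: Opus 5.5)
You follow the same route as the paper: write the general table, evaluate ${\rm ID}$ on basis quadruples, then normalize with $GL(\mathcal{S})$. However, Step 3 has a genuine gap.

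\textbf{The rescaling in Step 3 fails.} You expect the identity to give only $B(w,\cdot)=0$ and perhaps $b'\neq 0$. You then try to reach $e_3e_3=e_1$ by rescaling $e_3\mapsto\mu e_3$ with $\mu^2b'=1$ and absorbing $\mu$ by rescaling $e_2$. This cannot work. After $e_3\mapsto\mu e_3$ you have $e_2(\mu e_3)=\mu e_2$. This relation is homogeneous in $e_2$, so no rescaling of $e_2$ removes $\mu$.

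In fact, in the family $e_2e_3=e_2=-e_3e_2$, $e_2e_2=0$, $e_3e_3=b'e_1$, the scalar $b'$ is an isomorphism invariant. An isomorphism fixes $e_1$ and preserves $\mathcal{S}$. It also preserves the line $\langle e_2\rangle$ spanned by commutators of elements of $\mathcal{S}$. Hence $e_2\mapsto\lambda e_2$ and $e_3\mapsto ue_2+ve_3$. Applying it to $e_2e_3=e_2$ forces $v=1$. Then $(ue_2+e_3)^2=e_3e_3$ shows that $b'$ is preserved. So no change of basis brings $b'\neq 1$ to ${\rm S}_2$. Your argument needs the identity itself to force $b'=1$. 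Knowing only $b'\neq0$, you would be left with a one-parameter family instead of ${\rm S}_2$.

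\textbf{What is missing.} The identity determines $B$ completely from the bracket. ${\rm ID}(e_1,e_2,e_3,e_2)$ gives $a=q^2$ and $c=-pq$. ${\rm ID}(e_1,e_2,e_3,e_3)$, which is already on your list, gives $b=p^2$. In the paper's notation these read $\alpha_1=\gamma^2$, $\alpha_2=-\beta\gamma$, $\alpha_3=\beta^2$. Equivalently, $B(x,y)=\det(w,x)\det(w,y)$, which is strictly stronger than $B(w,\cdot)=0$.

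Once you have $b=p^2$, make $q=0$ and replace $e_3$ by $p^{-1}e_3$. This gives $e_3e_3=e_1$ directly, with no further normalization. The paper does this in one change of basis, $e_2'=e_2+\gamma\beta^{-1}e_3$, $e_3'=\beta^{-1}e_3$. The same relation also settles the case $b'=0$ you were worried about, since it forces $b'=1$.
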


\begin{proof}
It is easy to see that  
$\overline{e_2  e_3}=\overline{e_3} \  \overline{e_2}=e_3 e_2,$ \ 
$\overline{e_2  e_2}=e_2e_2$ and 
$\overline{e_3  e_3}= e_3e_3,$
hence
\begin{longtable}{lcllcl}
$e_2 e_2 $&$=$&$\alpha_1 e_1$ & $e_2 e_3  $&$=$&$  \alpha_2 e_1+\beta e_2+\gamma e_3$\\ 
$ e_3e_3$&$=$&$\alpha_3 e_1$&$ e_3 e_2 $&$=$&$ \alpha_2 e_1-\beta e_2-\gamma e_3$
\end{longtable}

Applying the identity \eqref{id_princ}  to some useful elements, we have 

\begin{longtable}{lcl}
  ${\rm ID}(e_1,e_2,e_3,e_2)$&$=$&$6 (\alpha_2+\beta \gamma) e_2-6 (\alpha_1- \gamma^2) e_3,$ i.e.   $\alpha_1=\gamma^2$ and $\alpha_2=-\beta\gamma;$\\

  ${\rm ID}(e_1,e_2,e_3,e_3)$&$=$&$-6 (\alpha_3-\beta^2) e_2,$ i.e.   $\alpha_3=\beta^2.$

  \end{longtable}


Now we have the following parametric family of algebras:
\begin{longtable}{lcllcl}
$e_2 e_2$&$= $&$\gamma^2e_1$&$e_2 e_3$&$ =$&$ -\beta \gamma e_1+\beta e_2+\gamma e_3$\\ 
$e_3 e_3$&$=$&$\beta^2e_1$ & $ e_3 e_2 $&$=$&$ -\beta \gamma e_1-\beta e_2-\gamma e_3$
\end{longtable}

In this case, the conditions $(\beta,\gamma)=(0,0)$ and $(\beta,\gamma)\neq (0,0)$ are invariant.
If $(\beta,\gamma)=(0,0)$ then we have the algebra $\rm S_1$. 

If $(\beta,\gamma)\neq (0,0)$ then, without loss of generality, 
we may assume $\beta\neq0$. By taking the new basis elements
$e_1'=e_1,$ 
$e_2'=e_2+\gamma\beta^{-1}e_3,$ and 
$e_3'={\beta^{-1}}e_3$, we have  the algebra $\rm S_2$.
\end{proof}

\subsection{Properties  of unital $3$-dimensional structurable algebras} 

\subsubsection{Derivations}\label{der}

\begin{definition}
Let $\mathcal{A}$ be a structurable algebra. A linear map $D : \mathcal{A} \to \mathcal{A}$
is called a derivation if
$D(xy)\ =\ D(x)y+xD(y).$
If a derivation of $\mathcal A$ satisfies  $D(x)\ =\ \overline{D(\overline{x})},$ then it is called a derivation commuting with the involution 
$\big($or $\overline{derivation}\big)$.
The set of all derivations of $\mathcal{A}$ and the set of all $\overline{derivations}$ are  denoted by 
${\mathfrak{Der}}(\mathcal{A})$
and $\overline{{\mathfrak{Der}}}(\mathcal{A}).$
\end{definition}


\begin{proposition}
Derivations and $\overline{derivations}$ of complex unital $3$-dimensional  structurable algebras are given below:

\begin{longtable}{|lcl|}
  \hline 
  
${\mathfrak{Der}}({\rm A}_1)$&$=$&$\big\langle d_1,d_2,d_3,d_4 \ | \ 
d_1(e_2)=e_2; \ 
d_2(e_3)=e_3; \ 
d_3(e_2)=e_3; \ 
d_4(e_3)=e_2\big\rangle$\\

$\overline{{\mathfrak{Der}}}({\rm A}_1)$&$=$&$\big\langle d_1,d_2 \ | \ 
d_1(e_2)=e_2; \ 
d_2(e_3)=e_3\big\rangle$\\
\hline 

${\mathfrak{Der}}({\rm A}_2)$&$=$&$\big\langle d_1, d_2 \ | \  
d_1(e_2)=2e_2, \ 
d_1(e_3)=e_3; \ 
d_2(e_3)=e_2 \big\rangle$\\

$\overline{{\mathfrak{Der}}}({\rm A}_2)$&$=$&$\big\langle d_1 \ | \  
d_1(e_2)=2e_2, \ d_1(e_3)=e_3\big\rangle$\\

\hline 
  
${\mathfrak{Der}}({\rm A}_3)$&$=$&$\big\langle d_1 \ | \   d_1(e_3)=e_3\big\rangle$\\

$\overline{\mathfrak{Der}}({\rm A}_3)$&$=$&$\big\langle d_1 \ | \   d_1(e_3)=e_3\big\rangle$\\

\hline 
  
${\mathfrak{Der}}({\rm A}_4)$&$=$&$\big\langle 0\big\rangle$\\
$\overline{\mathfrak{Der}}({\rm A}_4)$&$=$&$\big\langle 0\big\rangle$\\

\hline 
  
${\mathfrak{Der}}({\rm A}_5)$&$=$&$\big\langle d_1, d_2 \ | \  
d_1(e_2)=e_2; \ 
d_2(e_3)=e_2 \big\rangle$\\

$\overline{\mathfrak{Der}}({\rm A}_5)$&$=$&$\big\langle d_1 \ | \  d_1(e_2)=e_2\big\rangle$\\

\hline 
  
${\mathfrak{Der}}({\rm S}_1)$&$=$&$\big\langle d_1,d_2,d_3,d_4 \ | \ 
d_1(e_2)=e_2; \ 
d_2(e_2)=e_3; \ 
d_3(e_3)=e_2; \  
d_4(e_3)=e_3\big\rangle$\\

$\overline{\mathfrak{Der}}({\rm S}_1)$&$=$&$\big\langle d_1,d_2,d_3,d_4 \ | \ 
d_1(e_2)=e_2; \ 
d_2(e_2)=e_3; \ 
d_3(e_3)=e_2; \  
d_4(e_3)=e_3\big\rangle$\\

\hline 

${\mathfrak{Der}}({\rm S}_2)$&$=$&$\big\langle d_1,d_2 \ | \ 
d_1(e_2)=e_2; \ 
d_2(e_3)=e_2\big\rangle$\\

$\overline{\mathfrak{Der}}({\rm S}_2)$&$=$&$\big\langle d_1,d_2 \ | \ 
d_1(e_2)=e_2; \ 
d_2(e_3)=e_2\big\rangle$\\
\hline 
  
\end{longtable}

\end{proposition}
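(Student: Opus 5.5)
The plan is direct linear algebra, carried out separately for each of the seven algebras ${\rm A}_1,\dots,{\rm A}_5,{\rm S}_1,{\rm S}_2$ from Theorems \ref{Thcl1} and \ref{Thcl2}. The first step is common to all cases. Since $e_1$ is the unit, $D(e_1)=D(e_1e_1)=2D(e_1)$, so $D(e_1)=0$. Hence a derivation is determined by
\[
D(e_2)=a_{12}e_1+a_{22}e_2+a_{32}e_3,\qquad D(e_3)=a_{13}e_1+a_{23}e_2+a_{33}e_3,
\]
which leaves six unknowns. Because $D(e_1)=0$ and $e_1$ is the unit, the Leibniz rule holds automatically whenever one argument is $e_1$. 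The derivation conditions therefore reduce to the four equations
\[
D(e_ie_j)=D(e_i)e_j+e_iD(e_j),\qquad i,j\in\{2,3\}.
\]

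For each algebra I will expand these four equations using the multiplication table and compare coefficients of $e_1,e_2,e_3$. This gives a homogeneous linear system in the $a_{kl}$, and its solution space is ${\mathfrak{Der}}$.

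\begin{itemize}
\item For the universal algebras ${\rm A}_1$ and ${\rm S}_1$, all products of $e_2,e_3$ vanish. The equations then force the $e_1$-components to vanish (for example $0=D(e_2)e_2+e_2D(e_2)=2a_{12}e_2$) and leave the $2\times2$ block on $\langle e_2,e_3\rangle$ free. This gives dimension $4$.
\item For ${\rm A}_2$, the equation $e_3e_3=e_2$ forces $D(e_2)=2a_{33}e_2+\dots$ together with the conditions coming from $e_2e_2=0$ and $e_2e_3=0$.
\item For ${\rm A}_3$ and ${\rm A}_4$, the idempotent $e_2$ forces $D(e_2)=0$ (since $D(e_2)=2e_2D(e_2)$ restricted to the span). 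For ${\rm A}_4$, the relation $e_3e_3=-e_1+e_2$ then kills $D(e_3)$ as well.
\item For ${\rm A}_5$ and ${\rm S}_2$, the relations $e_3e_3=e_1$ and $e_2e_3=-e_3e_2=e_2$ give $D(e_3)\in\langle e_2\rangle$ and $D(e_2)\in\langle e_2\rangle$.
\end{itemize}

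In each case I will read off a basis of the solution space and match it with the $d_i$ listed in the statement.

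Next I will treat $\overline{{\mathfrak{Der}}}$. The condition $D(x)=\overline{D(\overline{x})}$ is equivalent to $D$ commuting with the involution. Equivalently, $D$ must preserve both $\mathcal{H}$ and $\mathcal{S}$. So $\overline{{\mathfrak{Der}}}$ is obtained from ${\mathfrak{Der}}$ by deleting the components that map $\mathcal{H}$ into $\mathcal{S}$ or $\mathcal{S}$ into $\mathcal{H}$.

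\begin{itemize}
\item For type $(2,1)$, with $\mathcal{H}=\langle e_1,e_2\rangle$ and $\mathcal{S}=\langle e_3\rangle$, this removes $d_3,d_4$ for ${\rm A}_1$ and $d_2$ for ${\rm A}_2$ and ${\rm A}_5$.
\item For type $(1,2)$, with $\mathcal{S}=\langle e_2,e_3\rangle$, every derivation found already preserves $\mathcal{S}$ and kills $e_1$. So $\overline{{\mathfrak{Der}}}={\mathfrak{Der}}$ for ${\rm S}_1$ and ${\rm S}_2$.
\end{itemize}

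The only step needing real care is solving the systems for ${\rm A}_4$, ${\rm A}_5$ and ${\rm S}_2$, where the $e_1$-terms in the products couple the coefficients. There I would first use the idempotent relation, or the relation $e_3e_3=e_1$, to pin down the $e_1$- and $e_3$-components, and then substitute into the mixed products $e_2e_3$ and $e_3e_2$. The risk is sign errors from the anticommutation $e_3e_2=-e_2e_3$, so I would verify each resulting $d_i$ directly against all four products.
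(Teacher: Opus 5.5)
Your proposal is correct and follows the same route as the paper, which states this proposition without proof and leaves the routine coefficient computation implicit; solving the four Leibniz equations does reproduce every entry, and reducing $\overline{\mathfrak{Der}}$ to the derivations that preserve $\mathcal{H}$ and $\mathcal{S}$ is right. One minor slip in your sketch: for ${\rm A}_4$ the relation $e_3e_3=-e_1+e_2$ only forces $a_{13}=a_{33}=0$, and it is $e_2e_3=0$ (giving $a_{13}+a_{23}=0$) that kills the $e_2$-component of $D(e_3)$.
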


\subsubsection{Automorphisms}\label{aut}
\begin{definition}
Let $\mathcal{A}$ be a structurable algebra. A linear map $\varphi : \mathcal{A} \to \mathcal{A}$
is called an automorphism   if
$\varphi(xy)\ =\ \varphi(x)\varphi(y).$
If an automorphism  of $\mathcal A$ satisfies  $\varphi(x)\ =\ \overline{\varphi(\overline{x})},$ then it is called an automorphism   commuting with the involution $\big($or $\overline{automorphism}\big)$.
The set of all automorphisms of $\mathcal{A}$ and the set of all $\overline{automorphisms}$ are  denoted by 
${\rm{Aut}}(\mathcal{A})$
and $\overline{{\rm{Aut}}}(\mathcal{A}).$
\end{definition}

\begin{proposition}\label{Aut} 
Automorphisms and $\overline{automorphisms}$ of complex unital $3$-dimensional  structurable algebras are given below
$\big($it has to be mentioned that all matrices given below are non-degenerate$\big)$:

\begin{longtable}{|lcc|lcc|}
  \hline 

$\mathrm{Aut}({\rm A}_1)$&$=$&$\begin{pmatrix} 
1 & 0& 0 \\ 
0 & \alpha & \gamma  \\ 
0 & \delta & \beta  \end{pmatrix}  $&$  \overline{\mathrm{Aut}}({\rm A}_1)$&$=$&$\begin{pmatrix} 
1 & 0& 0 \\ 
0 & \alpha & 0 \\ 
0 & 0 & \beta  \end{pmatrix}$\\

  \hline 

$\mathrm{Aut}({\rm A}_2)$&$=$&$\begin{pmatrix} 
1 & 0& 0 \\ 
0 & \alpha^2 & \beta \\ 
0 & 0 & \alpha  \end{pmatrix}$&$\overline{\mathrm{Aut}}({\rm A}_2)$&$=$&$\begin{pmatrix}
1 & 0& 0 \\ 0 & \alpha^2 & 0 \\ 
0 & 0 & \alpha
\end{pmatrix}$\\

  \hline 

$\mathrm{Aut}({\rm A}_3)$&$=$&$\begin{pmatrix} 1 & 0& 0 \\ 
0 & \alpha & 0 \\ 
0& 0 & \beta  \end{pmatrix}$&$\overline{\mathrm{Aut}}({\rm A}_3)$&$=$&$\begin{pmatrix}
1 & 0& 0 \\ 
0 & \alpha  & 0 \\ 
0& 0 & \beta  \end{pmatrix}$\\ 
  \hline

$\mathrm{Aut}({\rm A}_4)$&$=$&$\begin{pmatrix} 1 & 0& 0 \\ 0 & 1 & 0 \\ 0 & 0 & \pm1 \end{pmatrix}$&$\overline{\mathrm{Aut}}({\rm A}_4)$&$=$&$\begin{pmatrix} 1 & 0& 0 \\ 0 & 1 & 0 \\ 0 & 0 & \pm1 \end{pmatrix}$\\
  \hline

$\mathrm{Aut}({\rm A}_5)$&$=$&$\begin{pmatrix} 
1 & 0& 0 \\ 
0 & \alpha & \beta \\
0 & 0 & 1\end{pmatrix}$&$\overline{\mathrm{Aut}}({\rm A}_5)$&$=$&$\begin{pmatrix} 
1 & 0& 0 \\ 
0 & \alpha & 0 \\ 
0 & 0 & 1  \end{pmatrix}$\\
  \hline

$\mathrm{Aut}({\rm S}_1)$&$=$&$\begin{pmatrix} 1 & 0& 0 \\ 
0 & \alpha & \gamma  \\ 
0 & \delta & \beta \end{pmatrix}  $&$  \overline{\mathrm{Aut}}({\rm S}_1)$&$=$&$\begin{pmatrix} 1 & 0& 0 \\ 
0 & \alpha & \gamma  \\ 
0 & \delta & \beta \end{pmatrix}$\\
  \hline

$\mathrm{Aut}({\rm S}_2)$&$=$&$\begin{pmatrix} 
1 & 0& 0 \\ 
0 & \alpha & \beta \\
0 & 0 & 1
\end{pmatrix}$&$\overline{\mathrm{Aut}}({\rm S}_2)$&$=$&$\begin{pmatrix} 
1 & 0& 0 \\ 
0 & \alpha & \beta \\
0 & 0 & 1\end{pmatrix}$\\
  \hline

\end{longtable}
\end{proposition}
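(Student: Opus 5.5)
By Lemma \ref{automor}, every automorphism $\varphi$ fixes $e_1$, so the first column of its matrix (images written as columns with respect to $\{e_1,e_2,e_3\}$) is $(1,0,0)^T$. I will write
\[
\varphi(e_2)=a_1e_1+a_2e_2+a_3e_3,\qquad \varphi(e_3)=b_1e_1+b_2e_2+b_3e_3,
\]
and impose $\varphi(e_ie_j)=\varphi(e_i)\varphi(e_j)$ for $i,j\in\{2,3\}$, using the multiplication tables of Theorems \ref{Thcl1} and \ref{Thcl2}. This gives a system of polynomial equations in $a_k,b_k$, which I will solve algebra by algebra. The $\overline{\rm Aut}$ column then follows quickly. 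The condition $\varphi(x)=\overline{\varphi(\overline x)}$ means exactly that $\varphi$ commutes with the involution, i.e.\ preserves $\mathcal H$ and $\mathcal S$. For type $(2,1)$ this kills the entries mixing $e_2$ and $e_3$. For type $(1,2)$, where $\mathcal S=\langle e_2,e_3\rangle$, it only forces $a_1=b_1=0$.

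The first reduction is to show $a_1=b_1=0$, i.e.\ that $\varphi$ preserves $\langle e_2,e_3\rangle$. The tool is the identity $(c e_1+v)^2=c^2e_1+2cv+v^2$ for $v\in\langle e_2,e_3\rangle$. Take ${\rm A}_1$ and ${\rm S}_1$, where $v^2=0$. Comparing the $e_1$- and $v$-components of $\varphi(e_2)^2=0$ gives $a_1=0$, and likewise $b_1=0$. For these two algebras every invertible block on $\langle e_2,e_3\rangle$ then works, which gives the full $2\times2$ block. In ${\rm A}_2$ and ${\rm A}_3$ the algebra $\langle e_2,e_3\rangle$ is closed under multiplication, and a similar component comparison gives $a_1=b_1=0$. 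The remaining equations are then short:
\begin{itemize}
\item In ${\rm A}_2$, $\varphi(e_2)=\varphi(e_3)^2$ forces $\varphi(e_2)=b_3^2e_2$, i.e.\ $a_3=0$ and $a_2=b_3^2$.
\item In ${\rm A}_3$, idempotency of $\varphi(e_2)$ and $e_3^2=0$ force a diagonal block with $a_2=1$ (as $\varphi(e_2)$ is a nonzero idempotent in the span).
\end{itemize}
I will check this against the table, which lists a general $\alpha$ for ${\rm A}_3$; if my computation gives $a_2=1$, I will flag the discrepancy rather than force agreement.

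For ${\rm A}_4$, ${\rm A}_5$ and ${\rm S}_2$ the square $e_3e_3$ has an $e_1$-component, so the $e_1$-entries are not killed immediately. Here I will proceed in order. For ${\rm A}_4$:
\begin{itemize}
\item From $\varphi(e_2)^2=\varphi(e_2)$ and $e_2e_3=0$, I will deduce that $\varphi(e_2)$ is an idempotent orthogonal to $\varphi(e_3)$. Since the idempotents of ${\rm A}_4$ are few, this should give $\varphi(e_2)=e_2$.
\item Then $\varphi(e_3)^2=-e_1+e_2$ pins down $\varphi(e_3)=\pm e_3$.
\end{itemize}
For ${\rm A}_5$ and ${\rm S}_2$ I will use the relations $e_2e_3=-e_3e_2=e_2$ and $e_3^2=e_1$:
\begin{itemize}
\item Comparing $e_1$-components in $\varphi(e_2)\varphi(e_3)+\varphi(e_3)\varphi(e_2)=0$ and in $\varphi(e_3)^2=e_1$ should give $a_1=0$, then $a_3=0$, and then $b_3=\pm1$ with $b_1=0$.
\item The relation $\varphi(e_2)\varphi(e_3)=\varphi(e_2)$ should then fix the sign $b_3=1$, leaving $\varphi(e_2)=\alpha e_2$ and $\varphi(e_3)=\beta e_2+e_3$.
\end{itemize}

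I expect the main obstacle to be these last three algebras. There the nonvanishing $e_1$-components make the system genuinely nonlinear, and several sign branches must be excluded carefully; the sign of $b_3$ is the main worry. I will first exhaust linear consequences, namely the $e_3$-coefficients of the anticommutator relation, before substituting into the quadratic ones.
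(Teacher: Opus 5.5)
The paper states this table without proof, so the comparison is with what a direct computation gives. Your treatment of ${\rm A}_1$, ${\rm A}_2$, ${\rm A}_5$, ${\rm S}_1$ and ${\rm S}_2$ is sound, and your suspicion about ${\rm A}_3$ is justified: the equations force $a_2^2=a_2$, hence $a_2=1$. So both $\mathrm{Aut}({\rm A}_3)$ and $\overline{\mathrm{Aut}}({\rm A}_3)$ consist only of ${\rm diag}(1,1,\beta)$, which agrees with the paper's own one-dimensional $\mathfrak{Der}({\rm A}_3)$.

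Two small repairs are needed. In ${\rm A}_3$, the $e_1$-component of $\varphi(e_2)^2=\varphi(e_2)$ gives only $a_1^2=a_1$; indeed $e_1-e_2$ is an idempotent. You get $a_1=0$ from $\varphi(e_2)\varphi(e_3)=0$, after first showing $\varphi(e_3)=b_3e_3$ with $b_3\neq0$. In ${\rm A}_5$ and ${\rm S}_2$, it is cleaner to use $\varphi(e_2)^2=0$, which gives $a_1^2+a_3^2=0$ and $a_1a_3=0$. Also, $\varphi(e_3)^2=e_1$ yields only $b_1b_3=0$ and $b_1^2+b_3^2=1$, so the branch $b_3=0$, $b_1=\pm1$ must be excluded by non-degeneracy.

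The genuine gap is ${\rm A}_4$. The step ``the idempotents of ${\rm A}_4$ are few, so $\varphi(e_2)=e_2$'' is false, and so is the $\mathrm{Aut}({\rm A}_4)$ entry you are aiming for. Put $g_\pm=\tfrac12(e_1-e_2\pm{\mathfrak i}e_3)$. Then $e_2,g_+,g_-$ are pairwise orthogonal idempotents with $e_2+g_++g_-=e_1$. Hence ${\rm A}_4\cong\mathbb{C}^3$ as an algebra, every permutation of $\{e_2,g_+,g_-\}$ is an automorphism, and $\mathrm{Aut}({\rm A}_4)\cong S_3$ has six elements. For instance, the transposition $e_2\leftrightarrow g_+$ gives
\[
\varphi(e_2)=\tfrac12\big(e_1-e_2+{\mathfrak i}e_3\big),\qquad \varphi(e_3)=\tfrac{\mathfrak i}{2}e_1-\tfrac{3{\mathfrak i}}{2}e_2+\tfrac12 e_3 .
\]
One checks directly that $\varphi(e_2)^2=\varphi(e_2)$, $\varphi(e_2)\varphi(e_3)=0$, $\varphi(e_3)^2=-e_1+\varphi(e_2)$, and that the matrix has determinant $-1$.

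Your orthogonality-plus-non-degeneracy filter does eliminate $e_1-e_2$ and $\tfrac12(e_1+e_2\pm{\mathfrak i}e_3)$, which are non-primitive idempotents. It does not eliminate $g_\pm$, so the correct conclusion is $\varphi(e_2)\in\{e_2,g_+,g_-\}$. Only the two automorphisms fixing $e_2$ commute with the involution, because $g_\pm\notin\mathcal H$. Thus $\overline{\mathrm{Aut}}({\rm A}_4)=\{{\rm diag}(1,1,\pm1)\}$ is correct, but $\mathrm{Aut}({\rm A}_4)$ as stated is not. You should flag this entry exactly as you planned to do for ${\rm A}_3$, rather than expect agreement with the table.
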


\subsubsection{Subalgebras of  unital $3$-dimensional structurable algebras}\label{sub}

\medskip \noindent{\it Subalgebras of  ${\rm A}_1.$}

\begin{theorem}\label{subA1}
Let $\mathfrak{s}$ be a one-dimensional subalgebra of $\rm{A}_1,$ then $\mathfrak{s}$ is one of the following subalgebras:
$\big\langle e_1 \big\rangle,$ 
$\big\langle e_2 \big\rangle,$ or 
$\big\langle \alpha  e_2 + e_3 \big\rangle_{\alpha \in \mathbb C}.$
Up to isomorphisms, all one-dimensional subalgebras of $\rm{A}_1$ are equivalent to  $\big\langle e_1 \big\rangle $ or $ \big\langle e_2 \big\rangle.$ 

\end{theorem}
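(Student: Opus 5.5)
The proof is a direct computation in the multiplication table of ${\rm A}_1$, followed by an application of Proposition \ref{Aut}. In ${\rm A}_1$ the only nonzero products are those involving the unit $e_1$, so $e_2e_2=e_2e_3=e_3e_2=e_3e_3=0$. A one-dimensional subalgebra is $\mathfrak{s}=\langle v\rangle$ with $v=ae_1+be_2+ce_3\neq 0$, and the only condition is $vv\in\langle v\rangle$.

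First I expand the square:
\[
vv=a^2e_1+2a(be_2+ce_3),
\]
and split into two cases according to $a$.
\begin{enumerate}[(1)]
\item If $a\neq 0$, then $vv=\lambda v$ forces $\lambda=a$ from the $e_1$-coordinate. The $e_2$- and $e_3$-coordinates then give $ab=2ab$ and $ac=2ac$, so $b=c=0$. Hence $\mathfrak{s}=\langle e_1\rangle$.
\item If $a=0$, then $vv=0$, so every nonzero $v\in\langle e_2,e_3\rangle$ spans a (zero-product) subalgebra. Normalizing the line, it is either $\langle e_2\rangle$ (when $c=0$) or $\langle \alpha e_2+e_3\rangle$ with $\alpha=b/c$ (when $c\neq 0$).
\end{enumerate}
This yields exactly the list in the statement.

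For the ``up to isomorphism'' part, I use Proposition \ref{Aut}. There $\mathrm{Aut}({\rm A}_1)$ consists of all maps fixing $e_1$ and acting by an arbitrary invertible $2\times 2$ matrix on $\langle e_2,e_3\rangle$. Choosing such an automorphism with $e_2\mapsto \alpha e_2+e_3$ carries $\langle e_2\rangle$ onto $\langle\alpha e_2+e_3\rangle$, so all these lines collapse to the single class $\langle e_2\rangle$. The class $\langle e_1\rangle$ cannot merge with $\langle e_2\rangle$, because every automorphism fixes $e_1$ (Lemma \ref{automor}). Intrinsically, too, $\langle e_1\rangle$ has nonzero product while $\langle e_2\rangle$ has zero product.

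The only point needing care is the meaning of ``equivalent''. The lines $\langle\alpha e_2+e_3\rangle$ with $\alpha\neq 0$ are not invariant under the involution, and they are not $\overline{\mathrm{Aut}}$-conjugate to $\langle e_2\rangle$, since $\overline{\mathrm{Aut}}({\rm A}_1)$ is diagonal. So the statement should be read with respect to the full group $\mathrm{Aut}({\rm A}_1)$, or as abstract isomorphism of the subalgebras as algebras. Either reading gives exactly the two classes claimed.
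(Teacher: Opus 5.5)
Your proof is correct and follows the paper's argument. You expand $vv$ in ${\rm A}_1$ and solve $vv\in\langle v\rangle$; you split on the $e_1$-coefficient rather than on the scalar $k$ as the paper does, which only merges two of its cases. You then use the description of $\mathrm{Aut}({\rm A}_1)$ in Proposition~\ref{Aut} to collapse all lines in $\langle e_2,e_3\rangle$ to $\langle e_2\rangle$, again as the paper does. Your remark that the second part must be read with respect to the full group $\mathrm{Aut}({\rm A}_1)$ rather than $\overline{\mathrm{Aut}}({\rm A}_1)$ is consistent with the paper. Its separate corollary lists $\langle e_1\rangle$, $\langle e_2\rangle$ and $\langle e_3\rangle$ as distinct classes of one-dimensional $\overline{\mbox{subalgebras}}$.
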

\begin{proof}
Let $\mathfrak{s}=\big\langle \alpha e_1 + \beta e_2 + \gamma e_3 \big\rangle .$
Consider the product:
\[
\big(\alpha e_1 + \beta e_2 + \gamma e_3\big)
\big(\alpha e_1 + \beta e_2 + \gamma e_3\big)
\ =\  k\big(\alpha e_1 + \beta e_2 + \gamma e_3\big),
\]
which leads to
\[
\alpha^2 e_1 + 2\alpha\beta e_2 + 2\alpha\gamma e_3
\ =\  k\big(\alpha e_1 + \beta e_2 + \gamma e_3\big),
\]
i.e., $\alpha^2 \ =\  k\alpha,$ \ $2\alpha\beta \ =\  k\beta,$ \ and $2\alpha\gamma \ =\  k\gamma.$ 
Let us consider the following cases:

\begin{enumerate}
\item[(1)] 
If $k = 0$, then $\alpha = 0$, and we obtain two types of subalgebras
$\big\langle e_2 \big\rangle$ and
$\big\langle \alpha  e_2 + e_3 \big\rangle_{\alpha \in \mathbb C}.$

\item[(2)] If $k \neq 0$ and $\alpha = 0$, then $\beta = 0$ and $\gamma = 0$, which is a contradiction.

\item[(3)] If $k \neq 0$ and $\alpha \neq 0$, then $\alpha = k$ and $\beta = 0$, $\gamma = 0$, which yields the subalgebra
$\langle e_1 \rangle .$
\end{enumerate}
Taking automorphisms of $\rm{A}_1$ from Proposition \ref{Aut}, 
we have the second part of our statement.
\end{proof}
\begin{corollary}

Let $\mathfrak{s}$ be a one-dimensional $\overline{\mbox{subalgebra}}$ of $\rm{A}_1,$ then $\mathfrak{s}$ is one of the following $\overline{\mbox{subalgebras}}$:
$\big\langle e_1 \big\rangle,$ 
$\big\langle e_2 \big\rangle,$ or 
$\big\langle   e_3 \big\rangle.$
Up to isomorphisms, all one-dimensional $\overline{\mbox{subalgebras}}$ of $\rm{A}_1$ are equivalent to  $\big\langle e_1 \big\rangle,$
$\big\langle e_2 \big\rangle$ or
$\big\langle e_3 \big\rangle.$
\end{corollary}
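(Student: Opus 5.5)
The plan is to refine the description of one-dimensional subalgebras of ${\rm A}_1$ from Theorem \ref{subA1} by adding the requirement that the subalgebra be invariant under the involution. A one-dimensional $\overline{\mbox{subalgebra}}$ is a subalgebra $\mathfrak{s}=\langle x\rangle$ with $\overline{\mathfrak{s}}=\mathfrak{s}$. Since $^-$ is an involution, $\overline{x}=\lambda x$ forces $\lambda^2=1$. Hence $x$ is an eigenvector of $^-$, so either $x\in\mathcal{H}=\langle e_1,e_2\rangle$ or $x\in\mathcal{S}=\langle e_3\rangle$.

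First, intersect the list from Theorem \ref{subA1} with this condition. The subalgebras $\langle e_1\rangle$ and $\langle e_2\rangle$ lie in $\mathcal{H}$, so they are invariant. For the family $\langle \alpha e_2+e_3\rangle$, the involution gives $\overline{\alpha e_2+e_3}=\alpha e_2-e_3$. This is proportional to $\alpha e_2+e_3$ only when $\alpha=0$, which leaves $\langle e_3\rangle$. This yields exactly the three listed $\overline{\mbox{subalgebras}}$.

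For the classification up to isomorphism, only $\overline{\rm Aut}({\rm A}_1)$ from Proposition \ref{Aut} may be used. These are the diagonal matrices ${\rm diag}(1,\alpha,\beta)$, and they preserve each of $\langle e_1\rangle$, $\langle e_2\rangle$ and $\langle e_3\rangle$. So no two of the three subalgebras are identified under these automorphisms. If the isomorphism is instead meant intrinsically, as an isomorphism of algebras with involution, the three are still distinct:
\begin{itemize}
\item $\langle e_1\rangle$ has nonzero square, while $\langle e_2\rangle$ and $\langle e_3\rangle$ have zero multiplication;
\item the involution acts as $+1$ on $\langle e_2\rangle$ and as $-1$ on $\langle e_3\rangle$.
\end{itemize}
The only mildly delicate step is making precise which notion of equivalence is intended. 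With either reading, the invariants above separate the three subalgebras, so the argument does not depend on that choice.
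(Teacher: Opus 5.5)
Your proposal is correct and follows the route the paper implicitly takes. The paper gives no separate proof: the corollary is read off from Theorem \ref{subA1} by keeping only the involution-stable subalgebras, which removes the family $\langle \alpha e_2+e_3\rangle$ except for $\alpha=0$, and then noting that $\overline{\rm Aut}({\rm A}_1)$ consists of diagonal maps, so $\langle e_2\rangle$ and $\langle e_3\rangle$ are not identified. Your additional check that the three are non-isomorphic as algebras with involution is a harmless extra.
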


\begin{corollary} One-dimensional ideals of $\rm{A}_1$ are
$\big\langle e_2 \big\rangle$ and  $\big\langle \alpha e_2+e_3 \big\rangle_{\alpha \in \mathbb C};$ 
one-dimensional   $\overline{\mbox{ideals}}$ of $\rm{A}_1$ are   $\big\langle   e_2 \big\rangle$ and  $\big\langle e_3 \big\rangle.$
\end{corollary}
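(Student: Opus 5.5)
The statement is the last corollary, describing the one-dimensional ideals and $\overline{\mbox{ideals}}$ of ${\rm A}_1$. The plan is to reduce everything to Theorem \ref{subA1}. Every ideal is in particular a subalgebra, so a one-dimensional ideal must be one of $\langle e_1\rangle$, $\langle e_2\rangle$, or $\langle \alpha e_2+e_3\rangle_{\alpha\in\mathbb C}$. It then suffices to test the ideal condition $\mathcal A\mathfrak s+\mathfrak s\mathcal A\subseteq \mathfrak s$ on each candidate.

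First, $\langle e_1\rangle$ is excluded. Since $e_1$ is the unit, $e_1e_2=e_2\notin\langle e_1\rangle$.

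Next, consider $\mathfrak s=\langle \alpha e_2+e_3\rangle$ with $\alpha$ arbitrary, or $\mathfrak s=\langle e_2\rangle$. In ${\rm A}_1$ all products among $e_2,e_3$ vanish, so for a general element $x=a e_1+b e_2+c e_3$ and $s\in\mathfrak s$ we get $xs=sx=a s\in\mathfrak s$. Hence every such subalgebra is a (two-sided) ideal. This gives the first part of the statement.

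For $\overline{\mbox{ideals}}$ we additionally require $\overline{\mathfrak s}\subseteq\mathfrak s$. A one-dimensional subspace invariant under the involution is spanned by an eigenvector of $^-$, so it lies in $\mathcal H=\langle e_1,e_2\rangle$ or in $\mathcal S=\langle e_3\rangle$. Consider $\overline{\alpha e_2+e_3}=\alpha e_2-e_3$. This is proportional to $\alpha e_2+e_3$ only if $\alpha=0$. Therefore the $\overline{\mbox{ideals}}$ are exactly $\langle e_2\rangle$ and $\langle e_3\rangle$, which is consistent with the preceding corollary on $\overline{\mbox{subalgebras}}$.

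There is no real obstacle here. The only point needing care is remembering that the unit forces $\langle e_1\rangle$ out, while the trivial multiplication on $\langle e_2,e_3\rangle$ makes every other subalgebra an ideal.
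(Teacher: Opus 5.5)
Your argument is correct and is essentially the paper's route: the corollary appears without a separate proof as a direct consequence of Theorem \ref{subA1}, obtained exactly as you do. One discards $\langle e_1\rangle$ (since $e_1e_2=e_2$), observes that every line in $\langle e_2,e_3\rangle$ is a two-sided ideal because products inside $\langle e_2,e_3\rangle$ vanish, and then keeps only the lines spanned by eigenvectors of the involution. Your reading of $\overline{\mbox{ideal}}$ as set-wise invariance $\overline{\mathfrak s}=\mathfrak s$ is also the one the paper uses, as the appearance of $\langle e_3\rangle$ among the $\overline{\mbox{subalgebras}}$ of ${\rm A}_1$ confirms.
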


\begin{theorem}\label{sub2A1}
Let $\mathfrak{s}$ be a two-dimensional subalgebra of $\rm{A}_1,$ then $\mathfrak{s}$ is one of the following subalgebras:
$\big\langle e_2, e_3\big\rangle,$\ 
$\big\langle e_1, e_2\big\rangle$, or 
$\big\langle e_1, \alpha e_2 + e_3 \big\rangle_{\alpha \in \mathbb C}.$
Up to isomorphisms, all two-dimensional subalgebras of $\rm{A}_1$ are equivalent to  
$\langle e_1  ,e_2\rangle$ or 
$\langle e_2 ,e_3\rangle.$

\end{theorem}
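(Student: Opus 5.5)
A two-dimensional subspace of ${\rm A}_1$ either contains $e_1$ or it does not, and I will treat these two cases separately. Throughout I use that in ${\rm A}_1$ the only nonzero products are those involving $e_1$. So for $x=a_1e_1+a_2e_2+a_3e_3$ and $y=b_1e_1+b_2e_2+b_3e_3$,
\[
xy \;=\; a_1b_1e_1+(a_1b_2+a_2b_1)e_2+(a_1b_3+a_3b_1)e_3 .
\]
In particular, $N:=\langle e_2,e_3\rangle$ satisfies $N\cdot N=0$.

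\emph{Case 1: $e_1\in\mathfrak{s}$.} Then $\mathfrak{s}=\langle e_1,v\rangle$ with $v\in N$ nonzero, since we may subtract the $e_1$-component. Any such space is closed under multiplication: $e_1$ acts as the identity and $v^2=0$. So these are exactly the spaces $\langle e_1,e_2\rangle$ and $\langle e_1,\alpha e_2+e_3\rangle_{\alpha\in\mathbb C}$, according to whether $v$ is proportional to $e_2$ or has nonzero $e_3$-component.

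\emph{Case 2: $e_1\notin\mathfrak{s}$.} Suppose $\mathfrak{s}\neq N$. Then $\mathfrak{s}$ contains an element $x=e_1+w$ with $w\in N$. Indeed, $\mathfrak{s}+N$ is then $3$-dimensional, so the projection of $\mathfrak{s}$ onto $\langle e_1\rangle$ along $N$ is nonzero. Also $\dim(\mathfrak{s}\cap N)=1$, so $\mathfrak{s}=\langle x,u\rangle$ for some nonzero $u\in N$.
\begin{itemize}
\item Since $w^2=0$, we get $x^2=e_1+2w$.
\item Closure forces $e_1+2w=\lambda(e_1+w)+\mu u$. Comparing $e_1$-coefficients gives $\lambda=1$, hence $w=\mu u\in\mathfrak{s}$.
\item Then $e_1=x-w\in\mathfrak{s}$, a contradiction.
\end{itemize}
Hence $\mathfrak{s}=N=\langle e_2,e_3\rangle$, which is indeed a subalgebra with zero multiplication. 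This completes the list in the statement.

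\emph{Reduction up to isomorphism.} I use $\mathrm{Aut}({\rm A}_1)$ from Proposition \ref{Aut}, which consists of all maps fixing $e_1$ and acting by an arbitrary invertible matrix on $N$. The space $N$ is mapped to itself by every automorphism. For the family, the automorphism sending $e_2\mapsto \alpha e_2+e_3$ and $e_3\mapsto e_2$ (determinant $-1\neq0$) carries $\langle e_1,e_2\rangle$ onto $\langle e_1,\alpha e_2+e_3\rangle$. So there are exactly two classes, $\langle e_1,e_2\rangle$ and $\langle e_2,e_3\rangle$. They are genuinely non-isomorphic: the first is unital, while the second has zero multiplication.

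The only step needing care is the exclusion argument in Case 2; the rest is direct verification.
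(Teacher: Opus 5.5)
Your proof is correct and follows essentially the same route as the paper. Both set aside $\langle e_2,e_3\rangle$, write any other two-dimensional subalgebra as $\langle e_1+w,u\rangle$ with $w,u\in\langle e_2,e_3\rangle$, and use $(e_1+w)^2=e_1+2w$ to force $w\in\langle u\rangle$, hence $e_1\in\mathfrak{s}$; the orbit reduction then comes from $\mathrm{Aut}({\rm A}_1)$. Your additions (the dimension argument for the existence of such a basis, the closure check for $\langle e_1,v\rangle$, the explicit automorphism, and the non-isomorphism of the two classes) fill in details the paper leaves implicit.
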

\begin{proof}
Obviously, that $\big\langle e_2, e_3\big\rangle$ is a subalgebra with zero multiplication.  
Let 
\begin{center}
$\mathfrak{s}=\big\langle e_1 + \alpha_2 e_2+\alpha_3 e_3,\  \beta_2e_2+ \beta_3 e_3 \big\rangle.$
\end{center}
Consider the product:
\begin{center}
$\big(e_1 + \alpha_2 e_2+\alpha_3 e_3\big)\big(e_1+ \alpha_2 e_2+\alpha_3 e_3\big)
=  e_1 + 2\big(\alpha_2 e_2+\alpha_3 e_3\big)
=k_1\big(e_1 + \alpha_2 e_2+\alpha_3 e_3)+k_2(\beta_2e_2+ \beta_3 e_3\big),$
\end{center}
i.e., $k_1=1,$ $\alpha_2=k_2\beta_2,$ and  $\alpha_3=k_2\beta_3.$
We have that $\mathfrak{s}=\big\langle e_1,\  \beta_2e_2+ \beta_3 e_3 \big\rangle.$
 This gives  the first part of our statement.
Taking automorphisms of $\rm{A}_1$ from Proposition \ref{Aut}, 
we have the second part of our statement.
\end{proof}
\begin{corollary}

Let $\mathfrak{s}$ be a two-dimensional $\overline{\mbox{subalgebra}}$ of $\rm{A}_1,$ then $\mathfrak{s}$ is one of the following $\overline{\mbox{subalgebras}}$:
$\big\langle e_1,e_2 \big\rangle,$ 
$\big\langle e_1,e_3 \big\rangle,$ or 
$\big\langle e_2,e_3\big\rangle.$  
Up to isomorphisms, all two-dimensional $\overline{\mbox{subalgebras}}$ of $\rm{A}_1$ are equivalent to  $\big\langle e_1,e_2 \big\rangle,$ 
$\big\langle e_1,e_3 \big\rangle$ or 
$\big\langle e_2,e_3\big\rangle.$  
\end{corollary}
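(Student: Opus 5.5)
The statement to prove is the last corollary: the two-dimensional $\overline{\mbox{subalgebras}}$ of ${\rm A}_1$, meaning two-dimensional subalgebras that are invariant under the involution. I will take as the starting point the list from Theorem \ref{sub2A1}: every two-dimensional subalgebra of ${\rm A}_1$ is $\langle e_2,e_3\rangle$, $\langle e_1,e_2\rangle$, or $\langle e_1,\alpha e_2+e_3\rangle$ with $\alpha\in\mathbb C$. One case is missing from that list. In its proof, the subspaces not containing a vector with nonzero $e_1$-coordinate are exactly $\langle e_2,e_3\rangle$. Any subspace that does contain such a vector has the form $\langle e_1,\beta_2e_2+\beta_3e_3\rangle$ for an arbitrary nonzero $(\beta_2,\beta_3)$. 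The case $\beta_3=0$ gives $\langle e_1,e_2\rangle$, and $\beta_3\neq 0$ gives $\langle e_1,\alpha e_2+e_3\rangle$. So the list is complete.

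\textbf{Step 1: invariance.} For each subspace on the list I check whether it is closed under $^-$, using $\overline{e_1}=e_1$, $\overline{e_2}=e_2$ and $\overline{e_3}=-e_3$.
\begin{itemize}
\item $\langle e_1,e_2\rangle$ and $\langle e_2,e_3\rangle$ are spanned by eigenvectors of the involution, so both are invariant.
\item $\langle e_1,\alpha e_2+e_3\rangle$ contains $\overline{\alpha e_2+e_3}=\alpha e_2-e_3$. Then $(\alpha e_2+e_3)-(\alpha e_2-e_3)=2e_3$ must lie in the subspace, so $e_3$ and hence $\alpha e_2$ lie in it too. Since the subspace is two-dimensional and contains $e_1,e_3$, this forces $\alpha=0$, leaving $\langle e_1,e_3\rangle$.
\end{itemize}
This yields exactly the three $\overline{\mbox{subalgebras}}$ $\langle e_1,e_2\rangle$, $\langle e_1,e_3\rangle$ and $\langle e_2,e_3\rangle$.

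\textbf{Step 2: non-equivalence.} I must show that no two of these three are related by an element of $\overline{\rm Aut}({\rm A}_1)$. By Proposition \ref{Aut}, those automorphisms are the diagonal maps $\mathrm{diag}(1,\alpha,\beta)$, and each of them maps every coordinate subspace to itself. So the three subalgebras lie in distinct orbits.

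The same conclusion also follows from intrinsic invariants, without using the orbit argument:
\begin{itemize}
\item $\langle e_2,e_3\rangle$ is the only one of the three without the unit $e_1$.
\item $\langle e_1,e_2\rangle$ and $\langle e_1,e_3\rangle$ are both unital, but the involution restricts to the identity on the first and to a non-identity map on the second. Hence they are not isomorphic even as algebras with involution.
\end{itemize}

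The only step needing any care is confirming that the list in Theorem \ref{sub2A1} is exhaustive, which is the case check given above. The rest is direct verification.
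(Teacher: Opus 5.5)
Your proposal is correct and matches the argument the paper leaves implicit: restrict the list of Theorem \ref{sub2A1} to the subspaces stable under $^-$, which forces $\alpha=0$ in $\langle e_1,\alpha e_2+e_3\rangle$. Then observe that the diagonal group $\overline{\mathrm{Aut}}({\rm A}_1)$ of Proposition \ref{Aut} maps each coordinate subspace to itself, so the three $\overline{\mbox{subalgebras}}$ lie in distinct orbits.

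Two small wording fixes. Drop the sentence ``One case is missing from that list'', since it contradicts your own correct conclusion that the list is complete. Also, $\langle e_1,e_2\rangle\cong\langle e_1,e_3\rangle$ as plain algebras, so your invariant separates them only as algebras with involution; the word ``even'' suggests the opposite.
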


\begin{corollary} 
There is only one two-dimensional ideal of $\rm{A}_1,$ 
it is an $\overline{\mbox{ideal}}$ and equal to  $\big\langle e_2 ,e_3\big\rangle.$
\end{corollary}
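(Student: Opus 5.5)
The plan is to determine all two-dimensional ideals $I$ of $\mathrm{A}_1$ directly. First I show that $I$ cannot contain the unit $e_1$, and then I use Theorem \ref{sub2A1} to see that $\langle e_2,e_3\rangle$ is the only remaining candidate. Once $I=\langle e_2,e_3\rangle$ is established, invariance under the involution is immediate from its definition.

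\textbf{Step 1: $e_1\notin I$.} If $e_1\in I$, then $x=x\,e_1\in I$ for every $x\in\mathrm{A}_1$. Hence $I=\mathrm{A}_1$, which is impossible since $\dim I=2<3$.

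\textbf{Step 2: $I=\langle e_2,e_3\rangle$.} Every ideal is in particular a subalgebra, so $I$ appears in the list of Theorem \ref{sub2A1}. The proof of that theorem shows that any two-dimensional subalgebra other than $\langle e_2,e_3\rangle$ has the form $\langle e_1,\beta_2e_2+\beta_3e_3\rangle$ and so contains $e_1$. By Step 1 this is excluded, leaving only $I=\langle e_2,e_3\rangle$.

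A more direct route to Step 2 also works. Since $e_1\notin I$, any element of $I$ with a nonzero $e_1$-coordinate could be combined with a second basis vector to produce $e_1$. So the projection of $I$ onto $\langle e_1\rangle$ must vanish, and $I\subseteq\langle e_2,e_3\rangle$; equal dimensions then force equality.

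\textbf{Step 3: $\langle e_2,e_3\rangle$ is an ideal.} In $\mathrm{A}_1$ all products among $e_2,e_3$ are zero, and $e_1$ acts as the identity. Therefore
\[
\mathrm{A}_1\langle e_2,e_3\rangle+\langle e_2,e_3\rangle\mathrm{A}_1\subseteq\langle e_2,e_3\rangle .
\]

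\textbf{Step 4: it is an $\overline{\mbox{ideal}}$.} We have $\overline{e_2}=e_2$ and $\overline{e_3}=-e_3$, so $\langle e_2,e_3\rangle$ is invariant under the involution.

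I do not expect a real obstacle. The only point needing care is Step 2, namely that the two-dimensional $e_1$-free subalgebra is unique. This is immediate here because $\langle e_2,e_3\rangle$ is the whole complement of $\langle e_1\rangle$.
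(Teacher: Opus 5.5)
Your argument is correct and is essentially how the paper obtains this corollary: the paper reads it off from the list in Theorem~\ref{sub2A1}, discarding the subalgebras that contain the unit $e_1$ (which can lie in no proper ideal) and noting that $\langle e_2,e_3\rangle$ is an ideal stable under the involution. The one soft spot is your optional ``more direct route'': the phrase ``combined with a second basis vector to produce $e_1$'' is not an argument, and the underlying principle fails in general (in $\mathrm{A}_3$ the ideal $\langle e_1-e_2,e_3\rangle$ avoids $e_1$), but for $\mathrm{A}_1$ it is easily repaired, since $x=e_1+v\in I$ with $v\in\langle e_2,e_3\rangle$ gives $xe_2=e_2$ and $xe_3=e_3$ in $I$, hence $I=\mathrm{A}_1$.
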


\medskip \noindent{\it Subalgebras of  ${\rm A}_2.$} 

\begin{theorem}
Let $\mathfrak{s}$ be a one-dimensional subalgebra of $\rm{A}_2,$ then $\mathfrak{s}$ is one of the following subalgebras:
$\big\langle e_1 \big\rangle$ or $\big\langle e_2 \big\rangle.$ 
Up to isomorphisms, all one-dimensional subalgebras of $\rm{A}_2$ are equivalent to  $\big\langle e_1 \big\rangle$ or
$\big\langle e_2 \big\rangle.$

\end{theorem}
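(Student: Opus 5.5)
Following the proof of Theorem \ref{subA1}, let $\mathfrak{s}=\big\langle x \big\rangle$ with $x=\alpha e_1+\beta e_2+\gamma e_3$. Since $\mathfrak{s}$ is one-dimensional, it is a subalgebra exactly when $xx=kx$ for some $k\in\mathbb C$. In ${\rm A}_2$ the only nontrivial product besides those with the unit is $e_3e_3=e_2$. So we compute
\[
xx=\alpha^2 e_1+\big(2\alpha\beta+\gamma^2\big)e_2+2\alpha\gamma e_3 ,
\]
which gives the system
\[
\alpha^2=k\alpha,\qquad 2\alpha\beta+\gamma^2=k\beta,\qquad 2\alpha\gamma=k\gamma .
\]

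We split into cases on $\alpha$.

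\emph{Case $\alpha=0$.} The first equation holds automatically and the third is trivial. The middle equation becomes $\gamma^2=k\beta$.
\begin{enumerate}[(1)]
\item If $k=0$, then $\gamma=0$ and we obtain $\big\langle e_2\big\rangle$.
\item If $k\neq 0$, we must exclude the possibility $\gamma\neq0$. This is the only point where some care is needed. Taking $x=\beta e_2+\gamma e_3$ with $\gamma^2=k\beta$, the product $xx=\gamma^2 e_2$ is proportional to $e_2$. It lies in $\big\langle x \big\rangle$ only if $\gamma=0$, because $e_2$ and $x$ are independent when $\gamma\neq0$. So $\gamma=0$ again, and we get $\big\langle e_2\big\rangle$.
\end{enumerate}

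\emph{Case $\alpha\neq0$.} The first equation gives $k=\alpha$. The third becomes $2\alpha\gamma=\alpha\gamma$, so $\gamma=0$. The second becomes $2\alpha\beta=\alpha\beta$, so $\beta=0$. This yields $\big\langle e_1\big\rangle$.

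Together these cases give the first part of the statement: the only one-dimensional subalgebras are $\big\langle e_1\big\rangle$ and $\big\langle e_2\big\rangle$.

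For the classification up to isomorphism, we use Lemma \ref{automor} and Proposition \ref{Aut}. Every automorphism fixes $e_1$, and $\big\langle e_2\big\rangle$ is invariant under every automorphism of ${\rm A}_2$ given there. The two subalgebras are also not isomorphic as algebras: $e_1e_1=e_1\neq0$, whereas $e_2e_2=0$. Hence they represent distinct classes. There is no real obstacle here; the only subtlety is the careful handling of the case $\alpha=0$, $k\neq0$.
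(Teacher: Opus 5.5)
Your proof is correct and is essentially the paper's argument: you set up the same system from $xx=kx$ and split into cases (you split on $\alpha$ first, the paper on $k$ first, which makes no difference), and your check that $\langle e_1\rangle$ and $\langle e_2\rangle$ are automorphism-invariant and non-isomorphic fills in what the paper leaves as ``taking automorphisms''. One small slip: for $\alpha=0$ the third equation is not trivial but reads $k\gamma=0$, so for $k\neq 0$ it gives $\gamma=0$ at once, and then $\gamma^2=k\beta$ gives $\beta=0$; that subcase is therefore a contradiction, as in the paper, rather than producing $\langle e_2\rangle$. Your independence argument reaches $\gamma=0$ anyway, so the final list is unaffected.
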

\begin{proof}
Let $\mathfrak{s}=\big\langle \alpha e_1 + \beta e_2 + \gamma e_3 \big\rangle .$
Consider the product:
\[
\big(\alpha e_1 + \beta e_2 + \gamma e_3\big)
\big(\alpha e_1 + \beta e_2 + \gamma e_3\big)
\ =\  k\big(\alpha e_1 + \beta e_2 + \gamma e_3\big),
\]
which leads to
\[
\alpha^2 e_1 + \gamma^2e_2+2\alpha\beta e_2 + 2\alpha\gamma e_3
\ =\  k\big(\alpha e_1 + \beta e_2 + \gamma e_3\big),
\]
i.e., $\alpha^2 \ =\  k\alpha,$ \ $\gamma^2+2\alpha\beta \ =\  k\beta,$ \ and $2\alpha\gamma \ =\  k\gamma.$ 
Let us consider the following cases:

\begin{enumerate}
\item[(1)] 
If $k = 0$, then $\alpha = 0,\  \gamma=0$, and we obtain the subalgebra
$\big\langle e_2 \big\rangle$. 
\item[(2)] If $k \neq 0$ and $\alpha = 0$, then $\beta = 0$ and $\gamma = 0$, which is a contradiction.

\item[(3)] If $k \neq 0$ and $\alpha \neq 0$, then $\alpha = k$ and $\beta = 0$, $\gamma = 0$, which yields the subalgebra
$\big\langle e_1 \big\rangle .$
\end{enumerate}
Taking automorphisms of $\rm{A}_2$ from Proposition \ref{Aut}, 
we have the second part of our statement.
\end{proof}
\begin{corollary}

Let $\mathfrak{s}$ be a one-dimensional $\overline{\mbox{subalgebra}}$ of $\rm{A}_2,$ then $\mathfrak{s}$ is one of the following $\overline{\mbox{subalgebras}}$:
$\big\langle e_1 \big\rangle$ or  
$\big\langle e_2 \big\rangle.$ 
Up to isomorphisms, all one-dimensional $\overline{\mbox{subalgebras}}$ of $\rm{A}_2$ are equivalent to  $\big\langle e_1 \big\rangle$ or
$\big\langle e_2 \big\rangle.$ 
\end{corollary}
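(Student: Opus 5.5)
The plan is to deduce the corollary from the preceding theorem on one-dimensional subalgebras of ${\rm A}_2$. An $\overline{\mbox{subalgebra}}$ is in particular a subalgebra, so it must be $\big\langle e_1\big\rangle$ or $\big\langle e_2\big\rangle$. Hence two things remain: check that both of these are actually invariant under the involution, and handle the classification up to isomorphism.

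For invariance we use the fixed involution $\overline{e_1}=e_1$, $\overline{e_2}=e_2$, $\overline{e_3}=-e_3$. Since $e_1,e_2\in\mathcal{H}$, both $\big\langle e_1\big\rangle$ and $\big\langle e_2\big\rangle$ are pointwise fixed by $^-$, and so they are $\overline{\mbox{subalgebras}}$. Their closure under multiplication was already checked in the theorem: $e_1e_1=e_1$ and $e_2e_2=0$ in ${\rm A}_2$.

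For the classification up to isomorphism, we look at the $\overline{\rm Aut}({\rm A}_2)$ given in Proposition \ref{Aut}. These are the diagonal matrices ${\rm diag}(1,\alpha^2,\alpha)$, and each one fixes both lines. By Lemma \ref{automor} no automorphism can move $e_1$ to anything else. Independently of automorphisms, $\big\langle e_1\big\rangle$ is unital with nonzero square, while $\big\langle e_2\big\rangle$ has zero multiplication, so the two are not isomorphic even as abstract algebras. This gives exactly the two classes stated.

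No step here is really an obstacle. The only point needing care is to read ``up to isomorphisms'' consistently with how the paper uses it for ${\rm A}_1$, namely as equivalence under (involution-compatible) automorphisms of ambient ${\rm A}_2$. Under that reading, together with the intrinsic check above, the two classes are genuinely distinct.
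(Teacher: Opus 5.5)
Your proposal is correct and follows the paper's route. The paper gives no separate argument: the corollary is read off from the preceding theorem classifying the one-dimensional subalgebras of ${\rm A}_2$, because both $\big\langle e_1\big\rangle$ and $\big\langle e_2\big\rangle$ lie in $\mathcal{H}$ and so are invariant under the involution; your remarks that $\overline{\rm Aut}({\rm A}_2)$ fixes both lines and that the two subalgebras are intrinsically non-isomorphic simply make explicit what the paper leaves implicit.
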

\begin{corollary}
There is only one one-dimensional ideal of $\rm{A}_2,$ 
it is an $\overline{\mbox{ideal}}$ and equal to    $\big\langle   e_2 \big\rangle.$
\end{corollary}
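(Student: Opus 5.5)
We need to determine the one-dimensional ideals of $\rm A_2$ and check which of them are $\overline{\mbox{ideals}}$. The plan is to combine the list of one-dimensional subalgebras of $\rm{A}_2$ from the preceding theorem with the multiplication table $e_3e_3=e_2$, all other products of $e_2,e_3$ being zero.

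Every ideal is in particular a subalgebra. So a one-dimensional ideal $\mathfrak{i}$ must be one of the one-dimensional subalgebras from the previous theorem, namely $\langle e_1\rangle$ or $\langle e_2\rangle$. Strictly speaking, that theorem lists all one-dimensional subalgebras, not just representatives up to automorphism. Its proof treats a general spanning vector $\alpha e_1+\beta e_2+\gamma e_3$ and shows it must be proportional to $e_1$ or $e_2$, so no automorphism reduction is involved here.

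Next I test the two candidates.

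\begin{itemize}
\item $\langle e_1\rangle$ is not an ideal, since $e_1$ is the unit: any ideal containing $e_1$ contains $e_1x=x$ for all $x$, hence equals $\rm A_2$.
\item For $\langle e_2\rangle$, I compute $e_1e_2=e_2e_1=e_2$ and $e_2e_2=e_2e_3=e_3e_2=0$. So $\mathcal{A}\langle e_2\rangle+\langle e_2\rangle\mathcal{A}\subseteq\langle e_2\rangle$, and it is a two-sided ideal.
\end{itemize}

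Finally, $\overline{e_2}=e_2$, so $\langle e_2\rangle$ is invariant under the involution and is therefore an $\overline{\mbox{ideal}}$.

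The only point needing care is to make sure the subalgebra list is genuinely exhaustive, rather than exhaustive only up to automorphisms. I would double check this by redoing the case $k=0$: it forces $\alpha=0$, and then $\gamma^2=0$, so $\gamma=0$, which leaves only $\langle e_2\rangle$. This confirms uniqueness.
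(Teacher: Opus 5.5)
Your proof is correct and follows the paper's route. The paper reads the corollary off the preceding classification of one-dimensional subalgebras of $\rm A_2$, namely $\langle e_1\rangle$ and $\langle e_2\rangle$. It discards $\langle e_1\rangle$ because it contains the unit, and $\langle e_2\rangle$ is then a two-sided ideal fixed by the involution, exactly as you argue.
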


\begin{theorem}
Let $\mathfrak{s}$ be a two-dimensional subalgebra of $\rm{A}_2,$ then $\mathfrak{s}$ is one of the following subalgebras:
$\big\langle e_1 ,e_2\big\rangle$ or 
$\big\langle e_2,e_3 \big\rangle.$
Up to isomorphisms, all two-dimensional subalgebras of $\rm{A}_2$ are equivalent to  
$\big\langle e_1,e_2\big\rangle$ or 
$\big\langle e_2,e_3\big\rangle.$

\end{theorem}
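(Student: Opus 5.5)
We follow the pattern of the proof of Theorem \ref{sub2A1}. The multiplication of ${\rm A}_2$ is $e_1$ acting as a unit, $e_3e_3=e_2$, and all other products of $e_2,e_3$ equal to zero. A two-dimensional subspace $\mathfrak{s}$ either contains no vector with nonzero $e_1$-coordinate, or it contains exactly a one-parameter family of such vectors. Accordingly we split into two cases.

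\emph{Case 1: $\mathfrak{s}\subseteq\langle e_2,e_3\rangle$.} Then $\mathfrak{s}=\langle e_2,e_3\rangle$ by dimension. This is a subalgebra, since $e_3e_3=e_2\in\mathfrak{s}$ and all other products vanish.

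\emph{Case 2: $\mathfrak{s}\not\subseteq\langle e_2,e_3\rangle$.} Write
\begin{center}
$\mathfrak{s}=\big\langle e_1+\alpha_2e_2+\alpha_3e_3,\ \beta_2e_2+\beta_3e_3\big\rangle$, with $(\beta_2,\beta_3)\neq(0,0)$.
\end{center}
Put $u=\alpha_2e_2+\alpha_3e_3$ and $v=\beta_2e_2+\beta_3e_3$.

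\begin{enumerate}
\item[(a)] Squaring the first generator gives $e_1+2u+\alpha_3^2e_2$. Writing this as $k_1(e_1+u)+k_2v$ forces $k_1=1$, so $u+\alpha_3^2e_2\in\langle v\rangle$.
\item[(b)] Since $e_1\cdot v=v$ and $u\cdot v=\alpha_3\beta_3e_2$, closure under products requires $\alpha_3\beta_3e_2\in\mathfrak{s}$.
\item[(c)] Similarly $v^2=\beta_3^2e_2$, so $\beta_3^2e_2\in\mathfrak{s}\cap\langle e_2,e_3\rangle=\langle v\rangle$.
\end{enumerate}

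From (c): if $\beta_3\neq0$, then $e_2\in\langle v\rangle$, which forces $\beta_3=0$, a contradiction. Hence $\beta_3=0$ and $v$ is proportional to $e_2$. Then (a) gives $\alpha_3=0$, so $u=\alpha_2e_2\in\langle v\rangle$. Subtracting this multiple of $v$ from the first generator yields $\mathfrak{s}=\langle e_1,e_2\rangle$, which is visibly closed under multiplication.

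The two subalgebras are not isomorphic: $\langle e_1,e_2\rangle$ is unital, while $\langle e_2,e_3\rangle$ is nilpotent. So no automorphism from Proposition \ref{Aut} can identify them, and the second part of the statement follows from the first. Note that any automorphism fixes $e_1$ by Lemma \ref{automor}.

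The only real care point is Case 2, in particular excluding $\beta_3\neq0$ via the $\beta_3^2e_2$ product. No other obstacle is expected.
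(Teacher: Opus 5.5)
Your argument is correct and follows the paper's proof. You treat $\langle e_2,e_3\rangle$ separately, then use the square of $e_1+\alpha_2e_2+\alpha_3e_3$ together with $v^2=\beta_3^2e_2$ to force first $\beta_3=0$ and then $\alpha_3=0$, exactly as the paper does; your step (b) is never used, and your observation that $\langle e_1,e_2\rangle$ is unital while $\langle e_2,e_3\rangle$ is nilpotent usefully shows the two listed subalgebras are genuinely non-isomorphic, which the paper only handles by pointing to Proposition \ref{Aut}.
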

\begin{proof}

Obviously, that $\big\langle e_2, e_3\big\rangle$ is a subalgebra of  $\rm{A}_2$.  
Let 
\begin{center}
$\mathfrak{s}=\big\langle e_1 + \alpha_2 e_2+\alpha_3 e_3,\  \beta_2e_2+ \beta_3 e_3 \big\rangle.$
\end{center}
Consider the product:
\begin{center}
$\big(e_1 + \alpha_2 e_2+\alpha_3 e_3 \big)\big(e_1 + \alpha_2 e_2+\alpha_3 e_3 \big)=
e_1 + (2\alpha_2 +\alpha_3^2)e_2+2\alpha_3 e_3 = 
k_1(e_1 + \alpha_2 e_2+\alpha_3 e_3)+k_2(\beta_2e_2+ \beta_3 e_3),$
\end{center}
i.e., $k_1=1,$ $\alpha_2+\alpha_3^2=k_2\beta_2,$ and $\alpha_3=k_2\beta_3;$
and $\mathfrak{s}=\big\langle e_1 - \alpha_3^2 e_2,\  \beta_2e_2+ \beta_3 e_3 \big\rangle.$
If $\beta_3\neq0,$ then 
$\big(\beta_2e_2+ \beta_3 e_3 \big)\big(\beta_2e_2+ \beta_3 e_3 \big)=\beta_3^2 e_2 \in \mathfrak{s},$
i.e., $e_2 \in \mathfrak{s}$ and $e_3 \in \mathfrak{s},$ which gives a contradiction.
Hence, $\beta_3=0$ and 
$\big\langle e_1,   e_2 \big\rangle$ gives one more two-dimensional subalgebra of $\rm A_2$
and the first part of our statement.

Taking automorphisms of $\rm{A}_2$ from Proposition \ref{Aut}, 
we have the second part of our statement.
\end{proof}
\begin{corollary}
Let $\mathfrak{s}$ be a two-dimensional $\overline{\mbox{subalgebra}}$ of $\rm{A}_2,$ then $\mathfrak{s}$ is one of the following 
$\big\langle e_1 ,e_2\big\rangle$ or
$\big\langle e_2 ,e_3\big\rangle.$  
Up to isomorphisms, all two-dimensional $\overline{\mbox{subalgebras}}$ of $\rm{A}_2$ are equivalent to  $\big\langle e_1 ,e_2\big\rangle$ or
$\big\langle e_2 ,e_3\big\rangle.$  
\end{corollary}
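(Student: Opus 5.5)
The statement is the last corollary: the two-dimensional $\overline{\mbox{subalgebras}}$ of $\rm{A}_2$ are exactly $\langle e_1,e_2\rangle$ and $\langle e_2,e_3\rangle$, and these are pairwise non-equivalent. I would derive it directly from the preceding theorem on two-dimensional subalgebras of $\rm{A}_2$. A two-dimensional $\overline{\mbox{subalgebra}}$ is in particular a two-dimensional subalgebra, so by that theorem $\mathfrak{s}$ is either $\langle e_1,e_2\rangle$ or $\langle e_2,e_3\rangle$.

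\textbf{Step 1: both candidates are involution-invariant.} Since $\overline{e_1}=e_1$, $\overline{e_2}=e_2$ and $\overline{e_3}=-e_3$, each of the two spaces is spanned by eigenvectors of the involution. Hence each is mapped onto itself by $^-$, and so each is a $\overline{\mbox{subalgebra}}$. This proves the first part of the statement.

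\textbf{Step 2: the two are not equivalent.} Here I use $\overline{\mathrm{Aut}}(\rm{A}_2)$ from Proposition \ref{Aut}, whose elements are the diagonal maps ${\rm diag}(1,\alpha^2,\alpha)$. Each such map preserves both subspaces, so no $\overline{\mbox{automorphism}}$ sends one onto the other. More intrinsically, $\langle e_1,e_2\rangle$ contains the unit $e_1$ while $\langle e_2,e_3\rangle$ does not. By Lemma \ref{automor} every automorphism fixes $e_1$, so the two subalgebras cannot be identified by any automorphism. Alternatively, they are non-isomorphic as algebras: one is unital and the other is nilpotent, since $e_3e_3=e_2$ and $e_2$ annihilates $\langle e_2,e_3\rangle$.

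\textbf{Expected difficulty.} None of this is hard; the statement is essentially read off from the previous theorem. The only point needing care is the equivalence claim, and there I would rely on the unit argument via Lemma \ref{automor} rather than on a case check over the automorphism group.
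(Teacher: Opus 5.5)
Your proposal is correct and follows the route the paper leaves implicit. The corollary is read off from the preceding classification of two-dimensional subalgebras of ${\rm A}_2$, by noting that both $\langle e_1,e_2\rangle$ and $\langle e_2,e_3\rangle$ are spanned by eigenvectors of the involution, with the ``up to isomorphism'' part taken from $\overline{\mathrm{Aut}}({\rm A}_2)$ in Proposition \ref{Aut}. Your additional non-equivalence argument is correct and is a cleaner supplement to what the paper omits: every automorphism fixes $e_1$ by Lemma \ref{automor}, and one subalgebra is unital while the other is nilpotent.
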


\begin{corollary} There is only one two-dimensional ideal of $\rm{A}_1,$ 
it is an $\overline{\mbox{ideal}}$ and equal to  $\big\langle e_2, \ e_3\big\rangle.$
\end{corollary}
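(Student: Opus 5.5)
The plan is to prove the statement for $\rm{A}_1$, exactly as worded, by reading off the list of two-dimensional subalgebras from Theorem \ref{sub2A1} and checking which of them are ideals. Every ideal is in particular a subalgebra. So a two-dimensional ideal $\mathfrak{i}$ of $\rm{A}_1$ must be one of the following:
\[
\big\langle e_2, e_3\big\rangle,\qquad \big\langle e_1, e_2\big\rangle,\qquad \big\langle e_1, \alpha e_2 + e_3 \big\rangle \quad (\alpha\in\mathbb C).
\]

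First I would discard every candidate that contains the unit $e_1$. If $e_1\in\mathfrak{i}$ and $\mathfrak{i}$ is an ideal, then $x = x\,e_1\in\mathfrak{i}$ for every $x\in \rm{A}_1$. That gives $\mathfrak{i}=\rm{A}_1$, which contradicts $\dim\mathfrak{i}=2$. This removes $\langle e_1,e_2\rangle$ and the whole family $\langle e_1,\alpha e_2+e_3\rangle$ at once, leaving only $\langle e_2,e_3\rangle$.

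Next I would verify directly that $\mathfrak{i}=\langle e_2,e_3\rangle$ is an ideal. In the universal unital algebra $\rm{A}_1$ the only nonzero products are those involving $e_1$, and $e_1 e_i=e_i e_1=e_i$. Hence for $x=\lambda e_1+y$ with $y\in\langle e_2,e_3\rangle$ and any $z\in\mathfrak{i}$ we get $xz=zx=\lambda z\in\mathfrak{i}$. So $\mathfrak{i}$ is a two-sided ideal, and it is the unique two-dimensional one.

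Finally I would check invariance under the involution: $\overline{e_2}=e_2$ and $\overline{e_3}=-e_3$, so $\overline{\mathfrak{i}}=\mathfrak{i}$ and $\mathfrak{i}$ is an $\overline{\mbox{ideal}}$. No step is a real obstacle. The only point needing care is the exclusion of the one-parameter family $\langle e_1,\alpha e_2+e_3\rangle$, and the unit argument handles it uniformly in $\alpha$.
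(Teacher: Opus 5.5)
Your argument is correct, and it is how the paper gets this corollary, which it states without a separate proof: read the candidates off the preceding classification of two-dimensional subalgebras, discard every one containing the unit $e_1$, and check that $\langle e_2,e_3\rangle$ is a two-sided ideal stable under the involution. Note, though, that in the paper this corollary comes right after the classification of two-dimensional subalgebras of ${\rm A}_2$, and the same statement for ${\rm A}_1$ already appears after Theorem~\ref{sub2A1}, so ``${\rm A}_1$'' is presumably a typo for ${\rm A}_2$; your argument carries over verbatim to that list $\langle e_1,e_2\rangle$, $\langle e_2,e_3\rangle$, the only new check being $e_3e_3=e_2\in\langle e_2,e_3\rangle$.
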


\medskip \noindent{\it Subalgebras of  ${\rm A}_3.$ }

\begin{theorem}\label{A3subd1}
Let $\mathfrak{s}$ be a one-dimensional subalgebra of $\rm{A}_3,$ then $\mathfrak{s}$ is one of the following subalgebras:
$\big\langle e_1 \big\rangle,$ 
$\big\langle e_2 \big\rangle,$  
$\big\langle e_3 \big\rangle,$ or 
$\big\langle e_1-e_2 \big\rangle.$
Up to isomorphisms, all two-dimensional subalgebras of $\rm{A}_3$ are equivalent to  
$\big\langle e_1 \big\rangle,$ 
$\big\langle e_2 \big\rangle,$  
$\big\langle e_3 \big\rangle$ or 
$\big\langle e_1-e_2 \big\rangle.$
\end{theorem}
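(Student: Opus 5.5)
We follow the same scheme as in Theorem \ref{subA1}: write a generic one-dimensional subspace as $\mathfrak{s}=\big\langle x\big\rangle$ with $x=\alpha e_1+\beta e_2+\gamma e_3\neq 0$. It is a subalgebra if and only if $x x=kx$ for some $k\in\mathbb C$. In ${\rm A}_3$ the only nonzero products besides those involving the unit are $e_2e_2=e_2$; in particular $e_3e_3=0$ and $e_2e_3=e_3e_2=0$. Hence
\[
x x=\alpha^2 e_1+\big(2\alpha\beta+\beta^2\big)e_2+2\alpha\gamma e_3 .
\]
Comparing coefficients gives the system $\alpha^2=k\alpha$, $\beta^2+2\alpha\beta=k\beta$, $2\alpha\gamma=k\gamma$.

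Next we split into cases according to whether $\alpha$ vanishes.

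\begin{enumerate}
\item[(1)] Let $\alpha=0$. The system becomes $\beta^2=k\beta$ and $k\gamma=0$.
\begin{itemize}
\item If $k=0$, then $\beta=0$, which yields $\big\langle e_3\big\rangle$.
\item If $k\neq 0$, then $\gamma=0$ and $\beta=k\neq0$, which yields $\big\langle e_2\big\rangle$.
\end{itemize}
\item[(2)] Let $\alpha\neq 0$. The first equation forces $k=\alpha$. The third becomes $2\alpha\gamma=\alpha\gamma$, so $\gamma=0$. The second becomes $\beta(\beta+\alpha)=0$.
\begin{itemize}
\item $\beta=0$ gives $\big\langle e_1\big\rangle$.
\item $\beta=-\alpha$ gives $\big\langle e_1-e_2\big\rangle$.
\end{itemize}
\end{enumerate}

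This produces exactly the four listed subalgebras; conversely, each of them is visibly closed under multiplication.

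For the classification up to isomorphism (the statement's "two-dimensional" there is clearly meant to read "one-dimensional"), we use Proposition \ref{Aut}. Every automorphism of ${\rm A}_3$ is diagonal of the form $\mathrm{diag}(1,\alpha,\beta)$. Such a map fixes each of the lines $\big\langle e_1\big\rangle$, $\big\langle e_2\big\rangle$, $\big\langle e_3\big\rangle$ and $\big\langle e_1-e_2\big\rangle$, since $e_1-e_2\mapsto e_1-\alpha e_2$ and $\alpha=1$ is forced by $e_2e_2=e_2$.

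As an independent check that the four lines are pairwise inequivalent, we can compare invariants.
\begin{itemize}
\item $e_1$ is the unit and is fixed by every automorphism (Lemma \ref{automor}).
\item $e_3$ spans a nilpotent line.
\item $e_2$ and $e_1-e_2$ are both non-unit idempotents, but they are distinguished by the diagonal form of the automorphisms.
\end{itemize}

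The only step requiring care is the case $\alpha\neq0$, where one must not miss the second root $\beta=-\alpha$ that produces $\big\langle e_1-e_2\big\rangle$. Everything else is routine.
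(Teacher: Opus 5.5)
Your proof is correct and takes essentially the same route as the paper: you solve $xx=kx$ for $x=\alpha e_1+\beta e_2+\gamma e_3$, then invoke the automorphisms of ${\rm A}_3$ for the second part; splitting first on $\alpha$ rather than on $k$ makes no real difference. Your remark that $e_2e_2=e_2$ forces the $(2,2)$ entry of every automorphism to be $1$ is a genuine sharpening of Proposition \ref{Aut}, which leaves that entry free, and it is exactly what makes all four lines automorphism-invariant and hence pairwise inequivalent.
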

\begin{proof}
Let $\mathfrak{s}=\big\langle \alpha e_1 + \beta e_2 + \gamma e_3 \big\rangle .$
Consider the product:
\[
\big(\alpha e_1 + \beta e_2 + \gamma e_3\big)
\big(\alpha e_1 + \beta e_2 + \gamma e_3\big)
\ =\  k\big(\alpha e_1 + \beta e_2 + \gamma e_3\big),
\]
which leads to
\[
\alpha^2 e_1 + \beta^2e_2+2\alpha\beta e_2 + 2\alpha\gamma e_3
\ =\  k\big(\alpha e_1 + \beta e_2 + \gamma e_3\big),
\]
i.e., $\alpha^2 \ =\  k\alpha,$ \ $\beta^2+2\alpha\beta \ =\  k\beta,$ \ and $2\alpha\gamma \ =\  k\gamma.$ 
Let us consider the following cases:

\begin{enumerate}
\item[(1)] 
If $k = 0$, then $\alpha = 0$ and$\beta=0.$
We obtain the subalgebra
$\big\langle e_3 \big\rangle$. 

\item[(2)] If $k \neq 0$ and $\alpha = 0$, then $\gamma = 0$, and  we obtain the subalgebra
$\big\langle e_2 \big\rangle$. 

\item[(3)] If $k \neq 0$ and $\alpha \neq 0$, then $\alpha = k$,\  $\gamma = 0$. If $\beta=0,$ which yields the subalgebra
$\big\langle e_1 \big\rangle .$

\item[(4)] If $k \neq 0$ and $\alpha \neq 0$, then $\alpha = k$,\  $\gamma = 0$. If $\beta\neq0,$ which yields the subalgebra
$\big\langle e_1-e_2 \big\rangle .$
 
\end{enumerate}
Taking automorphisms of $\rm{A}_3$ from Proposition \ref{Aut},  we have the second part of our statement.
\end{proof}
\begin{corollary}

Let $\mathfrak{s}$ be a one-dimensional $\overline{\mbox{subalgebra}}$ of $\rm{A}_3,$ then $\mathfrak{s}$ is one of the following $\overline{\mbox{subalgebras}}$:
$\big\langle e_1 \big\rangle,$ $\big\langle e_2 \big\rangle,$ 
$\big\langle e_3 \big\rangle,$ or  
$\big\langle e_1-e_2 \big\rangle.$ 
Up to isomorphisms, all one-dimensional $\overline{\mbox{subalgebras}}$ of $\rm{A}_3$ are equivalent to  $\big\langle e_1 \big\rangle,$ $\big\langle e_2 \big\rangle,$ 
$\big\langle e_3 \big\rangle$ or  
$\big\langle e_1-e_2 \big\rangle.$ 
\end{corollary}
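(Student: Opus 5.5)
The plan is to read this corollary off Theorem \ref{A3subd1}, which already lists all one-dimensional subalgebras of $\rm{A}_3$ as $\langle e_1\rangle$, $\langle e_2\rangle$, $\langle e_3\rangle$ and $\langle e_1-e_2\rangle$. A $\overline{\mbox{subalgebra}}$ is a subalgebra that is also invariant under the involution $^-$. So it suffices to check which of these four lines satisfy $\overline{\mathfrak{s}}\subseteq\mathfrak{s}$.

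Recall that $\overline{e_1}=e_1$, $\overline{e_2}=e_2$ and $\overline{e_3}=-e_3$. Then $\langle e_1\rangle$, $\langle e_2\rangle$ and $\langle e_1-e_2\rangle$ consist of symmetric elements and are fixed pointwise. The line $\langle e_3\rangle$ is mapped onto itself, since $\overline{e_3}=-e_3$. Hence all four subalgebras are $\overline{\mbox{subalgebras}}$, which gives the first part. In general a one-dimensional space $\langle v\rangle$ is $^-$-invariant exactly when $v$ lies in $\mathcal H$ or in $\mathcal S$. This confirms that no other lines could appear, although Theorem \ref{A3subd1} already rules them out.

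For the classification up to isomorphism, I would use $\overline{\mathrm{Aut}}(\rm{A}_3)$ from Proposition \ref{Aut}. These are the diagonal maps $\mathrm{diag}(1,\alpha,\beta)$ with $\alpha\beta\neq0$, and all of them preserve each of the four lines. The four subalgebras are also pairwise non-isomorphic as algebras with involution. The square of the generator is, respectively:
\begin{itemize}
\item for $e_1$: $e_1$, a unit with trivial involution;
\item for $e_2$: $e_2$, an idempotent with trivial involution;
\item for $e_3$: $0$, zero product with nontrivial involution;
\item for $e_1-e_2$: $e_1-e_2$, an idempotent with trivial involution.
\end{itemize}

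Equivalence here is presumably taken under $\overline{\mathrm{Aut}}(\rm{A}_3)$, as in the analogous statements for $\rm{A}_1$ and $\rm{A}_2$. Under that reading, none of the four lines is moved to another, so all four classes remain. The only delicate point is fixing this meaning of ``up to isomorphisms''. With it, there is no genuine obstacle and the proof is a direct inspection.
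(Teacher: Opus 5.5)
Your first part is correct and matches how the corollary follows from Theorem \ref{A3subd1}: each of the four lines is spanned by an element of $\mathcal H$ or of $\mathcal S$, so each is $^-$-invariant. The problems are in the ``up to isomorphism'' part.

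\textbf{First error.} You take $\overline{\mathrm{Aut}}(\mathrm{A}_3)=\{\mathrm{diag}(1,\alpha,\beta)\}$ from Proposition \ref{Aut} and assert that every such map preserves each line. This is false for $\langle e_1-e_2\rangle$: $\mathrm{diag}(1,\alpha,\beta)$ sends $e_1-e_2$ to $e_1-\alpha e_2$. When $\alpha\neq 1$ this is not in $\langle e_1-e_2\rangle$, and it does not even span a subalgebra. Your conclusion survives only because that table entry is itself wrong. An automorphism must satisfy $\varphi(e_2)^2=\varphi(e_2)$, i.e.\ $\alpha^2=\alpha$, so $\alpha=1$. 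This agrees with $\mathfrak{Der}(\mathrm{A}_3)$ being spanned by $d_1(e_3)=e_3$ alone. A direct computation gives $\mathrm{Aut}(\mathrm{A}_3)=\overline{\mathrm{Aut}}(\mathrm{A}_3)=\{\mathrm{diag}(1,1,\beta):\beta\neq0\}$. Only for this corrected group is your line-preservation claim true, so you need to state and use it.

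\textbf{Second error.} The claim that the four subalgebras are pairwise non-isomorphic as algebras with involution is false. Each of $\langle e_1\rangle$, $\langle e_2\rangle$ and $\langle e_1-e_2\rangle$ is a one-dimensional algebra spanned by an idempotent, with identity involution, so they are mutually isomorphic. Your own list gives identical data for $e_2$ and $e_1-e_2$, and ``unit'' describes $e_1$ only relative to the ambient algebra. Intrinsic invariants therefore cannot separate these classes. You need invariants of the embedding that are preserved by automorphisms of $\mathrm{A}_3$, for example:
\begin{itemize}
\item $\varphi(e_1)=e_1$ by Lemma \ref{automor}, so $\langle e_1\rangle$ is fixed;
\item $\langle e_3\rangle$ is the only line with zero square;
\item $\langle e_2\rangle$ is an ideal, while $\langle e_1-e_2\rangle$ is not, since $(e_1-e_2)e_3=e_3$.
\end{itemize}
Under the reading ``up to automorphisms of $\mathrm{A}_3$'', which is what the paper means by ``taking automorphisms'', either the corrected group or these invariants shows the four classes are distinct.
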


\begin{corollary} 
Each one-dimensional ideal  of $\rm{A}_3$ is an    $\overline{\mbox{ideal}}$ and 
it is one of the following: $\big\langle e_2 \big\rangle$  or $\big\langle  e_3 \big\rangle.$ 
\end{corollary}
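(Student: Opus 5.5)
Every ideal is in particular a subalgebra, so I will start from the list of one-dimensional subalgebras of $\rm{A}_3$ given in Theorem \ref{A3subd1}, namely $\langle e_1\rangle$, $\langle e_2\rangle$, $\langle e_3\rangle$ and $\langle e_1-e_2\rangle$, and simply test each candidate against the ideal condition. Since $e_1$ is the unit, a one-dimensional space $\langle v\rangle$ is a (two-sided) ideal if and only if $e_2v,\ ve_2,\ e_3v,\ ve_3\in\langle v\rangle$. Recall that in $\rm{A}_3$ the only nontrivial product besides those with the unit is $e_2e_2=e_2$.

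I will first discard the two candidates with a nonzero $e_1$-component, using the products with $e_3$. For $\langle e_1\rangle$ we have $e_3e_1=e_3\notin\langle e_1\rangle$, so it is not an ideal. For $\langle e_1-e_2\rangle$ we have $e_3(e_1-e_2)=e_3\notin\langle e_1-e_2\rangle$, so it is not an ideal either.

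Next I will check the two remaining candidates directly. For $\langle e_2\rangle$, the products are $e_2e_2=e_2$ and $e_3e_2=e_2e_3=0$, so it is an ideal. For $\langle e_3\rangle$, the products are $e_2e_3=e_3e_2=e_3e_3=0$, so it is also an ideal.

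Finally, $\langle e_2\rangle$ and $\langle e_3\rangle$ are invariant under the involution, since $\overline{e_2}=e_2$ and $\overline{e_3}=-e_3$. Hence both are $\overline{\mbox{ideals}}$, which gives the statement.

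There is no real obstacle here. The only point that needs care is relying on the completeness of Theorem \ref{A3subd1} rather than redoing the parametric analysis. If one wants a self-contained check, one can argue directly with $v=\alpha e_1+\beta e_2+\gamma e_3$: the product $e_3v=\alpha e_3$ forces $\alpha=0$ or $v\in\langle e_3\rangle$, and then $e_2v=\beta e_2$ forces $\beta=0$ or $\gamma=0$. This recovers exactly $\langle e_2\rangle$ and $\langle e_3\rangle$.
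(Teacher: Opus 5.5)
Your proof is correct and follows essentially the route the paper intends: the corollary is read off from the list of one-dimensional subalgebras in Theorem \ref{A3subd1} by testing closure under multiplication by $e_2,e_3$ and invariance under the involution. Your self-contained check with $v=\alpha e_1+\beta e_2+\gamma e_3$, using $e_3v=\alpha e_3$ and then $e_2v=\beta e_2$, is also valid.
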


\begin{theorem}
Let $\mathfrak{s}$ be a two-dimensional subalgebra of $\rm{A}_3,$ then $\mathfrak{s}$ is one of the following subalgebras:
$\big\langle e_1,e_3 \big\rangle,$ \  
$\big\langle e_1-e_2,e_3 \big\rangle,$ \ 
$\big\langle e_1, e_2 \big\rangle,$ or
$\big\langle e_2, e_3\big\rangle.$
Up to isomorphisms, all two-dimensional subalgebras of $\rm{A}_3$ are equivalent to  
$\big\langle e_1,e_3 \big\rangle,$ \  
$\big\langle e_1-e_2,e_3 \big\rangle,$ \ 
$\big\langle e_1, e_2 \big\rangle$ or
$\big\langle e_2, e_3 \big\rangle.$

\end{theorem}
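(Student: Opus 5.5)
Split into the two cases used for $\rm A_1$ and $\rm A_2$: either $\mathfrak{s}$ contains an element with nonzero $e_1$-coefficient, or $\mathfrak{s}\subseteq\langle e_2,e_3\rangle$. In ${\rm A}_3$ the only nontrivial products are $e_2e_2=e_2$ and the unit relations, so $\langle e_2,e_3\rangle$ is closed under multiplication ($e_2e_3=e_3e_2=e_3e_3=0$). Since $\langle e_2,e_3\rangle$ is itself two-dimensional, the second case gives exactly $\mathfrak{s}=\langle e_2,e_3\rangle$.

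In the first case write
\begin{center}
$\mathfrak{s}=\big\langle e_1+\alpha_2e_2+\alpha_3e_3,\ \beta_2e_2+\beta_3e_3\big\rangle.$
\end{center}
\begin{enumerate}
\item[(i)] Squaring $x=e_1+\alpha_2e_2+\alpha_3e_3$ gives $e_1+(2\alpha_2+\alpha_2^2)e_2+2\alpha_3e_3$. Subtracting $x$ shows that $(\alpha_2+\alpha_2^2)e_2+\alpha_3e_3\in\mathfrak{s}\cap\langle e_2,e_3\rangle=\langle y\rangle$, where $y=\beta_2e_2+\beta_3e_3$.
\item[(ii)] Closure requires $y^2=\beta_2^2e_2\in\mathfrak{s}$, hence $\beta_2^2e_2\in\langle y\rangle$. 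So either $\beta_2=0$, giving $y\sim e_3$, or $\beta_3=0$, giving $y\sim e_2$.
\item[(iii)] If $y=e_3$, the vector $\alpha_3e_3$ may be removed from $x$. From (i), $\alpha_2+\alpha_2^2=0$, so $\alpha_2\in\{0,-1\}$. This yields $\langle e_1,e_3\rangle$ and $\langle e_1-e_2,e_3\rangle$.
\item[(iv)] If $y=e_2$, then $\alpha_2$ may be removed and (i) forces $\alpha_3=0$. This yields $\langle e_1,e_2\rangle$.
\end{enumerate}
In each case one checks directly that the resulting span is closed; for instance $(e_1-e_2)^2=e_1-e_2$ and $(e_1-e_2)e_3=e_3$.

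For the isomorphism part, invoke Proposition~\ref{Aut}. The automorphisms of ${\rm A}_3$ are diagonal, ${\rm diag}(1,\alpha,\beta)$, so each subspace above is preserved and no two of the four are identified. In fact $\alpha=1$ is forced by $e_2e_2=e_2$. The four subalgebras are also pairwise non-isomorphic as algebras:
\begin{itemize}
\item $\langle e_2,e_3\rangle$ has no unit;
\item $\langle e_1,e_3\rangle$ has a nilpotent basis element, $e_3^2=0$;
\item $\langle e_1,e_2\rangle$ has a nontrivial idempotent, $e_2^2=e_2$;
\item $\langle e_1-e_2,e_3\rangle$ has unit $e_1-e_2$ and $e_3^2=0$, so it is of the same abstract type as $\langle e_1,e_3\rangle$, but it is a different subalgebra.
\end{itemize}
The statement lists it separately, consistent with "equivalent under automorphisms of ${\rm A}_3$".

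The main obstacle is step (ii): the case analysis on the one-dimensional intersection $\mathfrak{s}\cap\langle e_2,e_3\rangle$. One must make sure that mixed vectors $\beta_2e_2+\beta_3e_3$ with both coefficients nonzero are genuinely excluded, exactly as in the ${\rm A}_2$ argument, and that the reduction of $x$ modulo $y$ is done correctly in each branch.
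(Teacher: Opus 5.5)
Your proof is correct and follows the paper's route: you split off $\langle e_2,e_3\rangle$, square $e_1+\alpha_2e_2+\alpha_3e_3$, and use $(\beta_2e_2+\beta_3e_3)^2=\beta_2^2e_2$ to force $\beta_2\beta_3=0$. You then read off $\alpha_2\in\{0,-1\}$ or $\alpha_3=0$; the paper gets the first via the one-dimensional classification, you get it directly. The second part comes from the diagonal automorphisms of Proposition~\ref{Aut}, and you are right that $e_2e_2=e_2$ forces $\alpha=1$ there. The only slip is your sentence claiming the four subalgebras are pairwise non-isomorphic as abstract algebras, which your own last bullet contradicts ($e_1\mapsto e_1-e_2$, $e_3\mapsto e_3$ gives $\langle e_1,e_3\rangle\cong\langle e_1-e_2,e_3\rangle$). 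Drop that sentence and keep only the automorphism-orbit reading, which is the one the paper uses.
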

\begin{proof}
It is obviously that $\big\langle e_2, e_3\big\rangle$ is a subalgebra of ${\rm A}_3.$
Let $\mathfrak{s}=\langle e_1 + \alpha_2 e_2 +\alpha_3 e_3,  \beta_2e_2+\beta_3e_3 \rangle.$
Consider the product:
\begin{center}
    $\big(e_1 + \alpha_2 e_2 +\alpha_3 e_3\big)\big(e_1 + \alpha_2 e_2 +\alpha_3 e_3\big)= 
    e_1+ (2\alpha_2 + \alpha_2^2) e_2 + 2\alpha_3e_3
    =k_1\big(e_1 + \alpha_2 e_2+\alpha_3 e_3)+k_2(\beta_2e_2+ \beta_3 e_3\big),$
\end{center}
i.e., $k_1=1,$ $\alpha_2+\alpha_2^2=k_2\beta_2,$ and  $\alpha_3=k_2\beta_3;$
and $\mathfrak{s}=\big\langle e_1 - \alpha_2^2 e_2,\  \beta_2e_2+ \beta_3 e_3 \big\rangle.$
If $\beta_2\neq0,$ then 
$\big(\beta_2e_2+ \beta_3 e_3 \big)\big(\beta_2e_2+ \beta_3 e_3 \big)=\beta_2^2 e_2 \in \mathfrak{s},$
i.e., $e_2 \in \mathfrak{s}$ and $ \beta_3=0,$ which gives a subalgebra $\big\langle e_1,   e_2 \big\rangle$.
If $\beta_2=0,$ then  $\beta_3 \neq 0$ and $e_3 \in \mathfrak{s}.$
Taking the classification of one-dimentional subalgebras of ${\rm A}_3$ 
$\big($see Theorem \ref{A3subd1}$\big),$ we have two cases $\alpha_2=0$ or $\alpha_2=-1.$
The last observation completes the proof of the first part of the statement. 

Taking automorphisms of $\rm{A}_3$ from Proposition \ref{Aut},  we have the second part of our statement.
\end{proof}
\begin{corollary}

Let $\mathfrak{s}$ be a two-dimensional $\overline{\mbox{subalgebra}}$ of $\rm{A}_3,$ then $\mathfrak{s}$ is one of the following $\big\langle e_1 \big\rangle,$ $\big\langle e_2 \big\rangle,$ 
$\big\langle e_3 \big\rangle,$ or  
$\big\langle e_1-e_2 \big\rangle.$ 
Up to isomorphisms, all two-dimensional $\overline{\mbox{subalgebras}}$ of $\rm{A}_3$ are equivalent to  $\big\langle e_1 \big\rangle,$ $\big\langle e_2 \big\rangle,$ 
$\big\langle e_3 \big\rangle$ or  
$\big\langle e_1-e_2 \big\rangle.$ 
\end{corollary}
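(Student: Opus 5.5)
The plan is to deduce the corollary directly from the preceding theorem on two-dimensional subalgebras of ${\rm A}_3$, by checking which of those subalgebras are stable under the involution. By definition, an $\overline{\mbox{subalgebra}}$ is a subalgebra $\mathfrak{s}$ with $\overline{\mathfrak{s}}\subseteq\mathfrak{s}$. So the two-dimensional $\overline{\mbox{subalgebras}}$ are exactly those entries of the theorem's list
$\big\langle e_1,e_3 \big\rangle,$ $\big\langle e_1-e_2,e_3 \big\rangle,$ $\big\langle e_1, e_2 \big\rangle,$ $\big\langle e_2, e_3\big\rangle$
that are invariant under $^-$. No new computation with the multiplication of ${\rm A}_3$ is needed.

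The first step is the invariance check. In ${\rm A}_3$ we have $\overline{e_1}=e_1$, $\overline{e_2}=e_2$ and $\overline{e_3}=-e_3$. Each of the four subspaces is spanned by vectors lying in $\mathcal{H}$ or in $\mathcal{S}$: namely $e_1$, $e_2$ and $e_1-e_2$ lie in $\mathcal{H}$, and $e_3$ lies in $\mathcal{S}$. Hence each subspace is mapped into itself by the involution, and all four subalgebras are $\overline{\mbox{subalgebras}}$.

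Conversely, every two-dimensional $\overline{\mbox{subalgebra}}$ is in particular a two-dimensional subalgebra, so it already appears in that list. This gives the first part of the statement, read as the list of these four two-dimensional spaces. The displayed list in the corollary repeats the one-dimensional spans $\langle e_1\rangle,\langle e_2\rangle,\langle e_3\rangle,\langle e_1-e_2\rangle$; I take this to be a misprint for the four two-dimensional $\overline{\mbox{subalgebras}}$ above, and that is what the argument establishes.

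For the \emph{up to isomorphism} part, I will use Proposition~\ref{Aut}. The relevant group is $\overline{\mathrm{Aut}}({\rm A}_3)$, which consists of diagonal matrices ${\rm diag}(1,\alpha,\beta)$. Applying $e_2e_2=e_2$ to such a map forces $\alpha=1$. Diagonal maps send each coordinate-type subspace above to itself, so no two of the four subspaces are identified by an $\overline{\mbox{automorphism}}$.

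The main obstacle is to make sure the four classes are genuinely distinct even as abstract algebras with involution, and not merely as subspaces. I will separate them by invariants of the restricted structure. $\big\langle e_2,e_3\big\rangle$ has no unit. $\big\langle e_1,e_2\big\rangle$ has identical involution, unlike $\big\langle e_1,e_3\big\rangle$ and $\big\langle e_1-e_2,e_3\big\rangle$. For the last two, $(e_1-e_2)e_3=e_3$ and $e_3e_3=0$, which give the same abstract structure. So these two are distinguished only as subalgebras up to automorphisms of ${\rm A}_3$, which is the sense the paper uses in the analogous statements.
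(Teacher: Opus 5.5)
Your argument is correct and is essentially the derivation the paper intends; the paper gives no separate proof of this corollary. As you say, the displayed list is a misprint carried over from the one-dimensional corollary, and the two-dimensional $\overline{\mbox{subalgebras}}$ are exactly the four subalgebras of the preceding theorem, each being spanned by elements of $\mathcal{H}$ or $\mathcal{S}$. For the classification you rightly note that $e_2e_2=e_2$ forces $\alpha=1$ in the diagonal automorphisms of Proposition~\ref{Aut} (the table there is slightly off), so every automorphism of ${\rm A}_3$ fixes each of the four subspaces, and the four classes are distinct in the paper's sense of equivalence under automorphisms of ${\rm A}_3$.
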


\begin{corollary}  There is only one two-dimensional ideal of $\rm{A}_3,$ 
it is an $\overline{\mbox{ideal}}$ and equal to  $\big\langle e_2, \ e_3\big\rangle.$ 
\end{corollary}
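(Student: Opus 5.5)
The plan is to reduce the statement to the classification of two-dimensional subalgebras of ${\rm A}_3$ in the preceding theorem. Every ideal is in particular a subalgebra, so a two-dimensional ideal of ${\rm A}_3$ must be one of
$\big\langle e_1,e_3 \big\rangle,$ $\big\langle e_1-e_2,e_3 \big\rangle,$ $\big\langle e_1, e_2 \big\rangle$ or $\big\langle e_2, e_3\big\rangle.$
It then remains to test each of these four candidates for two-sided absorption under multiplication by $e_1,e_2,e_3$, using the table $e_2e_2=e_2$ (all other products of $e_2,e_3$ being zero).

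\textbf{Candidates containing the unit.} The subalgebras $\big\langle e_1,e_3\big\rangle$ and $\big\langle e_1,e_2\big\rangle$ contain the unit $e_1$. Any ideal $I$ containing $e_1$ satisfies $x=x e_1\in I$ for all $x$, so $I={\rm A}_3$. Hence neither of these is a proper two-dimensional ideal.

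\textbf{The candidate $\big\langle e_2,e_3\big\rangle$.} Multiplication by $e_1$ acts as the identity. The products $e_ie_j$ with $i,j\in\{2,3\}$ all lie in $\big\langle e_2\big\rangle$. Thus $\big\langle e_2,e_3\big\rangle$ is an ideal. It is spanned by eigenvectors of the involution, so it is $^-$-stable, i.e.\ an $\overline{\mbox{ideal}}$.

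\textbf{The candidate $\big\langle e_1-e_2,e_3\big\rangle$: the main obstacle.} To exclude it, one must exhibit a product $x\cdot s$ or $s\cdot x$ that leaves the subspace. Computing with the stated table gives
\[
e_2(e_1-e_2)=0,\quad (e_1-e_2)e_3=e_3,\quad e_3(e_1-e_2)=e_3,\quad e_3e_3=0 .
\]
These all lie in the subspace, so the naive check does not rule it out; the argument fails at exactly this point.

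I would therefore first re-verify the multiplication table of ${\rm A}_3$ against Theorem \ref{Thcl1}. Then I would determine whether the paper's notion of ideal carries an extra condition that excludes $\big\langle e_1-e_2,e_3\big\rangle$, for instance a requirement compatible with the structurable identity \eqref{id_princ} or with the triple products $V_{x,y}$. Only under such an additional convention does uniqueness of $\big\langle e_2,e_3\big\rangle$ follow.

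Without such a convention, the computation above shows that $\big\langle e_1-e_2,e_3\big\rangle$ is also a two-dimensional ideal, and in that case the statement as worded cannot be proved.
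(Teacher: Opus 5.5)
Your computation is correct, and so is your conclusion: the corollary is false as stated. Put $f=e_1-e_2$. Then $f^2=f$, $e_2f=fe_2=0$, $fe_3=e_3f=e_3$ and $e_2e_3=e_3e_2=e_3e_3=0$. Hence ${\rm A}_3=\langle e_2\rangle\oplus\langle e_1-e_2,e_3\rangle$ is a direct sum of two unital ideals, a copy of $\mathbb{C}$ and a copy of the dual numbers. Every ideal splits along this decomposition, so the two-dimensional ideals are exactly $\langle e_2\rangle\oplus\langle e_3\rangle=\langle e_2,e_3\rangle$ and $\langle e_1-e_2,e_3\rangle$. This agrees with your case check against the subalgebra list. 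Both are $\overline{\mbox{ideals}}$, because $e_1-e_2\in\mathcal{H}$ and $e_3\in\mathcal{S}$.

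The paper gives no separate argument for this corollary. It is read off from the list of two-dimensional subalgebras of ${\rm A}_3$, which is exactly your reduction, and the candidate $\langle e_1-e_2,e_3\rangle$ was overlooked. Compare ${\rm A}_4$: its table differs only by $e_3e_3=-e_1+e_2$, and there the paper does list $\langle e_1-e_2,e_3\rangle$ among the ideals.

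You can drop the hedging in your last two paragraphs. The table you used is the one produced in case (3) of the proof of Theorem~\ref{Thcl1}, and no reasonable convention rescues the claim:
\begin{itemize}
\item Requiring stability under the involution does not exclude this subspace.
\item A $^-$-stable two-sided ideal $I$ automatically satisfies $V_{x,y}(z)\in I$ whenever one of $x,y,z$ lies in $I$, so a triple-product condition adds nothing.
\item The two ideals are not even isomorphic as algebras: $e_1-e_2$ is a unit for $\langle e_1-e_2,e_3\rangle$, while $\langle e_2,e_3\rangle$ annihilates $e_3$ and so has no unit. An ``up to isomorphism'' reading therefore fails too.
\end{itemize}
The correct statement is that ${\rm A}_3$ has exactly two two-dimensional ideals, $\langle e_2,e_3\rangle$ and $\langle e_1-e_2,e_3\rangle$, and both are $\overline{\mbox{ideals}}$.
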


\medskip \noindent{\it Subalgebras of  ${\rm A}_4.$ }

\begin{theorem}
Let $\mathfrak{s}$ be a one-dimensional subalgebra of $\rm{A}_4,$ then $\mathfrak{s}$ is one of the following subalgebras:
\begin{center}
    $
\big\langle e_1 \big\rangle,$ \ 
$\big\langle e_2 \big\rangle,$ \ 
$\big\langle e_1-e_2 \big\rangle,$ \ 
$\big\langle e_1+e_2 \pm {\mathfrak i}e_3 \big\rangle$ \ or
$\big\langle e_1-e_2\pm{\mathfrak i}e_3 \big\rangle.$
\end{center}
Up to isomorphisms, all one-dimensional subalgebras of $\rm{A}_4$ are equivalent to  
\begin{center}
$\big\langle e_1 \big\rangle,$ \ 
$\big\langle e_2 \big\rangle,$ \ 
$\big\langle e_1-e_2 \big\rangle$ \  or 
$\big\langle e_1 \pm e_2+{\mathfrak i}e_3 \big\rangle  .$ 
\end{center}
\end{theorem}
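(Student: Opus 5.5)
I would follow the same scheme as for ${\rm A}_1$--${\rm A}_3$. Take an arbitrary one-dimensional subspace $\mathfrak{s}=\big\langle x\big\rangle$ with $x=\alpha e_1+\beta e_2+\gamma e_3\neq 0$. The condition that $\mathfrak{s}$ is a subalgebra is that $x x=kx$ for some $k\in\mathbb C$. Using the table of ${\rm A}_4$ ($e_2e_2=e_2$, $e_3e_3=-e_1+e_2$, $e_2e_3=e_3e_2=0$, and $e_1$ the unit), I compute
\[
x x=(\alpha^2-\gamma^2)e_1+(2\alpha\beta+\beta^2+\gamma^2)e_2+2\alpha\gamma e_3 .
\]
This gives the system
\[
\alpha^2-\gamma^2=k\alpha,\qquad 2\alpha\beta+\beta^2+\gamma^2=k\beta,\qquad 2\alpha\gamma=k\gamma .
\]

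I would split according to whether $\gamma=0$, since the third equation is the one that decouples.

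If $\gamma\neq0$, then $k=2\alpha$. The first equation becomes $\gamma^2=-\alpha^2$, so $\alpha\neq0$ and $\gamma=\pm{\mathfrak i}\alpha$. The second equation becomes $2\alpha\beta+\beta^2-\alpha^2=2\alpha\beta$, i.e. $\beta=\pm\alpha$. Normalizing $\alpha=1$ gives the four lines $\big\langle e_1\pm e_2\pm{\mathfrak i}e_3\big\rangle$.

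If $\gamma=0$, the system reduces to $\alpha^2=k\alpha$ and $\beta^2+2\alpha\beta=k\beta$. Now $k=0$ forces $\alpha=\beta=0$, which is impossible, so $k\neq0$. If $\alpha=0$, then $\beta=k$ and we get $\big\langle e_2\big\rangle$. If $\alpha\neq 0$, then $\alpha=k$ and $\beta(\beta+\alpha)=0$, which yields $\big\langle e_1\big\rangle$ and $\big\langle e_1-e_2\big\rangle$. This gives the first part of the statement.

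For the second part I would use Proposition \ref{Aut}: ${\rm Aut}({\rm A}_4)$ consists only of the identity and $e_3\mapsto -e_3$. Hence the sign of ${\mathfrak i}e_3$ can be absorbed, while the sign of $e_2$ cannot. The real obstacle is to justify that the listed representatives are pairwise non-equivalent, since automorphisms identify nothing further. I would argue this via invariants. The spans of $e_1$, $e_1-e_2$ and $e_2$ are distinguished from each other as follows. Among idempotent generators, $e_1$ is the unit and is fixed. The lines $\big\langle e_1-e_2\big\rangle$ and $\big\langle e_2\big\rangle$ are different lines, and the only nontrivial automorphism fixes both. The lines $e_1\pm e_2+{\mathfrak i}e_3$ differ in their $e_2$-coefficient, which the automorphism $e_3\mapsto -e_3$ does not change. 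If the intended notion of ``isomorphism'' is instead abstract isomorphism of the one-dimensional subalgebras, the claimed list would collapse, so I read it as equivalence under ${\rm Aut}({\rm A}_4)$, as in the earlier theorems.
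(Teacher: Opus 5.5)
Your computation of $x^2$, the resulting system and the case analysis are the paper's argument; you split on $\gamma$ before $k$, which is only a reordering. They correctly give the seven lines. For the second part the paper, like you, simply invokes Proposition~\ref{Aut}.

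\textbf{The step that fails.} You add a step and call it ``the real obstacle'': pairwise non-equivalence of the five representatives, justified by ${\rm Aut}({\rm A}_4)=\{{\rm id},\ e_3\mapsto -e_3\}$. That two-element group is $\overline{\rm Aut}({\rm A}_4)$, not the full automorphism group; the ${\rm A}_4$ entry of Proposition~\ref{Aut} is defective here. Indeed ${\rm A}_4$ is commutative and associative. The elements $e_2$, $u=\tfrac12(e_1-e_2+{\mathfrak i}e_3)$ and $v=\tfrac12(e_1-e_2-{\mathfrak i}e_3)$ are pairwise orthogonal idempotents with sum $e_1$. Hence ${\rm A}_4\cong\mathbb C^3$, every permutation of $\{e_2,u,v\}$ is an algebra automorphism, and ${\rm Aut}({\rm A}_4)\cong S_3$.

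For example, swapping $e_2$ and $u$ gives $e_1\mapsto e_1$, $e_2\mapsto\tfrac12(e_1-e_2+{\mathfrak i}e_3)$, $e_3\mapsto\tfrac{\mathfrak i}{2}e_1-\tfrac{3{\mathfrak i}}{2}e_2+\tfrac12e_3$. This carries $\langle e_2\rangle$ onto $\langle e_1-e_2+{\mathfrak i}e_3\rangle$. Your seven lines are exactly the spans of the seven nonzero idempotents, and under $S_3$ they form only three orbits: $\{\langle e_1\rangle\}$, $\{\langle e_2\rangle,\langle e_1-e_2\pm{\mathfrak i}e_3\rangle\}$ and $\{\langle e_1-e_2\rangle,\langle e_1+e_2\pm{\mathfrak i}e_3\rangle\}$.

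\textbf{What survives.} You explicitly adopt the full-${\rm Aut}$ reading, as for ${\rm A}_1$, where automorphisms mixing $\mathcal H$ and $\mathcal S$ are used. Under that reading your non-equivalence claim is false, and the second part holds only as a covering statement, with redundancy. It becomes true only if equivalence means involution-preserving automorphisms, i.e.\ isomorphisms of structurable algebras. The swap above does not commute with $^-$, which fixes $e_2$ and interchanges $u$ and $v$.

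Your ``invariants'' are also not invariants: they merely read off the orbits of the group you assumed. A genuine invariant, such as $\dim(x\,{\rm A}_4)$ for the idempotent generator $x$, equals $1$ for both $e_2$ and $u$, and would have exposed the problem.
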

\begin{proof}
Let $\mathfrak{s}=\big\langle \alpha e_1 + \beta e_2 + \gamma e_3 \big\rangle .$
Consider the product:
\[
\big(\alpha e_1 + \beta e_2 + \gamma e_3\big)
\big(\alpha e_1 + \beta e_2 + \gamma e_3\big)
\ =\  k\big(\alpha e_1 + \beta e_2 + \gamma e_3\big),
\]
which leads to
\[
\alpha^2 e_1 + \beta^2e_2+\gamma^2(-e_1+e_2)+2\alpha\beta e_2 + 2\alpha\gamma e_3
\ =\  k\big(\alpha e_1 + \beta e_2 + \gamma e_3\big),
\]
i.e., $\alpha^2-\gamma^2 \ =\  k\alpha,$ \ $\beta^2+\gamma^2+2\alpha\beta \ =\  k\beta,$ \ and $2\alpha\gamma \ =\  k\gamma.$ 
Let us consider the following cases:

\begin{enumerate}
\item[(1)] 
If $k = 0,$ then  $\alpha=\gamma=\beta=0,$ which is a contradiction.
 
\item[(2)] If $k\neq 0,\gamma = 0$ and  $\alpha = 0$, and  we obtain the subalgebra $\big\langle e_2 \big\rangle.$
\item[(3)] If $k\neq 0, \gamma = 0$ and $\alpha \neq 0,$ then $\alpha=k$ and  $\beta^2=- \alpha\beta,$ we have those subalgebras 
$\big\langle e_1 \big\rangle,$ and $\big\langle e_1-e_2 \big\rangle.$
\item[(4)] If $k \neq 0$ and $\gamma \neq 0$, 
then $\alpha = \frac{k}{2},$ which gives 
$\beta^2 = \frac{k^2}{4}$ and 
 $\gamma^2=-\frac{k^2}{4}$. 
We construct the following subalgebras:
$\big\langle e_1+e_2\pm{\mathfrak i}e_3 \big\rangle$ and 
$\big\langle e_1-e_2\pm{\mathfrak i}e_3 \big\rangle.$

\end{enumerate}
\noindent Taking automorphisms of $\rm{A}_4$ from Proposition \ref{Aut}, 
we have the second part of our statement.\end{proof}
\begin{corollary}

Let $\mathfrak{s}$ be a one-dimensional $\overline{\mbox{subalgebra}}$ of $\rm{A}_4,$ then $\mathfrak{s}$ is one of the following $\overline{\mbox{subalgebras}}$:
$\big\langle e_1 \big\rangle,$ 
$\big\langle e_2 \big\rangle,$ or 
$\big\langle e_1-e_2 \big\rangle.$ 
Up to isomorphisms, all one-dimensional $\overline{\mbox{subalgebras}}$ of $\rm{A}_4$ are equivalent to  $\big\langle e_1 \big\rangle,$ $\big\langle e_2 \big\rangle$ 
or $\big\langle e_1-e_2 \big\rangle.$ 

\end{corollary}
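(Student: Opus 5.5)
The corollary follows from the preceding theorem once we use the meaning of $\overline{\mbox{subalgebra}}$: a subalgebra $\mathfrak{s}$ with $\overline{\mathfrak{s}}=\mathfrak{s}$. So I would go through the list of one-dimensional subalgebras of ${\rm A}_4$ obtained there and keep exactly those invariant under the involution $\overline{e_1}=e_1$, $\overline{e_2}=e_2$, $\overline{e_3}=-e_3$.

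For a line $\langle \alpha e_1+\beta e_2+\gamma e_3\rangle$, invariance means $\alpha e_1+\beta e_2-\gamma e_3=\lambda(\alpha e_1+\beta e_2+\gamma e_3)$ for some $\lambda$. Hence either $\lambda=1$ and $\gamma=0$ (the generator lies in $\mathcal H$), or $\lambda=-1$ and $\alpha=\beta=0$ (the generator is a multiple of $e_3$).

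\begin{enumerate}
\item[(1)] The subalgebras $\langle e_1\rangle$, $\langle e_2\rangle$ and $\langle e_1-e_2\rangle$ lie in $\mathcal H$, so they are invariant.
\item[(2)] The lines $\langle e_1\pm e_2\pm{\mathfrak i}e_3\rangle$ have generators with $\gamma\neq0$ and $(\alpha,\beta)\neq(0,0)$, so they are not invariant.
\item[(3)] The line $\langle e_3\rangle$ is invariant, but it is not a subalgebra, since $e_3e_3=-e_1+e_2\notin\langle e_3\rangle$. This is consistent with its absence from the list in the theorem.
\end{enumerate}

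This gives the first part of the statement.

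For the second part, by Proposition \ref{Aut} every automorphism of ${\rm A}_4$, and so every $\overline{\mbox{automorphism}}$, is ${\rm diag}(1,1,\pm1)$. Such maps fix each of $\langle e_1\rangle$, $\langle e_2\rangle$ and $\langle e_1-e_2\rangle$, so these three are pairwise inequivalent under automorphisms. They are also pairwise non-isomorphic as algebras: $\langle e_2\rangle$ and $\langle e_1-e_2\rangle$ are idempotent lines, while $\langle e_1\rangle$ contains the unit. The statement therefore follows.

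The only step needing any care is checking that no non-trivial combination containing $e_3$ is both involution-invariant and closed under multiplication. The dichotomy $\gamma=0$ or $\alpha=\beta=0$ settles this immediately.
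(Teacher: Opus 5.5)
Your argument is correct and is the same route the paper takes. The corollary is read off from the preceding theorem by keeping the one-dimensional subalgebras with $\overline{\mathfrak s}=\mathfrak s$. Your dichotomy ($\gamma=0$, or $\alpha=\beta=0$) does this cleanly, and the ``up to isomorphism'' part is then immediate.

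Two side remarks should be corrected, though neither is needed for the statement.

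\textbf{The three lines are isomorphic as algebras.} Your sentence claiming they are pairwise non-isomorphic is false. $e_1$, $e_2$ and $e_1-e_2$ are all idempotents lying in $\mathcal H$, so each line is isomorphic to $\mathbb C$ with trivial involution. ``Containing the unit of ${\rm A}_4$'' describes the embedding, not the isomorphism type. Delete that sentence.

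\textbf{Only $\overline{\rm Aut}({\rm A}_4)$ is needed.} The paper's notion of equivalence for $\overline{\mbox{subalgebras}}$ uses $\overline{\rm Aut}({\rm A}_4)={\rm diag}(1,1,\pm1)$, which is correct. The first half of your claim, that every automorphism of ${\rm A}_4$ is ${\rm diag}(1,1,\pm1)$, is not true, despite the entry in Proposition~\ref{Aut}. Indeed ${\rm A}_4\cong\mathbb C^3$, with orthogonal primitive idempotents $e_2$ and $\tfrac12(e_1-e_2\pm{\mathfrak i}e_3)$ summing to $e_1$. So its full automorphism group permutes these three idempotents and moves $\langle e_2\rangle$.

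Your inequivalence conclusion still holds even under the full group. $e_1$ is the unit, $e_2$ is a primitive idempotent, and $e_1-e_2$ is a sum of two orthogonal primitive idempotents; automorphisms preserve each of these properties.
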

\begin{corollary} 
There is only one one-dimensional ideal of $\rm{A}_4,$ 
it is an $\overline{\mbox{ideal}}$ and equal to    $\big\langle   e_2 \big\rangle.$
\end{corollary}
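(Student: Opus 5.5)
The statement to prove is the last Corollary: ${\rm A}_4$ has exactly one one-dimensional ideal, namely $\big\langle e_2\big\rangle$, and it is an $\overline{\mbox{ideal}}$. Any one-dimensional ideal is in particular a one-dimensional subalgebra. So the plan is to start from the list of one-dimensional subalgebras of ${\rm A}_4$ in the preceding Theorem and test each candidate for the ideal condition.

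An ideal $I$ of a unital algebra cannot contain an element $x$ with a nonzero $e_1$-component unless it contains all of ${\rm A}_4$. It therefore suffices to multiply each candidate generator by $e_2$ and by $e_3$ and check whether the product stays in its span.

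\textbf{Candidates containing $e_1$.} The generators are $e_1$, $e_1-e_2$, $e_1\pm e_2\pm{\mathfrak i}e_3$.
\begin{itemize}
\item $e_1\cdot e_3=e_3\notin\langle e_1\rangle$, so $\langle e_1\rangle$ is not an ideal.
\item $(e_1-e_2)e_3=e_3$, since $e_2e_3=0$ in ${\rm A}_4$. This is not a multiple of $e_1-e_2$, so $\langle e_1-e_2\rangle$ fails.
\item For $x=e_1+\epsilon e_2+\delta{\mathfrak i}e_3$ with $\epsilon,\delta=\pm1$, use $e_2e_2=e_2$, $e_2e_3=e_3e_2=0$ and $e_3e_3=-e_1+e_2$. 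Then $xe_2=(1+\epsilon)e_2$. This is either $0$ or $2e_2$, and $2e_2$ is not proportional to $x$.
\item When $\epsilon=-1$, use $xe_3$ instead: $xe_3=e_3+\delta{\mathfrak i}(-e_1+e_2)$. This is not proportional to $x$, because the $e_1$-coefficient $-\delta{\mathfrak i}$ together with the $e_3$-coefficient $1$ forces the ratio $-\delta{\mathfrak i}$ on the $e_1$-component but $\delta{\mathfrak i}$ on the $e_3$-component.
\end{itemize}
So none of these is an ideal.

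\textbf{The candidate $\langle e_2\rangle$.} Here $e_2e_1=e_2$, $e_2e_2=e_2$, $e_2e_3=e_3e_2=0$. Hence $\langle e_2\rangle$ is a two-sided ideal.

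Since $\overline{e_2}=e_2$, this ideal is invariant under the involution, so it is an $\overline{\mbox{ideal}}$. This gives uniqueness and the involution-invariance together.

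The only real point of care is the last family. There the product must be written out with $e_3e_3=-e_1+e_2$ and compared coefficientwise, to avoid accidentally finding proportionality. Everything else is immediate from the multiplication table.
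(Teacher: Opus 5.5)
\textbf{The step that fails.} Your case $\epsilon=-1$ contains an arithmetic error, and it cannot be repaired, because the uniqueness claim is false. For $x=e_1-e_2+\delta{\mathfrak i}e_3$ you correctly get $xe_3=-\delta{\mathfrak i}e_1+\delta{\mathfrak i}e_2+e_3$. However,
\[
-\delta{\mathfrak i}\,x=-\delta{\mathfrak i}e_1+\delta{\mathfrak i}e_2-\delta^2{\mathfrak i}^2e_3=-\delta{\mathfrak i}e_1+\delta{\mathfrak i}e_2+e_3=xe_3 .
\]
The ratio on the $e_3$-component is $1/(\delta{\mathfrak i})=-\delta{\mathfrak i}$, not $\delta{\mathfrak i}$. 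We also have $xe_2=0$ and $xe_1=x$, and ${\rm A}_4$ is commutative. So $\big\langle e_1-e_2\pm{\mathfrak i}e_3\big\rangle$ are one-dimensional ideals.

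\textbf{Why the statement is false.} The elements $e_2$ and $\tfrac12(e_1-e_2\pm{\mathfrak i}e_3)$ are pairwise orthogonal idempotents summing to $e_1$. Hence ${\rm A}_4\cong\mathbb C^3$ as an algebra, and $\mathbb C^3$ has exactly three one-dimensional ideals. The paper's own corollary on two-dimensional ideals confirms this: $\big\langle e_1\pm{\mathfrak i}e_3,e_2\big\rangle\cap\big\langle e_1-e_2,e_3\big\rangle=\big\langle e_1-e_2\pm{\mathfrak i}e_3\big\rangle$ is an intersection of ideals.

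Your opening remark is also false: it is not true that an ideal containing an element with nonzero $e_1$-component must be all of ${\rm A}_4$. That conclusion is forced only by $e_1$ itself, or by an invertible element. For example, $e_1-e_2$ lies in the proper ideal $\big\langle e_1-e_2,e_3\big\rangle$. The two extra ideals above are exactly counterexamples to your remark. You do not actually rely on it, since checking products with $e_2$ and $e_3$ suffices anyway.

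\textbf{What your argument does prove once corrected.} The involution gives $\overline{e_1-e_2+{\mathfrak i}e_3}=e_1-e_2-{\mathfrak i}e_3$, so it swaps the two extra ideals and neither is an $\overline{\mbox{ideal}}$. Therefore the one-dimensional ideals of ${\rm A}_4$ are $\big\langle e_2\big\rangle$ and $\big\langle e_1-e_2\pm{\mathfrak i}e_3\big\rangle$. Among them, $\big\langle e_2\big\rangle$ is the unique one-dimensional $\overline{\mbox{ideal}}$; that is the correct form of the corollary.

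The paper gives no separate proof here; it reads the corollary off the list of one-dimensional subalgebras, which is the same route you take. So the mistake lies in the stated uniqueness claim itself, not in your choice of approach.
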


\begin{theorem}
Let $\mathfrak{s}$ be a two-dimensional subalgebra of $\rm{A}_4,$ then $\mathfrak{s}$ is one of the following subalgebras:
$\big\langle e_1, e_2\big\rangle,$
$\big\langle e_1\pm {\mathfrak i} e_3, e_2\big\rangle,$
$\big\langle e_1-e_2,  e_3\big\rangle$ or 
$\big\langle e_1, \pm {\mathfrak i}e_2+ e_3\big\rangle.$
Up to isomorphisms, all two-dimensional subalgebras of $\rm{A}_4$ are equivalent to  
\begin{center}
$\big\langle e_1, e_2\big\rangle,$
$\big\langle e_1+ {\mathfrak i} e_3, e_2\big\rangle,$
$\big\langle e_1-e_2,  e_3\big\rangle$ or 
$\big\langle e_1,   {\mathfrak i}e_2+ e_3\big\rangle.$
\end{center}

\end{theorem}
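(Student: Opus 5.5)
We argue directly from the multiplication table of ${\rm A}_4$: $e_1$ is the unit, $e_2e_2=e_2$, $e_3e_3=-e_1+e_2$, and all other products of basis elements are zero. The plan is to parametrize a two-dimensional subspace $\mathfrak{s}$, impose closure under squares and products, and then reduce the resulting list by the automorphisms from Proposition \ref{Aut}.

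\textbf{Setting up the parametrization.} First observe that $\langle e_2,e_3\rangle$ is not a subalgebra, since $e_3e_3=-e_1+e_2\notin\langle e_2,e_3\rangle$. So $\mathfrak{s}\not\subseteq\langle e_2,e_3\rangle$. On the other hand $\dim\big(\mathfrak{s}\cap\langle e_2,e_3\rangle\big)\geq 1$. Hence we may write
$$\mathfrak{s}=\langle u,v\rangle,\qquad u=e_1+\alpha_2e_2+\alpha_3e_3,\qquad 0\neq v=\beta e_2+\gamma e_3 .$$
We then split into the cases $\gamma=0$ and $\gamma\neq 0$, as in the previous theorems of this section.

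\textbf{Case $\gamma=0$.} Here $v=e_2$, and after subtracting a multiple of $e_2$ we may take $u=e_1+\alpha_3e_3$. The products $ue_2=e_2u=e_2$ and $e_2e_2=e_2$ already lie in $\mathfrak{s}$, so the only condition is $u^2\in\mathfrak{s}$. We compute
$$u^2=(1-\alpha_3^2)e_1+\alpha_3^2e_2+2\alpha_3e_3 .$$
Comparing the $e_1$-coefficient with that of $u$ forces $k=1-\alpha_3^2$ for $u^2=ku+me_2$. Comparing the $e_3$-coefficients gives $2\alpha_3=k\alpha_3$. Thus either $\alpha_3=0$, or $k=2$, which means $\alpha_3=\pm{\mathfrak i}$. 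This yields $\langle e_1,e_2\rangle$ and $\langle e_1\pm{\mathfrak i}e_3,e_2\rangle$.

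\textbf{Case $\gamma\neq0$.} Normalize $v=\beta e_2+e_3$ and reduce $u$ to $u=e_1+\alpha_2e_2$. The key point is that $e_2\notin\mathfrak{s}$: otherwise $e_2,\ e_3=v-\beta e_2$ and $u$ would span the whole algebra. Every closure condition then reduces to requiring a coefficient of $e_2$ to vanish:
\begin{itemize}
\item $v^2=-e_1+(\beta^2+1)e_2=-u+(\alpha_2+\beta^2+1)e_2$, which forces $\alpha_2=-1-\beta^2$;
\item $u^2=u+(\alpha_2+\alpha_2^2)e_2$, which forces $\alpha_2\in\{0,-1\}$;
\item $uv=vu=v+\alpha_2\beta e_2$, which forces $\alpha_2\beta=0$.
\end{itemize}
If $\alpha_2=-1$, then $\beta=0$ and we get $\langle e_1-e_2,e_3\rangle$. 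If $\alpha_2=0$, then $\beta=\pm{\mathfrak i}$ and we get $\langle e_1,\pm{\mathfrak i}e_2+e_3\rangle$. This completes the first part of the statement.

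\textbf{Reduction up to isomorphism.} By Proposition \ref{Aut}, $\mathrm{Aut}({\rm A}_4)$ consists only of $e_3\mapsto\pm e_3$ with $e_1,e_2$ fixed. The automorphism $e_3\mapsto -e_3$ sends $\langle e_1+{\mathfrak i}e_3,e_2\rangle$ to $\langle e_1-{\mathfrak i}e_3,e_2\rangle$. It also sends $\langle e_1,{\mathfrak i}e_2+e_3\rangle$ to $\langle e_1,-{\mathfrak i}e_2+e_3\rangle$. The four listed representatives are pairwise non-equivalent, since the automorphism group is just $\{\mathrm{id},\,e_3\mapsto-e_3\}$.

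The main obstacle is the case $\gamma\neq 0$. There one must keep all three closure conditions together, and use $e_2\notin\mathfrak{s}$ to turn each membership condition into a scalar equation, so that no solution is lost.
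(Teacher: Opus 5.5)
Your proof is correct and is essentially the paper's. It uses the same normalization $\mathfrak{s}=\langle e_1+\alpha_2e_2+\alpha_3e_3,\ \beta e_2+\gamma e_3\rangle$, the same split on whether the second generator has an $e_3$-component, and the same automorphism $e_3\mapsto -e_3$ for the second part. You are slightly more careful than the paper: you also check the mixed products $uv=vu$, and you state explicitly that $e_2\notin\mathfrak{s}$ turns each membership condition into a scalar equation.

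One caveat concerns your final sentence, that the four representatives are pairwise non-equivalent. That claim is not part of the statement, and you should drop it or restrict it to automorphisms commuting with the involution. As a plain algebra, ${\rm A}_4=\mathbb{C}p_1\oplus\mathbb{C}p_+\oplus\mathbb{C}p_-$ with orthogonal idempotents $p_1=e_2$ and $p_\pm=\frac12(e_1-e_2\pm\mathfrak{i}e_3)$. Its full automorphism group is therefore $S_3$, which is larger than the two-element group listed in Proposition~\ref{Aut}; that list is only $\overline{\mathrm{Aut}}({\rm A}_4)$. For example, the swap $p_1\leftrightarrow p_+$ sends $\langle e_1,e_2\rangle$ to $\langle e_1,\mathfrak{i}e_2+e_3\rangle$, and sends $\langle e_1-e_2,e_3\rangle$ to $\langle e_1-\mathfrak{i}e_3,e_2\rangle$. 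The covering claim you actually need is unaffected.
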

\begin{proof}
It is easy to see that $\big\langle e_2, e_3\big\rangle$ is not a subalgebra of ${\rm A}_4.$
Hence, 
\begin{center}
$\mathfrak{s}=\big\langle e_1 + \alpha_2 e_2+\alpha_3 e_3, \  \beta_2 e_2+\beta_3 e_3 \big\rangle.$
\end{center}
Consider the product:
\begin{equation}
  \label{eqA4}  (\beta_2 e_2+\beta_3 e_3)(\beta_2 e_2+\beta_3 e_3)= -\beta_3^2 e_1+ (\beta_2^2+\beta_3^2)e_2 \in 
\mathfrak s,  
\end{equation}
which arrives us to the following two cases.
\begin{enumerate}
    
\item[(1)] If $\beta_3=0,$ then $e_2\in \mathfrak{s}$ and $\mathfrak{s}= \big\langle e_1+\alpha_3 e_3, e_2\big\rangle.$ It follows that
\begin{center}
$(1-\alpha_3^2)e_1+\alpha_3^2e_2+2\alpha_3 e_3 =
\big(e_1 +  \alpha_3 e_3\big) \big(e_1 +  \alpha_3 e_3\big)=
k_1(e_1 +  \alpha_3 e_3)+k_2 e_2,$ i.e.,
 \end{center}
$k_1=1-\alpha_3^2, \  \alpha_3^2 = k_2 , \  2\alpha_3= k_1 \alpha_3.$
\begin{enumerate}
    \item 
If $\alpha_3=0,$ we have the subalgebra $\big\langle e_1, e_2\big\rangle.$

\item If $\alpha_3\neq0,$ we have that $k_1=2$ and $\alpha^2_3=-1,$
i.e., we have two subalgebras 
$\big\langle e_1\pm {\mathfrak i} e_3, e_2\big\rangle.$

\end{enumerate}

\item[(2)] If $\beta_3 \neq 0,$ then  can suppose that $\alpha_3=0$ and $\beta_3=1,$
i.e., $\mathfrak{s}=\big\langle e_1 + \alpha_2 e_2, \  \beta_2 e_2+  e_3 \big\rangle.$
Hence,

\begin{center}
$ e_1+(2\alpha_2+\alpha_2^2)e_2 =
\big(e_1 + \alpha_2 e_2 \big) \big(e_1 + \alpha_2 e_2 \big)=
k_1(e_1 + \alpha_2 e_2)+k_2(\beta_2 e_2+  e_3),$ i.e.,
 \end{center}

$k_1=1, \ 
2\alpha_2+\alpha_2^2   = k_1\alpha_2+k_2\beta_2, \ 0=  k_2.$
The last implies $\alpha_2+\alpha_2^2=0$, and we have the following two subcases.

\begin{enumerate}
    \item If $\alpha_2=0,$ then due to \eqref{eqA4}, we have $\beta_2= \pm {\mathfrak i}.$

    \item If $\alpha_2=-1,$ then due to \eqref{eqA4}, we have $\beta_2=0.$
    \end{enumerate}

Summarizing, the case $(2)$ gives  three subalgebras: 
$\big\langle e_1-e_2,  e_3\big\rangle$ and 
$\big\langle e_1, \pm {\mathfrak i}e_2+ e_3\big\rangle.$
\end{enumerate}
Taking automorphisms of $\rm{A}_4$ from Proposition \ref{Aut}, we have the second part of our statement.
\end{proof}
\begin{corollary}

Let $\mathfrak{s}$ be a two-dimensional $\overline{\mbox{subalgebra}}$ of $\rm{A}_4,$ then $\mathfrak{s}$ is one of the following 
$\big\langle e_1, e_2\big\rangle$ or
$\big\langle e_1-e_2,  e_3\big\rangle.$  
Up to isomorphisms, all two-dimensional $\overline{\mbox{subalgebras}}$ of $\rm{A}_4$ are equivalent to  
$\big\langle e_1, e_2\big\rangle$ or 
$\big\langle e_1-e_2,  e_3\big\rangle.$  
\end{corollary}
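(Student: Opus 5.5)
The corollary concerns two-dimensional $\overline{\mbox{subalgebras}}$ of ${\rm A}_4$, that is, two-dimensional subalgebras $\mathfrak s$ that are invariant under the involution. I would start from the complete list of two-dimensional subalgebras in the preceding theorem,
$\big\langle e_1, e_2\big\rangle,$ $\big\langle e_1\pm {\mathfrak i} e_3, e_2\big\rangle,$ $\big\langle e_1-e_2, e_3\big\rangle$ and $\big\langle e_1, \pm {\mathfrak i}e_2+ e_3\big\rangle,$
and test each one for invariance under $^-$. Since $\overline{e_1}=e_1$, $\overline{e_2}=e_2$ and $\overline{e_3}=-e_3$, a subspace is invariant exactly when it equals $(\mathfrak s\cap\mathcal H)\oplus(\mathfrak s\cap\mathcal S)$. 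In other words, it must be spanned by vectors that each lie either in $\langle e_1,e_2\rangle$ or in $\langle e_3\rangle$.

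The check runs as follows.

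The subspaces $\langle e_1,e_2\rangle$ and $\langle e_1-e_2,e_3\rangle$ are clearly invariant, because each is spanned by eigenvectors of the involution.

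For $\langle e_1\pm{\mathfrak i}e_3,e_2\rangle$, the involution sends $e_1\pm{\mathfrak i}e_3$ to $e_1\mp{\mathfrak i}e_3$. If this image lay in $\mathfrak s$, then the difference $2{\mathfrak i}e_3$ would too. Together with $e_1+{\mathfrak i}e_3$ and $e_2$, that would force $\mathfrak s$ to be three-dimensional, which is a contradiction.

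Similarly, for $\langle e_1,\pm{\mathfrak i}e_2+e_3\rangle$, the image $\pm{\mathfrak i}e_2-e_3$ would give $e_2,e_3\in\mathfrak s$ together with $e_1$, again a contradiction.

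This proves the first part of the statement. For the ``up to isomorphism'' part, I would note that $\langle e_1,e_2\rangle$ and $\langle e_1-e_2,e_3\rangle$ cannot be identified by any $\overline{\mbox{automorphism}}$. The automorphism group from Proposition~\ref{Aut} is $\mathrm{diag}(1,1,\pm1)$, which fixes both subspaces. One can also see it intrinsically: one subspace lies in $\mathcal H$ and the other does not.

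There is no real obstacle here. The only point needing care is to use the invariance criterion rather than checking images of a chosen basis carelessly.
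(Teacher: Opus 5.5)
Your argument is correct and matches what the paper intends. The paper states this corollary without a separate proof; it is read off from the preceding theorem's list of two-dimensional subalgebras of ${\rm A}_4$ by keeping exactly those that are stable under the involution, which is what you do.

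Your inequivalence remark is sound for $\overline{\mbox{automorphisms}}$. If you want it to hold for all algebra automorphisms, and to be independent of the automorphism table, use the unit instead. Every automorphism fixes $e_1$, and $e_1$ lies in $\langle e_1,e_2\rangle$ but not in $\langle e_1-e_2,e_3\rangle$. This matters because ${\rm A}_4\cong\mathbb{C}^3$ as an algebra, so its full automorphism group is larger than ${\rm diag}(1,1,\pm1)$.
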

\begin{corollary} Two-dimensional ideals of $\rm{A}_4$ are the following
$\big\langle e_1\pm {\mathfrak i} e_3, e_2\big\rangle$ or
$\big\langle e_1-e_2,  e_3\big\rangle.$ 
There is only one two-dimensional $\overline{\mbox{ideal}}$ equal to 
$\big\langle e_1-e_2,  e_3\big\rangle.$ 
\end{corollary}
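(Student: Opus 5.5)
Since every ideal is in particular a subalgebra, I will start from the complete list of two-dimensional subalgebras of ${\rm A}_4$ given in the preceding theorem:
$\big\langle e_1, e_2\big\rangle,$ $\big\langle e_1\pm {\mathfrak i} e_3, e_2\big\rangle,$ $\big\langle e_1-e_2, e_3\big\rangle$ and $\big\langle e_1, \pm {\mathfrak i}e_2+ e_3\big\rangle.$
For each of them I will test whether it is closed under multiplication by the basis elements $e_2,e_3$. Multiplication by $e_1$ needs no check, since $e_1$ is the unit. The multiplication of ${\rm A}_4$ is commutative, so it suffices to test one side.

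The first reduction is immediate. A proper ideal cannot contain the unit $e_1$, because then it would contain $e_1x=x$ for every $x$. This excludes $\big\langle e_1, e_2\big\rangle$ and both $\big\langle e_1, \pm{\mathfrak i}e_2+e_3\big\rangle$.

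For the remaining candidates the check is direct, using $e_2e_2=e_2$, $e_2e_3=0$ and $e_3e_3=-e_1+e_2$.
\begin{itemize}
\item For $\mathfrak{s}=\big\langle e_1\pm{\mathfrak i}e_3, e_2\big\rangle$:
\begin{itemize}
\item $(e_1\pm{\mathfrak i}e_3)e_2=e_2\in\mathfrak s$;
\item $e_2e_2=e_2\in\mathfrak s$ and $e_2e_3=0$;
\item $(e_1\pm{\mathfrak i}e_3)e_3=e_3\mp{\mathfrak i}e_1\pm{\mathfrak i}e_2=\mp{\mathfrak i}(e_1\pm{\mathfrak i}e_3)\pm{\mathfrak i}e_2\in\mathfrak s$.
\end{itemize}
Hence both of these subalgebras are ideals.
\item For $\big\langle e_1-e_2,e_3\big\rangle$:
\begin{itemize}
\item $(e_1-e_2)e_2=0$ and $(e_1-e_2)e_3=e_3$;
\item $e_3e_2=0$ and $e_3e_3=-(e_1-e_2)$.
\end{itemize}
Hence it is an ideal as well.
\end{itemize}
This gives the first part of the statement.

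For the $\overline{\mbox{ideals}}$, I only need to check which of these three ideals are invariant under the involution $\overline{e_1}=e_1$, $\overline{e_2}=e_2$, $\overline{e_3}=-e_3$.
\begin{itemize}
\item The ideal $\big\langle e_1-e_2,e_3\big\rangle$ is spanned by eigenvectors of the involution, so it is invariant.
\item The involution sends $e_1\pm{\mathfrak i}e_3$ to $e_1\mp{\mathfrak i}e_3$. If $e_1\mp{\mathfrak i}e_3$ lay in $\big\langle e_1\pm{\mathfrak i}e_3,e_2\big\rangle$, then subtracting would put $e_3$ in this span, and then $e_1$ too. That is impossible in a two-dimensional space also containing $e_2$.
\end{itemize}
So $\big\langle e_1-e_2,e_3\big\rangle$ is the unique two-dimensional $\overline{\mbox{ideal}}$.

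There is no real obstacle here. The only step needing care is the membership computation for $e_3(e_1\pm{\mathfrak i}e_3)$, where the sign choice must be tracked consistently.
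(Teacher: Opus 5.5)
Your proof is correct and follows the paper's route. The corollary is read off from the preceding classification of two-dimensional subalgebras of ${\rm A}_4$: discard those containing the unit $e_1$, check closure under multiplication by $e_2,e_3$ for the rest, and then test invariance under the involution. Your computations, including $(e_1\pm{\mathfrak i}e_3)e_3=\mp{\mathfrak i}(e_1\pm{\mathfrak i}e_3)\pm{\mathfrak i}e_2$, and your argument that $\langle e_1\pm{\mathfrak i}e_3,e_2\rangle$ is not involution-invariant both check out.
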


\medskip \noindent{\it Subalgebras of  ${\rm A}_5.$}

\begin{theorem}\label{subA5}
Let $\mathfrak{s}$ be a one-dimensional subalgebra of $\rm{A}_5,$ then $\mathfrak{s}$ is one of the following subalgebras:
$\big\langle e_1 \big\rangle,$ \ 
$\big\langle e_2 \big\rangle,$ \ 
or 
$\big\langle e_1+\alpha e_2 \pm e_3 \big\rangle_{\alpha\in \mathbb C}.$
Up to isomorphisms, all one-dimensional subalgebras of $\rm{A}_5$ are equivalent to  
$\big\langle e_1 \big\rangle,$ \ 
$\big\langle e_2 \big\rangle$ \ 
or 
$\big\langle e_1  \pm e_3 \big\rangle.$
\end{theorem}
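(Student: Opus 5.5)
Following the pattern used for ${\rm A}_1$--${\rm A}_4$, we take a generic one-dimensional subspace $\mathfrak{s}=\big\langle \alpha e_1+\beta e_2+\gamma e_3\big\rangle$ and impose $x^2=kx$ for $x=\alpha e_1+\beta e_2+\gamma e_3$. In ${\rm A}_5$ the only nonunital products are $e_2e_3=e_2$, $e_3e_2=-e_2$, $e_3e_3=e_1$. The mixed terms $\beta\gamma(e_2e_3+e_3e_2)$ cancel, so
\[
x^2=(\alpha^2+\gamma^2)e_1+2\alpha\beta e_2+2\alpha\gamma e_3 .
\]
Comparing coefficients gives the system $\alpha^2+\gamma^2=k\alpha$, $2\alpha\beta=k\beta$, $2\alpha\gamma=k\gamma$.

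We then split into cases as in the earlier proofs.
\begin{enumerate}
\item[(1)] If $k=0$, then $\alpha\beta=\alpha\gamma=0$ and $\alpha^2+\gamma^2=0$. If $\alpha\neq0$, this forces $\beta=\gamma=0$ and then $\alpha=0$, a contradiction. So $\alpha=0$, hence $\gamma=0$, and we get $\big\langle e_2\big\rangle$.
\item[(2)] If $k\neq0$ and $\alpha=0$, then $\beta=\gamma=0$, which is impossible.
\item[(3)] If $k\neq0$ and $\alpha\neq0$, normalize $\alpha=1$.
\begin{enumerate}
\item If $\gamma=0$, then $k=1$; the equation $2\beta=\beta$ gives $\beta=0$, yielding $\big\langle e_1\big\rangle$.
\item If $\gamma\neq0$, then $k=2$, so $\beta$ is free and $1+\gamma^2=2$ gives $\gamma=\pm1$. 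This yields the family $\big\langle e_1+\alpha e_2\pm e_3\big\rangle_{\alpha\in\mathbb C}$.
\end{enumerate}
\end{enumerate}
This is the first part of the statement.

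For the second part we use $\mathrm{Aut}({\rm A}_5)$ from Proposition \ref{Aut}. Its elements fix $e_1$ and $e_3$ and send $e_2\mapsto \alpha' e_2$. In the matrix form $e_3\mapsto \beta' e_2+e_3$ (matrix columns as images), so
\[
\varphi(e_1+\alpha e_2\pm e_3)=e_1+(\alpha\alpha'\pm\beta')e_2\pm e_3 .
\]
Choosing $\beta'=\mp\alpha\alpha'$ brings the generator to $e_1\pm e_3$. The two signs cannot be identified, because every automorphism fixes the $e_3$-coordinate. The subalgebras $\big\langle e_1\big\rangle$ and $\big\langle e_2\big\rangle$ are preserved by all automorphisms.

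The only delicate step is reading the automorphism matrix convention correctly, so that the parameter $\beta'$ indeed adds a multiple of $e_2$ to $e_3$. We check this directly by verifying that $e_3\mapsto \beta' e_2+e_3$ preserves $e_2e_3=e_2$ and $e_3e_3=e_1$, using $e_2e_2=0$ and $e_2e_3+e_3e_2=0$.
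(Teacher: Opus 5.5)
Your proof is correct and follows the paper's argument: the same computation $x^2=(\alpha^2+\gamma^2)e_1+2\alpha\beta e_2+2\alpha\gamma e_3$, the same case analysis (you split on $\alpha$ where the paper splits on $\gamma$, which changes nothing), and the same use of $\mathrm{Aut}({\rm A}_5)$ to normalize the family. Your explicit checks add detail the paper leaves implicit: you verify that $e_3\mapsto\beta'e_2+e_3$ is an automorphism, and you note that the $e_3$-coordinate is invariant, so the two signs stay distinct. One slip to fix: the phrase ``fix $e_1$ and $e_3$'' should read ``fix $e_1$ and preserve the $e_3$-coordinate''.
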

\begin{proof}
Let $\mathfrak{s}=\big\langle \alpha e_1 + \beta e_2 + \gamma e_3 \big\rangle .$
Consider the product:
\[
\big(\alpha e_1 + \beta e_2 + \gamma e_3\big)
\big(\alpha e_1 + \beta e_2 + \gamma e_3\big)
\ =\  k\big(\alpha e_1 + \beta e_2 + \gamma e_3\big),
\]
which leads to
\[
(\alpha^2 + \gamma^2)e_1+2\alpha\beta e_2 + 2\alpha\gamma e_3
\ =\  k\big(\alpha e_1 + \beta e_2 + \gamma e_3\big),
\]
i.e., $\alpha^2+\gamma^2 \ =\  k\alpha,$ \ $2\alpha\beta \ =\  k\beta,$ \ and $2\alpha\gamma \ =\  k\gamma.$ 
Let us consider the following cases:

\begin{enumerate}
\item[(1)] 
If $k = 0$, then $\alpha =  \gamma=0$, and  we get the subalgebra $\big\langle e_2 \big\rangle.$
  
\item[(2)] If $k \neq 0$ and $\gamma = 0$, 
then $\alpha = 0$ or $\alpha = k.$
The first opportunity gives $\beta=0$, i.e., it is a contradiction. 
The second  opportunity gives $\beta=0$, i.e., we have the subalgebra $\big\langle e_1 \big\rangle.$

\item[(3)] If $k\neq 0$ and $\gamma \neq 0$, then 
$\alpha = \frac{k}{2}$ and $\gamma=\pm \frac{k}{2};$ 
which gives    
$\big\langle  e_1+2\beta e_2 \pm e_3 \big\rangle_{\beta\in\mathbb C}.$

\end{enumerate}
This gives  the first part of our statement.
Taking automorphisms of $\rm{A}_5$ from Proposition \ref{Aut}, 
we have the second part of our statement.
\end{proof}
\begin{corollary}

Let $\mathfrak{s}$ be a one-dimensional $\overline{\mbox{subalgebra}}$ of $\rm{A}_5,$ then $\mathfrak{s}$ is one of the following $\overline{\mbox{subalgebras}}$:
$\big\langle e_1 \rangle$ or $\langle e_2 \big\rangle.$
Up to isomorphisms, all one-dimensional $\overline{\mbox{subalgebras}}$ of $\rm{A}_5$ are equivalent to  $\big\langle e_1 \big\rangle$ or $\big\langle e_2 \big\rangle.$ 

\end{corollary}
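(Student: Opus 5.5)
The corollary asks for the one-dimensional $\overline{\mbox{subalgebras}}$ of ${\rm A}_5$. The plan is to filter the list in Theorem \ref{subA5} by invariance under the involution. A one-dimensional subspace $\mathfrak{s}=\langle v\rangle$ is an $\overline{\mbox{subalgebra}}$ exactly when it is a subalgebra and $\overline{\mathfrak s}=\mathfrak s$, that is, $\overline{v}=\lambda v$ for some $\lambda$. Since $^-$ is an involution, $\lambda=\pm1$, so $v$ must lie in $\mathcal H=\langle e_1,e_2\rangle$ or in $\mathcal S=\langle e_3\rangle$.

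I will test each candidate from Theorem \ref{subA5} in turn. The subspaces $\langle e_1\rangle$ and $\langle e_2\rangle$ lie in $\mathcal H$, so they are invariant and both survive. For $v=e_1+\alpha e_2\pm e_3$ we get $\overline v=e_1+\alpha e_2\mp e_3$. This is not a scalar multiple of $v$: comparing the $e_1$-coefficients forces $\lambda=1$, and then the $e_3$-coefficients would give $\pm1=\mp1$, which is impossible. So this whole family is excluded. As a consistency check, $\langle e_3\rangle$ would be the only candidate from $\mathcal S$, but $e_3e_3=e_1\notin\langle e_3\rangle$, so it is not a subalgebra; this agrees with its absence from Theorem \ref{subA5}.

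For the part "up to isomorphisms", $\langle e_1\rangle$ and $\langle e_2\rangle$ cannot be identified. Indeed, $e_1e_1=e_1\neq0$ while $e_2e_2=0$, and by Lemma \ref{automor} every automorphism fixes $e_1$. Alternatively, one can use the matrices of $\overline{\rm Aut}({\rm A}_5)$ in Proposition \ref{Aut}, which map $\langle e_2\rangle$ to itself. This gives exactly the two classes stated.

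I do not expect a genuine obstacle. The only point needing care is to state precisely that "$\overline{\mbox{subalgebra}}$" means a subalgebra invariant under $^-$, and to observe that a one-dimensional invariant subspace must be an eigenline of $^-$.
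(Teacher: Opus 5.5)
Your argument is correct and follows the paper's route: the corollary is read off from Theorem \ref{subA5} by keeping only the involution-invariant subalgebras, and your observation that an invariant line must satisfy $\overline{v}=\pm v$ cleanly excludes the family $\langle e_1+\alpha e_2\pm e_3\rangle$. Your check that $\langle e_1\rangle$ and $\langle e_2\rangle$ are genuinely inequivalent (via $e_1e_1=e_1$, $e_2e_2=0$, and Lemma \ref{automor}) is a small addition that the paper leaves implicit.
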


\begin{corollary}
There is only one one-dimensional ideal of $\rm{A}_5,$ 
it is an $\overline{\mbox{ideal}}$ and equal to    $\big\langle   e_2 \big\rangle.$
\end{corollary}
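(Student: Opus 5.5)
Since every ideal is in particular a subalgebra, I will start from the complete list of one-dimensional subalgebras of $\rm{A}_5$ given in Theorem \ref{subA5}: $\big\langle e_1 \big\rangle$, $\big\langle e_2 \big\rangle$, and the family $\big\langle e_1+\alpha e_2 \pm e_3 \big\rangle_{\alpha\in\mathbb C}$. I will then test each candidate for two-sided absorption under multiplication by the basis elements $e_2,e_3$. Multiplication by $e_1$ is automatic because $e_1$ is the unit, and by linearity it suffices to check basis elements. The nonzero products of $\rm{A}_5$ I will use are $e_2e_3=e_2$, $e_3e_2=-e_2$ and $e_3e_3=e_1$.

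First, $\big\langle e_1\big\rangle$ is not an ideal, since $e_1e_2=e_2\notin\big\langle e_1\big\rangle$. Second, $\big\langle e_2\big\rangle$ is an ideal. Indeed, $e_2e_2=0$, $e_2e_3=e_2$ and $e_3e_2=-e_2$ all lie in $\big\langle e_2\big\rangle$. Moreover, $\overline{e_2}=e_2$, so this ideal is invariant under the involution and hence is an $\overline{\mbox{ideal}}$.

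For the family $x_\pm=e_1+\alpha e_2\pm e_3$, I will use that any nonzero element of $\big\langle x_\pm\big\rangle$ has nonzero $e_1$-coefficient, so no nonzero multiple of $e_2$ can lie in it. For the sign $+$, the product $e_2x_+=e_2+e_2e_3=2e_2$ is a nonzero multiple of $e_2$, so $\big\langle x_+\big\rangle$ is not an ideal. For the sign $-$, left multiplication by $e_2$ gives $0$. Left multiplication by $e_3$ gives $e_3-\alpha e_2-e_1=-x_-$, which also lies in the span. So I must use right multiplication: $x_-e_2=e_2-e_3e_2=2e_2$, which is a nonzero multiple of $e_2$ and hence not in $\big\langle x_-\big\rangle$. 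Therefore none of these subalgebras is an ideal.

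The only mildly delicate point is precisely the sign $-$ case, where left multiplications happen to preserve the span and one must remember that an ideal is two-sided. Collecting the cases, $\big\langle e_2\big\rangle$ is the unique one-dimensional ideal of $\rm{A}_5$, and it is an $\overline{\mbox{ideal}}$.
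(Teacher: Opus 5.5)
Your proof is correct and follows the same route the paper intends. The paper gives no separate proof; the corollary is read off from the list of one-dimensional subalgebras in Theorem~\ref{subA5} by checking which candidates absorb products from both sides. Your computations are right, including $e_2x_+=2e_2$ and the right product $x_-e_2=2e_2$ needed for the minus sign. Since $\overline{e_2}=e_2$, the ideal $\langle e_2\rangle$ is indeed an $\overline{\mbox{ideal}}$.
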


\begin{theorem}\label{sub2A5}
Let $\mathfrak{s}$ be a two-dimensional subalgebra of $\rm{A}_5,$ then $\mathfrak{s}$ is one of the following subalgebras:
$\big\langle e_1,e_2 \big\rangle,$ 
$\big\langle e_1\pm e_3,e_2 \big\rangle,$ 
or $
\big\langle e_1,\alpha e_2+e_3 \big\rangle_{\alpha \in \mathbb C}.$
Up to isomorphisms, all two-dimensional subalgebras of $\rm{A}_5$ are equivalent to  
$\big\langle e_1,e_2 \big\rangle,$ 
$\big\langle e_1\pm e_3,e_2 \big\rangle$ 
or $
\big\langle e_1, e_3 \big\rangle.$

\end{theorem}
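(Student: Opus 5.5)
I would follow the same scheme as in the previous two-dimensional cases (Theorems \ref{sub2A1} and the ones for ${\rm A}_2$--${\rm A}_4$). Recall the multiplication of ${\rm A}_5$: $e_2e_3=e_2$, $e_3e_2=-e_2$, $e_3e_3=e_1$, with $e_2e_2=0$.

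\emph{Step 1: the case $e_1\notin\mathfrak s+\langle e_2,e_3\rangle$-complement.} If $\mathfrak s=\langle e_2,e_3\rangle$, then $e_3e_3=e_1\notin\mathfrak s$. Hence $\langle e_2,e_3\rangle$ is not a subalgebra, so $\mathfrak s\cap\langle e_2,e_3\rangle$ is one-dimensional. I would therefore write
\[
\mathfrak s=\big\langle e_1+\alpha_2e_2+\alpha_3e_3,\ \beta_2e_2+\beta_3e_3\big\rangle .
\]

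\emph{Step 2: $\beta_3\neq0$.} The square $(\beta_2e_2+\beta_3e_3)^2=\beta_3^2e_1$ lies in $\mathfrak s$, so $e_1\in\mathfrak s$ and $\mathfrak s=\langle e_1,\beta_2e_2+\beta_3e_3\rangle$. Normalizing $\beta_3=1$ gives $\langle e_1,\alpha e_2+e_3\rangle$. This is closed, since $(\alpha e_2+e_3)^2=e_1$ and $e_1$ is the unit.

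\emph{Step 3: $\beta_3=0$.} Then $e_2\in\mathfrak s$, and we may take $\mathfrak s=\langle e_1+\alpha_3e_3,e_2\rangle$. Products with $e_2$ stay in $\langle e_2\rangle$, so the only condition comes from
\[
(e_1+\alpha_3e_3)^2=(1+\alpha_3^2)e_1+2\alpha_3e_3=k_1(e_1+\alpha_3e_3)+k_2e_2 .
\]
Comparing coefficients gives $k_1=1+\alpha_3^2$ and $2\alpha_3=k_1\alpha_3$. So either $\alpha_3=0$, giving $\langle e_1,e_2\rangle$, or $\alpha_3^2=1$, giving $\langle e_1\pm e_3,e_2\rangle$. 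This is consistent with the one-dimensional subalgebras $\langle e_1\pm e_3\rangle$ of Theorem \ref{subA5}, and it proves the first part.

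\emph{Step 4: the second part.} I would use the automorphisms of ${\rm A}_5$ from Proposition \ref{Aut}: $e_2\mapsto\alpha e_2$, $e_3\mapsto\beta e_2+e_3$. Taking $\beta=-\alpha$ maps $\alpha e_2+e_3$ to $e_3$, so the whole family $\langle e_1,\alpha e_2+e_3\rangle$ collapses to $\langle e_1,e_3\rangle$.

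The subalgebras $\langle e_1+e_3,e_2\rangle$ and $\langle e_1-e_3,e_2\rangle$ are not related by these automorphisms, since $e_3$ is always fixed modulo $e_2$. So both stay listed. This is the only point needing care, and it is why the statement keeps the $\pm$. I expect no real obstacle: the only delicate point is noticing in Step 2 that the square forces $e_1\in\mathfrak s$ before any case split.
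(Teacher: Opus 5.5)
Your proof is correct and is essentially the paper's argument: you rule out $\langle e_2,e_3\rangle$ via $e_3e_3=e_1$, use $(\beta_2e_2+\beta_3e_3)^2=\beta_3^2e_1$ to split on $\beta_3$, solve $(e_1+\alpha_3e_3)^2\in\mathfrak s$ to get $\alpha_3\in\{0,\pm1\}$, and reduce with the automorphisms of Proposition~\ref{Aut}. Your closure check for $\langle e_1,\alpha e_2+e_3\rangle$ and your invariance argument keeping $\langle e_1+e_3,e_2\rangle$ and $\langle e_1-e_3,e_2\rangle$ apart are details the paper leaves implicit. There is one notational slip in Step 4, where the automorphism's scaling parameter clashes with the family parameter $\alpha$: the automorphism you want is $e_2\mapsto e_2$, $e_3\mapsto -\alpha e_2+e_3$.
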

\begin{proof}
It is easy to see that $\big\langle e_2, e_3\big\rangle$ is not a subalgebra. Let 
\begin{center}$\mathfrak{s}=\big\langle e_1 + \alpha_2 e_2+\alpha_3 e_3,  \beta_2e_2+\beta_3e_3 \big\rangle .$
\end{center} 
Consider the product:
\begin{center}
$(\beta_2e_2+\beta_3e_3)(\beta_2e_2+\beta_3e_3)=
\beta_3^2 e_1 \in \mathfrak s,$
\end{center}
which gives us two cases:
\begin{enumerate}
    \item If $\beta_3\neq0,$ then we have the following family of subalgebras $\big\langle e_1, \alpha e_2+e_3\big\rangle_{\alpha \in \mathbb C}.$
\item If $\beta_3=0,$ then 
$\mathfrak{s}=\big\langle e_1 + \alpha_3 e_3,  e_2\big\rangle$ and 
$(e_1 + \alpha_3 e_3)(e_1 + \alpha_3 e_3) = 
(1+\alpha_3^2)e_1 + 2\alpha_3 e_3 \in \mathfrak s.$
That gives three opportunities for $\alpha_3:$ $0$ or $\pm 1.$\end{enumerate}

\noindent This gives  the first part of our statement.
Taking automorphisms of $\rm{A}_5$ from Proposition \ref{Aut}, 
we have the second part of our statement.
\end{proof}
\begin{corollary}

Let $\mathfrak{s}$ be a two-dimensional $\overline{\mbox{subalgebra}}$ of $\rm{A}_5,$ then $\mathfrak{s}$ is one of the following 
$\big\langle e_1,e_2 \big\rangle$ or 
$\big\langle e_1,e_3 \big\rangle.$ 
Up to isomorphisms, all two-dimensional $\overline{\mbox{subalgebras}}$ of $\rm{A}_5$ are equivalent to  $\big\langle e_1,e_2 \big\rangle$ or 
$\big\langle e_1,e_3 \big\rangle.$ 
\end{corollary}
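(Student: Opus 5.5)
The plan is to deduce the statement directly from Theorem \ref{sub2A5}, reading a two-dimensional $\overline{\mbox{subalgebra}}$ as a two-dimensional subalgebra that is invariant under the involution $^-$. On ${\rm A}_5$ the involution fixes $e_1,e_2$ and sends $e_3\mapsto -e_3$. So it suffices to take the complete list of two-dimensional subalgebras from Theorem \ref{sub2A5} and test each one for stability under $^-$. Equivalently, a subspace $\mathfrak s$ is $^-$-invariant if and only if $\mathfrak s=(\mathfrak s\cap\mathcal H)\oplus(\mathfrak s\cap\mathcal S)$, where $\mathcal H=\langle e_1,e_2\rangle$ and $\mathcal S=\langle e_3\rangle$.

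The subalgebras are checked in turn as follows.
\begin{enumerate}
\item $\langle e_1,e_2\rangle\subseteq\mathcal H$ is pointwise fixed by $^-$, so it is invariant.
\item For $\langle e_1\pm e_3,e_2\rangle$, the involution gives $\overline{e_1\pm e_3}=e_1\mp e_3$. If this subspace were invariant, it would contain $e_1$ and $e_3$ as well as $e_2$. That makes it three-dimensional, a contradiction, so these two subalgebras are excluded.
\item For the family $\langle e_1,\alpha e_2+e_3\rangle_{\alpha\in\mathbb C}$, the involution gives $\overline{\alpha e_2+e_3}=\alpha e_2-e_3$. This lies in the span if and only if $\alpha e_2-e_3=\lambda e_1+\mu(\alpha e_2+e_3)$. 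Comparing coefficients forces $\mu=-1$ and $\alpha=-\alpha$, so $\alpha=0$. The family therefore contributes only $\langle e_1,e_3\rangle$.
\end{enumerate}
This gives the first part of the statement: the only candidates are $\langle e_1,e_2\rangle$ and $\langle e_1,e_3\rangle$.

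For the classification up to isomorphism, we need these two $\overline{\mbox{subalgebras}}$ to be inequivalent. There are two independent ways to see this.
\begin{itemize}
\item By Proposition \ref{Aut}, every element of $\overline{\mathrm{Aut}}({\rm A}_5)$ is of the form ${\rm diag}(1,\alpha,1)$. Such a map preserves each of $\langle e_1,e_2\rangle$ and $\langle e_1,e_3\rangle$, so no $\overline{\mbox{automorphism}}$ carries one onto the other.
\item They are not isomorphic even as abstract algebras with involution. In $\langle e_1,e_2\rangle$ the non-unit element squares to zero, $e_2e_2=0$. In $\langle e_1,e_3\rangle$ we have $e_3e_3=e_1$, so it contains no nonzero nilpotent element.
\end{itemize}

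There is no real obstacle here. The only step needing care is the coefficient comparison that excludes $\alpha\neq 0$ in the one-parameter family, and it is immediate.
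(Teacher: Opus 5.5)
Your proof is correct and takes the same route the paper implicitly uses (the corollary is stated without proof). You keep only those subalgebras from the list of Theorem~\ref{sub2A5} that are stable under the involution, and then separate $\langle e_1,e_2\rangle$ and $\langle e_1,e_3\rangle$ using the automorphisms of Proposition~\ref{Aut}; your further remark that $e_2$ is a nonzero nilpotent in $\langle e_1,e_2\rangle$ while $\langle e_1,e_3\rangle$ has none is a useful extra, since it makes the inequivalence independent of which automorphism group is meant.
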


\begin{corollary} Two-dimensional ideals  of $\rm{A}_5$ are the following
$\big\langle e_1\pm e_3,e_2 \big\rangle,$ 
There is no   one two-dimensional
$\overline{\mbox{ideals}}$ of $\rm{A}_5.$
\end{corollary}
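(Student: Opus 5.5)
The plan is to reduce everything to Theorem \ref{sub2A5}. Every two-dimensional ideal is in particular a two-dimensional subalgebra, so it must appear in the list
$\big\langle e_1,e_2 \big\rangle,$ $\big\langle e_1\pm e_3,e_2 \big\rangle,$ $\big\langle e_1,\alpha e_2+e_3 \big\rangle_{\alpha \in \mathbb C}.$
We then test each candidate directly against the multiplication table of ${\rm A}_5$, which is $e_2e_3=e_2,$ $e_3e_2=-e_2,$ $e_3e_3=e_1,$ with $e_1$ the unit.

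First we discard every subalgebra containing $e_1$. If $e_1$ lies in an ideal $I$, then $x=xe_1\in I$ for all $x$, so $I={\rm A}_5$. This removes $\big\langle e_1,e_2\big\rangle$ and the whole family $\big\langle e_1,\alpha e_2+e_3\big\rangle$.

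Next we check that $I_\pm=\big\langle e_1\pm e_3, e_2\big\rangle$ really are ideals. It suffices to multiply the two spanning vectors on both sides by $e_2$ and $e_3$, since multiplication by $e_1$ is trivial.
\begin{itemize}
\item Multiplication by $e_2$: $e_2(e_1\pm e_3)=e_2\pm e_2$ and $(e_1\pm e_3)e_2=e_2\mp e_2$, both multiples of $e_2$; also $e_2e_2=0$.
\item Multiplication by $e_3$: $e_3(e_1\pm e_3)=(e_1\pm e_3)e_3=e_3\pm e_1=\pm(e_1\pm e_3)$, and $e_2e_3=e_2$, $e_3e_2=-e_2$.
\end{itemize}
All these products lie in $I_\pm$, so both $I_+$ and $I_-$ are two-dimensional ideals, which proves the first assertion.

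For the second assertion, a $\overline{\mbox{ideal}}$ must in addition be invariant under the involution. We have $\overline{e_1\pm e_3}=e_1\mp e_3$. If this lay in $I_\pm$, then together with $e_1\pm e_3$ it would give $e_1\in I_\pm$, which is false. Hence neither $I_+$ nor $I_-$ is a $\overline{\mbox{ideal}}$, and there are no two-dimensional $\overline{\mbox{ideals}}$ of ${\rm A}_5$.

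The only place needing care is relying on the completeness of the list in Theorem \ref{sub2A5}. Everything else is a finite check, so no real obstacle is expected.
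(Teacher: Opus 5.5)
Your proposal is correct and follows the paper's (implicit) route: you read the corollary off the list of two-dimensional subalgebras in Theorem~\ref{sub2A5}, discard those containing the unit $e_1$, verify that $\langle e_1\pm e_3,e_2\rangle$ are two-sided ideals, and show that neither is involution-stable because $\overline{e_1\pm e_3}=e_1\mp e_3$. Your multiplication checks are all accurate, and the list in Theorem~\ref{sub2A5}, which is the one point you flag, is indeed complete.
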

\medskip \noindent{\it Subalgebras of  ${\rm S}_1.$ }

\begin{theorem}
Let $\mathfrak{s}$ be a one-dimensional subalgebra of $\rm{S}_1,$ then $\mathfrak{s}$ is one of the following subalgebras:
$\big\langle e_1 \big\rangle,$ 
$\big\langle e_2 \big\rangle,$ or 
$\big\langle \alpha  e_2 + e_3 \big\rangle_{\alpha \in \mathbb C}.$
Up to isomorphisms, all one-dimensional subalgebras of $\rm{S}_1$ are equivalent to  $\big\langle e_1 \big\rangle $ or $ \big\langle e_2 \big\rangle.$ 

\end{theorem}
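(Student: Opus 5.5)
I will follow exactly the pattern used for ${\rm A}_1$ in Theorem \ref{subA1}, since ${\rm S}_1$ is the universal unital algebra of type $(1,2)$. Its multiplication has only the products involving the unit $e_1$; all products among $e_2,e_3$ vanish. The only change from ${\rm A}_1$ is the involution, which plays no role for ordinary (not $\overline{\mbox{}}$) subalgebras.

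\textbf{Reduction to one equation.} Write $\mathfrak{s}=\big\langle \alpha e_1+\beta e_2+\gamma e_3\big\rangle$. Since $\mathfrak{s}$ is one-dimensional, it is a subalgebra exactly when the square of its generator lies in $\mathfrak{s}$. Expanding with $e_1$ as unit and $e_ie_j=0$ for $i,j\in\{2,3\}$ gives
\[
\alpha^2 e_1+2\alpha\beta e_2+2\alpha\gamma e_3 = k\big(\alpha e_1+\beta e_2+\gamma e_3\big).
\]
This yields the system $\alpha^2=k\alpha$, $2\alpha\beta=k\beta$, $2\alpha\gamma=k\gamma$.

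\textbf{Case analysis.}
\begin{enumerate}
\item[(1)] If $k=0$, then $\alpha=0$, while $\beta,\gamma$ are arbitrary and not both zero. Normalising gives $\big\langle e_2\big\rangle$ when $\gamma=0$, and $\big\langle \alpha e_2+e_3\big\rangle$ with $\alpha\in\mathbb C$ when $\gamma\neq0$.
\item[(2)] If $k\neq0$ and $\alpha=0$, then $\beta=\gamma=0$, a contradiction.
\item[(3)] If $k\neq0$ and $\alpha\neq0$, then $\alpha=k$, so $2k\beta=k\beta$ and $2k\gamma=k\gamma$ force $\beta=\gamma=0$. This gives $\big\langle e_1\big\rangle$.
\end{enumerate}
This proves the first part of the statement.

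\textbf{Classification up to isomorphism.} By Proposition \ref{Aut}, every block matrix $\mathrm{diag}\big(1,\begin{pmatrix}\alpha&\gamma\\ \delta&\beta\end{pmatrix}\big)$ with non-degenerate lower block is an automorphism of ${\rm S}_1$. Indeed, any linear map fixing $e_1$ and preserving $\langle e_2,e_3\rangle$ respects the zero products. Thus $\mathrm{GL}_2$ acts transitively on the nonzero vectors of $\langle e_2,e_3\rangle$, so each $\big\langle \alpha e_2+e_3\big\rangle$ is mapped onto $\big\langle e_2\big\rangle$. On the other hand, $\big\langle e_1\big\rangle$ is fixed by every automorphism (Lemma \ref{automor}), and it is not isomorphic to the null subalgebra $\big\langle e_2\big\rangle$ anyway. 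Hence exactly two classes remain: $\big\langle e_1\big\rangle$ and $\big\langle e_2\big\rangle$.

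\textbf{Main obstacle.} There is no real obstacle. The only point needing care is confirming that the full automorphism group of Proposition \ref{Aut} is available here, rather than only the $\overline{\rm Aut}$ version. For ${\rm S}_1$ the two groups coincide, so the transitivity argument goes through.
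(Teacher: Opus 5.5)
Your proof is correct and is essentially the paper's argument. The paper simply notes that ${\rm S}_1$ has the same multiplication table as ${\rm A}_1$ and invokes Theorem \ref{subA1}, whose proof is exactly the squaring computation, three-case split, and automorphism step that you reproduce directly (your remark that ${\rm Aut}$ and $\overline{\rm Aut}$ coincide for ${\rm S}_1$ is accurate).
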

\begin{proof}
The multiplication table  of ${\rm S}_1$ coincides with the multiplication table of 
${\rm A}_1,$ hence our proof follows from Theorem \ref{subA1}. 
\end{proof}
\begin{corollary}

Let $\mathfrak{s}$ be a one-dimensional $\overline{\mbox{subalgebra}}$ of $\rm{S}_1,$ then $\mathfrak{s}$ is one of the following $\overline{\mbox{subalgebras}}$:
$\big\langle e_1 \big\rangle,$ 
$\big\langle e_2 \big\rangle,$ or 
$\big\langle \alpha  e_2 + e_3 \big\rangle_{\alpha \in \mathbb C}.$
Up to isomorphisms, all one-dimensional $\overline{\mbox{subalgebras}}$ of $\rm{S}_1$ are equivalent to  $\big\langle e_1 \big\rangle$ or $\big\langle e_2 \big\rangle.$ 

\end{corollary}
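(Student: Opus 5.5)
The statement is the last corollary: the one-dimensional $\overline{\mbox{subalgebras}}$ of ${\rm S}_1$ are $\langle e_1\rangle$, $\langle e_2\rangle$ and $\langle \alpha e_2+e_3\rangle_{\alpha\in\mathbb C}$, and up to isomorphism there are only $\langle e_1\rangle$ and $\langle e_2\rangle$.

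The plan is to deduce this directly from the preceding theorem on one-dimensional subalgebras of ${\rm S}_1$. I use the same notion of $\overline{\mbox{subalgebra}}$ as in the analogous corollaries for ${\rm A}_1,\dots,{\rm A}_5$: a subalgebra that is invariant under the involution $^-$. The first step is to take the list of one-dimensional subalgebras of ${\rm S}_1$ from that theorem, which the paper obtains from Theorem \ref{subA1}, and check each one for $^-$-invariance.

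In ${\rm S}_1$ the involution is $\overline{e_1}=e_1$ and $\overline{e_2}=-e_2$, $\overline{e_3}=-e_3$. So on $\mathcal S=\langle e_2,e_3\rangle$ the involution is just $-\mathrm{id}$, and every line inside $\mathcal S$ is mapped to itself.
\begin{itemize}
\item $\langle e_1\rangle$ is invariant, since $e_1\in\mathcal H$.
\item $\langle e_2\rangle$ is invariant, since it lies in $\mathcal S$.
\item $\langle \alpha e_2+e_3\rangle$ is invariant for every $\alpha\in\mathbb C$, since $\overline{\alpha e_2+e_3}=-(\alpha e_2+e_3)$.
\end{itemize}
So no subalgebra is lost, and the first part of the statement is exactly the list from the preceding theorem. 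This contrasts with ${\rm A}_1$, where only $\langle e_3\rangle$ survived from the family.

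For the classification up to isomorphism, I use that $\overline{\mathrm{Aut}}({\rm S}_1)=\mathrm{Aut}({\rm S}_1)$ by Proposition \ref{Aut}. This group consists of all non-degenerate block matrices fixing $e_1$ with an arbitrary $2\times2$ block on $\langle e_2,e_3\rangle$. Concretely, take the matrix with $\alpha=1,\ \beta=0,\ \gamma=\alpha,\ \delta=1$ in Proposition \ref{Aut}'s notation, which has determinant $-\alpha\gamma+\ldots$; more simply, any invertible block sending $e_2\mapsto \alpha e_2+e_3$ works. Such a block exists because $\alpha e_2+e_3\neq0$, and it commutes with the involution automatically.

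Hence the whole family $\langle\alpha e_2+e_3\rangle$ is identified with $\langle e_2\rangle$. Moreover, $\langle e_1\rangle$ contains the unit, while $\langle e_2\rangle$ has zero square, so these two are not equivalent.

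There is no real obstacle here. The only point needing care is verifying that the chosen automorphism commutes with the involution, and this holds because the involution acts as a scalar on $\mathcal S$.
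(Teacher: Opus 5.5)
Your proposal is correct and follows the route the paper leaves implicit. You restrict the list of one-dimensional subalgebras of ${\rm S}_1$ (inherited from Theorem \ref{subA1}) to the involution-invariant ones, which is all of them since $^-$ acts as $-\mathrm{id}$ on $\langle e_2,e_3\rangle$, and then use $\overline{\mathrm{Aut}}({\rm S}_1)=\mathrm{Aut}({\rm S}_1)$ to collapse the family onto $\langle e_2\rangle$. The only blemish is your first explicit matrix, which reuses $\alpha$ for two parameters and is singular when $\alpha=0$; drop it, because the existence of an invertible block on $\langle e_2,e_3\rangle$ sending $e_2\mapsto\alpha e_2+e_3$ already does the job.
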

\begin{corollary} 
Each one-dimensional ideal of $\rm{S}_1$ is an    $\overline{\mbox{ideal}}$ and it is one of the following: \begin{center}
    $\big\langle e_2 \big\rangle$ or $\big\langle \alpha e_2+e_3 \big\rangle_{\alpha \in \mathbb C}$.
\end{center}\end{corollary}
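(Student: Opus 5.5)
The statement to prove is the last corollary: every one-dimensional ideal of ${\rm S}_1$ is an $\overline{\mbox{ideal}}$, and the one-dimensional ideals are exactly $\big\langle e_2 \big\rangle$ and $\big\langle \alpha e_2+e_3 \big\rangle_{\alpha\in\mathbb C}$. The plan is to read the ideals off the classification of one-dimensional subalgebras of ${\rm S}_1$ proved just above. Then check the ideal condition and the invariance under the involution for each candidate.

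\textbf{Step 1: reduce to the subalgebra list.} Every ideal is in particular a subalgebra. So a one-dimensional ideal must be one of $\big\langle e_1 \big\rangle$, $\big\langle e_2 \big\rangle$ or $\big\langle \alpha e_2+e_3 \big\rangle_{\alpha\in\mathbb C}$.

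\textbf{Step 2: exclude $\big\langle e_1 \big\rangle$.} Since $e_1$ is the unit, $e_1e_2=e_2\notin\big\langle e_1 \big\rangle$. More generally, an ideal containing the unit contains the whole algebra, which is $3$-dimensional.

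\textbf{Step 3: confirm the remaining candidates are ideals.} In ${\rm S}_1$ the only nonzero products are those with $e_1$, since its multiplication table coincides with that of ${\rm A}_1$. Hence for any $v\in\big\langle e_2,e_3\big\rangle$ we have $e_1v=ve_1=v$ and $e_iv=ve_i=0$ for $i=2,3$. So every subspace of $\big\langle e_2,e_3\big\rangle$ is a two-sided ideal. In particular $\big\langle e_2 \big\rangle$ and all $\big\langle \alpha e_2+e_3 \big\rangle$ are ideals.

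\textbf{Step 4: invariance under the involution.} In type $(1,2)$ we have $\overline{e_2}=-e_2$ and $\overline{e_3}=-e_3$. Hence the involution acts as $-\mathrm{id}$ on $\big\langle e_2,e_3\big\rangle$, and every subspace of it is mapped to itself. Thus every one-dimensional ideal is an $\overline{\mbox{ideal}}$, consistent with the previous corollary's list of $\overline{\mbox{subalgebras}}$.

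\textbf{Main obstacle.} There is no real obstacle. The only point needing care is Step 4, where the type $(1,2)$ involution, unlike the ${\rm A}_1$ case, makes the whole family invariant rather than only $\big\langle e_3\big\rangle$.
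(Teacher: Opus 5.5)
Your proof is correct and follows the same route as the paper, which gives no separate proof and reads this corollary off the classification of one-dimensional subalgebras of ${\rm S}_1$ (the same list as for ${\rm A}_1$). Your three remaining steps supply the checks that are left implicit there: $\langle e_1\rangle$ is excluded because it contains the unit, every subspace of $\langle e_2,e_3\rangle$ is an ideal because all products there vanish, and the involution acts as $-\mathrm{id}$ on $\langle e_2,e_3\rangle$, so each of these ideals is an $\overline{\mbox{ideal}}$.
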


\begin{theorem}
Let $\mathfrak{s}$ be a two-dimensional subalgebra of $\rm{S}_1,$ then $\mathfrak{s}$ is one of the following subalgebras:
$\big\langle e_1, e_2 \big\rangle,$ \
$\big\langle e_2, e_3 \big\rangle,$ \
or 
$\big\langle e_1, \alpha   e_2 + e_3 \big\rangle_{\alpha \in \mathbb C}.$
Up to isomorphisms, all two-dimensional subalgebras of $\rm{S}_1$ are equivalent to  
$\big\langle e_1 ,e_2\big\rangle$ or 
$\big\langle e_2 ,e_3\big\rangle.$

\end{theorem}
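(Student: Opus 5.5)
The multiplication table of ${\rm S}_1$ is the same as that of ${\rm A}_1$: only the products with the unit $e_1$ are nonzero. The involution plays no role in describing subalgebras, and the automorphism groups in Proposition \ref{Aut} coincide as sets of matrices. So the plan is to reduce to Theorem \ref{sub2A1}, exactly as the one-dimensional case of ${\rm S}_1$ was reduced to Theorem \ref{subA1}. For completeness, I would still sketch the direct argument, since it is short.

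\textbf{Case $e_1\notin\mathfrak{s}$.} Suppose $\mathfrak s$ is a two-dimensional subalgebra whose elements all have zero $e_1$-coordinate. Since $\dim\mathfrak s=2$, this forces $\mathfrak s=\langle e_2,e_3\rangle$. This is indeed a subalgebra, because all products inside it vanish.

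\textbf{Case $\mathfrak{s}\not\subseteq\langle e_2,e_3\rangle$.} Otherwise write
\begin{center}
$\mathfrak s=\big\langle e_1+\alpha_2e_2+\alpha_3e_3,\ \beta_2e_2+\beta_3e_3\big\rangle.$
\end{center}
Squaring the first generator gives $e_1+2(\alpha_2e_2+\alpha_3e_3)$. Membership of this square in $\mathfrak s$ forces $\alpha_2e_2+\alpha_3e_3$ to be proportional to $\beta_2e_2+\beta_3e_3$, so $e_1\in\mathfrak s$. Thus $\mathfrak s=\langle e_1,\beta_2e_2+\beta_3e_3\rangle$, and every such space is a subalgebra. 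Normalizing the second generator gives the two forms $\langle e_1,e_2\rangle$ (when $\beta_3=0$) and $\langle e_1,\alpha e_2+e_3\rangle_{\alpha\in\mathbb C}$ (when $\beta_3\neq0$). This proves the first part.

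\textbf{Isomorphism classes.} By Proposition \ref{Aut}, ${\rm Aut}({\rm S}_1)$ consists of all matrices that fix $e_1$ and act by an arbitrary invertible $2\times2$ block on $\langle e_2,e_3\rangle$. This group acts transitively on the lines of $\langle e_2,e_3\rangle$, so every $\langle e_1,\alpha e_2+e_3\rangle$ is mapped to $\langle e_1,e_2\rangle$. The two remaining classes are genuinely different: $\langle e_2,e_3\rangle$ has zero multiplication, while $\langle e_1,e_2\rangle$ contains the unit.

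\textbf{Main obstacle.} The only point needing care is whether "up to isomorphisms" is meant with respect to automorphisms commuting with the involution. Here this is harmless: for ${\rm S}_1$ both $e_2$ and $e_3$ lie in $\mathcal S$, so $\overline{\rm Aut}({\rm S}_1)={\rm Aut}({\rm S}_1)$ by Proposition \ref{Aut}. Hence no extra classes appear, unlike for ${\rm A}_1$.
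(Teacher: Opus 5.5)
Your proposal is correct and follows the paper's proof, which simply invokes Theorem \ref{sub2A1} because ${\rm S}_1$ and ${\rm A}_1$ have the same multiplication table; your direct sketch reproduces that theorem's squaring argument and automorphism step, and also adds the non-equivalence check that the paper omits. One small correction: the heading of your first case should be $\mathfrak s\subseteq\langle e_2,e_3\rangle$ rather than $e_1\notin\mathfrak s$, though the two cases you actually treat are complementary, so nothing is lost.
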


\begin{proof}
The multiplication table of  ${\rm S}_1$ coincides with the multiplication table of ${\rm A}_1,$ hence our proof follows from Theorem \ref{sub2A1}. 
\end{proof}

\begin{corollary}

Let $\mathfrak{s}$ be a two-dimensional $\overline{\mbox{subalgebra}}$ of $\rm{S}_1,$ then $\mathfrak{s}$ is one of the following $\overline{\mbox{subalgebras}}$:
$\big\langle e_1, e_2 \big\rangle,$ \
$\big\langle e_2, e_3 \big\rangle,$ \
or 
$\big\langle e_1, \alpha   e_2 + e_3 \big\rangle_{\alpha \in \mathbb C}.$
Up to isomorphisms, all two-dimensional $\overline{\mbox{subalgebras}}$ of $\rm{S}_1$ are equivalent to  $\big\langle e_1,e_2 \big\rangle$  or 
$\big\langle     e_2,e_3 \big\rangle.$
\end{corollary}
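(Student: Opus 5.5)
The statement to prove is the last corollary: the two-dimensional $\overline{\mbox{subalgebras}}$ of ${\rm S}_1$, and their classification up to isomorphism. The plan is to start from the preceding theorem, which lists all two-dimensional subalgebras of ${\rm S}_1$: $\big\langle e_1,e_2\big\rangle$, $\big\langle e_2,e_3\big\rangle$ and $\big\langle e_1,\alpha e_2+e_3\big\rangle_{\alpha\in\mathbb C}$. From this list we keep exactly those that are invariant under the involution.

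For ${\rm S}_1$ the involution acts as $e_1\mapsto e_1$ and $x\mapsto -x$ for every $x\in\langle e_2,e_3\rangle=\mathcal S$. Hence any subspace of the form $\langle e_1\rangle\oplus W$ or $W$, with $W\subseteq\mathcal S$, is mapped onto itself. Each of the listed subalgebras has this shape: the first and third are $\langle e_1\rangle\oplus W$ with $W$ one-dimensional in $\mathcal S$, and the second is $\mathcal S$ itself. So every subalgebra on the list is automatically $^-$-invariant, and the list of $\overline{\mbox{subalgebras}}$ coincides with the list of subalgebras. This is the first part of the statement.

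For the "up to isomorphism" part, we use $\overline{\rm Aut}({\rm S}_1)$ from Proposition \ref{Aut}. It consists of all nondegenerate matrices acting as an arbitrary element of $GL_2$ on $\langle e_2,e_3\rangle$ while fixing $e_1$. We choose such an automorphism sending $e_2$ to $\alpha e_2+e_3$; it commutes with the involution. It carries $\big\langle e_1,e_2\big\rangle$ onto $\big\langle e_1,\alpha e_2+e_3\big\rangle$, so the whole family collapses to $\big\langle e_1,e_2\big\rangle$.

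It remains to check that $\big\langle e_1,e_2\big\rangle$ and $\big\langle e_2,e_3\big\rangle$ are not equivalent. The first contains the unit $e_1$, while the second has zero multiplication. Any automorphism fixes $e_1$ by Lemma \ref{automor}, and in any case unitality is an isomorphism invariant, so the two cannot be identified. There is no real obstacle here; the only point needing care is confirming the invariance claim for the family, which is immediate since $\alpha e_2+e_3\in\mathcal S$.
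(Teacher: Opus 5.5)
Your proposal is correct and follows essentially the route the paper intends. The corollary is read off from the preceding theorem on two-dimensional subalgebras of ${\rm S}_1$ by keeping the involution-invariant ones; that is all of them, since the involution is the identity on $\langle e_1\rangle$ and $-1$ on $\mathcal S=\langle e_2,e_3\rangle$. The family $\big\langle e_1,\alpha e_2+e_3\big\rangle$ is then collapsed using $\overline{\rm Aut}({\rm S}_1)$ from Proposition \ref{Aut}, and your explicit check that $\big\langle e_1,e_2\big\rangle$ (unital) and $\big\langle e_2,e_3\big\rangle$ (zero product) are inequivalent fills in a step the paper leaves implicit.
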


\begin{corollary} 
There is only one two-dimensional ideal of $\rm{S}_1,$ 
it is an $\overline{\mbox{ideal}}$ and equal to  $\big\langle e_2 ,e_3\big\rangle.$
\end{corollary}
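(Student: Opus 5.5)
The statement concerns two-dimensional ideals of ${\rm S}_1$. Since ${\rm S}_1$ is the universal unital algebra, the only nonzero products are those involving $e_1$. I will determine all two-dimensional ideals directly, and then check which of them are invariant under the involution.

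\emph{Step 1: an ideal cannot contain an element with nonzero $e_1$-coordinate.} Suppose $I$ is a two-dimensional ideal containing $x=\alpha e_1+\beta e_2+\gamma e_3$ with $\alpha\neq 0$. Then $I$ contains
\[
x e_2=\alpha e_2 \quad\text{and}\quad x e_3=\alpha e_3 .
\]
So $e_2,e_3\in I$. Subtracting the $e_2$- and $e_3$-components of $x$ then gives $e_1\in I$, hence $I={\rm S}_1$, which is not two-dimensional. Equivalently, as soon as an ideal contains an element with nonzero $e_1$-coordinate it contains $e_1$, and $e_1$ generates everything.

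\emph{Step 2: only one candidate remains.} By Step 1, every two-dimensional ideal lies in $\langle e_2,e_3\rangle$. Both spaces are two-dimensional, so the ideal must equal $\langle e_2,e_3\rangle$. This agrees with the list of two-dimensional subalgebras in the preceding theorem: the other subalgebras there, $\langle e_1,e_2\rangle$ and $\langle e_1,\alpha e_2+e_3\rangle$, contain $e_1$ and are excluded by Step 1.

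\emph{Step 3: $\langle e_2,e_3\rangle$ is an ideal and is involution-invariant.}
\begin{itemize}
\item[(a)] It is an ideal. Multiplying by $e_1$ acts as the identity on it, and all products between $e_2$ and $e_3$ are zero, so it is closed under multiplication by every element of ${\rm S}_1$.
\item[(b)] It is stable under the involution. In type $(1,2)$ we have $\overline{e_2}=-e_2$ and $\overline{e_3}=-e_3$, so the involution maps $\langle e_2,e_3\rangle$ into itself. Hence it is an $\overline{\mbox{ideal}}$.
\end{itemize}

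Together, Steps 1–3 show that $\langle e_2,e_3\rangle$ is the unique two-dimensional ideal of ${\rm S}_1$ and that it is an $\overline{\mbox{ideal}}$, which is the statement.
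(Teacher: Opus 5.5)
Your proof is correct. The paper gives no separate argument for this corollary; it reads it off the classification of two-dimensional subalgebras of ${\rm S}_1$, which is inherited from Theorem \ref{sub2A1} for ${\rm A}_1$. In that list, every subalgebra other than $\big\langle e_2,e_3\big\rangle$ contains the unit $e_1$, so it cannot be a proper ideal.

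You instead prove directly the stronger fact that any ideal containing an element with nonzero $e_1$-coordinate is all of ${\rm S}_1$. This forces every proper ideal into $\big\langle e_2,e_3\big\rangle$ without using the subalgebra classification, so your argument is self-contained and would also classify one-dimensional ideals. The paper's route is shorter only because that classification was already in hand.
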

\medskip \noindent{\it Subalgebras of  ${\rm S}_2.$ }

\begin{theorem}\label{49 th} 
Let $\mathfrak{s}$ be a one-dimensional subalgebra of $\rm{S}_2,$ then $\mathfrak{s}$ is one of the following subalgebras:
$\big\langle e_1 \big\rangle,$ \ 
$\big\langle e_2 \big\rangle,$ \ or 
$\big\langle e_1+\alpha e_2 \pm e_3 \big\rangle_{\alpha\in \mathbb C}.$
Up to isomorphisms, all one-dimensional subalgebras of  $\rm{S}_2$ are equivalent to  
$\big\langle e_1 \big\rangle,$ \ 
$\big\langle e_2 \big\rangle,$ \ or 
$\big\langle e_1\pm e_3 \big\rangle.$  
\end{theorem}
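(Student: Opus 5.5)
The plan is to observe that ${\rm S}_2$ has exactly the same multiplication table as ${\rm A}_5$: $e_2e_3=e_2,$ $e_3e_2=-e_2,$ $e_3e_3=e_1$. The two algebras differ only in the involution, since in ${\rm S}_2$ the element $e_2$ is skew rather than symmetric. Being a subalgebra is a condition on the multiplication alone, so the first part of the statement should follow directly from Theorem \ref{subA5}, in the same way the subalgebras of ${\rm S}_1$ were obtained from those of ${\rm A}_1$.

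To make this self-contained, I would repeat the computation. Take $\mathfrak{s}=\langle \alpha e_1+\beta e_2+\gamma e_3\rangle$ and square the generator. The products $\beta\gamma(e_2e_3+e_3e_2)$ cancel, so
\[(\alpha e_1+\beta e_2+\gamma e_3)^2=(\alpha^2+\gamma^2)e_1+2\alpha\beta e_2+2\alpha\gamma e_3.\]
Setting this equal to $k(\alpha e_1+\beta e_2+\gamma e_3)$ gives the system
\[\alpha^2+\gamma^2=k\alpha,\qquad 2\alpha\beta=k\beta,\qquad 2\alpha\gamma=k\gamma.\]
The case analysis splits as follows.
\begin{enumerate}
\item[(1)] If $k=0$, then $\alpha=\gamma=0$, which gives $\langle e_2\rangle$.
\item[(2)] If $k\neq0$ and $\gamma=0$, then $\alpha=k$ and $\beta=0$, which gives $\langle e_1\rangle$. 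The other root $\alpha=0$ forces $\beta=0$ and is excluded.
\item[(3)] If $k\neq0$ and $\gamma\neq0$, then $\alpha=k/2$ and $\gamma=\pm k/2$, with $\beta$ free. This gives the family $\langle e_1+\alpha e_2\pm e_3\rangle_{\alpha\in\mathbb C}$.
\end{enumerate}

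For the second part, I would use the automorphism group of ${\rm S}_2$ from Proposition \ref{Aut}. These automorphisms fix $e_1$ and $e_3$ and send $e_2\mapsto \alpha e_2$ and $e_3\mapsto \beta e_2+e_3$. Applying the automorphism with $\alpha=1$ and $\beta=\mp\alpha_0$ to $e_1+\alpha_0 e_2\pm e_3$ kills the $e_2$ coefficient, so the whole family reduces to $\langle e_1\pm e_3\rangle$. The signs cannot be merged, because every automorphism fixes the $e_3$ coefficient of any vector.

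The main point requiring care is confirming that the automorphisms in Proposition \ref{Aut} commute with the involution of ${\rm S}_2$. This holds because $e_2,e_3\in\mathcal S$ and the maps preserve $\langle e_2,e_3\rangle$. It matters because isomorphism of structurable algebras must respect $^-$. With that checked, the remaining classes $\langle e_1\rangle$, $\langle e_2\rangle$ and $\langle e_1\pm e_3\rangle$ are pairwise non-equivalent: they are distinguished by whether the generator is idempotent-like or nilpotent, and by the $e_3$-coefficient.
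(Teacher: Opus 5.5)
Your proposal is correct and follows the paper's route. The paper also observes that ${\rm S}_2$ and ${\rm A}_5$ share a multiplication table, so it reduces to Theorem~\ref{subA5} (the same squaring computation and three-case split), and then uses the automorphisms of Proposition~\ref{Aut} for the second part. One wording slip: these automorphisms fix $e_1$ and the $e_3$-coordinate, not $e_3$ itself, though you then use $e_3\mapsto \beta e_2+e_3$ correctly. Your check that they commute with the involution is a useful addition, since here ${\rm Aut}({\rm S}_2)=\overline{\rm Aut}({\rm S}_2)$.
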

\begin{proof}
The multiplication table of  ${\rm S}_2$ coincides with the multiplication table of ${\rm A}_5,$ hence our proof follows from Theorem \ref{subA5}. 
\end{proof}
\begin{corollary}

Let $\mathfrak{s}$ be a one-dimensional $\overline{\mbox{subalgebra}}$ of $\rm{S}_2,$ then $\mathfrak{s}$ is one of the following $\overline{\mbox{subalgebras}}$:
$\big\langle e_1 \big\rangle$ 
or 
$\big\langle e_2 \big\rangle.$
Up to isomorphisms, all one-dimensional 
$\overline{\mbox{subalgebras}}$ of $\rm{S}_2$ are equivalent to  $\big\langle e_1 \big\rangle$ or $\big\langle e_2 \big\rangle.$ 

\end{corollary}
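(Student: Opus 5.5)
The plan is to intersect the list of one-dimensional subalgebras of $\rm{S}_2$ from Theorem \ref{49 th} with the condition of being invariant under the involution. In $\rm{S}_2$ the involution is $\overline{e_1}=e_1$, $\overline{e_2}=-e_2$, $\overline{e_3}=-e_3$. A one-dimensional subspace $\langle v\rangle$ is an $\overline{\mbox{subalgebra}}$ exactly when it is a subalgebra and $\overline{v}\in\langle v\rangle$. Since the involution is diagonalizable with eigenvalues $\pm1$, this forces $v$ to lie in $\mathcal{H}=\langle e_1\rangle$ or in $\mathcal{S}=\langle e_2,e_3\rangle$.

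Next I would check the three families from Theorem \ref{49 th} against this condition.
\begin{itemize}
\item $\langle e_1\rangle$ lies in $\mathcal H$, so it is invariant.
\item $\langle e_2\rangle$ lies in $\mathcal S$, so it is invariant.
\item For $v=e_1+\alpha e_2\pm e_3$ we get $\overline{v}=e_1-\alpha e_2\mp e_3$. Proportionality $\overline{v}=\lambda v$ requires $\lambda=1$ from the $e_1$-coefficient and then $\mp1=\pm1$ from the $e_3$-coefficient, which is impossible. So none of these lines is invariant.
\end{itemize}
As a sanity check, the only other candidates inside $\mathcal S$ are lines $\langle \beta e_2+\gamma e_3\rangle$. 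For these, $(\beta e_2+\gamma e_3)^2=\gamma^2e_1$, so such a line is a subalgebra only when $\gamma=0$, which is consistent with the list above.

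For the ``up to isomorphism'' part, I would use $\overline{\rm Aut}(\rm{S}_2)$ from Proposition \ref{Aut}. Every automorphism fixes $e_1$ (Lemma \ref{automor}) and maps $e_2\mapsto\alpha e_2$ with $\alpha\neq0$, so both lines are preserved. Moreover $\langle e_1\rangle\cong\mathbb C$ is unital while $\langle e_2\rangle$ has zero multiplication, so the two are not isomorphic and both classes must be listed.

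There is no real obstacle here. The only point needing care is the observation that an involution-stable line must lie entirely in $\mathcal H$ or entirely in $\mathcal S$; this is what rules out the family $\langle e_1+\alpha e_2\pm e_3\rangle$.
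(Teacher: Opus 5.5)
Your proposal is correct and follows essentially the same route as the paper, which states this corollary without a separate proof: it is read off from Theorem \ref{49 th} by keeping only the involution-stable lines. That filtering removes the family $\langle e_1+\alpha e_2\pm e_3\rangle$ and leaves $\langle e_1\rangle$ and $\langle e_2\rangle$; your observation that a stable line lies in $\mathcal H$ or $\mathcal S$, together with your direct check of the lines in $\mathcal S$, simply makes that step explicit.
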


\begin{corollary}
There is only one-dimensional ideal of $\rm{S}_2,$ 
it is an $\overline{\mbox{ideal}}$ and equal to  $\big\langle e_2 \big\rangle.$
\end{corollary}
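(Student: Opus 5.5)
The statement to prove is the last corollary: the only one-dimensional ideal of ${\rm S}_2$ is $\big\langle e_2 \big\rangle$, and it is an $\overline{\mbox{ideal}}$. The plan is to use Theorem \ref{49 th}, since every one-dimensional ideal is in particular a one-dimensional subalgebra. So I only need to test the listed candidates, namely $\big\langle e_1 \big\rangle$, $\big\langle e_2 \big\rangle$ and $\big\langle e_1+\alpha e_2\pm e_3 \big\rangle_{\alpha\in\mathbb C}$, for two-sided absorption under multiplication by the basis $e_1,e_2,e_3$ of ${\rm S}_2$.

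First, $\big\langle e_1 \big\rangle$ is not an ideal, because $e_1e_2=e_2\notin\big\langle e_1\big\rangle$. More generally, any ideal containing an element with nonzero $e_1$-coefficient fails to be one-dimensional. Indeed, multiplying such an element $x=e_1+\alpha e_2\pm e_3$ by $e_2$ on the left gives
\[
e_2x = e_2+\alpha\cdot 0\pm e_2 e_3 = e_2\pm e_2 = (1\pm 1)e_2 .
\]
Multiplying by $e_3$ on the left gives
\[
e_3x=e_3-\alpha e_2\pm e_1 .
\]
This second product is not proportional to $x$ for either sign: comparing the $e_1$- and $e_3$-coefficients forces the scalar to be $\pm1$ and $1$ simultaneously, and the $e_2$-coefficient then gives a further mismatch unless $\alpha=0$. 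I will do this comparison carefully for both signs. It rules out the whole family $\big\langle e_1+\alpha e_2\pm e_3 \big\rangle$.

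Next I check $\big\langle e_2\big\rangle$ directly from the multiplication table of ${\rm S}_2$. The products are $e_1e_2=e_2e_1=e_2$, $e_2e_2=0$, and $e_2e_3=e_2=-e_3e_2$, so every product of $e_2$ with a basis element on either side lies in $\big\langle e_2\big\rangle$. Hence it is a two-sided ideal. Finally, $\overline{e_2}=-e_2$, so $\big\langle e_2 \big\rangle$ is invariant under the involution and is therefore an $\overline{\mbox{ideal}}$.

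The only step needing care is the exclusion of the parametric family. I would handle it exactly as above, using $e_3x$ and the coefficient comparison, rather than any automorphism argument. Everything else is immediate from the multiplication table.
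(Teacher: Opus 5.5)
Your reduction is the right one, and it is what the paper implicitly does: a one-dimensional ideal is a one-dimensional subalgebra, so only the list of Theorem~\ref{49 th} needs testing. Your checks of $\langle e_1\rangle$, of $\langle e_2\rangle$, and of $\overline{e_2}=-e_2$ are correct.

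The gap is in the step you flag as delicate: left multiplication by $e_3$ does \emph{not} exclude the family. Put $x=e_1+\alpha e_2+\epsilon e_3$ with $\epsilon=\pm1$. Then $e_3x=\epsilon e_1-\alpha e_2+e_3$, and $e_3x=\lambda x$ requires $\lambda=\epsilon$ (coefficient of $e_1$) and $\lambda\epsilon=1$ (coefficient of $e_3$). Since $\epsilon^2=1$, these are the \emph{same} condition, not a contradiction. The $e_2$-coefficient then asks $-\alpha=\epsilon\alpha$. This holds for every $\alpha$ when $\epsilon=-1$, and for $\alpha=0$ when $\epsilon=1$. Concretely, $e_3(e_1+\alpha e_2-e_3)=-(e_1+\alpha e_2-e_3)$ and $e_3(e_1+e_3)=e_1+e_3$. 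So $\langle e_1+e_3\rangle$ and every $\langle e_1+\alpha e_2-e_3\rangle$ pass your test. Multiplying by $e_3$ on the right does not help either, since $(e_1+e_3)e_3=e_1+e_3$. The lines $\langle e_1\pm e_3\rangle$ really are ideals of the subalgebra $\langle e_1,e_3\rangle$, so only $e_2$ can exclude them.

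What eliminates the family is $e_2$, used on both sides: $e_2x=(1+\epsilon)e_2$ and $xe_2=e_2+\epsilon\, e_3e_2=(1-\epsilon)e_2$. For $\epsilon=1$ the first product is $2e_2$, and for $\epsilon=-1$ the second one is. In either case $2e_2\notin\langle x\rangle$, because $x$ has $e_1$-coefficient $1$. You computed $e_2x$ but never drew this conclusion. The left product alone leaves $\epsilon=-1$ open (there $e_2x=0$), so the right product $xe_2$ is indispensable. With this replacing the $e_3x$ comparison, your proof is complete.

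Equivalently, and without quoting Theorem~\ref{49 th}: for $x=ae_1+be_2+ce_3$, the products $e_2x=(a+c)e_2$ and $xe_2=(a-c)e_2$ force $a=0$. After that, $e_3x=ce_1-be_2$ forces $c=0$.
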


\begin{theorem}
Let $\mathfrak{s}$ be a two-dimensional subalgebra of $\rm{S}_2,$ then $\mathfrak{s}$ is one of the following subalgebras:
$\big\langle e_1,e_2 \big\rangle,$ 
$\big\langle e_1\pm e_3,e_2 \big\rangle,$ 
or $
\big\langle e_1,\alpha e_2+e_3 \big\rangle_{\alpha \in \mathbb C}.$
Up to isomorphisms, all two-dimensional subalgebras of $\rm{S}_2$ are equivalent to  
$\big\langle e_1,e_2 \big\rangle,$ 
$\big\langle e_1\pm e_3,e_2 \big\rangle$ 
or $
\big\langle e_1, e_3 \big\rangle.$
\end{theorem}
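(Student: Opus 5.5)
The multiplication table of ${\rm S}_2$ is literally the multiplication table of ${\rm A}_5$: in both algebras the nonzero products among $e_2,e_3$ are $e_2e_3=e_2$, $e_3e_2=-e_2$, $e_3e_3=e_1$, with $e_1$ the unit. Only the involution differs. The notion of subalgebra does not involve the involution, so the set of two-dimensional subalgebras of ${\rm S}_2$ coincides with that of ${\rm A}_5$. The first part of the statement therefore follows verbatim from Theorem \ref{sub2A5}. This parallels what was done for Theorem \ref{49 th}.

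For a self-contained argument, I would repeat the computation from Theorem \ref{sub2A5}.
\begin{enumerate}
\item First, $\langle e_2,e_3\rangle$ is not a subalgebra, since $e_3e_3=e_1$. Hence any two-dimensional $\mathfrak s$ can be written as $\langle e_1+\alpha_2e_2+\alpha_3e_3,\ \beta_2e_2+\beta_3e_3\rangle$.
\item Squaring the second generator gives $\beta_3^2e_1\in\mathfrak s$.
\item If $\beta_3\neq0$, then $e_1\in\mathfrak s$, and we get the family $\langle e_1,\alpha e_2+e_3\rangle$. This is a subalgebra because $(\alpha e_2+e_3)^2=e_1$, the cross terms cancelling.
\item If $\beta_3=0$, then $\mathfrak s=\langle e_1+\alpha_3e_3,e_2\rangle$. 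Closure requires $(1+\alpha_3^2)e_1+2\alpha_3e_3$ to be proportional to $e_1+\alpha_3e_3$ modulo $e_2$. This forces $\alpha_3=0$ or $\alpha_3^2=1$.
\end{enumerate}

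For the second part, ``up to isomorphisms'' here means up to the action of ${\rm Aut}({\rm S}_2)$. By Proposition \ref{Aut}, ${\rm Aut}({\rm S}_2)$ consists of the maps $e_1\mapsto e_1$, $e_2\mapsto\alpha e_2$, $e_3\mapsto\beta e_2+e_3$, exactly as for ${\rm A}_5$. Choosing $\beta=-\alpha'$ sends $\alpha' e_2+e_3$ to $e_3$, so the whole family collapses to $\langle e_1,e_3\rangle$. The subspaces $\langle e_1,e_2\rangle$ and $\langle e_1\pm e_3,e_2\rangle$ are preserved by every automorphism.

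The only point needing care is that $\langle e_1+e_3,e_2\rangle$ and $\langle e_1-e_3,e_2\rangle$ are not identified. Every automorphism fixes the $e_3$-coefficient modulo $e_2$, so they stay distinct under this action. I would simply note this (the paper's own statement lists them as ``$\pm$'') rather than resolve any subtlety about abstract isomorphism of the subalgebras themselves.
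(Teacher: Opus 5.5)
Your proposal is correct and takes the same approach as the paper. The paper's proof is just the reduction to Theorem~\ref{sub2A5}, using that ${\rm S}_2$ and ${\rm A}_5$ share a multiplication table (and hence ${\rm Aut}({\rm S}_2)={\rm Aut}({\rm A}_5)$, which handles the second part). Your self-contained recomputation and your automorphism argument accurately reproduce the proof of Theorem~\ref{sub2A5}.
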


\begin{proof}
The multiplication table of  ${\rm S}_2$ coincides with the multiplication table of ${\rm A}_5,$ hence our proof follows from Theorem \ref{sub2A5}.\end{proof}

\begin{corollary}

Let $\mathfrak{s}$ be a two-dimensional $\overline{\mbox{subalgebra}}$ of $\rm{S}_2,$ then $\mathfrak{s}$ is one of the following 
$\big\langle e_1,e_2 \big\rangle,$ 
or $
\big\langle e_1,\alpha e_2+e_3 \big\rangle_{\alpha \in \mathbb C}.$
Up to isomorphisms, all two-dimensional $\overline{\mbox{subalgebras}}$ of $\rm{S}_2$ are equivalent to  
$\big\langle e_1,e_2 \big\rangle$ or 
$\big\langle e_1,e_3 \big\rangle.$
\end{corollary}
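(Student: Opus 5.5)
The corollary describes two-dimensional $\overline{\mbox{subalgebras}}$ of ${\rm S}_2$. I read these as subalgebras that are invariant under the involution, i.e. $\overline{\mathfrak s}=\mathfrak s$, in the same sense used for the preceding corollaries. My plan is to start from the list of two-dimensional subalgebras of ${\rm S}_2$ given in the previous theorem, which was itself obtained from Theorem \ref{sub2A5}. I will test each member for stability under $^-$, and then pass to orbits under $\overline{\mathrm{Aut}}({\rm S}_2)$ from Proposition \ref{Aut}.

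In ${\rm S}_2$ the involution fixes $e_1$ and negates $e_2$ and $e_3$. This gives the following checks.
\begin{enumerate}
\item For $\big\langle e_1,e_2\big\rangle$: its image is $\big\langle e_1,-e_2\big\rangle$, so it is invariant.
\item For $\big\langle e_1,\alpha e_2+e_3\big\rangle$: its image is $\big\langle e_1,-(\alpha e_2+e_3)\big\rangle$, which is the same space. So the whole family is invariant, for every $\alpha\in\mathbb C$.
\item For $\big\langle e_1\pm e_3,e_2\big\rangle$: the involution sends $e_1\pm e_3$ to $e_1\mp e_3$. If the image stayed in the subspace, then $e_1$ and $e_3$ would both lie in it, forcing dimension $3$. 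So these two subalgebras are not invariant and are excluded.
\end{enumerate}
This yields the first part of the statement.

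For the classification up to isomorphism, I use that $\overline{\mathrm{Aut}}({\rm S}_2)=\mathrm{Aut}({\rm S}_2)$ consists of the maps $e_1\mapsto e_1$, $e_2\mapsto \alpha e_2$, $e_3\mapsto \beta e_2+e_3$. Applying the automorphism with $\beta=-\alpha$ sends $\alpha e_2+e_3$ to $e_3$. Hence every member of the family $\big\langle e_1,\alpha e_2+e_3\big\rangle$ is carried to $\big\langle e_1,e_3\big\rangle$.

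It remains to show that $\big\langle e_1,e_2\big\rangle$ and $\big\langle e_1,e_3\big\rangle$ are not equivalent. An automorphism fixes $e_1$ and maps $e_2$ into $\langle e_2\rangle$, so it preserves $\big\langle e_1,e_2\big\rangle$. Alternatively, the two are distinguished intrinsically: $\big\langle e_1,e_2\big\rangle$ has $e_2e_2=0$, whereas in $\big\langle e_1,e_3\big\rangle$ we have $e_3e_3=e_1$, which is semisimple.

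The only delicate point is the exclusion of $\big\langle e_1\pm e_3,e_2\big\rangle$, and the dimension argument in item 3 handles it directly. Everything else is an immediate check.
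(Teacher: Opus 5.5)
Your proposal is correct and takes essentially the same approach as the paper, which states this corollary without separate proof: you keep the involution-stable members of the list of two-dimensional subalgebras of ${\rm S}_2$ (inherited from Theorem~\ref{sub2A5}), then reduce modulo $\overline{\mathrm{Aut}}({\rm S}_2)=\mathrm{Aut}({\rm S}_2)$. One notational slip needs fixing: you use the letter $\alpha$ both for the family parameter and for the automorphism's scaling of $e_2$, so the automorphism carrying $\alpha e_2+e_3$ to $e_3$ should be stated explicitly as $e_2\mapsto e_2$, $e_3\mapsto -\alpha e_2+e_3$.
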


\begin{corollary}  Two-dimensional ideals  of $\rm{S}_2$ are the following
$\big\langle e_1\pm e_3,e_2 \big\rangle,$ 
There is no   one two-dimensional
$\overline{\mbox{ideals}}$ of $\rm{S}_2.$
\end{corollary}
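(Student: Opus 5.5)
The last statement is the corollary on two-dimensional ideals of ${\rm S}_2$: the two-dimensional ideals are $\big\langle e_1\pm e_3,e_2 \big\rangle$, and there are no two-dimensional $\overline{\mbox{ideals}}$.

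Every ideal is in particular a subalgebra. So I would start from the list of two-dimensional subalgebras of ${\rm S}_2$ in the preceding theorem: $\big\langle e_1,e_2 \big\rangle$, $\big\langle e_1\pm e_3,e_2 \big\rangle$ and $\big\langle e_1,\alpha e_2+e_3 \big\rangle_{\alpha\in\mathbb C}$. For each candidate I check closure under left and right multiplication by the basis elements $e_2,e_3$; multiplication by $e_1$ is trivial. The multiplication table of ${\rm S}_2$ is $e_2e_3=e_2$, $e_3e_2=-e_2$, $e_3e_3=e_1$.

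First I discard every subspace containing the unit $e_1$. Such a subspace is a proper subalgebra, but as an ideal it would contain $e_1\cdot\mathcal A=\mathcal A$. This removes $\big\langle e_1,e_2 \big\rangle$ and the whole family $\big\langle e_1,\alpha e_2+e_3 \big\rangle$.

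For $I_\pm=\big\langle e_1\pm e_3,e_2 \big\rangle$ I compute directly.
\begin{itemize}
\item Products with $e_2$: $e_2(e_1\pm e_3)=e_2\pm e_2$ and $(e_1\pm e_3)e_2=e_2\mp e_2$. Both lie in $\langle e_2\rangle$, and $e_2e_2=0$.
\item Products with $e_3$: $e_3(e_1\pm e_3)=e_3\pm e_1=\pm(e_1\pm e_3)\in I_\pm$, and $(e_1\pm e_3)e_3$ gives the same element.
\item Also $e_3e_2=-e_2$ and $e_2e_3=e_2$ lie in $I_\pm$.
\end{itemize}
Hence both $I_+$ and $I_-$ are two-sided ideals, which gives the first claim.

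For $\overline{\mbox{ideals}}$, I need ideals invariant under the involution. The involution sends $e_1\pm e_3\mapsto e_1\mp e_3$, so it maps $I_+$ onto $I_-$. An invariant $I_\pm$ would contain $e_1$, which is impossible. So no two-dimensional $\overline{\mbox{ideal}}$ exists.

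There is no real obstacle here. The only thing to be careful about is relying on the completeness of the subalgebra list from the previous theorem, which itself is inherited from Theorem \ref{sub2A5} via the coincidence of multiplication tables with ${\rm A}_5$.
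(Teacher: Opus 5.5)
Your argument is correct and is essentially the route the paper takes implicitly. The paper gives no separate proof and reads this corollary off the classification of two-dimensional subalgebras of ${\rm S}_2$ (inherited from Theorem \ref{sub2A5}); you do the same, discarding subalgebras that contain the unit, checking that $\langle e_1\pm e_3,e_2\rangle$ are two-sided ideals, and observing that the involution swaps them, so neither is an $\overline{\mbox{ideal}}$. Your reliance on the completeness of that list is justified, since the case split $\beta_3\neq 0$ / $\beta_3=0$ there covers every two-dimensional subspace other than $\langle e_2,e_3\rangle$, and that one is not a subalgebra because $e_3e_3=e_1$.
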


\subsubsection{Identities  of  unital $3$-dimensional structurable algebras}\label{ident}
Let us remember that each structurable algebra with the identical involution is a Jordan algebra, i.e., all of them are commutative, and they satisfy the same identity of degree $2.$ 
On the other hand, our algebras $\rm A_5$ and $\rm S_2$ are not commutative.
The present subsection aims to study the question of the existence of a common functional identity of degree $2$, i.e., an identity that includes an involution and is valued in all unital $3$-dimensional structurable algebras with nontrivial involution.
Let us consider the general form of functional identities of degree $2$ below:
\begin{center}
$f(x,y) = \alpha_1 xy+ \alpha_2  yx +
\beta_1 \overline{x}y+ \beta_2 x \overline{y} + \beta_3 \overline{y} x +\beta_4 y \overline{x}+ 
\gamma_1 \overline {xy} + \gamma_2\overline{yx}.$
\end{center}
In the commutative case, our identity $f(x,y)$ can be reduced to 
\begin{center}
$f_c(x,y) = \alpha xy+ \beta \overline{x}y+ \gamma  x \overline{y} +\delta  \overline {xy}.$
\end{center}
All our algebras, except for  $\rm A_5$ and $\rm S_2$ are commutative, 
hence, for all our commutative algebras, we will consider functional identities reduced $f_c(x,y)$ by the commutative law. 

\begin{proposition}
    Let $\mathfrak f(x,y)$ be a functional identity of degree $2$ of $\rm A_1,$ 
    then $\mathfrak f(x,y)$ is constructed from the commutative identity and 
    $\mathfrak F(x,y)=  xy - \overline{x}y- x \overline{y} +   \overline {xy}.$
  \end{proposition}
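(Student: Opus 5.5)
The plan is to treat $\mathfrak f$ as an unknown linear combination and cut out its coefficient space by evaluating on basis elements of $\rm A_1$. Since $\rm A_1$ is commutative, $\mathfrak f$ can be reduced modulo the commutative identity $xy-yx$ to the form
\[
f_c(x,y)=\alpha xy+\beta\overline{x}y+\gamma x\overline{y}+\delta\overline{xy},
\]
because $\overline{y}x=x\overline{y}$, $y\overline{x}=\overline{x}y$ and $\overline{yx}=\overline{xy}$ in $\rm A_1$. So it suffices to show that $f_c\equiv 0$ on $\rm A_1$ exactly when $(\alpha,\beta,\gamma,\delta)$ is proportional to $(1,-1,-1,1)$.

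First, verify that $\mathfrak F$ is an identity. Write $x=x_1e_1+x_h+x_s$ with $x_h\in\langle e_2\rangle$ and $x_s\in\langle e_3\rangle$, and similarly for $y$. Then $\overline{x}=x_1e_1+x_h-x_s$. All products of elements of $\langle e_2,e_3\rangle$ vanish in $\rm A_1$, so
\[
xy=x_1y_1e_1+x_1(y_h+y_s)+y_1(x_h+x_s).
\]
Since $f_c$ is bilinear, it is enough to check $\mathfrak F$ on the nine pairs of basis vectors $(e_i,e_j)$; this is immediate.

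The main step is necessity, again by evaluating on basis pairs:
\begin{itemize}
\item $(e_1,e_1)$ gives $\alpha+\beta+\gamma+\delta=0$.
\item $(e_1,e_3)$ gives $(\alpha+\beta-\gamma-\delta)e_3=0$, since $\overline{e_1}e_3=e_3$, $e_1\overline{e_3}=-e_3$ and $\overline{e_3}=-e_3$.
\item $(e_3,e_1)$ gives $\alpha-\beta+\gamma-\delta=0$.
\item Pairs involving $e_2$ only reproduce the first equation.
\item Pairs inside $\langle e_2,e_3\rangle$ give nothing.
\end{itemize}
These three independent linear equations cut out a one-dimensional solution space. Solving gives $\beta=\gamma=-\alpha$ and $\delta=\alpha$, so $f_c=\alpha\,\mathfrak F$.

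The only point needing care is to justify that no additional identities hide in the non-reduced form. The kernel of the reduction from $f$ to $f_c$ is spanned precisely by the consequences of commutativity, namely $xy-yx$, $\overline{x}y-y\overline{x}$, $x\overline{y}-\overline{y}x$ and $\overline{xy}-\overline{yx}$. So every degree-$2$ functional identity of $\rm A_1$ is a combination of these commutativity-type relations and $\mathfrak F$, which is the statement.
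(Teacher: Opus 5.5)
Your proof is correct and follows the paper's argument. You reduce to $f_c$ via commutativity, evaluate on $(e_1,e_1)$, $(e_1,e_3)$ and $(e_3,e_1)$, and solve to get $\alpha=-\beta=-\gamma=\delta$. You also verify that $\mathfrak F$ really vanishes on $\rm A_1$, and that the other basis pairs and the kernel of the reduction add nothing new; the paper leaves these points implicit.
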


\begin{proof}
We consider the following relations:
\begin{longtable}{lllllll}
    
$0$&$=$&$\mathfrak f(e_1,e_1)$&$=$&$\alpha e_1+ \beta e_1+ \gamma  e_1 +\delta  e_1,$ & then &  $\alpha+\beta+\gamma+\delta=0;$\\

$0$&$=$&$\mathfrak f(e_1,e_3)$&$=$&$\alpha e_3+ \beta e_3 - \gamma  e_3 - \delta  e_3,$ & then &  $\alpha+\beta-\gamma-\delta=0;$\\ 

$0$&$=$&$\mathfrak f(e_3,e_1)$&$=$&$\alpha e_3- \beta e_3 + \gamma  e_3 - \delta  e_3,$ & then &  $\alpha-\beta+\gamma-\delta=0;$\\ 
    \end{longtable}
\noindent that gives $\alpha=-\beta=-\gamma=\delta$ and $\rm A_1$ satisfies the identity 
 $\mathfrak F(x,y):=  xy - \overline{x}y- x \overline{y} +   \overline {xy}.$
\end{proof}

\begin{proposition}
    Let $\mathfrak f(x,y)$ be a functional identity of degree $2$ of $\rm A_2,$ 
    then $\mathfrak f(x,y)$ is constructed  from the commutative identity. 
  \end{proposition}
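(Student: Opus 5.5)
The plan is to follow the scheme of the proof for ${\rm A}_1$. Since ${\rm A}_2$ is commutative, every functional identity of degree $2$ of ${\rm A}_2$ reduces, modulo the commutative law, to the form $f_c(x,y)=\alpha xy+\beta\overline{x}y+\gamma x\overline{y}+\delta\overline{xy}$. It is therefore enough to show that if $f_c$ vanishes identically on ${\rm A}_2$, then $\alpha=\beta=\gamma=\delta=0$. Because $f_c$ is bilinear, it suffices to evaluate it on pairs of basis elements, using $\overline{e_1}=e_1$, $\overline{e_2}=e_2$, $\overline{e_3}=-e_3$ and the single nontrivial product $e_3e_3=e_2$.

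First I take the pairs that involve the unit. Evaluating $f_c(e_1,e_1)$, $f_c(e_1,e_3)$ and $f_c(e_3,e_1)$ gives exactly the same three linear conditions as for ${\rm A}_1$, because $e_1$ is the unit in both algebras and these products do not see the rest of the multiplication table:
$$\alpha+\beta+\gamma+\delta=0,\qquad \alpha+\beta-\gamma-\delta=0,\qquad \alpha-\beta+\gamma-\delta=0.$$
Solving them yields $\alpha=\delta$ and $\beta=\gamma=-\alpha$. In other words, the only candidate surviving these conditions is a multiple of $\mathfrak F(x,y)=xy-\overline{x}y-x\overline{y}+\overline{xy}$, just as in the case of ${\rm A}_1$.

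The decisive step is to test $\mathfrak F$ on the new product $e_3e_3=e_2$. Using $\overline{e_3}=-e_3$ and $\overline{e_2}=e_2$, we get $xy=e_2$, $\overline{x}y=-e_2$, $x\overline{y}=-e_2$ and $\overline{xy}=e_2$. Hence
$$f_c(e_3,e_3)=(\alpha-\beta-\gamma+\delta)e_2=4\alpha e_2,$$
which forces $\alpha=0$, and then all four coefficients vanish.

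As a sanity check, the remaining pairs $(e_2,e_2)$, $(e_2,e_3)$, $(e_3,e_2)$ have zero product in ${\rm A}_2$ and so give no further conditions. The conclusion is that no nontrivial involutive identity of degree $2$ holds in ${\rm A}_2$ beyond commutativity. There is no real obstacle here; the only care needed is tracking the signs that the involution introduces in the evaluation on $(e_3,e_3)$.
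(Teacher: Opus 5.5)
Your proof is correct and follows the same route as the paper: it evaluates the reduced commutative form on the same four pairs $(e_1,e_1)$, $(e_1,e_3)$, $(e_3,e_1)$, $(e_3,e_3)$. The resulting linear system forces $\alpha=\beta=\gamma=\delta=0$. Your sequential solving, with the first three equations giving a multiple of $\mathfrak F$ and $(e_3,e_3)$ giving $4\alpha=0$, is just a more explicit presentation of the same computation.
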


\begin{proof}
We consider the following relations:
\begin{longtable}{lllllll}
    
$0$&$=$&$\mathfrak f(e_1,e_1)$&$=$&$\alpha e_1+ \beta e_1+ \gamma  e_1 +\delta  e_1,$ & then &  $\alpha+\beta+\gamma+\delta=0;$\\

$0$&$=$&$\mathfrak f(e_1,e_3)$&$=$&$\alpha e_3+ \beta e_3 - \gamma  e_3 - \delta  e_3,$ & then &  $\alpha+\beta-\gamma-\delta=0;$\\ 

$0$&$=$&$\mathfrak f(e_3,e_1)$&$=$&$\alpha e_3- \beta e_3 + \gamma  e_3 - \delta  e_3,$ & then &  $\alpha-\beta+\gamma-\delta=0;$\\

$0$&$=$&$\mathfrak f(e_3,e_3)$&$=$&$\alpha e_2- \beta e_2 - \gamma  e_2 +\delta  e_2,$ & then &  $\alpha-\beta-\gamma+\delta=0;$  
    \end{longtable}
\noindent that gives $\alpha=\beta=\gamma=\delta=0$ and $\rm A_2$ does not satisfy non-commutative functional identities of degree $2.$  
\end{proof}

\begin{proposition}
    Let $\mathfrak f(x,y)$ be a functional identity of degree $2$ of $\rm A_3,$ 
    then $\mathfrak f(x,y)$ is constructed from the commutative identity and 
    $\mathfrak F(x,y):=  xy - \overline{x}y- x \overline{y} +   \overline {xy}.$
  \end{proposition}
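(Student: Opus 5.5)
Since ${\rm A}_3$ is commutative, we work with the reduced form $\mathfrak f(x,y)=\alpha xy+\beta\overline{x}y+\gamma x\overline{y}+\delta\overline{xy}$, as in the treatment of ${\rm A}_1$ and ${\rm A}_2$. Any degree-$2$ functional identity of ${\rm A}_3$ is, modulo commutativity, of this shape. The plan has three steps. First, impose $\mathfrak f(e_i,e_j)=0$ on basis pairs to get linear conditions on $(\alpha,\beta,\gamma,\delta)$. Second, show the solution space is spanned by $(1,-1,-1,1)$. Third, check that $\mathfrak F$ really holds on ${\rm A}_3$.

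For the first step, the relevant products in ${\rm A}_3$ are $e_1e_i=e_i$, $e_2e_2=e_2$, $e_2e_3=e_3e_3=0$, with involution fixing $e_1,e_2$ and negating $e_3$. Evaluating $\mathfrak f(e_1,e_1)$, $\mathfrak f(e_1,e_3)$ and $\mathfrak f(e_3,e_1)$ gives exactly the three equations obtained for ${\rm A}_1$:
\[
\alpha+\beta+\gamma+\delta=0,\qquad \alpha+\beta-\gamma-\delta=0,\qquad \alpha-\beta+\gamma-\delta=0 .
\]
Together these force $\alpha=-\beta=-\gamma=\delta$. The remaining pairs add nothing new. For instance $\mathfrak f(e_2,e_2)=(\alpha+\beta+\gamma+\delta)e_2$ and $\mathfrak f(e_1,e_2)$ give the first relation again, while $\mathfrak f(e_2,e_3)$, $\mathfrak f(e_3,e_2)$ and $\mathfrak f(e_3,e_3)$ vanish identically because those products are zero. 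So the solution space is one-dimensional, spanned by $\mathfrak F(x,y)=xy-\overline{x}y-x\overline{y}+\overline{xy}$.

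For the third step, write $x=h+s$ and $y=h'+s'$ with $h,h'\in\mathcal H$ and $s,s'\in\mathcal S$. Since $\mathcal S=\langle e_3\rangle$ satisfies $\mathcal S\mathcal S=0$ in ${\rm A}_3$, and $\mathcal H\mathcal H\subseteq\mathcal H$ while $\mathcal H\mathcal S,\mathcal S\mathcal H\subseteq\mathcal S$, a bilinear check on the three components $hh'$, $hs'$, $sh'$ suffices:
\begin{itemize}
\item[] on $hh'$ the coefficients give $1-1-1+1=0$;
\item[] on $hs'$ they give $1-1+1-1=0$, using $\overline{hs'}=-hs'$;
\item[] on $sh'$ they give $1+1-1-1=0$.
\end{itemize}
The $ss'$ component contributes nothing since $ss'=0$. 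Hence $\mathfrak F$ holds on ${\rm A}_3$.

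The only point needing care is the last one, since ${\rm A}_2$ failed $\mathfrak F$ precisely because $e_3e_3\neq0$ there. The argument must therefore use $\mathcal S\mathcal S=0$ in ${\rm A}_3$ explicitly, together with the grading $\mathcal H\mathcal S\subseteq\mathcal S$. Combining the two steps, every degree-$2$ functional identity of ${\rm A}_3$ is built from the commutative identity and $\mathfrak F$.
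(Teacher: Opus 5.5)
Your proof is correct and follows the paper's route: you reduce to the commutative form $\alpha xy+\beta\overline{x}y+\gamma x\overline{y}+\delta\overline{xy}$ and evaluate at $(e_1,e_1)$, $(e_1,e_3)$, $(e_3,e_1)$ to force $\alpha=-\beta=-\gamma=\delta$. You also check that $\mathfrak F$ actually vanishes on ${\rm A}_3$, by bilinearity over $\mathcal H\oplus\mathcal S$ and using $\mathcal S\mathcal S=0$; this supplies the sufficiency direction that the paper only asserts.
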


\begin{proof}
We consider the following relations:
\begin{longtable}{lllllll}
    
$0$&$=$&$\mathfrak f(e_1,e_1)$&$=$&$\alpha e_1+ \beta e_1+ \gamma  e_1 +\delta  e_1,$ & then &  $\alpha+\beta+\gamma+\delta=0;$\\

$0$&$=$&$\mathfrak f(e_1,e_3)$&$=$&$\alpha e_3+ \beta e_3 - \gamma  e_3 - \delta  e_3,$ & then &  $\alpha+\beta-\gamma-\delta=0;$\\ 

$0$&$=$&$\mathfrak f(e_3,e_1)$&$=$&$\alpha e_3- \beta e_3 + \gamma  e_3 - \delta  e_3,$ & then &  $\alpha-\beta+\gamma-\delta=0;$\\ 
    
    \end{longtable}
\noindent that gives $\alpha=-\beta=-\gamma=\delta$ and $\rm A_3$ satisfies the identity 
 $\mathfrak F(x,y):=  xy - \overline{x}y- x \overline{y} +   \overline {xy}.$
\end{proof}

\begin{proposition}
    Let $\mathfrak f(x,y)$ be a functional identity of degree $2$ of $\rm A_4,$ 
    then $\mathfrak f(x,y)$ is constructed from the commutative identity.
  \end{proposition}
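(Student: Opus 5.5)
We follow the scheme used for ${\rm A}_2$. Since ${\rm A}_4$ is commutative, a functional identity of degree $2$ is reduced by the commutative law to
$f_c(x,y)=\alpha xy+\beta\overline{x}y+\gamma x\overline{y}+\delta\overline{xy}$.
We will show that this identity vanishes on ${\rm A}_4$ only for $\alpha=\beta=\gamma=\delta=0$. This means every degree-$2$ identity of ${\rm A}_4$ is a consequence of commutativity. Because $f_c$ is bilinear, it suffices to evaluate it on pairs of basis elements and collect coefficients.

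First, we evaluate on $(e_1,e_1)$, $(e_1,e_3)$ and $(e_3,e_1)$, exactly as for ${\rm A}_1$–${\rm A}_3$. Using $\overline{e_1}=e_1$ and $\overline{e_3}=-e_3$, these give the relations
$\alpha+\beta+\gamma+\delta=0$, $\alpha+\beta-\gamma-\delta=0$ and $\alpha-\beta+\gamma-\delta=0$.
Together they force $\alpha=-\beta=-\gamma=\delta$. So the only candidate beyond commutativity is a multiple of $\mathfrak F(x,y)=xy-\overline{x}y-x\overline{y}+\overline{xy}$.

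Next, we evaluate $\mathfrak F$ on $(e_3,e_3)$, the step that distinguishes ${\rm A}_4$ from ${\rm A}_1$ and ${\rm A}_3$. In ${\rm A}_4$ we have $e_3e_3=-e_1+e_2\in\mathcal H$. The four terms of $f_c(e_3,e_3)$ are
$\alpha e_3e_3$, $\ \beta(-e_3)e_3=-\beta e_3e_3$, $\ \gamma e_3(-e_3)=-\gamma e_3e_3$ and $\ \delta\overline{e_3e_3}=\delta e_3e_3$.
Hence
$f_c(e_3,e_3)=(\alpha-\beta-\gamma+\delta)(-e_1+e_2)$,
which gives $\alpha-\beta-\gamma+\delta=0$. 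Combined with the previous relations this yields $4\alpha=0$, so $\alpha=\beta=\gamma=\delta=0$.

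The only point needing care is the assertion that $e_3e_3$ is fixed by the involution, i.e. $\overline{e_3e_3}=e_3e_3$. This holds because $e_1,e_2\in\mathcal H$, so the $\delta$-term contributes with sign $+$. The four relations then form an invertible (Hadamard-type) system, and the remaining evaluations on $e_2$ are not needed.
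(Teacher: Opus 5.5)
Your proposal is correct and follows the paper's proof: both reduce $\mathfrak f$ to $f_c$ by commutativity and evaluate it on $(e_1,e_1)$, $(e_1,e_3)$, $(e_3,e_1)$ and $(e_3,e_3)$. In both, $e_3e_3=-e_1+e_2\in\mathcal H$ supplies the fourth relation $\alpha-\beta-\gamma+\delta=0$, which together with the other three forces $\alpha=\beta=\gamma=\delta=0$.
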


\begin{proof}
We consider the following relations:
\begin{longtable}{lllllll}
    
$0$&$=$&$\mathfrak f(e_1,e_1)$&$=$&$\alpha e_1+ \beta e_1+ \gamma  e_1 +\delta  e_1,$ & then &  $\alpha+\beta+\gamma+\delta=0;$\\

$0$&$=$&$\mathfrak f(e_1,e_3)$&$=$&$\alpha e_3+ \beta e_3 - \gamma  e_3 - \delta  e_3,$ & then &  $\alpha+\beta-\gamma-\delta=0;$\\ 

$0$&$=$&$\mathfrak f(e_3,e_1)$&$=$&$\alpha e_3- \beta e_3 + \gamma  e_3 - \delta  e_3,$ & then &  $\alpha-\beta+\gamma-\delta=0;$\\

$0$&$=$&$\mathfrak f(e_3,e_3)$&$=$&$ \big(\alpha  - \beta   - \gamma    +\delta\big)  (e_2-e_1),$ & then &  $\alpha-\beta-\gamma+\delta=0;$\\ 
    
    \end{longtable}\noindent
that gives $\alpha=\beta=\gamma=\delta=0$ and $\rm A_4$ satisfies the identity 
 $\mathfrak f_1(x,y):=  0.$
\end{proof}

\begin{proposition}
    Let $\mathfrak f(x,y)$ be a functional identity of degree $2$ of $\rm A_5,$ 
    then $\mathfrak f(x,y)$ is constructed from 
$\mathfrak f_1(x,y) = xy-yx-\overline{xy}+\overline{yx}$ and 
$\mathfrak f_2(x,y) =  \overline{x}y+x\overline{y}-\overline{y}x-y\overline{x}.$

  \end{proposition}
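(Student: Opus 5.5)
Since $\mathrm{A}_5$ is not commutative, I will work with the full eight-parameter form
\[
f(x,y)=\alpha_1 xy+\alpha_2 yx+\beta_1\overline{x}y+\beta_2x\overline{y}+\beta_3\overline{y}x+\beta_4y\overline{x}+\gamma_1\overline{xy}+\gamma_2\overline{yx}
\]
rather than the reduced $f_c$. The map $(x,y)\mapsto f(x,y)$ is bilinear, so $f$ is an identity of $\mathrm{A}_5$ if and only if $f(e_i,e_j)=0$ for all $1\le i,j\le 3$. This gives a homogeneous linear system in the eight coefficients. The claim is that its solution space is spanned by the coefficient vectors of $\mathfrak f_1$ and $\mathfrak f_2$, that is, by $(\alpha_1,\alpha_2,\gamma_1,\gamma_2)=(1,-1,-1,1)$ with all $\beta_k=0$, and by $(\beta_1,\beta_2,\beta_3,\beta_4)=(1,1,-1,-1)$ with all other coefficients $0$.

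I will first check that $\mathfrak f_1$ and $\mathfrak f_2$ are identities, which is structural. Since $\overline{xy}=\overline{y}\,\overline{x}$, we have $\mathfrak f_1(x,y)=[x,y]-[\overline{y},\overline{x}]=[x,y]+[\overline{x},\overline{y}]$. Decompose $x=h+s$ with $h\in\mathcal H$ and $s\in\mathcal S$. In $\mathrm{A}_5$ the only nonzero commutators are $[e_2,e_3]=2e_2$, which is of the form $[\mathcal H,\mathcal S]$. So $[x,y]=[h,s']+[s,h']$, while $[\overline x,\overline y]=-[h,s']-[s,h']$, and $\mathfrak f_1$ vanishes. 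Similarly, $\mathfrak f_2(x,y)=[\overline{x},y]+[x,\overline{y}]$, and the same parity argument makes it vanish. Alternatively I can just evaluate both on the nine basis pairs.

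Next I will write out the nine basis evaluations, using $\overline{e_1}=e_1$, $\overline{e_2}=e_2$, $\overline{e_3}=-e_3$ and the table $e_2e_3=e_2=-e_3e_2$, $e_3e_3=e_1$. The pairs involving $e_1$ give the familiar relations $\sum=0$ (from $(e_1,e_1)$ and the $e_2$-pairs), together with sign-twisted versions from $(e_1,e_3)$ and $(e_3,e_1)$. The pair $(e_3,e_3)$ gives $e_1$ times $\alpha_1+\alpha_2-\beta_1-\beta_2-\beta_3-\beta_4+\gamma_1+\gamma_2$. The pairs $(e_2,e_3)$ and $(e_3,e_2)$ give multiples of $e_2$, where $\overline{e_2e_3}=\overline{e_2}=e_2$. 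The pair $(e_2,e_2)$ is trivial because $e_2e_2=0$.

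Collecting these gives a homogeneous system of rank $6$ in eight unknowns. I will row-reduce it and read off a two-dimensional kernel, then verify that the kernel matches the two vectors above. The main obstacle is only bookkeeping: getting the signs right in the $(e_2,e_3)$ and $(e_3,e_2)$ evaluations and confirming that the rank is exactly $6$, not $7$. I expect the relations from $(e_1,e_3)$ and $(e_3,e_1)$, combined with $(e_3,e_3)$, to force $\beta_1=\beta_2$, $\beta_3=\beta_4=-\beta_1$, $\alpha_2=-\alpha_1$ and $\gamma_1=-\gamma_2=-\alpha_1$, leaving $\alpha_1$ and $\beta_1$ free. This is exactly the span of $\mathfrak f_1$ and $\mathfrak f_2$.
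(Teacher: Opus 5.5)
Your proposal is correct and takes the same route as the paper: evaluate $f$ on the basis pairs, solve the resulting rank-$6$ homogeneous system (the paper writes out exactly your six nontrivial equations), and read off the span of $\mathfrak f_1,\mathfrak f_2$. Your commutator-parity check that $\mathfrak f_1,\mathfrak f_2$ vanish is a harmless extra. One correction to your forecast: $(e_1,e_3)$, $(e_3,e_1)$ and $(e_3,e_3)$ alone cannot force those relations. You also need $(e_1,e_1)$ to get $\alpha_1+\alpha_2=\gamma_1+\gamma_2=0$, and it is the $(e_2,e_3)$ and $(e_3,e_2)$ equations that give $\beta_1=\beta_2$ and $\gamma_1=-\alpha_1$.
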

\begin{proof}
    First, we consider the following relations:
\begin{longtable}{llllr}
$0$&$=$&$\mathfrak f(e_1,e_1)$&$=$&$\big(\alpha_1+\alpha_2+\beta_1+\beta_2+\beta_3+\beta_4+\gamma_1+\gamma_2\big)e_1;$ \\

$0$&$=$&$\mathfrak f(e_1,e_3)$&$=$&$\big(\alpha_1+\alpha_2+\beta_1-\beta_2-\beta_3+\beta_4-\gamma_1-\gamma_2\big)e_3;$ \\

$0$&$=$&$\mathfrak f(e_2,e_3)$&$=$&$\big(\alpha_1-\alpha_2+\beta_1-\beta_2+\beta_3-\beta_4+\gamma_1-\gamma_2\big)e_2;$  \\

$0$&$=$&$\mathfrak f(e_3,e_1)$&$=$&$\big(\alpha_1+\alpha_2-\beta_1+\beta_2+\beta_3-\beta_4-\gamma_1-\gamma_2\big)e_3;$\\ 

$0$&$=$&$\mathfrak f(e_3,e_2)$&$=$&$\big(-\alpha_1+\alpha_2+\beta_1-\beta_2+\beta_3-\beta_4-\gamma_1+\gamma_2\big)e_2;$ \\ 

$0$&$=$&$\mathfrak f(e_3,e_3)$&$=$&$\big(\alpha_1+\alpha_2-\beta_1-\beta_2-\beta_3-\beta_4+\gamma_1+\gamma_2\big)e_1.$  \\ 
    \end{longtable}
\noindent The present relations give a system of linear equations, which has the following solution:
\begin{center}
    $\alpha_1= \xi, \ 
    \alpha_2=-\xi,\ 
    \beta_1=\nu,\ 
    \beta_2=\nu,\ 
    \beta_3=-\nu,\ 
    \beta_4=-\nu,\ 
    \gamma_1=-\xi,\ 
    \gamma_2=\xi,$
\end{center}
i.e., $\mathfrak f(x,y)= \xi \mathfrak f_1(x,y) + \nu \mathfrak f_2(x,y),$ 
where $\mathfrak f_1(x,y)$ and $\mathfrak f_2(x,y)$ are functional identities of $\rm A_5,$ given by
\begin{center}
$\mathfrak f_1(x,y):= xy-yx-\overline{xy}+\overline{yx}$ and 
$\mathfrak f_2(x,y):=  \overline{x}y+x\overline{y}-\overline{y}x-y\overline{x}.$
\end{center}

\end{proof}

\begin{proposition}
    Let $\mathfrak f(x,y)$ be a functional identity of degree $2$ of $\rm S_1,$ 
    then $\mathfrak f(x,y)$ is constructed from the commutative identity and 
    $\mathfrak F(x,y) =  xy - \overline{x}y- x \overline{y} +   \overline {xy}.$
  \end{proposition}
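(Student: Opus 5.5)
Since $\rm S_1$ is commutative (its multiplication table coincides with that of $\rm A_1$), I will work, as in the previous propositions, with the reduced form
$$\mathfrak f_c(x,y)=\alpha xy+\beta\overline{x}y+\gamma x\overline{y}+\delta\overline{xy},$$
obtained after applying the commutative law. The only nonzero products are those involving $e_1$, and the involution is $\overline{e_1}=e_1$, $\overline{e_2}=-e_2$, $\overline{e_3}=-e_3$. So I evaluate $\mathfrak f_c$ on pairs of basis vectors, and by bilinearity this suffices.

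First, $\mathfrak f(e_1,e_1)=(\alpha+\beta+\gamma+\delta)e_1$ gives $\alpha+\beta+\gamma+\delta=0$. Next, for $i\in\{2,3\}$ we have $e_1e_i=e_i$, $\overline{e_1}e_i=e_i$, $e_1\overline{e_i}=-e_i$ and $\overline{e_1e_i}=-e_i$. Hence $\mathfrak f(e_1,e_i)=(\alpha+\beta-\gamma-\delta)e_i$, which gives $\alpha+\beta-\gamma-\delta=0$. Symmetrically, $\mathfrak f(e_i,e_1)=(\alpha-\beta+\gamma-\delta)e_i$ gives $\alpha-\beta+\gamma-\delta=0$. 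Finally, $\mathfrak f(e_i,e_j)=0$ automatically for $i,j\in\{2,3\}$, because all such products vanish in $\rm S_1$. This is exactly the difference from $\rm A_2$ and $\rm A_4$, where $e_3e_3\neq0$ produced a fourth independent equation.

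Solving the three equations gives $\alpha=-\beta=-\gamma=\delta$. Therefore every identity of degree $2$ of $\rm S_1$ is a combination of the commutative identity and $\mathfrak F(x,y)=xy-\overline{x}y-x\overline{y}+\overline{xy}$.

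As a sanity check, I will verify directly that $\mathfrak F$ holds on $\rm S_1$. On $e_1\otimes e_1$ it gives $1-1-1+1=0$. On $e_1\otimes e_i$ it gives $1-1+1-1=0$. On $e_i\otimes e_1$ it gives $1+1-1-1=0$. The remaining pairs are trivially zero.

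The only real point of care is justifying the reduction to $\mathfrak f_c$. This relies on commutativity of $\rm S_1$ together with $\overline{xy}=\overline{yx}$ holding there. It is the same step already used for $\rm A_1$, so no genuine obstacle is expected.
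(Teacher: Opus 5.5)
Your proof is correct and is essentially the paper's argument: you reduce to $\mathfrak f_c$ by commutativity, evaluate on $(e_1,e_1)$, $(e_1,e_i)$ and $(e_i,e_1)$ to get the same three linear equations, and solve to obtain $\alpha=-\beta=-\gamma=\delta$. Your extra remarks are useful: you note that the pairs $(e_i,e_j)$ with $i,j\in\{2,3\}$ impose no condition, and you check directly that $\mathfrak F$ vanishes on all basis pairs, which makes explicit the sufficiency direction the paper leaves implicit.
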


\begin{proof}
We consider the following relations:
\begin{longtable}{lllllll}
    
$0$&$=$&$\mathfrak f(e_1,e_1)$&$=$&$\alpha e_1+ \beta e_1+ \gamma  e_1 +\delta  e_1,$ & then &  $\alpha+\beta+\gamma+\delta=0;$\\

$0$&$=$&$\mathfrak f(e_1,e_3)$&$=$&$\alpha e_3+ \beta e_3 - \gamma  e_3 - \delta  e_3,$ & then &  $\alpha+\beta-\gamma-\delta=0;$\\ 

$0$&$=$&$\mathfrak f(e_3,e_1)$&$=$&$\alpha e_3- \beta e_3 + \gamma  e_3 - \delta  e_3,$ & then &  $\alpha-\beta+\gamma-\delta=0;$\\ 
    
    \end{longtable}
\noindent that gives $\alpha=-\beta=-\gamma=\delta$ and $\rm S_1$ satisfies the identity 
 $\mathfrak F(x,y):=  xy - \overline{x}y- x \overline{y} +   \overline {xy}.$
\end{proof}

\begin{proposition}
    Let $\mathfrak f(x,y)$ be a functional identity of degree $2$ of $\rm S_2,$ 
    then $\mathfrak f(x,y)$ is constructed from  general identity and 
\begin{center}
$\mathfrak g_1(x,y)= xy-yx +\overline{x}y-y\overline{x};$  \ 
$\mathfrak g_2(x,y)= xy-yx +x\overline{y}-\overline{y}x;$ and 
$\mathfrak g_3(x,y)= xy-yx +\overline{xy}-\overline{yx}.$
\end{center}

  \end{proposition}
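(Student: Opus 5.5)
We evaluate the general degree-$2$ functional identity
$$f(x,y) = \alpha_1 xy+ \alpha_2  yx +
\beta_1 \overline{x}y+ \beta_2 x \overline{y} + \beta_3 \overline{y} x +\beta_4 y \overline{x}+
\gamma_1 \overline {xy} + \gamma_2\overline{yx}$$
on ${\rm S}_2$, following the scheme used for ${\rm A}_5$. Since $f$ is bilinear in $(x,y)$, it vanishes identically on ${\rm S}_2$ if and only if $f(e_i,e_j)=0$ for all $1\leq i,j\leq 3$. So we are looking for the solution space of a homogeneous linear system in the eight unknowns $\alpha_1,\alpha_2,\beta_1,\ldots,\beta_4,\gamma_1,\gamma_2$.

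First we tabulate the nine values $f(e_i,e_j)$. We use $\overline{e_1}=e_1$, $\overline{e_2}=-e_2$, $\overline{e_3}=-e_3$ and the table $e_2e_3=e_2=-e_3e_2$, $e_3e_3=e_1$, with $e_2e_2=0$ and $e_1$ the unit. A first computation gives the following.
\begin{itemize}
\item $f(e_1,e_1)$ yields $\sum\alpha+\sum\beta+\sum\gamma=0$.
\item $f(e_1,e_2)$ and $f(e_1,e_3)$ both yield $\alpha_1+\alpha_2+\beta_1-\beta_2-\beta_3+\beta_4-\gamma_1-\gamma_2=0$.
\item $f(e_2,e_1)$ and $f(e_3,e_1)$ yield $\alpha_1+\alpha_2-\beta_1+\beta_2+\beta_3-\beta_4-\gamma_1-\gamma_2=0$.
\item $f(e_2,e_2)=0$ holds automatically.
\item $f(e_2,e_3)$ yields $\alpha_1-\alpha_2-\beta_1-\beta_2+\beta_3+\beta_4-\gamma_1+\gamma_2=0$, and $f(e_3,e_2)$ gives the same relation up to sign.
\item $f(e_3,e_3)$ yields $\alpha_1+\alpha_2-\beta_1-\beta_2-\beta_3-\beta_4+\gamma_1+\gamma_2=0$.
\end{itemize}
These are exactly the relations for ${\rm A}_5$ with the signs attached to $e_2$ flipped. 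The difference comes from $e_2$ now being skew rather than symmetric. It should be stressed that one cannot simply reuse the ${\rm A}_5$ answer, even though the multiplication tables coincide: the involutions differ, so we recompute.

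Next we solve the system. It has five equations in eight unknowns, and we expect them to be independent, which would leave a $3$-dimensional solution space. We then verify directly that $\mathfrak g_1$, $\mathfrak g_2$ and $\mathfrak g_3$ satisfy all five relations. For $\mathfrak g_1$ the coefficients are $\alpha_1=\beta_1=1$ and $\alpha_2=\beta_4=-1$, and substituting these into each relation is immediate; $\mathfrak g_2$ and $\mathfrak g_3$ are handled the same way. We also check that the three are linearly independent as coefficient vectors. This step is routine.

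The main obstacle is reconciling the count with the phrase ``general identity'' in the statement. We interpret the general identity as any further relation valid in the whole class, for instance one coming from the structure of the involution. If the rank turns out to be $4$ rather than $5$, the extra solution is that general identity. In that case we exhibit it explicitly by back-substitution, taking $\alpha_2,\beta_3,\gamma_2$ and one further parameter as free, and show that it together with $\mathfrak g_1,\mathfrak g_2,\mathfrak g_3$ spans the kernel. Row-reducing the $5\times 8$ matrix settles which case occurs. We then write $f$ as a linear combination of the basis found, exactly as was done for ${\rm A}_5$.
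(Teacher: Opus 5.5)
Your nine evaluations $f(e_i,e_j)$ and the five resulting relations are correct and agree with the paper's. The route is also the paper's: reduce to the basis and solve the homogeneous system. The gap is the decisive step. You never determine the rank of the $5\times 8$ system; you only ``expect'' the relations to be independent and keep a fallback branch for rank $4$. Checking that $\mathfrak g_1,\mathfrak g_2,\mathfrak g_3$ satisfy the relations and are linearly independent shows only that the identities form a space of dimension at least $3$. The proposition asserts the converse inclusion, that every identity is a combination of the $\mathfrak g_i$. That is exactly the claim that the rank is $5$, and your proposal does not prove it.

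The step closes quickly. Put $A=\alpha_1+\alpha_2$, $P=\beta_1+\beta_4$, $Q=\beta_2+\beta_3$, $G=\gamma_1+\gamma_2$. The relations from $f(e_1,e_1)$, $f(e_1,e_2)$, $f(e_2,e_1)$, $f(e_3,e_3)$ then read
\[
A+P+Q+G=0,\quad A+P-Q-G=0,\quad A-P+Q-G=0,\quad A-P-Q+G=0.
\]
The coefficient matrix is a nonsingular $4\times 4$ Hadamard matrix, so $A=P=Q=G=0$.

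The relation from $f(e_2,e_3)$ is $(\alpha_1-\alpha_2)-(\beta_1-\beta_4)-(\beta_2-\beta_3)-(\gamma_1-\gamma_2)=0$. It involves only these differences, while the other four involve only the sums. Sums and differences are independent linear coordinates, so this relation is independent of the other four and the rank is $5$.

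Solving gives $\beta_1=\nu$, $\beta_2=\mu$, $\gamma_1=\xi$, $\alpha_1=-\alpha_2=\nu+\mu+\xi$, $\beta_3=-\mu$, $\beta_4=-\nu$, $\gamma_2=-\xi$. That is, $\mathfrak f=\nu\mathfrak g_1+\mu\mathfrak g_2+\xi\mathfrak g_3$, exactly the paper's solution. There is no fourth solution, so drop the rank-$4$ branch and the speculation about ``general identity''. That phrase adds nothing beyond the span of $\mathfrak g_1,\mathfrak g_2,\mathfrak g_3$, which is where the paper's proof also ends.
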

\begin{proof}
    First, we consider the following relations:
\begin{longtable}{llllll}
    
$0$&$=$&$\mathfrak f(e_1,e_1)$&$=$&$(\alpha_1+\alpha_2+\beta_1+\beta_2+\beta_3+\beta_4+\gamma_1+\gamma_2)e_1;$ \\

$0$&$=$&$\mathfrak f(e_1,e_2)$&$=$&$(\alpha_1+\alpha_2+\beta_1-\beta_2-\beta_3+\beta_4-\gamma_1-\gamma_2)e_2;$\\


$0$&$=$&$\mathfrak f(e_2,e_1)$&$=$&$(\alpha_1+\alpha_2-\beta_1+\beta_2+\beta_3-\beta_4-\gamma_1-\gamma_2)e_2;$\\

$0$&$=$&$\mathfrak f(e_2,e_3)$&$=$&$(\alpha_1-\alpha_2-\beta_1-\beta_2+\beta_3+\beta_4-\gamma_1+\gamma_2)e_2;$  \\ 

    $0$&$=$&$\mathfrak f(e_3,e_3)$&$=$&$(\alpha_1+\alpha_2-\beta_1-\beta_2-\beta_3-\beta_4+\gamma_1+\gamma_2)e_1.$  \\ 
    \end{longtable}

From the above equations, we obtain the following solution:
\begin{center}
$\alpha_1 = \nu + \mu + \xi,$\ 
$\alpha_2 = -(\nu + \mu + \xi),$\
$\beta_1= \nu,$\ 
$\beta_2=\mu,$\ 
$\beta_3 = -\mu,$\ 
$\beta_4 = -\nu,$\ 
$\gamma_1= \xi,$\ 
$\gamma_2 = -\xi,$
\end{center}
i.e., $\mathfrak f(x,y)= \nu \mathfrak g_1(x,y) + \mu \mathfrak g_2(x,y)+ \xi \mathfrak g_3(x,y),$ 
where $\mathfrak g_1(x,y),$ $\mathfrak g_2(x,y),$ and $\mathfrak g_3(x,y)$ are functional identities of $\rm S_2,$ given by
\begin{center}
$\mathfrak g_1(x,y):= xy-yx +\overline{x}y-y\overline{x};$  \ 
$\mathfrak g_2(x,y):= xy-yx +x\overline{y}-\overline{y}x;$ and  
$\mathfrak g_3(x,y):= xy-yx +\overline{xy}-\overline{yx}.$
\end{center}

\end{proof}

\begin{corollary}
    Each unital $3$-dimensional structurable algebra, including the case of type $(3,0),$
    satisfies the following two functional identities:
\begin{center}   $\mathfrak f_1(x,y) = xy-yx-\overline{xy}+\overline{yx}$ and 
$\mathfrak f_2(x,y) =  
\overline{x}y-y\overline{x}+x\overline{y}-\overline{y}x.$
\end{center}
\end{corollary}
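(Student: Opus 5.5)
The plan is to check directly that $\mathfrak f_1$ and $\mathfrak f_2$ vanish on every algebra in the classification: ${\rm J}_1,\dots,{\rm J}_6$ (type $(3,0)$, with identical involution), ${\rm A}_1,\dots,{\rm A}_5$ (Theorem \ref{Thcl1}) and ${\rm S}_1,{\rm S}_2$ (Theorem \ref{Thcl2}). Both $\mathfrak f_1$ and $\mathfrak f_2$ are bilinear in $(x,y)$, because the involution is linear. Both are also antisymmetric: $\mathfrak f_i(y,x)=-\mathfrak f_i(x,y)$. Hence it suffices to check $\mathfrak f_i(e_j,e_k)=0$ for $j<k$, and since $e_1$ is the unit, the pairs $(e_1,e_k)$ are handled uniformly.

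\textbf{Pairs involving the unit.} For $x=e_1$ we have $\overline{x}=x$, so
\[
\mathfrak f_1(e_1,y)=y-y-\overline{y}+\overline{y}=0,
\qquad
\mathfrak f_2(e_1,y)=y-y+\overline{y}-\overline{y}=0 .
\]

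\textbf{Commutative algebras.} Every listed algebra except ${\rm A}_5$ and ${\rm S}_2$ is commutative. In that case $xy=yx$ gives $\overline{xy}=\overline{yx}$, so $\mathfrak f_1\equiv 0$. Likewise $\overline{x}y=y\overline{x}$ and $x\overline{y}=\overline{y}x$, so $\mathfrak f_2\equiv 0$. This covers the type $(3,0)$ case: there the involution is the identity, the algebras are Jordan and hence commutative. It also covers ${\rm A}_1,\dots,{\rm A}_4$ and ${\rm S}_1$.

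\textbf{Non-commutative algebras.} For ${\rm A}_5$ there is nothing new to do: the proposition on functional identities of ${\rm A}_5$ lists $\mathfrak f_1$ and $\mathfrak f_2$ as identities of ${\rm A}_5$. It remains to treat ${\rm S}_2$. By bilinearity and antisymmetry, the only pair to check is $(e_2,e_3)$, where $e_2,e_3$ are both skew and $e_2e_3=e_2=-e_3e_2$. The check goes as follows.
\begin{itemize}
\item For $\mathfrak f_2$: substitute $\overline{e_2}=-e_2$ and $\overline{e_3}=-e_3$. Then $\mathfrak f_2(e_2,e_3)=-(e_2e_3-e_3e_2)-(e_2e_3-e_3e_2)$. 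This reduces to a scalar multiple of $e_2$, which must be shown to vanish.
\item For $\mathfrak f_1$: write $\overline{e_2e_3}=\overline{e_3}\,\overline{e_2}=e_3e_2$, and similarly for $\overline{e_3e_2}$.
\end{itemize}
Alternatively, one can express $\mathfrak f_1$ and $\mathfrak f_2$ in the spanning set $\mathfrak g_1,\mathfrak g_2,\mathfrak g_3$ from the proposition on identities of ${\rm S}_2$, using $\mathfrak g_1+\mathfrak g_2=2(xy-yx)+\mathfrak f_2$ and $\mathfrak g_3=\mathfrak f_1+2(xy-yx)+2(\overline{xy}-\overline{yx})$.

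I expect this ${\rm S}_2$ pair to be the main obstacle. The signs coming from two skew factors combine with the anticommutation $e_3e_2=-e_2e_3$, and the cancellation is not automatic as in the commutative case. The sign conventions of the involution (the rule $\overline{xy}=\overline{y}\,\overline{x}$) must be tracked carefully here. If a nonzero multiple of $e_2$ survives, the statement would need to be restricted accordingly.
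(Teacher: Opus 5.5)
The ${\rm S}_2$ step you left open is exactly where the argument breaks, and it cannot be repaired: the statement is false for ${\rm S}_2$. For skew $x,y$ one has $\overline{xy}=\overline{y}\,\overline{x}=yx$. Hence $\mathfrak f_1(x,y)=2(xy-yx)$ and $\mathfrak f_2(x,y)=-2(xy-yx)$; your own reduction of $\mathfrak f_2$ already gives the latter. In ${\rm S}_2$ we have $e_2e_3-e_3e_2=2e_2$, so
\[
\mathfrak f_1(e_2,e_3)=4e_2\neq 0,\qquad \mathfrak f_2(e_2,e_3)=-4e_2\neq 0 .
\]
More generally, both $\mathfrak f_1$ and $\mathfrak f_2$ vanish on mixed pairs $x\in\mathcal H$, $y\in\mathcal S$, and both equal $2(xy-yx)$ on Hermitian pairs. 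So they are identities exactly when $[\mathcal H,\mathcal H]=[\mathcal S,\mathcal S]=0$. This holds for the ${\rm J}_i$, for ${\rm A}_1,\dots,{\rm A}_5$ and for ${\rm S}_1$, but not for ${\rm S}_2$.

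You cannot avoid this by doubting that ${\rm S}_2$ is structurable. It is associative: with $E_{11}=\frac12(e_1-e_3)$, $E_{22}=\frac12(e_1+e_3)$ and $E_{12}=e_2$, it is the algebra of upper triangular $2\times 2$ matrices with the involution $X\mapsto\mathrm{adj}\,X$. An associative algebra with involution is structurable.

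The paper gives no separate proof; the corollary is meant to be read off from the preceding propositions, and that reading also fails at ${\rm S}_2$. The paper's general solution for ${\rm S}_2$ is $\alpha_1=\nu+\mu+\xi$, $\beta_1=\nu$, $\beta_2=\mu$, $\gamma_1=\xi$. For $\mathfrak f_1$ this forces $\nu=\mu=0$ and $\xi=-1$, hence $\alpha_1=-1\neq 1$. For $\mathfrak f_2$ it forces $\nu=\mu=1$ and $\xi=0$, hence $\alpha_1=2\neq 0$. So neither lies in the span of $\mathfrak g_1,\mathfrak g_2,\mathfrak g_3$.

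Your alternative route via the $\mathfrak g_i$ therefore cannot work either. The relation you wrote is also off: in fact $\mathfrak g_3-\mathfrak f_1=2(\overline{xy}-\overline{yx})$, with no extra $2(xy-yx)$. In any case, writing $\mathfrak f_i$ as a combination of the $\mathfrak g_i$ plus multiples of $xy-yx$ or $\overline{xy}-\overline{yx}$ proves nothing, because those are not identities of ${\rm S}_2$.

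The rest of your reduction is correct: bilinearity, antisymmetry, the pairs involving the unit, the commutative cases, and ${\rm A}_5$. What can actually be proved is the statement with ${\rm S}_2$ excluded, which is the restriction you anticipated at the end.
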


\subsubsection{Allison---Hein construction}

\begin{proposition}[Allison---Hein construction  \cite{AH81}]
Let $\mathcal A$ be a unital structurable algebra, 
then the multiplication $\star$ defined as 
$x\star y :=   T_x(y),$ 
gives a structure of a conservative
algebra of order $2$ with unit element $e_1$.
\end{proposition}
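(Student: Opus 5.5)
The plan is to verify the defining identities of a conservative algebra of order $2$ directly from the structurable identity \eqref{structurableunit}. First the unit. We have $T_{e_1}(y)=e_1y+ye_1-y\overline{e_1}=y$, so $e_1\star y=y$. Also $T_x(e_1)=x+x-\overline{x}=2x-\overline{x}$, which is not $x$ in general. So the unit statement should be read as $e_1$ being a left unit, or else the construction should be applied to $\mathcal H$-parts, where $T_a=L_a$. I will make this precise at the start of the proof. This is also the form used in Allison--Hein \cite{AH81}, where the conservative structure is carried by the operators $T_x$ and $V_{x,y}$.

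The conservative identity of order $2$, in Kantor's sense, asks for the following. For each $z$ there should be operators $B_z$ and $C_z$ such that
\[
[L^\star_z,[L^\star_x,L^\star_y]]=[L^\star_{L^\star_z(x)+B_z(x)},L^\star_y]+[L^\star_x,L^\star_{L^\star_z(y)+C_z(y)}],
\]
where $L^\star_x=T_x$. Equivalently, the span of $\{L^\star_x\}$ together with the commutators $[L^\star_x,L^\star_y]$ should generate a Lie algebra that is $\mathbb Z$-graded in degrees $-1,0,1$ in the appropriate sense. The key step is to express commutators of $T$'s through $V$'s. Setting $y=e_1$ and using $V_{x,e_1}=T_x$, identity \eqref{structurableunit} gives
\[
[T_z,T_x]=T_{T_z(x)}-V_{x,T_{\overline z}(e_1)}=T_{T_z(x)}-V_{x,2\overline z-z}.
\]
So each $[T_z,T_x]$ lies in the span of the $T$'s and $V$'s. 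Identity \eqref{structurableunit} itself says that $\operatorname{ad}T_z$ preserves the span of the $V$'s, acting as $V_{x,y}\mapsto V_{T_z(x),y}-V_{x,T_{\overline z}(y)}$.

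Next I substitute. I expand $[T_z,[T_x,T_y]]$ using the formula above for $[T_x,T_y]$, and then apply \eqref{structurableunit} to $[T_z,V_{x,2\overline y-y}]$. The result is a combination of the form $[T_{T_z x},T_y]+[T_x,T_{T_z y}]$ plus correction terms of the shape $V_{x,T_{\overline z}(\cdot)}$. Those correction terms must be rewritten as $[T_x,T_{C_z(y)}]$ and $[T_{B_z(x)},T_y]$, again via the first formula. This determines $B_z$ and $C_z$, which I expect to be built from $T_{\overline z}$, $T_z$ and the involution.

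The main obstacle is this matching. The bracket $[T_x,T_y]$ produces $V_{y,2\overline x-x}-V_{x,2\overline y-y}$, and reversing it requires rewriting an arbitrary $V_{a,b}$ as commutators of $T$'s. I would try to obtain this using the standard structurable relations: $V_{x,y}$ is skew in the sense that $V_{x,y}-V_{y,x}$ relates to $L_{x\overline y-y\overline x}$, and the skew elements satisfy $V_{s,e_1}$-type identities. I would split $x$ and $y$ into their $\mathcal H$- and $\mathcal S$-components and handle the four cases separately. If this bookkeeping becomes intractable, the fallback for the present paper is to verify the conservative identity directly on the basis, using the multiplication tables of ${\rm A}_1,\dots,{\rm A}_5$, ${\rm S}_1$, ${\rm S}_2$ and ${\rm J}_1,\dots,{\rm J}_6$.
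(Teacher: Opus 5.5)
Your preliminary computations are correct. You have $T_{e_1}=\mathrm{id}$ and $x\star e_1=2x-\overline{x}$, so $e_1$ is only a left unit for $\star$; the paper's own tables show this, e.g.\ $e_3\star e_1=3e_3$. (Your alternative reading via $\mathcal H$ does not rescue the statement, since $\mathcal H$ is not closed under the product in general.) Putting $y=e_1$ in \eqref{structurableunit} does give $[T_z,T_x]=T_{T_z(x)}-V_{x,w_z}$ with $w_z=2\overline{z}-z$.

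But the proposal stops exactly where the proof has to begin. Carrying out your substitution gives
\[
[T_z,[T_x,T_y]]-[T_{T_z(x)},T_y]-[T_x,T_{T_z(y)}]=-T_{V_{x,w_z}(y)}-V_{T_x(y),w_z}+V_{y,\,T_{\overline{z}}(w_x)+w_{T_z(x)}}.
\]
The entire content of the statement is that this remainder equals $[T_{N_z(x)},T_y]+[T_x,T_{N_z(y)}]$ for a single linear map $N_z$ depending linearly on $z$. You never produce $N_z$, and your matching mechanism is not shown to work.

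Expanding $[T_{N_z(x)},T_y]+[T_x,T_{N_z(y)}]$ with the same formula yields $T$-terms together with $V_{y,w_{N_z(x)}}$ and $V_{N_z(y),w_x}$. Using antisymmetry instead, you get $V_{N_z(x),w_y}$ and $V_{x,w_{N_z(y)}}$. The remainder, however, contains $V_{T_x(y),w_z}$, whose first argument depends jointly on $x,y$ and whose second depends only on $z$. No term on the other side has that form, so genuine additional identities among the $V$'s are needed, and you only allude to them.

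Rewriting $V_{a,b}$ through commutators of $T$'s is possible: $V_{a,b}=T_{T_c(a)}-[T_c,T_a]$ with $c=b_{\mathcal H}-\frac13 b_{\mathcal S}$. But this produces commutators that contain neither $T_x$ nor $T_y$ as a factor, so by itself it does not give the required shape.

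Three further problems remain. The paper gives no argument here and relies entirely on Allison--Hein, so a self-contained proof must supply exactly this missing step.

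\begin{itemize}
\item You never say what distinguishes a conservative algebra \emph{of order $2$} from an arbitrary conservative algebra. So even a completed identity check would not prove the statement as written.
\item Your ``equivalently, a Lie algebra graded in degrees $-1,0,1$'' is unjustified. The $L^\star_x$ and their commutators all lie in $\mathrm{End}(\mathcal A)$, and the Lie algebra naturally attached to $\mathcal A$, namely $\mathcal F(\mathcal A)$ of Section 2, is $5$-graded.
\item The fallback of checking ${\rm A}_1,\dots,{\rm S}_2$ and ${\rm J}_1,\dots,{\rm J}_6$ concerns only complex $3$-dimensional algebras, and there it is largely vacuous. For ${\rm A}_1$--${\rm A}_4$ and ${\rm S}_1$ the operators $T_x$ commute pairwise, so the identity holds trivially with $M=0$. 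Such a check cannot establish a statement about all unital structurable algebras.
\end{itemize}
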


Direct application of the construction introduced by  Allison and Hein
to unital $3$-dimensional structurable algebras of types $(2,1)$ and $(1,2)$ gives the following statement. 

\begin{proposition}
The multiplication tables of conservative algebras constructed from 
unital $3$-dimensional structurable algebras with a non-identical involution via the Allison---Hein construction are given below 
 
\begin{longtable}{|c|lcllcllcl|}
    \hline 
$\mathcal{C}(\rm{A}_1)$&$
e_1 \star e_1 $&$=$&$ e_1$&$
e_1 \star e_2 $&$=$&$ e_2$&$ 
e_1 \star e_3 $&$=$&$ e_3$ \\
& $e_2 \star e_1 $&$=$&$ e_2$&$
e_2\star e_2 $&$=$&$ 0$&$
e_2 \star e_3 $&$=$&$ 0$ \\
& $e_3 \star e_1 $&$=$&$ 3e_3$&$
e_3 \star e_2 $&$=$&$ 0$&$ 
e_3 \star e_3 $&$=$&$ 0$\\
\hline 

$\mathcal{C}(\rm{A}_2)$&$
e_1 \star e_1 $&$=$&$ e_1$&$
e_1 \star e_2 $&$=$&$ e_2$&$
e_1 \star e_3 $&$=$&$ e_3$ \\
&$e_2 \star e_1 $&$=$&$ e_2$&$
e_2 \star e_2 $&$=$&$ 0$&$
e_2 \star e_3 $&$=$&$ 0$ \\
&$e_3 \star e_1 $&$=$&$ 3e_3$&$
e_3 \star e_2 $&$=$&$ 0$&$
e_3 \star e_3 $&$=$&$ 3e_2$\\
\hline 

$\mathcal{C}(\rm{A}_3)$&$
e_1 \star e_1 $&$=$&$ e_1$&$
e_1 \star e_2 $&$=$&$ e_2$&$
e_1 \star e_3 $&$=$&$ e_3$ \\
&$e_2 \star e_1 $&$=$&$ e_2$&$
e_2 \star e_2 $&$=$&$ e_2$&$
e_2 \star e_3 $&$=$&$ 0$ \\
&$e_3 \star e_1 $&$=$&$ 3e_3$&$
e_3 \star e_2 $&$=$&$ 0$&$
e_3 \star e_3 $&$=$&$ 0$\\
\hline 

$\mathcal{C}(\rm{A}_4)$&$
 e_1 \star e_1 $&$=$&$ e_1$&$
e_1 \star e_2 $&$=$&$ e_2$&$
e_1 \star e_3 $&$=$&$ e_3$ \\
&$e_2 \star e_1 $&$=$&$ e_2$&$
e_2 \star e_2 $&$=$&$ e_2$&$ 
e_2 \star e_3 $&$=$&$0$ \\
&$e_3 \star e_1 $&$=$&$ 3e_3$&$
e_3 \star e_2 $&$=$&$ 0$&$
e_3 \star e_3 $&$=$&$ -3e_1 + 3e_2$\\
\hline 
$\mathcal{C}(\rm{A}_5)$&$
e_1\star e_1 $&$=$&$ e_1$&$
e_1 \star e_2 $&$=$&$ e_2$&$
e_1 \star e_3 $&$=$&$ e_3$ \\
&$e_2 \star e_1 $&$=$&$ e_2$&$
e_2 \star e_2 $&$=$&$ 0$&$
e_2 \star e_3 $&$=$&$ e_2$ \\
&$e_3 \star e_1 $&$=$&$ 3e_3$&$
e_3 \star e_2 $&$=$&$ e_2$&$
e_3 \star e_3 $&$=$&$ 3e_1$\\
\hline 
 
\hline 

$\mathcal{C}(\rm{S}_1)$&$
e_1 \star e_1 $&$=$&$ e_1$&$ 
e_1 \star e_2 $&$=$&$ e_2$&$ 
e_1 \star e_3 $&$=$&$ e_3$ \\
&$e_2 \star e_1 $&$=$&$ 3e_2$&$ 
e_2 \star e_2 $&$=$&$ 0$&$ 
e_2 \star e_3 $&$=$&$ 0$ \\

&$e_3 \star e_1 $&$=$&$ 3e_3$&$ 
e_3 \star e_2 $&$=$&$ 0$&$ 
e_3 \star e_3 $&$=$&$ 0$\\
\hline 

$\mathcal{C}(\rm{S}_2)$&$
e_1 \star e_1 $&$=$&$ e_1$&$
e_1 \star e_2 $&$=$&$ e_2$&$
e_1 \star e_3 $&$=$&$ e_3$ \\
&$e_2 \star e_1 $&$=$&$ 3e_2$&$ 
e_2 \star e_2 $&$=$&$ 0$&$ 
e_2 \star e_3 $&$=$&$ -e_2$ \\
&$e_3 \star e_1 $&$=$&$ 3e_3$&$ 
e_3 \star e_2 $&$=$&$ e_2$&$
e_3 \star e_3 $&$=$&$ 3e_1$\\
\hline 
\end{longtable}
\end{proposition}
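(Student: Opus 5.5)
The statement is a direct computation, so the plan is to evaluate $x\star y=T_x(y)=xy+yx-y\overline{x}$ on all pairs of basis vectors for each of the seven algebras ${\rm A}_1,\dots,{\rm A}_5,{\rm S}_1,{\rm S}_2$. I will not verify the conservative identity of order $2$, since the Allison---Hein construction quoted above already guarantees it.

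\emph{First step: linearity and the unit.} Because $T_x$ is linear in $x$ and in $y$, it suffices to compute $e_i\star e_j$ for $1\le i,j\le 3$. The rows coming from the unit and from the symmetric part are handled uniformly.
\begin{itemize}
\item For $a\in\mathcal H$ we have $T_a=L_a$, as remarked after the definition of $T_x$. Hence $e_1\star y=y$, and in type $(2,1)$ also $e_2\star y=e_2y$.
\item For $s\in\mathcal S$ we have $\overline{s}=-s$, so $T_s(y)=sy+2ys$.
\item In particular $s\star e_1=s+2s=3s$. This gives the entries $e_3\star e_1=3e_3$ in every case, and $e_2\star e_1=3e_2$ for ${\rm S}_1$ and ${\rm S}_2$.
\end{itemize}

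\emph{Second step: the remaining products.} Only $s\star e_j=s e_j+2e_js$ for $s$ a skew basis vector and $j\ge 2$ is left, and I read these off the tables of Theorems \ref{Thcl1} and \ref{Thcl2}. For ${\rm A}_2$ this gives $e_3\star e_3=3e_3e_3=3e_2$. For ${\rm A}_4$ it gives $e_3\star e_3=3(-e_1+e_2)$, and $e_3\star e_2=0$ because $e_3e_2=0$. For ${\rm A}_5$:
\begin{itemize}
\item $e_3\star e_2=e_3e_2+2e_2e_3=-e_2+2e_2=e_2$;
\item $e_3\star e_3=3e_1$;
\item $e_2\star e_3=e_2e_3=e_2$, since $e_2\in\mathcal H$.
\end{itemize}
For ${\rm S}_2$ both $e_2$ and $e_3$ are skew:
\begin{itemize}
\item $e_2\star e_3=e_2e_3+2e_3e_2=e_2-2e_2=-e_2$;
\item $e_3\star e_2=e_3e_2+2e_2e_3=-e_2+2e_2=e_2$;
\item $e_3\star e_3=3e_1$;
\item $e_2\star e_2=0$.
\end{itemize}
The algebras ${\rm A}_1$, ${\rm A}_3$ and ${\rm S}_1$ are immediate, since their only nonzero products involve $e_1$ or $e_2e_2$.

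There is no real obstacle here. The only point needing care is sign bookkeeping: using $\overline{y}$ versus $\overline{x}$ correctly in $T_x(y)=xy+yx-y\overline{x}$, and remembering that $e_2$ is symmetric in type $(2,1)$ but skew in type $(1,2)$. That distinction is exactly what makes the $e_2\star e_1$ and $e_2\star e_3$ entries differ between ${\rm A}_5$ and ${\rm S}_2$.
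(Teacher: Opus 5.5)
Your proposal is correct and takes essentially the paper's approach: the paper gives no argument beyond saying the tables follow by direct application of $x\star y=T_x(y)$. Your reduction to $T_a=L_a$ for $a\in\mathcal H$ and $T_s(y)=sy+2ys$ for $s\in\mathcal S$ reproduces every entry, including the sign-sensitive ones $e_3\star e_2=e_2$ for ${\rm A}_5$ and $e_2\star e_3=-e_2$ for ${\rm S}_2$.
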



\begin{proposition}
Derivations of complex  $3$-dimensional  conservative algebras, constructed from unital structurable algebras via the Allison---Hein construction, are given below:

\begin{longtable}{|lcl|}
  \hline 
  
${\mathfrak{Der}}(\mathcal{C}(\rm{A}_1))$&$=$&$\big\langle d_1,d_2 \ | \ 
d_1(e_2)=e_2; \ 
d_2(e_3)=e_3 \ 
\big\rangle$\\

\hline

${\mathfrak{Der}}(\mathcal{C}(\rm{A}_2))$&$=$&$\big\langle d_1 \ | \  
d_1(e_2)=2e_2;\ 
d_1(e_3)=e_3 \ 
\big\rangle$\\

\hline 
  
${\mathfrak{Der}}(\mathcal{C}(\rm{A}_3))$&$=$&$\big\langle d_1 \ | \   d_1(e_2)=e_2 \big\rangle$\\

\hline 
  
${\mathfrak{Der}}(\mathcal{C}(\rm{A}_4))$&$=$&$\big\langle 0\big\rangle$\\

\hline 
  
${\mathfrak{Der}}(\mathcal{C}(\rm{A}_5))$&$=$&$\big\langle d_1 \ | \  
d_1(e_2)=e_2 \ 
\big\rangle$\\

\hline 
  
${\mathfrak{Der}}(\mathcal{C}(\rm{S}_1))$&$=$&$\big\langle d_1,d_2,d_3,d_4 \ | \ 
d_1(e_2)=e_2; \
d_2(e_2)=e_3; \ 
d_3(e_3)=e_2; \ 
d_4(e_3)=e_3\big\rangle$\\

\hline 

${\mathfrak{Der}}(\mathcal{C}(\rm{S}_2))$&$=$&$\big\langle d_1,d_2 \ | \ 
d_1(e_2)=e_2;\ d_2(e_3)=e_2 \ 
\big\rangle$\\

\hline 
  
\end{longtable}
\end{proposition}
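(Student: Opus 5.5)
The plan is a direct linear-algebra computation, carried out algebra by algebra on the multiplication tables of $\mathcal C({\rm A}_1),\dots,\mathcal C({\rm S}_2)$ given in the previous Proposition. For each such algebra I write a general linear map $D$ in the basis $\{e_1,e_2,e_3\}$ as $D(e_i)=\sum_j d_{ji}e_j$, nine unknowns. I then impose $D(e_i\star e_j)=D(e_i)\star e_j+e_i\star D(e_j)$ for all nine ordered pairs $(i,j)$. Since $\star$ is not commutative, both orders must be used. This gives a homogeneous linear system whose solution space is $\mathfrak{Der}$.

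\textbf{Step 1: remove the $e_1$-column.} The pair $(e_1,e_1)$ gives $D(e_1)=D(e_1)\star e_1+e_1\star D(e_1)$. Write $D(e_1)=ae_1+be_2+ce_3$. In every table, $e_1\star$ acts as the identity and $\star e_1$ acts by $\mathrm{diag}(1,\lambda_2,\lambda_3)$ with $\lambda_i\in\{1,3\}$. The equation therefore reads $ae_1+be_2+ce_3=2ae_1+(1+\lambda_2)be_2+(1+\lambda_3)ce_3$, which forces $a=b=c=0$, so $D(e_1)=0$. This is the conservative-algebra analogue of Lemma \ref{automor}. With $D(e_1)=0$, the pairs $(e_1,e_i)$ hold automatically. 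The pairs $(e_i,e_1)$ give
$$D(e_i)\star e_1=\lambda_i D(e_i).$$
Hence $D$ preserves the eigenspaces of right multiplication by $e_1$. For ${\rm A}_k$ these are $\langle e_1,e_2\rangle$ (eigenvalue $1$) and $\langle e_3\rangle$ (eigenvalue $3$). For ${\rm S}_k$ they are $\langle e_1\rangle$ and $\langle e_2,e_3\rangle$. This immediately kills most off-diagonal entries: for ${\rm A}_k$, $D(e_2)\in\langle e_2\rangle$ and $D(e_3)\in\langle e_3\rangle$; for ${\rm S}_k$, $D(e_2),D(e_3)\in\langle e_2,e_3\rangle$. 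This is why $\mathfrak{Der}(\mathcal C({\rm A}_1))$ is smaller than $\mathfrak{Der}({\rm A}_1)$.

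\textbf{Step 2: the remaining pairs $(e_i,e_j)$ with $i,j\in\{2,3\}$.} These give at most four equations in the surviving unknowns, handled case by case.
\begin{itemize}
\item $\mathcal C({\rm A}_1)$ and $\mathcal C({\rm S}_1)$: all these products vanish, so everything surviving Step 1 is a derivation. This gives dimensions $2$ and $4$.
\item $\mathcal C({\rm A}_2)$: $e_3\star e_3=3e_2$ forces $d_{22}=2d_{33}$.
\item $\mathcal C({\rm A}_4)$: $e_3\star e_3=-3e_1+3e_2$, compared against $D(e_1)=0$, forces $d_{33}=0$ and then $d_{22}=0$.
\item $\mathcal C({\rm A}_3)$: the only nonzero product among $e_2,e_3$ is $e_2\star e_2=e_2$, together with the diagonal form from Step 1. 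The diagonal unknowns are constrained by this one equation, which leaves a one-dimensional space of diagonal derivations. I then match this space against the generator $d_1$ listed in the statement.
\item $\mathcal C({\rm A}_5)$ and $\mathcal C({\rm S}_2)$: the relations $e_3\star e_3=3e_1$, $e_2\star e_3=\pm e_2$ and $e_3\star e_2=e_2$ force $D(e_3)\in\langle e_2\rangle$ and $D(e_2)\in\langle e_2\rangle$. For $\mathcal C({\rm A}_5)$, Step 1 additionally kills $d_{23}$, so only $d_1$ remains. For $\mathcal C({\rm S}_2)$ both $d_1$ and $d_2$ survive.
\end{itemize}

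\textbf{Main obstacle.} Nothing here is conceptually hard; the real risk is bookkeeping. The tables are non-commutative, with right multiplication by $e_1$ scaling by $3$ on one part and a twisted $\star$ on $\mathcal S$. So I will expand both $D(e_i)\star e_j$ and $e_i\star D(e_j)$ explicitly for every pair, and not rely on symmetry. The $\mathcal C({\rm A}_3)$ case deserves the most care: there the constraint $e_2\star e_2=e_2$ interacts with the diagonal form from Step 1, and the surviving generator must be checked explicitly against the listed $d_1$. As a final check, every listed generator will be verified to satisfy all nine equations.
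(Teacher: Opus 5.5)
The fatal problem is the $\mathcal C({\rm A}_3)$ row. You defer it (``I then match this space against the generator $d_1$ listed in the statement''), but that match cannot be made, because the printed row is false.

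In $\mathcal C({\rm A}_3)$ the element $e_2$ is idempotent: $e_2\star e_2=e_2$. Take $D(e_1)=0$ and $D(e_2)=d_{12}e_1+d_{22}e_2$. Then the pair $(e_2,e_2)$ reads $d_{12}e_1+d_{22}e_2=2(d_{12}+d_{22})e_2$, which forces $d_{12}=d_{22}=0$. With $D(e_3)=d_{33}e_3$, every remaining pair then holds automatically. Hence $\mathfrak{Der}(\mathcal C({\rm A}_3))=\langle d_1\mid d_1(e_3)=e_3\rangle$, which agrees with $\mathfrak{Der}({\rm A}_3)$ from the earlier Proposition.

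The listed map with $d_1(e_2)=e_2$ is not a derivation at all: $d_1(e_2\star e_2)=e_2$, whereas $d_1(e_2)\star e_2+e_2\star d_1(e_2)=2e_2$. As written, your plan would end by certifying a false entry. Carried out honestly, it proves a corrected statement, and you should flag this row as an error, presumably a typo for $d_1(e_3)=e_3$. The paper states this Proposition without proof, so nothing there rescues the printed row.

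There is also a smaller slip in Step 1. For type $(2,1)$, the eigenvalue-$1$ eigenspace of $x\mapsto x\star e_1$ is $\mathcal H=\langle e_1,e_2\rangle$. So the pairs $(e_i,e_1)$ give only $D(e_2)\in\langle e_1,e_2\rangle$, not $D(e_2)\in\langle e_2\rangle$. The coefficient $d_{12}$ is removed only by the pair $(e_2,e_2)$, whose right-hand side contains $d_{12}(e_1\star e_2+e_2\star e_1)=2d_{12}e_2$.

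Consequently, your claim that for $\mathcal C({\rm A}_1)$ ``everything surviving Step 1 is a derivation'' is false as stated. The map sending $e_2\mapsto e_1$ and the other basis vectors to $0$ passes every Step 1 condition, but fails $(e_2,e_2)$, since $0\neq 2e_2$. Once that equation is included, your computation does reproduce the printed rows for ${\rm A}_1$, ${\rm A}_2$, ${\rm A}_4$, ${\rm A}_5$, ${\rm S}_1$ and ${\rm S}_2$. For the ${\rm S}_k$, Step 1 is correct as you state it, since there $\mathcal H=\langle e_1\rangle$.
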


\newpage
\section{Allison---Kantor
construction}

\subsection{The Allison---Kantor structure algebra}\label{AKalg}
Let $\overline{\mathfrak{Der}}(\mathcal{A}) $ be the set of derivations of $\mathcal{A}$ that commute with $^-$. It is easy to note that if $x \in \mathcal{A}$
and $T_x(1)=0$ then $x=0$. Therefore, $T_{\mathcal{A}}\cap \overline{\mathfrak{Der}}(\mathcal{A})=0$ and we may define 
\begin{longtable}{rcl}
${\rm Sk}(\mathcal{A})$&$=$&$T_{\mathcal{S}}\oplus \overline{\mathfrak{Der}}(\mathcal{A}),$\\
${\rm Str}(\mathcal{A})$&$=$&$T_{\mathcal{A}}\oplus \overline{\mathfrak{Der}}(\mathcal{A}).$
\end{longtable}
Denote by $L_x$ and $R_x$ the operators of left and right multiplication on $x,$ i.e., $L_x(y)=xy,\ R_x(y)=yx.$ If $E\in {\rm End} (\mathcal{A})$ then we define
$$E^\delta=E+R_{\overline{E(e_1)}},\ \ \ 
E^\varepsilon=E-T_{E(e_1)+\overline{E(e_1)}},\ \  \    
\overline{E}(x)=\overline{E(\overline x)}.$$
If $D\in \overline{\mathfrak{Der}}(\mathcal{A})  $ then $D^\delta=D^\varepsilon=\overline{D}=D$ and 
$$T_x^\delta(y)=xy+y\overline{ x};\  \  T^\varepsilon_x=-T_{\overline{ x}};\ \  \overline{T_x}(y)=\overline{ x}y -xy+y\overline{ x}.$$
Consequently, if 
$E\in {\rm Str}(\mathcal{A}) $ then $\overline{E^\delta}=E^\delta$ and so $E^\delta $ stabilizes $\mathcal{S}$ and $\mathcal{H}.$

\medskip 

Put $N=\big\{(x,s): \ x\in \mathcal{A}, s\in \mathcal{S}\big\}.$ 
Then, since the map $E \mapsto E^\delta |_s$ is a Lie algebra homomorphism, $N$ is a 
${\rm Str}(\mathcal{A})$ -module under the action 
$E(x,s)\ =\ \big(E(x),E^\delta(s)\big).$ Put 
$$\mathcal{F}(\mathcal{A})=\wideparen  N \oplus {\rm Str}(\mathcal{A})\oplus N,$$
where $\wideparen{N}$ is   isomorphic of  $N$. 
Define anticommutative operation $[\cdot,\cdot]$ on 
$\mathcal{F}(\mathcal{A}),$
such that 
for $x,y\in \mathcal{A}, s,r\in \mathcal{S}, \ E\in {\rm Str}(\mathcal{A}): $
\begin{longtable}{rcl}
$[{E},  (x,s)] $&$=$&$ ({E}(x), {E}^\delta(s)),$\\ 
$[{E}, \wideparen {(x,s)} ] $&$ =$&$
\wideparen{({E}^\varepsilon(x), {E}^{\varepsilon\delta}(s))},$\\
$[(x,s),(y,r)]$&$ = $&$(0, x\overline y - y\overline x),$\\
$[\wideparen{(x,s)}, \wideparen{(y,r)}]$&$=$&$ \wideparen{(0, x\overline {y} - y\overline {x})},$\\
$[(x,s), \wideparen{(y,r)}] $&$=$&$(s y, 0)- \wideparen{(r x, 0)}  + V_{x,y} + L_s L_r.$
\end{longtable} 

It is known that $\mathcal{F}(\mathcal{A})$ is a  $\mathbb{Z}$-graded Lie algebra:
\begin{center}
$\mathcal{F}(\mathcal{A})\ =\ \mathcal{F}_{-2}\oplus\mathcal{F}_{-1}\oplus\mathcal{F}_{0}\oplus\mathcal{F}_{1}\oplus\mathcal{F}_{2},$\\
$\mathcal{F}_{-2}\ =\ \wideparen{(0,\mathcal{S)}}, \ \ 
\mathcal{F}_{-1}\ =\ \wideparen{(\mathcal{A},0)},\ \ 
\mathcal{F}_{0}\ =\ \rm{Instr} (\mathcal{A}), \ \  
\mathcal{F}_1\ =\ (\mathcal{A},0), \ \ 
\mathcal{F}_{2}\ =\ (0,\mathcal{S}).$\end{center}

For each $3$-dimensional structurable algebra $\mathcal{A}$ of type $(2,1),$
the $\rm{AK}$-algebra  $\mathcal{F}(\mathcal{A})$ has dimension $11.$
Let us now  fix the following notations for 
$3$-dimensional structurable algebra $\mathcal{A}$ of type $(2,1):$
\begin{longtable}{lclcl}
$\mathcal{F}_2$&$=$&$(0,\mathcal{S})$&$=$&$\big\langle \varepsilon_4:=(0,e_3)\big\rangle;$\\

$\mathcal{F}_1$&$=$&$(\mathcal{A},0)$&$=$&$\big\langle \varepsilon_1:=(e_1,0), \ \varepsilon_2:=(e_2,0), \ \varepsilon_3:=(e_3,0)\big\rangle;$\\ 

$\mathcal{F}_0$&$=$&$\rm{Instr}(\mathcal{A})$&$=$&$\big\langle 
\varepsilon_5:=T_{e_1}, \ 
\varepsilon_6:=T_{e_2}, \ 
\varepsilon_7:=T_{e_3}\big\rangle;$\\

$\mathcal{F}_{-1}$&$=$&$\wideparen{(\mathcal{A},0)}$&$=$&$\big\langle 
\varepsilon_8:=\wideparen{(e_1,0)}, \ 
\varepsilon_9:=\wideparen{(e_2,0)}, \ 
\varepsilon_{10}:=\wideparen{(e_3,0)}\big\rangle;$\\

$\mathcal{F}_{-2}$&$=$&$\wideparen{(0,\mathcal{S})}$&$=$&$\big\langle \varepsilon_{11}:=\wideparen{(0,e_3)} \big\rangle.$
\end{longtable}

 The following proposition can be obtained by some direct calculations, and it will be useful in our contruction.
 
\begin{proposition}
    \label{(ss)}
Elements $T_{\rm A}$ for a unital $3$-dimensional structurable algebra ${\rm A}$ are given below.

\begin{longtable}{lcllcllcl}

$T_{{\rm A}_1}$&$=$&$\begin{pmatrix} \alpha &0 & 0 \\ \beta & \alpha & 0 \\ 3\gamma & 0 & \alpha \end{pmatrix},$ & 

 $T_{{\rm A}_2}$&$=$&$\begin{pmatrix} \alpha & 0& 0 \\ \beta & \alpha & 3\gamma\\ 3\gamma & 0 & \alpha \end{pmatrix},$ & 

 $T_{{\rm A}_3}$&$=$&$\begin{pmatrix} \alpha & 0& 0 \\ \beta & \alpha +\beta & 0\\ 3\gamma & 0 & \alpha \end{pmatrix},$ 
\end{longtable}
\begin{longtable}{lcllcl}
$T_{{\rm A}_4}$&$=$&$\begin{pmatrix} \alpha & 0 & -3\gamma \\ \beta & \alpha+\beta & 3\gamma \\ 3\gamma & 0 & \alpha \end{pmatrix},$ &

$T_{{\rm A}_5}$&$=$&$\begin{pmatrix} \alpha & 0& 3\gamma \\ \beta& \alpha+\gamma & \beta  \\ 3\gamma & 0 & \alpha \end{pmatrix},$ \\ 
\\

 $T_{{\rm S}_1}$&$=$&$\begin{pmatrix} \alpha & 0& 0\\ 3\beta  & \alpha & 0 \\ 3\gamma & 0 & \alpha \end{pmatrix},$ &

$T_{{\rm S}_2}$&$=$&$\begin{pmatrix} \alpha & 0 & 3\gamma \\ 3\beta & \alpha+\gamma &  -\beta \\ 3\gamma &0 & \alpha \end{pmatrix}$.
\end{longtable}

\end{proposition}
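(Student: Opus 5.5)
The plan is a direct computation resting on two observations. First, $x\mapsto T_x$ is linear. So for a general element $x=\alpha e_1+\beta e_2+\gamma e_3$ we have $T_x=\alpha T_{e_1}+\beta T_{e_2}+\gamma T_{e_3}$, and it suffices to compute the three operators $T_{e_1},T_{e_2},T_{e_3}$ for each algebra. Second, the definition $T_x(z)=xz+zx-z\overline{x}$ simplifies on homogeneous elements with respect to $\mathcal{A}=\mathcal{H}\oplus\mathcal{S}$:
\[
T_a=L_a \quad (a\in\mathcal{H}),
\qquad
T_s(z)=sz+2zs \quad (s\in\mathcal{S}),
\]
the first of which was already noted after the definition of $T_x$. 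Since $e_1$ is the unit, $T_{e_1}=\mathrm{id}$, which produces the $\alpha$ on the diagonal in every matrix. The matrices are written with the convention that the $j$-th column is the coordinate vector of $T_x(e_j)$.

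For type $(2,1)$ (algebras ${\rm A}_1,\dots,{\rm A}_5$), $e_2\in\mathcal{H}$ and $e_3\in\mathcal{S}$, so $T_{e_2}=L_{e_2}$ and $T_{e_3}(z)=e_3z+2ze_3$. I will read each column off the multiplication tables of Theorem~\ref{Thcl1}:
\begin{itemize}
\item The first column is always $T_x(e_1)=\alpha e_1+\beta e_2+3\gamma e_3$, since $T_{e_3}(e_1)=3e_3$.
\item The second column is $\alpha e_2+\beta e_2e_2+\gamma(e_3e_2+2e_2e_3)$. For ${\rm A}_5$ this gives $\alpha e_2+\gamma(-e_2+2e_2)=(\alpha+\gamma)e_2$. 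For ${\rm A}_3$ and ${\rm A}_4$ it gives $(\alpha+\beta)e_2$.
\item The third column is $\alpha e_3+\beta e_2e_3+3\gamma e_3e_3$. For ${\rm A}_2$ this gives $3\gamma e_2$, for ${\rm A}_4$ it gives $3\gamma(-e_1+e_2)$, and for ${\rm A}_5$ it gives $\beta e_2+3\gamma e_1$.
\end{itemize}

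For type $(1,2)$ (algebras ${\rm S}_1,{\rm S}_2$ of Theorem~\ref{Thcl2}), both $e_2$ and $e_3$ are skew. Hence $T_{e_2}(z)=e_2z+2ze_2$ as well, which explains the factor $3\beta$ in the first column ($T_{e_2}(e_1)=3e_2$). For ${\rm S}_2$ the remaining entries are:
\begin{itemize}
\item $T_{e_2}(e_2)=3e_2e_2=0$ and $T_{e_2}(e_3)=e_2e_3+2e_3e_2=e_2-2e_2=-e_2$, giving the entry $-\beta$;
\item $T_{e_3}(e_2)=e_3e_2+2e_2e_3=e_2$, giving $\alpha+\gamma$ in the middle;
\item $T_{e_3}(e_3)=3e_1$, giving the entry $3\gamma$ in position $(1,3)$.
\end{itemize}
${\rm S}_1$ is immediate, since all products of $e_2,e_3$ vanish.

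There is no real obstacle, as everything reduces to evaluating products from the tables. The only points needing care are fixing the column convention consistently and not misapplying $T_s=L_s+2R_s$ for skew $s$ versus $T_a=L_a$ for hermitian $a$. The sign-sensitive cases ${\rm A}_5$ and ${\rm S}_2$ (where $e_3e_2=-e_2e_3$) deserve double-checking; I would verify them by recomputing $T_{e_3}(e_2)$ and $T_{e_2}(e_3)$ directly from $xz+zx-z\overline{x}$.
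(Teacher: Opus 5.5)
Your proposal is correct and matches the paper's approach: the paper gives no written proof and only says the matrices follow by direct calculation, which is exactly what you carry out. Expanding $T_x=\alpha T_{e_1}+\beta T_{e_2}+\gamma T_{e_3}$ with $T_a=L_a$ for $a\in\mathcal{H}$ and $T_s=L_s+2R_s$ for $s\in\mathcal{S}$, and taking the $j$-th column to be $T_x(e_j)$, reproduces every entry, including the sign-sensitive ones for ${\rm A}_5$ and ${\rm S}_2$; this column convention agrees with the explicit $T_{e_i}$ matrices the paper uses later in the Allison--Kantor section.
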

\subsection{$\rm{AK}$ construction for ${\rm A}_1$}

\begin{theorem}
 
$\mathcal{F}({\rm A}_1)$  is an $11$-dimensional Lie algebra  with the  product defined by

\begin{longtable}{lcrlcrlcr}
$[\varepsilon_1,\varepsilon_3]$&$=$&$-2 \varepsilon_4,$&$
[\varepsilon_1,\varepsilon_5]$&$=$&$-\varepsilon_1, $&$ 
[\varepsilon_1,\varepsilon_6]$&$=$&$-\varepsilon_2,$ \\ 
$[\varepsilon_1,\varepsilon_7]$&$=$&$-3\varepsilon_3,$&$ 
[\varepsilon_1,\varepsilon_8]$&$=$&$\varepsilon_5,$&$ 
[\varepsilon_1,\varepsilon_9]$&$=$&$\varepsilon_6,$\\ 
$[\varepsilon_1,\varepsilon_{10}]$&$=$&$-\varepsilon_7,$&$
[\varepsilon_1,\varepsilon_{11}]$&$=$&$-\varepsilon_{10},$&$
[\varepsilon_2,\varepsilon_5]$&$=$&$-\varepsilon_2,$\\
$[\varepsilon_2,\varepsilon_8]$&$=$&$\varepsilon_6, $&$ 
[\varepsilon_3,\varepsilon_5]$&$=$&$ -\varepsilon_3,$&$
[\varepsilon_3,\varepsilon_8]$&$=$&$\varepsilon_7,$\\ 

$[\varepsilon_4,\varepsilon_5]$&$=$&$-2\varepsilon_4,$&$
[\varepsilon_4,\varepsilon_8]$&$=$&$\varepsilon_3,$&$ 
[\varepsilon_5,\varepsilon_8]$&$=$&$-\varepsilon_8,$\\
$[\varepsilon_5,\varepsilon_9]$&$=$&$-\varepsilon_9,$&$
[\varepsilon_5,\varepsilon_{10}]$&$=$&$-\varepsilon_{10},$&$
[\varepsilon_5,\varepsilon_{11}]$&$=$&$-2\varepsilon_{11},$\\ 
$[\varepsilon_6,\varepsilon_8]$&$=$&$-\varepsilon_9,$&$
[\varepsilon_7,\varepsilon_8]$&$=$&$3\varepsilon_{10},$&$
[\varepsilon_8,\varepsilon_{10}]$&$=$&$-2\varepsilon_{11}.$
\end{longtable}
\end{theorem}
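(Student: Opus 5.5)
The statement is a direct computation, so the plan is to evaluate every bracket of basis elements using the defining formulas of $[\cdot,\cdot]$ on $\mathcal{F}(\mathcal{A})$ from Subsection \ref{AKalg}. First I fix the data of ${\rm A}_1$. All products of $e_2,e_3$ vanish, $e_1$ is the unit, and $\overline{e_3}=-e_3$. Hence $\mathcal{S}=\langle e_3\rangle$.

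By the definition of $\overline{\mathfrak{Der}}$, the only derivations commuting with the involution are $d_1,d_2$. Yet $\mathcal{F}_0$ is spanned by $T_{e_1},T_{e_2},T_{e_3}$ only; this matches the claimed dimension $11$. So I read $\mathcal{F}_0$ as the inner structure algebra $\rm{Instr}(\mathcal{A})$, i.e.\ the span of the $V_{x,y}$. I will verify that every $V_{e_i,e_j}$, and every $L_sL_r$, lies in $T_{\mathcal A}$. Proposition \ref{(ss)} shows that $T_{\alpha e_1+\beta e_2+\gamma e_3}$ has matrix with entries $\alpha,\beta,3\gamma$. From this I compute $V_{e_i,e_j}$ via $V_{x,y}(z)=(x\overline y)z+(z\overline y)x-(z\overline x)y$ and match each against this matrix shape. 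For example $V_{e_1,e_1}=T_{e_1}$, and $V_{e_3,e_1}$ should equal $-T_{e_3}$ up to the normalization giving $[\varepsilon_1,\varepsilon_{10}]=-\varepsilon_7$. Also $L_{e_3}L_{e_3}=0$.

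Next I compute the brackets degree by degree.

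\emph{Degree $1+1$.} $[(x,0),(y,0)]=(0,x\overline y-y\overline x)$. Only $x=e_1,y=e_3$ is nonzero, giving $-2e_3$, i.e.\ $[\varepsilon_1,\varepsilon_3]=-2\varepsilon_4$. Similarly $[\varepsilon_8,\varepsilon_{10}]=-2\varepsilon_{11}$.

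\emph{Degree $0$ acting on $\pm1,\pm2$.} For $E=T_x$ I use $T_x^\delta(s)=xs+s\overline x$ on $\mathcal F_2$. On $\mathcal F_{-1},\mathcal F_{-2}$ I use $T_x^\varepsilon=-T_{\overline x}$ together with $T_x^{\varepsilon\delta}$. This yields for instance $[\varepsilon_5,\varepsilon_8]=-\varepsilon_8$ and $[\varepsilon_5,\varepsilon_{11}]=-2\varepsilon_{11}$. It also gives the $\mathcal F_1$ brackets such as $[\varepsilon_1,\varepsilon_7]=-T_{e_3}(e_1)=-3\varepsilon_3$.

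\emph{Degree $1+(-1)$ and mixed $\pm2$.} $[(x,s),\wideparen{(y,r)}]=(sy,0)-\wideparen{(rx,0)}+V_{x,y}+L_sL_r$. For $s=r=0$ this reduces to $V_{x,y}$, expressed in $\varepsilon_5,\varepsilon_6,\varepsilon_7$ by the matching above. With $s=e_3$ it gives $[\varepsilon_4,\varepsilon_8]=\varepsilon_3$, and with $r=e_3$ it gives $[\varepsilon_1,\varepsilon_{11}]=-\varepsilon_{10}$.

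\emph{Degree $0+0$.} The brackets $[T_x,T_y]$ are commutators of the matrices in Proposition \ref{(ss)}. These should all vanish, which is consistent with their absence from the table. All remaining unlisted brackets should come out zero because products of $e_2,e_3$ vanish.

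The main obstacle is bookkeeping of signs and normalizations. These concern the anticommutativity convention, e.g.\ $[\varepsilon_1,\varepsilon_5]=-[T_{e_1},(e_1,0)]$. They also concern the exact identification of each $V_{x,y}$ with a $T$. The $V_{e_i,e_j}$ computation is where I will be most careful, first checking $V_{e_1,e_1}=T_{e_1}$ to fix conventions. As a final sanity check, I will spot-check the Jacobi identity on a few triples, e.g.\ $(\varepsilon_1,\varepsilon_3,\varepsilon_8)$.
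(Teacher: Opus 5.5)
Your plan is correct and is essentially the paper's proof. Like the paper, you take $\mathcal F_0=\langle T_{e_1},T_{e_2},T_{e_3}\rangle$, tabulate $T_{e_i}^{\varepsilon}$, $T_{e_i}^{\delta}$, $T_{e_i}^{\varepsilon\delta}$ and the $V_{e_i,e_j}$, and evaluate the defining brackets grade by grade. The paper skips your Jacobi spot-checks and simply cites the general fact that $\mathcal F(\mathcal A)$ is a Lie algebra.

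One small slip: $[\varepsilon_1,\varepsilon_{10}]=-\varepsilon_7$ comes from $V_{e_1,e_3}=-T_{e_3}$. By contrast, $V_{e_3,e_1}=T_{e_3}$ exactly, since $T_x=V_{x,e_1}$ by definition, and it gives $[\varepsilon_3,\varepsilon_8]=\varepsilon_7$; no ``normalization'' enters.
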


\begin{proof}
Based on the $\rm{AK}$-algorithm given in Subsection \ref{AKalg}, we define  
$$\mathcal{F}_0=\left\langle
T_{e_1}:=\begin{pmatrix} 1 & 0 & 0 \\ 0 & 1& 0 \\ 0 & 0 & 1 \end{pmatrix}, \ 
T_{e_2}:=\begin{pmatrix}  0& 0 & 0 \\ 1& 0 & 0 \\ 0 & 0 & 0 \end{pmatrix},\ 
T_{e_3}:=\begin{pmatrix} 0 & 0 &0\\ 0 & 0 & 0 \\ 3& 0 & 0 \end{pmatrix}  \right\rangle.$$
   
It is easy to see that $[\mathcal F_0, \mathcal F_{0}]=0.$

 Below, we provide a description  of 
 $T_{e_i}^\varepsilon,$ 
 $T_{e_i}^\delta,$ 
 $T_{e_i}^{\varepsilon \delta},$ and 
 $V_{e_i,e_j}$:

\begin{longtable}{|lcl|lclcl|}
\hline
   $T_{e_1}(e_{1})$&$=$&$e_1$&   $T_{e_1}^\varepsilon$&$=$&$T_{e_1}-T_{T_{e_1}(e_1)+\overline{T_{e_1}(e_1)}}$&$=$ &$-T_{e_1}$\\

$\overline{T_{e_1}({e_1})}$&$=$&$e_1$&$T_{e_1}^\delta$&$=$&$T_{e_1}+R_{\overline{T_{e_1}({e_1})}}$&$=$&$2T_{e_1}$\\

$R_{e_1}$&$=$&$T_{e_1}$&$T_{e_1}^{\varepsilon\delta}$&$=$&$T_{e_1}^\varepsilon+R_{\overline{T_{e_1}^\varepsilon({e_1})}}$&$=$&$-2T_{e_1}$\\
\hline 

 \hline
    
   $T_{e_2}({e_1})$&$=$&${e_2} $&$ T_{e_2}^\varepsilon$&$=$&$T_{e_2}-T_{T_{e_2}({e_1})+\overline{T_{e_2}({e_1})}}$&$=$&$-T_{e_2}$\\

$\overline{T_{e_2}({e_1})}$&$=$&${e_2}$&   
$T_{e_2}^\delta $&$=$&$T_{e_2}+R_{\overline{T_{e_2}({e_1})}}$&$=$&$2T_{e_2}$\\
   
$R_{e_2}$&$=$&$T_{e_2}$&$T_{e_2}^{\varepsilon\delta}$&$=$&$T_{e_2}^\varepsilon+R_{\overline{T_{e_2}^\varepsilon({e_1})}}$&$=$&$-2T_{e_2}$\\

   \hline
    
\hline
$T_{e_3}({e_1})$&$=$&$3{e_3}$&$    T_{e_3}^\varepsilon$&$=$&$T_{e_3}-T_{T_{e_3}({e_1})+\overline{T_{e_3}({e_1})}}$&$=$&$T_{e_3}$\\

$\overline{T_{e_3}({e_1}})$&$=$&$-3{e_3}$&$ T_{e_3}^\delta$&$=$&$T_{e_3}+R_{\overline{T_{e_3}({e_1})}}$&$=$&$0$\\
   
$R_{e_3}$&$=$&$\frac{1}{3}T_{e_3}$&$T_{e_3}^{\varepsilon\delta}$&$=$&$T_{e_3}^\varepsilon+R_{\overline{T_{e_3}^\varepsilon ({e_1})}}$&$=$&$0$\\
\hline\end{longtable}

\begin{longtable}{lcllcllcl}

$V_{{e_1},{e_1}}$&$=$&$T_{e_1},$&$ V_{{e_1},{e_2}}$&$=$&$T_{e_2},$&$ V_{{e_1},{e_3}}$&$=$&$-T_{e_3},$\\ 
$V_{{e_2},{e_1}}$&$=$&$T_{e_2},$&$ V_{{e_2},{e_2}}$&$=$&$0, $&$ V_{{e_2},{e_3}}$&$=$&$0,$\\
$V_{{e_3},{e_1}}$&$=$&$T_{e_3},$&$ V_{{e_3},{e_2}}$&$=$&$0,$&$ V_{{e_3},{e_3}}$&$=$&$0.$
\end{longtable}

We are now in a position to determine the multiplication table of $\mathcal F({\rm A}_1).$

\begin{enumerate}[(I)]
    \item As the first step, we define  $[\mathcal F_0, \mathcal F_{1}+\mathcal F_{2}],$ i.e., $[E,(x,s)]=(E(x), E^\delta(s)).$

\begin{longtable}{lclclcl}
$[\varepsilon_5,\varepsilon_1]$&$=$&$  [T_{e_1}, ({e_1},0)]$&$=$&$ \big(T_{e_1}({e_1}), T_{e_1}^\delta(0) \big)$&$=$&$({e_1}, 0) \  =\ \varepsilon_1;$\\

$[\varepsilon_5,\varepsilon_2]$&$=$&$[T_{e_1}, ({e_2},0)]$&$=$&$(T_{e_1}({e_2}), T_{e_1}^\delta(0)$&$=$&$({e_2},\ 0)\ =\ \varepsilon_2;$\\

$[\varepsilon_5,\varepsilon_3]$&$=$&$[T_{e_1}, ({e_3},0)]$&$=$&$(T_{e_1}({e_3}), T_{e_1}^\delta (0))$&$=$&$({e_3},\ 0)\ =\  \varepsilon_3;$\\
$[\varepsilon_5,\varepsilon_4]$&$=$&$[T_{e_1}, (0,{e_3})]$&$=$&$
(T_{e_1}(0), T_{e_1}^\delta (e_3))$&$=$&$(0,\  2e_3) \ =\ 2\varepsilon_4;$\\

$[\varepsilon_6,\varepsilon_1]$&$=$&$[T_{e_2}, ({e_1},0)]$&$=$&$
(T_{e_2}({e_1}), T_{e_2}^\delta (0))$&$=$&$(e_2,\ 0)\ =\ \varepsilon_2;$\\

$[\varepsilon_6,\varepsilon_2]$&$=$&$[T_{e_2}, ({e_2},0)]$&$=$&$
(T_{e_2}({e_2}), T_{e_2}^\delta (0))$&$=$&$(0,\ 0) \ =\ 0;$\\

$[\varepsilon_6,\varepsilon_3]$&$=$&$[T_{e_2}, ({e_3},0)]$&$=$&$
(T_{e_2}({e_3}), T_{e_2}^\delta (0))$&$=$&$(0,\ 0)\  =\ 0;$\\
$[\varepsilon_6,\varepsilon_4]$&$=$&$[T_{e_2}, (0,{e_3})]$&$=$&$
(T_{e_2}(0),T_{e_2}^\delta ({e_3}))$&$=$&$(0,\ 0) \ =\ 0;$\\

$[\varepsilon_7,\varepsilon_1]$&$=$&$[T_{e_3}, ({e_1},0)]$&$=$&$
(T_{e_3}({e_1}), T_{e_3}^\delta (0))$&$=$&$(3{e_3},\ 0) \ =\ 3\varepsilon_3;$\\

$[\varepsilon_7,\varepsilon_2]$&$=$&$[T_{e_3}, ({e_2},0)]$&$=$&$
(T_{e_3}({e_2}), T_{e_3}^\delta(0))$&$=$&$(0,\ 0) \ = \ 0;$\\

$[\varepsilon_7,\varepsilon_3]$&$=$&$[T_{e_3}, ({e_3},0)]$&$=$&$
(T_{e_3}({e_3}), T_{e_3}^\delta (0))$&$=$&$(0,\ 0)\ =\ 0;$\\

$[\varepsilon_7,\varepsilon_4]$&$=$&$[T_{e_3}, (0,{e_3})]$&$=$&$
(T_{e_3}(0), T_{e_3}^\delta ({e_3}))$&$=$&$(0,\ 0)\ =\ 0.$
\end{longtable}

  \item As the second step, we define   $[\mathcal F_0, \mathcal F_{-2}+\mathcal F_{-1}],$ i.e., 
  $[{E}, \wideparen {(x,s)} ] =
\wideparen{({E}^\varepsilon(x), {E}^{\varepsilon\delta}(s))}.$

\begin{longtable}{lclclcl}
$[\varepsilon_5,\varepsilon_8]$&$=$&$[T_{e_1}, \wideparen{({e_1},0)}] $&$=$&$ \wideparen{(T_{e_1}^\varepsilon({e_1}),\  0)}$&$=$&$
\wideparen{(-T_{e_1}({e_1}), \ 0)}\ =\ \wideparen{(-{e_1}, \ 0)}
\ =\ -\varepsilon_8;$ \\

$[\varepsilon_5,\varepsilon_9]$&$=$&$[T_{e_1}, \wideparen{({e_2},0)}] $&$=$&$ \wideparen{(T_{e_1}^\varepsilon({e_2}),\  0)}$&$=$&$
\wideparen{(-T_{e_1}({e_2}), \ 0)}\ =\ \wideparen{(-{e_2}, \ 0)}
\ =\ -\varepsilon_9;$ \\

$[\varepsilon_5,\varepsilon_{10}]$&$=$&$[T_{e_1}, \wideparen{({e_3},0)}] $&$=$&$ \wideparen{(T_{e_1}^\varepsilon({e_3}),\  0)}$&$=$&$
\wideparen{(-T_{e_1}({e_3}), \ 0)}\ =\ \wideparen{(-{e_3}, \ 0)}
\  =\ -\varepsilon_{10};$ \\

$[\varepsilon_5,\varepsilon_{11}]$&$=$&$[T_{e_1}, \wideparen{(0,{e_3})} ]$&$=$&$
\wideparen{(0,\  T_{e_1}^{\varepsilon\delta} ({e_3}))} $&$=$&$
\wideparen{(0,\ -2{e_3})}\ =\ -2\varepsilon_{11};$\\

$[\varepsilon_6,\varepsilon_8]$&$=$&$[T_{e_2}, \wideparen{({e_1},0)}] $&$=$&$ \wideparen{(T_{e_2}^\varepsilon({e_1}),\  0)}$&$=$&$
\wideparen{(-{e_2}, \ 0)}
\ =\ -\varepsilon_9;$\\

$[\varepsilon_6,\varepsilon_9]$&$=$&$[T_{e_2}, \wideparen{({e_2},0)}] $&$=$&$ \wideparen{(T_{e_2}^\varepsilon({e_2}),\  0)}$&$=$&$
\wideparen{(0, \ 0)}
\ =\ 0;$ \\

$[\varepsilon_6,\varepsilon_{10}]$&$=$&$[T_{e_2},\wideparen{({e_3},0)}] $&$=$&$ \wideparen{(T_{e_2}^\varepsilon({e_3}),\  0)}$&$=$&$
\wideparen{(0, \ 0)}
\ =\ 0;$\\

$[\varepsilon_6,\varepsilon_{11}]$&$=$&$[T_{e_2}, \wideparen{(0,{e_3})} ]$&$=$&$
\wideparen{(0,\  T_{e_2}^{\varepsilon\delta} ({e_3}))}$&$=$&$
\wideparen{(0,\ 0)} \ =\ 0;$\\
$[\varepsilon_7,\varepsilon_8]$&$=$&$[T_{e_3}, \wideparen{({e_1},0)}] $&$=$&$ \wideparen{(T_{e_3}^\varepsilon({e_1}),\  0)}$&$=$&$
\wideparen{(3{e_3}, \ 0)}
\ =\ 3\varepsilon_{10};$\\

$[\varepsilon_7,\varepsilon_9]$&$=$&$[T_{e_3}, \wideparen{({e_2},0)}] $&$=$&$ \wideparen{(T_{e_3}^\varepsilon({e_2}),\  0)}$&$=$&$
\wideparen{(0, \ 0)} \ =\ 0;$\\

$[\varepsilon_7,\varepsilon_{10}]$&$=$&$[T_{e_3}, \wideparen{({e_3},0)}] $&$=$&$ \wideparen{(T_{e_3}^\varepsilon({e_3}),\  0)}$&$=$&$\wideparen{(0, \ 0)}
\ =\ 0;$\\

$[\varepsilon_7,\varepsilon_{11}]$&$=$&$[T_{e_3}, \wideparen{(0,{e_3})} ]$&$=$&$\wideparen{(0, \ T_{e_3}^{\varepsilon\delta} ({e_3}))} $&$=$&$\wideparen{(0,\ 0)} \ =\ 0.$
\end{longtable}

\item As the third step, we define  $[ \mathcal F_{1}+\mathcal F_{2}, \mathcal F_{1}+\mathcal F_{2}],$ i.e., 
$[(x,s), (y,r)] = (0, x\overline{y} - y\overline{x}).$

\begin{longtable}{lclcl}

$[\varepsilon_1,\varepsilon_2]$&$=$&$ [({e_1},0), ({e_2},0)] $&$=$&$(0,\ 0) \ =\ 0;$\\
$[\varepsilon_1,\varepsilon_3]$&$=$&$[({e_1},0), ({e_3},0)] $&$=$&$(0,\ -2e_3) \ =\ -2\varepsilon_4;$\\

$[\varepsilon_2,\varepsilon_3]$&$=$&$[({e_2},0), ({e_3},0)] $&$=$&$(0,\ 0) \ =\ 0;$\\

\end{longtable}

\item We define   $[ \mathcal F_{-2}+\mathcal F_{-1}, \mathcal F_{-2}+\mathcal F_{-1}],$ i.e., 
$[\wideparen{(x,s)}, \wideparen{(y,r)} ] = \wideparen{(0, x\overline{y} - y\overline{x})}.$

\begin{longtable}{lclcl}

$[\varepsilon_8,\varepsilon_9]$&$=$&$[\wideparen{({e_1},0)}, \wideparen{({e_2},0)}] $&$=$&$\wideparen{(0,\ 0)} \ =\ 0;$\\
$[\varepsilon_8,\varepsilon_{10}]$&$=$&$[\wideparen{({e_1},0)}, \wideparen{({e_3},0)}] $&$=$&$\wideparen{(0,\ -2e_3)} \ =\ -2\varepsilon_{11};$\\

$[\varepsilon_9,\varepsilon_{10}]$&$=$&$[\wideparen{({e_2},0)}, \wideparen{({e_3},0)}] $&$=$&$\wideparen{(0,\ 0)} \ =\ 0.$\\

\end{longtable}

\item We define  $[ \mathcal F_{1}+\mathcal F_{2}, \mathcal F_{-2}+\mathcal F_{-1}],$ i.e., 
       $[(x,s), \wideparen{(y,r)}] = -\wideparen{(r x, 0)}  + V_{x,y} + L_s L_r+ (s y, 0).$

\begin{longtable}{lclcl}
    
$[\varepsilon_1,\varepsilon_8]$&$=$&$  [({e_1},0),\wideparen{({e_1},0)}]$&$=$&$V_{{e_1},{e_1}}\ =\ T_{e_1}\ =\ \varepsilon_5;$\\
$[\varepsilon_1,\varepsilon_9]$&$=$&$  [({e_1},0),\wideparen{({e_2},0)}]$&$=$&$
V_{{e_1},{e_2}}\ =\ T_{e_2}\ =\ \varepsilon_6;$\\
$[\varepsilon_1,\varepsilon_{10}]$&$=$&$[({e_1},0),\wideparen{({e_3},0)}]$&$=$&$
V_{{e_1},{e_3}}\ =\ -T_{e_3}\ =\ -\varepsilon_7;$\\
$[\varepsilon_1,\varepsilon_{11}]$&$=$&$[({e_1},0),\wideparen{(0,{e_3})}]$&$=$&$ -\wideparen{({e_3},0)} \ =\ -\varepsilon_{10};$\\

$[\varepsilon_2,\varepsilon_8]$&$=$&$ [({e_2},0),\wideparen{({e_1},0)}]$&$=$&$V_{{e_2},{e_1}}\ =\ T_{e_2}\ =\ \varepsilon_6;$\\

$[\varepsilon_2,\varepsilon_9]$&$=$&$  [({e_2},0),\wideparen{({e_2},0)}]$&$=$&$
V_{{e_2},{e_2}}\ =\ 0;$\\
$[\varepsilon_2,\varepsilon_{10}]$&$=$&$[({e_2},0),\wideparen{({e_3},0)}]$&$=$&$V_{{e_2},{e_3}}\ =\ 0;$\\
$[\varepsilon_2,\varepsilon_{11}]$&$=$&$[({e_2},0),\wideparen{(0,{e_3})}]$&$=$&$-\wideparen{({e_3}{e_2},0)}\ =\ -\wideparen{(0,0)} \ =\ 0;$\\

$[\varepsilon_3,\varepsilon_8]$&$=$&$[({e_3},0),\wideparen{({e_1},0)}]$&$=$&$V_{{e_3},{e_1}}\ =\ T_{e_3} \ =\ \varepsilon_7;$\\
$[\varepsilon_3,\varepsilon_9]$&$=$&$[({e_3},0),\wideparen{({e_2},0)}]$&$=$&$V_{{e_3},{e_2}}\ =\ 0;$\\
$[\varepsilon_3,\varepsilon_{10}]$&$=$&$[({e_3},0),\wideparen{({e_3},0)}]$&$=$&$V_{{e_3},{e_3}}\ =\ 0;$\\

$[\varepsilon_3,\varepsilon_{11}]$&$=$&$[({e_3},0),\wideparen{(0,{e_3})}]$&$=$&$
-\wideparen{({e_3}{e_3},0)}\ =\ -\wideparen{(0,0)} \ =\ 0;$\\

$[\varepsilon_4,\varepsilon_8]$&$=$&$[(0,{e_3}),\wideparen{({e_1},0)}]$&$=$&$
({e_3},0) \ = \ \varepsilon_3;$\\
 
$[\varepsilon_4,\varepsilon_9]$&$=$&$[(0,{e_3}),\wideparen{({e_2},0)}]$&$=$&$({e_3}{e_2},0)=(0,0)\ =\ 0;$\\
$[\varepsilon_4,\varepsilon_{10}]$&$=$&$[(0,{e_3}),\wideparen{({e_3},0)}]$&$=$&$
({e_3}{e_3},0)\ =\ (0,0) \  =\ 0;$\\

$[\varepsilon_4,\varepsilon_{11}]$&$=$&$[(0,{e_3}),\wideparen{(0,{e_3})}]$&$=$&$L_{e_3}L_{e_3}\ =\ 0.$\\
\end{longtable}
\end{enumerate}
\end{proof}

\begin{remark}
$\mathcal{F}({\rm A}_1)$ is perfect and 
$\mathcal{F}({\rm A}_1) = S \ltimes R,$
where:
\begin{itemize}
\item $S=\big\langle\varepsilon_1,  \ 
\varepsilon_5, \ 
\varepsilon_8\big\rangle \cong \mathfrak{sl}_2;$
\item $R = \big\langle\varepsilon_2, \ 
\varepsilon_3,\  
\varepsilon_4,\ 
\varepsilon_6,\ 
\varepsilon_7,\ 
\varepsilon_9,\  
\varepsilon_{10},\ 
\varepsilon_{11}\big\rangle$ is the  abelian radical.
\end{itemize}

\end{remark}

\subsection{$\rm{AK}$ construction for ${\rm A}_2$}

\begin{theorem}
$\mathcal{F}({\rm A}_2)$ is an $11$-dimensional Lie algebra  with the  multiplication given by

\begin{longtable}{lcrlcrlcr}
$[\varepsilon_1,\varepsilon_3]$&$=$&$-2 \varepsilon_4,$&$
[\varepsilon_1,\varepsilon_5]$&$=$&$-\varepsilon_1, $&$ 
[\varepsilon_1,\varepsilon_6]$&$=$&$-\varepsilon_2,$ \\ 

$[\varepsilon_1,\varepsilon_7]$&$=$&$-3\varepsilon_3,$&$ 
[\varepsilon_1,\varepsilon_8]$&$=$&$\varepsilon_5,$&$ 
[\varepsilon_1,\varepsilon_9]$&$=$&$\varepsilon_6,$\\ 
$[\varepsilon_1,\varepsilon_{10}]$&$=$&$-\varepsilon_7,$&$
[\varepsilon_1,\varepsilon_{11}]$&$=$&$-\varepsilon_{10},$&$
[\varepsilon_2,\varepsilon_5]$&$=$&$-\varepsilon_2,$\\

$[\varepsilon_2,\varepsilon_8]$&$=$&$\varepsilon_6, $&$ 
[\varepsilon_3,\varepsilon_5]$&$=$&$ -\varepsilon_3,$&$  [\varepsilon_3,\varepsilon_7]$&$=$&$-3\varepsilon_2,$\\ 

$[\varepsilon_3,\varepsilon_8]$&$=$&$\varepsilon_7,$&$  
[\varepsilon_3,\varepsilon_{10}]$&$=$&$-\varepsilon_6,$&$ 
[\varepsilon_3,\varepsilon_{11}]$&$=$&$-\varepsilon_9,$\\

$[\varepsilon_4,\varepsilon_5]$&$=$&$-2\varepsilon_4,$&$
[\varepsilon_4,\varepsilon_8]$&$=$&$\varepsilon_3,$&$  [\varepsilon_4,\varepsilon_{10}]$&$=$&$\varepsilon_2,$\\  

$[\varepsilon_4,\varepsilon_{11}]$&$=$&$\varepsilon_6,$&$
[\varepsilon_5,\varepsilon_8]$&$=$&$-\varepsilon_8,$&$ 
[\varepsilon_5,\varepsilon_9]$&$=$&$-\varepsilon_9,$\\ 

$[\varepsilon_5,\varepsilon_{10}]$&$=$&$-\varepsilon_{10},$&$ [\varepsilon_5,\varepsilon_{11}]$&$=$&$-2\varepsilon_{11},$&$ 
[\varepsilon_6,\varepsilon_8]$&$=$&$-\varepsilon_9,$\\

$[\varepsilon_7,\varepsilon_8]$&$=$&$3\varepsilon_{10},$&$
[\varepsilon_7,\varepsilon_{10}]$&$=$&$3\varepsilon_9,$&$ 
[\varepsilon_8,\varepsilon_{10}]$&$=$&$-2\varepsilon_{11}.$
\end{longtable}

\end{theorem}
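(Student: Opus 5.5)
The plan is to follow the same template used for $\mathcal{F}({\rm A}_1)$. We fix the basis $\varepsilon_1,\dots,\varepsilon_{11}$ as in Subsection \ref{AKalg}. For $\mathcal{F}_0$ we take the matrices of $T_{e_1},T_{e_2},T_{e_3}$ read off from $T_{{\rm A}_2}$ in Proposition \ref{(ss)}. Thus $T_{e_1}$ is the identity and $T_{e_2}$ sends $e_1\mapsto e_2$. The map $T_{e_3}$ sends $e_1\mapsto 3e_3$ and $e_3\mapsto 3e_2$, since $T_{e_3}(e_3)=e_3e_3+e_3e_3-e_3\overline{e_3}=3e_2$.

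We note that $\overline{\mathfrak{Der}}({\rm A}_2)$ is nonzero by the derivation table. However, the statement lists only the $T$'s in $\mathcal{F}_0$, matching the dimension count of $11$ fixed earlier, so I follow the paper's convention $\mathcal{F}_0=\langle T_{e_1},T_{e_2},T_{e_3}\rangle$. Next we check $[\mathcal{F}_0,\mathcal{F}_0]=0$. This amounts to commuting the three matrices: $T_{e_2}T_{e_3}$ and $T_{e_3}T_{e_2}$ both vanish, since $T_{e_2}$ kills $e_2,e_3$ and $T_{e_3}$ kills $e_2$.

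The auxiliary operators are computed next.
\begin{itemize}
\item For $T_{e_1}$ and $T_{e_2}$, the values $T_{e_i}(e_1)=e_i$ are Hermitian. Hence $T^\varepsilon_{e_i}=-T_{e_i}$, $T^\delta_{e_i}=2T_{e_i}$ and $T^{\varepsilon\delta}_{e_i}=-2T_{e_i}$, using $R_{e_i}=T_{e_i}$ on Hermitian elements.
\item For $T_{e_3}$ we have $T_{e_3}(e_1)=3e_3$, which is skew. Then $T^\varepsilon_{e_3}=T_{e_3}$, $T^\delta_{e_3}=T_{e_3}-3R_{e_3}$ and $T^{\varepsilon\delta}_{e_3}=T^\delta_{e_3}$.
\item Here $R_{e_3}$ now also maps $e_3\mapsto e_2$. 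So $T^\delta_{e_3}$ is no longer literally zero as an endomorphism: it sends $e_3\mapsto 3e_2-3e_2=0$ and $e_1\mapsto 0$. Only its restriction to $\mathcal{S}=\langle e_3\rangle$ enters the bracket, and that restriction vanishes.
\end{itemize}
Then I tabulate $V_{e_i,e_j}$ from the definition $V_{x,y}(z)=(x\overline y)z+(z\overline y)x-(z\overline x)y$. The new nonzero entries relative to ${\rm A}_1$ come from $e_3e_3=e_2$. I expect $V_{e_3,e_3}=0$ still, while $V_{e_2,e_3}$ and $V_{e_3,e_2}$ may pick up multiples of $T_{e_2}$. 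Each $V_{e_i,e_j}$ is expressed in the basis $T_{e_1},T_{e_2},T_{e_3}$ by evaluating at $e_1,e_2,e_3$.

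With these in hand, I run the five steps (I)--(V) exactly as for ${\rm A}_1$.
\begin{enumerate}[(I)]
\item $[E,(x,s)]=(E(x),E^\delta(s))$. This gives the new bracket $[\varepsilon_7,\varepsilon_3]=3\varepsilon_2$, i.e.\ $[\varepsilon_3,\varepsilon_7]=-3\varepsilon_2$.
\item $[E,\wideparen{(x,s)}]$ through $E^\varepsilon$ and $E^{\varepsilon\delta}$. This gives $[\varepsilon_7,\varepsilon_{10}]=3\varepsilon_9$.
\item and (IV) The brackets $x\overline y-y\overline x$, which are unchanged since $e_1\overline{e_3}-e_3\overline{e_1}=-2e_3$.
\setcounter{enumi}{4}
\item The mixed brackets $(sy,0)-\wideparen{(rx,0)}+V_{x,y}+L_sL_r$. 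Here the new terms come from the following:
  \begin{itemize}
  \item $s y$ with $s=e_3$, e.g.\ $[\varepsilon_4,\varepsilon_{10}]=(e_3e_3,0)=\varepsilon_2$;
  \item $rx$ with $r=e_3$, giving $[\varepsilon_3,\varepsilon_{11}]=-\wideparen{(e_3e_3,0)}=-\varepsilon_9$;
  \item $L_{e_3}L_{e_3}$, which maps $e_1\mapsto e_2$ and so equals $T_{e_2}$, giving $[\varepsilon_4,\varepsilon_{11}]=\varepsilon_6$;
  \item $V_{e_3,e_3}$ and its neighbours, producing $[\varepsilon_3,\varepsilon_{10}]=-\varepsilon_6$.
  \end{itemize}
\end{enumerate}

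The main obstacle is step (V) together with the $V_{e_i,e_j}$ table. Sign conventions in $V_{x,y}$ with the involution are easy to get wrong, and each result must be identified with an element of $\langle T_{e_1},T_{e_2},T_{e_3}\rangle$. I would carry out each evaluation on all three basis vectors explicitly. As a cross-check, I would verify a few Jacobi identities on the resulting table, such as $(\varepsilon_1,\varepsilon_4,\varepsilon_{11})$ and $(\varepsilon_3,\varepsilon_3,\varepsilon_{10})$, to catch sign errors.
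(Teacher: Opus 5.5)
Your plan is the paper's own proof. It uses the same basis and the same three matrices for $\mathcal F_0$, dropping $\overline{\mathfrak{Der}}({\rm A}_2)$ exactly as the paper does. It has the same tables of $T^\varepsilon_{e_i},T^\delta_{e_i},T^{\varepsilon\delta}_{e_i}$ and the same five bracket steps, and the new brackets you isolate in (I), (II) and (V) are precisely the paper's.

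The one genuine error is your predicted $V$-table, which is backwards. From $V_{x,y}(z)=(x\overline y)z+(z\overline y)x-(z\overline x)y$ one gets
\[
V_{e_3,e_3}(z)=(e_3\overline{e_3})z-(ze_3)e_3+(ze_3)e_3=-e_2z,
\]
so $V_{e_3,e_3}=-L_{e_2}=-T_{e_2}$. By contrast, $V_{e_2,e_3}=V_{e_3,e_2}=0$, since every term reduces to one of the vanishing products $e_2e_3$, $e_3e_2$, $e_2e_2$. This is what the paper lists, and it matters. Because $s=r=0$, one has $[\varepsilon_3,\varepsilon_{10}]=V_{e_3,e_3}$ with no other contributions. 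So your expectation $V_{e_3,e_3}=0$ contradicts the value $-\varepsilon_6$ you then claim for that bracket. Likewise, a nonzero $V_{e_2,e_3}$ would create a spurious bracket $[\varepsilon_2,\varepsilon_{10}]$. You intend to evaluate every $V_{e_i,e_j}$ on $e_1,e_2,e_3$, so the actual computation would correct this. As written, though, the proposal is inconsistent at exactly the entry that distinguishes ${\rm A}_2$ from ${\rm A}_1$ in step (V).

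Two smaller points.
\begin{itemize}
\item $T^\delta_{e_3}$ is identically zero, as the paper records. The relation $R_{e_3}=\frac13T_{e_3}$ still holds in ${\rm A}_2$: both send $e_1\mapsto e_3$, $e_3\mapsto e_2$ and $e_2\mapsto 0$. Your own evaluation shows it kills every basis vector.
\item The Jacobi triple $(\varepsilon_3,\varepsilon_3,\varepsilon_{10})$ is vacuous by antisymmetry. Use $(\varepsilon_1,\varepsilon_3,\varepsilon_{10})$ or $(\varepsilon_4,\varepsilon_8,\varepsilon_{10})$ instead. Given the remaining brackets, each of these forces $[\varepsilon_3,\varepsilon_{10}]=-\varepsilon_6$, so either would have caught the $V_{e_3,e_3}$ slip.
\end{itemize}
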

\begin{proof}
Following the $\rm{AK}$-algorithm given in subsection \ref{AKalg}, we define  
$$\mathcal{F}_0=\left\langle
T_{e_1}:=\begin{pmatrix} 1 & 0 & 0 \\ 0 & 1& 0 \\ 0 & 0 & 1 \end{pmatrix}, \ 
T_{e_2}:=\begin{pmatrix}  0& 0 & 0 \\ 1& 0 & 0 \\ 0 & 0 & 0 \end{pmatrix},\ 
T_{e_3}:=\begin{pmatrix} 0 & 0 &0\\ 0 & 0 & 3 \\ 3& 0 & 0 \end{pmatrix}  \right\rangle.$$

 It is easy to see that $[\mathcal F_0, \mathcal F_{0}]=0.$

 Let us note that $L_{e_3}L_{e_3}=T_{e_2}$ and   
 list elements $T_{e_i}^\varepsilon,$ 
 $T_{e_i}^\delta,$  
 $T_{e_i}^{\varepsilon \delta},$ and 
 $V_{e_i,e_j}$   below:

\begin{longtable}{|lcl|lclcl|}
\hline
   $T_{e_1}(e_{1})$&$=$&$e_1$&   $T_{e_1}^\varepsilon$&$=$&$T_{e_1}-T_{T_{e_1}(e_1)+\overline{T_{e_1}(e_1)}}$&$=$ &$-T_{e_1}$\\

$\overline{T_{e_1}({e_1})}$&$=$&$e_1$&$T_{e_1}^\delta$&$=$&$T_{e_1}+R_{\overline{T_{e_1}({e_1})}}$&$=$&$2T_{e_1}$\\

$R_{e_1}$&$=$&$T_{e_1}$&$T_{e_1}^{\varepsilon\delta}$&$=$&$T_{e_1}^\varepsilon+R_{\overline{T_{e_1}^\varepsilon({e_1})}}$&$=$&$-2T_{e_1}$\\
\hline 

 \hline
    
   $T_{e_2}({e_1})$&$=$&${e_2} $&$ T_{e_2}^\varepsilon$&$=$&$T_{e_2}-T_{T_{e_2}({e_1})+\overline{T_{e_2}({e_1})}}$&$=$&$-T_{e_2}$\\

$\overline{T_{e_2}({e_1})}$&$=$&${e_2}$&   
$T_{e_2}^\delta $&$=$&$T_{e_2}+R_{\overline{T_{e_2}({e_1})}}$&$=$&$2T_{e_2}$\\
   
$R_{e_2}$&$=$&$T_{e_2}$&$T_{e_2}^{\varepsilon\delta}$&$=$&$T_{e_2}^\varepsilon+R_{\overline{T_{e_2}^\varepsilon({e_1})}}$&$=$&$-2T_{e_2}$\\

   \hline
    
\hline
$T_{e_3}({e_1})$&$=$&$3{e_3}$&$    T_{e_3}^\varepsilon$&$=$&$T_{e_3}-T_{T_{e_3}({e_1})+\overline{T_{e_3}({e_1})}}$&$=$&$T_{e_3}$\\

$\overline{T_{e_3}({e_1}})$&$=$&$-3{e_3}$&$ T_{e_3}^\delta$&$=$&$T_{e_3}+R_{\overline{T_{e_3}({e_1})}}$&$=$&$0$\\
   
$R_{e_3}$&$=$&$\frac{1}{3}T_{e_3}$&$T_{e_3}^{\varepsilon\delta}$&$=$&$T_{e_3}^\varepsilon+R_{\overline{T_{e_3}^\varepsilon({e_1})}}$&$=$&$0$\\
\hline\end{longtable}

\begin{longtable}{lcllcllcl}

$V_{{e_1},{e_1}}$&$=$&$T_{e_1},$&$ V_{{e_1},{e_2}}$&$=$&$T_{e_2},$&$ V_{{e_1},{e_3}}$&$=$&$-T_{e_3},$\\ 
$V_{{e_2},{e_1}}$&$=$&$T_{e_2},$&$ V_{{e_2},{e_2}}$&$=$&$0, $&$ V_{{e_2},{e_3}}$&$=$&$0,$\\
$V_{{e_3},{e_1}}$&$=$&$T_{e_3},$&$ V_{{e_3},{e_2}}$&$=$&$0,$&$ V_{{e_3},{e_3}}$&$=$&$-T_{e_2}.$
\end{longtable}

Now we are ready to determine the multiplication table of $\mathcal F({\rm A}_2).$

\begin{enumerate}[(I)]
    \item First, we define   $[\mathcal F_0, \mathcal F_{1}+\mathcal F_{2}],$ i.e., $[E,(x,s)]=(E(x), E^\delta(s)).$

\begin{longtable}{lclclcl}
$[\varepsilon_5,\varepsilon_1]$&$=$&$  [T_{e_1}, ({e_1},0)]$&$=$&$ 
\big(T_{e_1}({e_1}), T_{e_1}^\delta(0) \big)$&$=$&$
({e_1}, 0) \ =\ \varepsilon_1;$\\

$[\varepsilon_5,\varepsilon_2]$&$=$&$[T_{e_1}, ({e_2},0)]$&$=$&$
(T_{e_1}({e_2}), T_{e_1}^\delta(0))$&$=$&$
({e_2},\ 0)\ =\ \varepsilon_2;$\\

$[\varepsilon_5,\varepsilon_3]$&$=$&$[T_{e_1}, ({e_3},0)]$&$=$&$
(T_{e_1}({e_3}), T_{e_1}^\delta (0))$&$=$&$
({e_3},\ 0)\ =\  \varepsilon_3;$\\
$[\varepsilon_5,\varepsilon_4]$&$=$&$[T_{e_1}, (0,{e_3})]$&$=$&$
(T_{e_1}(0), T_{e_1}^\delta (e_3))$&$=$&$(0,\  2e_3) \ =\ 2\varepsilon_4;$\\

$[\varepsilon_6,\varepsilon_1]$&$=$&$[T_{e_2}, ({e_1},0)]$&$=$&$
(T_{e_2}({e_1}), T_{e_2}^\delta (0))$&$=$&$
(e_2,\ 0) \ =\ \varepsilon_2;$\\

$[\varepsilon_6,\varepsilon_2]$&$=$&$[T_{e_2}, ({e_2},0)]$&$=$&$ 
(T_{e_2}({e_2}), T_{e_2}^\delta (0))$&$=$&$(0,\ 0)\ =\ 0;$\\

$[\varepsilon_6,\varepsilon_3]$&$=$&$[T_{e_2}, ({e_3},\ 0)]$&$=$&$
(T_{e_2}({e_3}), T_{e_2}^\delta (0))$&$=$&$(0,\ 0)\  =\ 0;$\\
$[\varepsilon_6,\varepsilon_4]$&$=$&$[T_{e_2}, (0,\ {e_3})]$&$=$&$
(T_{e_2}(0),T_{e_2}^\delta ({e_3}))$&$=$&$(0,\ 0) \ =\ 0;$\\

$[\varepsilon_7,\varepsilon_1]$&$=$&$[T_{e_3}, ({e_1},0)]$&$=$&$
(T_{e_3}({e_1}), T_{e_3}^\delta (0))$&$=$&$(3{e_3},\ 0) \ =\ 3\varepsilon_3;$\\

$[\varepsilon_7,\varepsilon_2]$&$=$&$[T_{e_3}, ({e_2},0)]$&$=$&$
(T_{e_3}({e_2}), T_{e_3}^\delta(0))$&$=$&$(0,\ 0)\ =\ 0;$\\

$[\varepsilon_7,\varepsilon_3]$&$=$&$[T_{e_3}, ({e_3},0)]$&$=$&$
(T_{e_3}({e_3}), T_{e_3}^\delta (0))$&$=$&$
(3{e_2},\  0) \ =\ 3\varepsilon_2;$\\

$[\varepsilon_7,\varepsilon_4]$&$=$&$[T_{e_3}, (0,{e_3})]$&$=$&$
(T_{e_3}(0), T_{e_3}^\delta ({e_3}))$&$=$&$
(0,\ 0)\ =\ 0.$
\end{longtable}

  \item Second, we define   $[\mathcal F_0, \mathcal F_{-2}+\mathcal F_{-1}],$ i.e., 
  $[{E}, \wideparen {(x,s)} ] =
\wideparen{({E}^\varepsilon(x), {E}^{\varepsilon\delta}(s))}.$

\begin{longtable}{lclclcl}
$[\varepsilon_5,\varepsilon_8]$&$=$&$[T_{e_1}, \wideparen{({e_1},0)}]$&$=$&$
\wideparen{(T_{e_1}^\varepsilon({e_1}),\  0)}$&$=$&$
\wideparen{(-T_{e_1}({e_1}), \ 0)}\ =\ \wideparen{(-{e_1}, \ 0)}
\ =\ -\varepsilon_8;$ \\

$[\varepsilon_5,\varepsilon_9]$&$=$&$[T_{e_1}, \wideparen{({e_2},0)}] $&$=$&$ \wideparen{(T_{e_1}^\varepsilon({e_2}),\  0)}$&$=$&$
\wideparen{(-T_{e_1}({e_2}), \ 0)}\ =\ \wideparen{(-{e_2}, \ 0)}
\ =\ -\varepsilon_9;$ \\

$[\varepsilon_5,\varepsilon_{10}]$&$=$&$[T_{e_1}, \wideparen{({e_3},0)}] $&$=$&$ \wideparen{(T_{e_1}^\varepsilon({e_3}),\  0)}$&$=$&$
\wideparen{(-T_{e_1}({e_3}), \ 0)}\ =\ \wideparen{(-{e_3}, \ 0)}
\ =\ -\varepsilon_{10};$ \\

$[\varepsilon_5,\varepsilon_{11}]$&$=$&$[T_{e_1}, \wideparen{(0,{e_3})} ]$&$=$&$
\wideparen{(0, T_{e_1}^{\varepsilon\delta} ({e_3}))} $&$=$&$
\wideparen{(0,\ -2{e_3})} \ =\ -2\varepsilon_{11};$\\

$[\varepsilon_6,\varepsilon_8]$&$=$&$[T_{e_2}, \wideparen{({e_1},0)}] $&$=$&$ \wideparen{(T_{e_2}^\varepsilon({e_1}),\  0)}$&$=$&$
\wideparen{(-{e_2}, \ 0)}
\ =\ -\varepsilon_9;$\\

$[\varepsilon_6,\varepsilon_9]$&$=$&$[T_{e_2}, \wideparen{({e_2},0)}] $&$=$&$ \wideparen{(T_{e_2}^\varepsilon({e_2}),\  0)}$&$=$&$\wideparen{(0, \ 0)}
\ =\ 0;$ \\

$[\varepsilon_6,\varepsilon_{10}]$&$=$&$[T_{e_2},\wideparen{({e_3},0)}] $&$=$&$ \wideparen{(T_{e_2}^\varepsilon({e_3}),\  0)}$&$=$&$
\wideparen{(0, \ 0)}
\ =\ 0;$\\

$[\varepsilon_6,\varepsilon_{11}]$&$=$&$[T_{e_2}, \wideparen{(0,{e_3})} ]$&$=$&$\wideparen{(0, T_{e_2}^{\varepsilon\delta} ({e_3}))} $&$=$&$\wideparen{(0,\ 0)} \ =\ 0;$\\

$[\varepsilon_7,\varepsilon_8]$&$=$&$[T_{e_3}, \wideparen{({e_1},0)}] $&$=$&$ \wideparen{(T_{e_3}^\varepsilon({e_1}),\  0)}$&$=$&$
\wideparen{(3{e_3}, \ 0)} \ =\ 3\varepsilon_{10};$\\

$[\varepsilon_7,\varepsilon_9]$&$=$&$[T_{e_3}, \wideparen{({e_2},0)}] $&$=$&$ \wideparen{(T_{e_3}^\varepsilon({e_2}),\  0)}$&$=$&$\wideparen{(0, \ 0)}
\ =\ 0;$\\

$[\varepsilon_7,\varepsilon_{10}]$&$=$&$[T_{e_3}, \wideparen{({e_3},0)}] $&$=$&$ \wideparen{(T_{e_3}^\varepsilon({e_3}),\  0)}$&$=$&$
\wideparen{(3{e_2}, \ 0)}\ =\ 3\varepsilon_9;$\\

$[\varepsilon_7,\varepsilon_{11}]$&$=$&$[T_{e_3}, \wideparen{(0,{e_3})} ]$&$=$&$
\wideparen{(0, T_{e_3}^{\varepsilon\delta} ({e_3}))} $&$=$&$ \wideparen{(0,\ 0)} \ =\ 0.$
\end{longtable}

\item Third, we define   $[ \mathcal F_{1}+\mathcal F_{2}, \mathcal F_{1}+\mathcal F_{2}],$ i.e., 
$[(x,s), (y,r)] = (0, x\overline{y} - y\overline{x}).$

\begin{longtable}{lclcl}

$[\varepsilon_1,\varepsilon_2]$&$=$&$ [({e_1},0), ({e_2},0)] $&$=$&$(0,\ 0) \ =\ 0;$\\
$[\varepsilon_1,\varepsilon_3]$&$=$&$[({e_1},0), ({e_3},0)] $&$=$&$(0,\ -2e_3) \ =\ -2\varepsilon_4;$\\

$[\varepsilon_2,\varepsilon_3]$&$=$&$[({e_2},0), ({e_3},0)] $&$=$&$(0,\ 0) \ =\ 0.$\\

\end{longtable}

\item Fourth, we define   $[ \mathcal F_{-2}+\mathcal F_{-1}, \mathcal F_{-2}+\mathcal F_{-1}],$ i.e., 
$[\wideparen{(x,s)}, \wideparen{(y,r)} ] = \wideparen{(0, x\overline{y} - y\overline{x})}.$

\begin{longtable}{lclcl}

$[\varepsilon_8,\varepsilon_9]$&$=$&$[\wideparen{({e_1},0)}, \wideparen{({e_2},0)}] $&$=$&$\wideparen{(0,\ 0)}\ =\ 0;$\\
$[\varepsilon_8,\varepsilon_{10}]$&$=$&$[\wideparen{({e_1},0)}, \wideparen{({e_3},0)}] $&$=$&$\wideparen{(0,\ -2e_3)} \ =\ -2\varepsilon_{11};$\\

$[\varepsilon_9,\varepsilon_{10}]$&$=$&$[\wideparen{({e_2},0)}, \wideparen{({e_3},0)}] $&$=$&$\wideparen{(0,\ 0)} \ =\ 0.$\\

\end{longtable}

\item End, we define   $[ \mathcal F_{1}+\mathcal F_{2}, \mathcal F_{-2}+\mathcal F_{-1}],$ i.e., 
    $[(x,s), \wideparen{(y,r)}] = -\wideparen{(r x, 0)}  + V_{x,y} + L_s L_r+ (s y, 0).$

\begin{longtable}{lclcl}
    
$[\varepsilon_1,\varepsilon_8]$&$=$&$  [({e_1},0),\wideparen{({e_1},0)}]$&$=$&$
V_{{e_1},{e_1}}\ =\ T_{e_1}\ =\ \varepsilon_5;$\\

$[\varepsilon_1,\varepsilon_9]$&$=$&$  [({e_1},0),\wideparen{({e_2},0)}]$&$=$&$
V_{{e_1},{e_2}}\ =\ T_{e_2\ }\ =\varepsilon_6;$\\

$[\varepsilon_1,\varepsilon_{10}]$&$=$&$[({e_1},0),\wideparen{({e_3},0)}]$&$=$&$
V_{{e_1},{e_3}}\ =\ -T_{e_3}\ =\ -\varepsilon_7;$\\

$[\varepsilon_1,\varepsilon_{11}]$&$=$&$[({e_1},0),\wideparen{(0,  {e_3})}]$&$=$&$
-\wideparen{({e_3},\ 0)}\ =\ -\varepsilon_{10};$\\

$[\varepsilon_2,\varepsilon_8]$&$=$&$ [({e_2},\ 0),\wideparen{({e_1},\ 0)}]$&$=$&$
V_{{e_2},{e_1}}\ =\ T_{e_2}\ =\ \varepsilon_6;$\\

$[\varepsilon_2,\varepsilon_9]$&$=$&$  [({e_2},0),\wideparen{({e_2},0)}]$&$=$&$
V_{{e_2},{e_2}}\ =\ 0;$\\
$[\varepsilon_2,\varepsilon_{10}]$&$=$&$[({e_2},0),\wideparen{({e_3},0)}]$&$=$&$
V_{{e_2},{e_3}}\ =\ 0;$\\
$[\varepsilon_2,\varepsilon_{11}]$&$=$&$[({e_2},0),\wideparen{(0,{e_3})}]$&$=$&$
-\wideparen{({e_3}{e_2},\ 0)}\ =\ -\wideparen{(0,\ 0)} \ =\ 0;$\\

$[\varepsilon_3,\varepsilon_8]$&$=$&$[({e_3},0),\wideparen{({e_1},0)}]$&$=$&$
V_{{e_3},{e_1}}\ =\ T_{e_3}\ =\ \varepsilon_7;$\\
$[\varepsilon_3,\varepsilon_9]$&$=$&$[({e_3},0),\wideparen{({e_2},0)}]$&$=$&$
V_{{e_3},{e_2}}\ =\ 0;$\\
$[\varepsilon_3,\varepsilon_{10}]$&$=$&$[({e_3},0),\wideparen{({e_3},0)}]$&$=$&$
V_{{e_3},{e_3}}\ =\ -T_{e_2}\ =\ -\varepsilon_6;$\\

$[\varepsilon_3,\varepsilon_{11}]$&$=$&$[({e_3},0),\wideparen{(0,{e_3})}]$&$=$&$
-\wideparen{({e_3}{e_3},\ 0)}\ =\ -\wideparen{({e_2},\ 0)} \ =\ 
-\varepsilon_9;$\\

$[\varepsilon_4,\varepsilon_8]$&$=$&$[(0,{e_3}),\wideparen{({e_1}, 0)}]$&$=$&$
({e_3},\ 0) \ =\ \varepsilon_3;$\\
 
$[\varepsilon_4,\varepsilon_9]$&$=$&$[(0,{e_3}),\wideparen{({e_2},0)}]$&$=$&$({e_3}{e_2},\ 0)\ =\ (0,\ 0)\ =\ 0;$\\
$[\varepsilon_4,\varepsilon_{10}]$&$=$&$[(0,{e_3}),\wideparen{({e_3},0)}]$&$=$&$
({e_3}{e_3},\ 0)\ =\ ({e_2},\ 0)  \ =\ \varepsilon_2;$\\

$[\varepsilon_4,\varepsilon_{11}]$&$=$&$[(0,{e_3}),\wideparen{(0,{e_3})}]$&$=$&$
L_{e_3}L_{e_3}\ =\ T_{e_2}\ =\ \varepsilon_6.$\\
\end{longtable}
\end{enumerate}
\end{proof}

\begin{remark} 
$\mathcal{F}({\rm A}_2)$ is non-perfect and 
$\mathcal{F}({\rm A}_2)= S \ltimes R,$ where

\begin{itemize}
\item
$S = \Big\langle\varepsilon_1, \varepsilon_5, \varepsilon_8\Big\rangle \cong \mathfrak{sl}_2;$

\item
$R = \Big\langle\varepsilon_2, \ 
\varepsilon_3, \ 
\varepsilon_4, \ 
\varepsilon_6, \  
\varepsilon_7, \ 
\varepsilon_9, \ 
\varepsilon_{10}, \ 
\varepsilon_{11}\Big\rangle$ is a nilpotent ideal nilindex equal to three.\end{itemize}

\end{remark}

  \subsection{$\rm{AK}$ construction for ${\rm A}_3$}

\begin{theorem}
$\mathcal{F}({\rm A}_3)$ is an $11$-dimensional Lie algebra  with the given product by

\begin{longtable}{lcrlcrlcr}
$[\varepsilon_1,\varepsilon_3]$&$=$&$-2 \varepsilon_4,$&$
[\varepsilon_1,\varepsilon_5]$&$=$&$-\varepsilon_1, $&$ 
[\varepsilon_1,\varepsilon_6]$&$=$&$-\varepsilon_2,$ \\ 

$[\varepsilon_1,\varepsilon_7]$&$=$&$-3\varepsilon_3,$&$ 
[\varepsilon_1,\varepsilon_8]$&$=$&$\varepsilon_5,$&$ 
[\varepsilon_1,\varepsilon_9]$&$=$&$\varepsilon_6,$\\ 
$[\varepsilon_1,\varepsilon_{10}]$&$=$&$-\varepsilon_7,$&$
[\varepsilon_1,\varepsilon_{11}]$&$=$&$-\varepsilon_{10},$&$
[\varepsilon_2,\varepsilon_5]$&$=$&$-\varepsilon_2,$\\
$[\varepsilon_2,\varepsilon_6]$&$=$&$-\varepsilon_2,$&$
[\varepsilon_2,\varepsilon_8]$&$=$&$\varepsilon_6, $&$ 
[\varepsilon_2,\varepsilon_9]$&$=$&$\varepsilon_6, $\\ 
$[\varepsilon_3,\varepsilon_5]$&$=$&$ -\varepsilon_3,$&$
[\varepsilon_3,\varepsilon_8]$&$=$&$\varepsilon_7,$&$
[\varepsilon_4,\varepsilon_5]$&$=$&$-2\varepsilon_4,$\\
$[\varepsilon_4,\varepsilon_8]$&$=$&$\varepsilon_3 $&$
[\varepsilon_5,\varepsilon_8]$&$=$&$-\varepsilon_8,$&$
[\varepsilon_5,\varepsilon_9]$&$=$&$-\varepsilon_9,$\\
$[\varepsilon_5,\varepsilon_{10}]$&$=$&$-\varepsilon_{10},$&$
[\varepsilon_5,\varepsilon_{11}]$&$=$&$-2\varepsilon_{11},$&$
[\varepsilon_6,\varepsilon_8]$&$=$&$-\varepsilon_9,$\\
$[\varepsilon_6,\varepsilon_9]$&$=$&$-\varepsilon_9,$&$
[\varepsilon_7,\varepsilon_8]$&$=$&$3\varepsilon_{10},$&$
[\varepsilon_8,\varepsilon_{10}]$&$=$&$-2\varepsilon_{11}.$\\
\end{longtable}

\end{theorem}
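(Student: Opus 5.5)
The plan is to run the $\rm{AK}$-algorithm of Subsection \ref{AKalg} for ${\rm A}_3$, exactly as was done for ${\rm A}_1$ and ${\rm A}_2$. We use the fixed basis $\varepsilon_1,\dots,\varepsilon_{11}$. For ${\rm A}_3$ we have $\overline{\mathfrak{Der}}({\rm A}_3)=\langle d_1\rangle$ with $d_1(e_3)=e_3$, while Proposition \ref{(ss)} shows $\mathcal F_0$ is spanned by $T_{e_1},T_{e_2},T_{e_3}$. We follow the convention of the previous subsections and take $\mathcal F_0=\langle T_{e_1},T_{e_2},T_{e_3}\rangle$, so that $\dim\mathcal F({\rm A}_3)=11$. (Strictly, $\mathrm{Str}$ should also contain $d_1$; we adopt the paper's normalization.)

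\textbf{Step 1: the operators.} By Proposition \ref{(ss)}, $T_{e_1}=\mathrm{id}$ and $T_{e_2}=L_{e_2}$, with $T_{e_2}(e_1)=e_2$, $T_{e_2}(e_2)=e_2$, $T_{e_2}(e_3)=0$. Also $T_{e_3}(e_1)=3e_3$ and $T_{e_3}$ vanishes on $e_2,e_3$. One checks that these commute, so $[\mathcal F_0,\mathcal F_0]=0$. Next compute the twisted operators:
\begin{itemize}
\item $T_{e_1}^\varepsilon=-T_{e_1}$, $T_{e_1}^\delta=2T_{e_1}$, $T_{e_1}^{\varepsilon\delta}=-2T_{e_1}$;
\item $T_{e_2}^\varepsilon=-T_{e_2}$ (using $T_x^\varepsilon=-T_{\overline x}$), $T_{e_2}^\delta=T_{e_2}+R_{e_2}$, which acts on $e_3$ by $e_3e_2=0$, so it kills $\mathcal S$;
\item $T_{e_3}^\varepsilon=T_{e_3}$ and $T_{e_3}^\delta=T_{e_3}^{\varepsilon\delta}=0$ on $\mathcal S$.
\end{itemize}
Then compute all $V_{e_i,e_j}$ from $V_{x,y}(z)=(x\overline y)z+(z\overline y)x-(z\overline x)y$. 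Expected values: $V_{e_1,e_1}=T_{e_1}$, $V_{e_1,e_2}=V_{e_2,e_1}=V_{e_2,e_2}=T_{e_2}$ (because $e_2e_2=e_2$), $V_{e_1,e_3}=-T_{e_3}$, $V_{e_3,e_1}=T_{e_3}$, and $V_{e_2,e_3}=V_{e_3,e_2}=V_{e_3,e_3}=0$. Finally $L_{e_3}L_{e_3}=0$.

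\textbf{Step 2: the five blocks of brackets.} We fill in the brackets in the same order (I)--(V) as for ${\rm A}_1$.
\begin{itemize}
\item (I) $[E,(x,s)]=(E(x),E^\delta(s))$. This gives $[\varepsilon_6,\varepsilon_2]=\varepsilon_2$, i.e. $[\varepsilon_2,\varepsilon_6]=-\varepsilon_2$, which is the new relation compared with ${\rm A}_1$.
\item (II) $[E,\wideparen{(x,s)}]$ via $E^\varepsilon,E^{\varepsilon\delta}$. This gives $[\varepsilon_6,\varepsilon_9]=-\varepsilon_9$.
\item (III) and (IV) $x\overline y-y\overline x$. Only $[\varepsilon_1,\varepsilon_3]=-2\varepsilon_4$ and $[\varepsilon_8,\varepsilon_{10}]=-2\varepsilon_{11}$ survive.
\item (V) $[(x,s),\wideparen{(y,r)}]=(sy,0)-\wideparen{(rx,0)}+V_{x,y}+L_sL_r$. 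This gives $[\varepsilon_2,\varepsilon_9]=V_{e_2,e_2}=\varepsilon_6$ and otherwise the same entries as for ${\rm A}_1$, because $e_3e_2=e_2e_3=e_3e_3=0$.
\end{itemize}

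\textbf{Main obstacle.} The delicate points are the $\delta$/$\varepsilon$ twists of $T_{e_2}$ and the correct evaluation of $V_{e_2,e_2}$, where $e_2$ is idempotent. Here signs and factors are easy to get wrong, so we would double-check them by a Jacobi identity. For example, $[\varepsilon_2,[\varepsilon_9,\varepsilon_8]]$ tests whether the entry $[\varepsilon_6,\varepsilon_9]=-\varepsilon_9$ is consistent.
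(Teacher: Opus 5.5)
Your proposal is correct and follows the paper's proof essentially step for step. You make the same choice $\mathcal F_0=\langle T_{e_1},T_{e_2},T_{e_3}\rangle$, obtain the same twisted operators (your $T_{e_2}^\delta=T_{e_2}+R_{e_2}$ is just $2T_{e_2}$, since ${\rm A}_3$ is commutative) and the same $V_{e_i,e_j}$, and fill in the same five blocks, where the new entries $[\varepsilon_2,\varepsilon_6]=-\varepsilon_2$, $[\varepsilon_6,\varepsilon_9]=-\varepsilon_9$ and $[\varepsilon_2,\varepsilon_9]=\varepsilon_6$ arise exactly as you describe.

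One small point on justification: the clean reason for leaving out $d_1$ is not Proposition \ref{(ss)}. It is that $\mathcal F_0$ is really $\mathrm{Instr}({\rm A}_3)$, which is spanned by the $V_{x,y}$ and the $L_sL_r$, and your Step 1 shows all of these lie in $T_{{\rm A}_3}$.
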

\begin{proof} In the accordance with the $\rm{AK}$-algorithm presented in subsection \ref{AKalg}, we define  
$$\mathcal{F}_0=\left\langle
T_{e_1}:=\begin{pmatrix} 1 & 0 & 0 \\ 0 & 1& 0 \\ 0 & 0 & 1 \end{pmatrix}, \ 
T_{e_2}:=\begin{pmatrix}  0& 0 & 0 \\ 1& 1 & 0 \\ 0 & 0 & 0 \end{pmatrix},\ 
T_{e_3}:=\begin{pmatrix} 0 & 0 &0\\ 0 & 0 & 0 \\ 3& 0 & 0 \end{pmatrix}  \right\rangle.$$

 It is easy to see that  $[\mathcal F_0, \mathcal F_{0}]=0.$

We give  the list of elements  
$T_{e_i}^\varepsilon,$ 
$T_{e_i}^\delta,$  
$T_{e_i}^{\varepsilon \delta},$
and $V_{e_i,e_j}$   below:

\begin{longtable}{|lcl|lclcl|}
\hline
   $T_{e_1}(e_{1})$&$=$&$e_1$&   $T_{e_1}^\varepsilon$&$=$&$T_{e_1}-T_{T_{e_1}(e_1)+\overline{T_{e_1}(e_1)}}$&$=$ &$-T_{e_1}$\\

$\overline{T_{e_1}({e_1})}$&$=$&$e_1$&$T_{e_1}^\delta$&$=$&$T_{e_1}+R_{\overline{T_{e_1}({e_1})}}$&$=$&$2T_{e_1}$\\

$R_{e_1}$&$=$&$T_{e_1}$&$T_{e_1}^{\varepsilon\delta}$&$=$&$T_{e_1}^\varepsilon+R_{\overline{T_{e_1}^\varepsilon({e_1})}}$&$=$&$-2T_{e_1}$\\
\hline 

 \hline
    
   $T_{e_2}({e_1})$&$=$&${e_2} $&$ T_{e_2}^\varepsilon$&$=$&$T_{e_2}-T_{T_{e_2}({e_1})+\overline{T_{e_2}({e_1})}}$&$=$&$-T_{e_2}$\\

$\overline{T_{e_2}({e_1})}$&$=$&${e_2}$&   
$T_{e_2}^\delta $&$=$&$T_{e_2}+R_{\overline{T_{e_2}({e_1})}}$&$=$&$2T_{e_2}$\\
   
$R_{e_2}$&$=$&$T_{e_2}$&$T_{e_2}^{\varepsilon\delta}$&$=$&$T_{e_2}^\varepsilon+R_{\overline{T_{e_2}^\varepsilon({e_1})}}$&$=$&$-2T_{e_2}$\\

   \hline
    
\hline
$T_{e_3}({e_1})$&$=$&$3{e_3}$&$    T_{e_3}^\varepsilon$&$=$&$T_{e_3}-T_{T_{e_3}({e_1})+\overline{T_{e_3}({e_1})}}$&$=$&$T_{e_3}$\\

$\overline{T_{e_3}({e_1}})$&$=$&$-3{e_3}$&$ T_{e_3}^\delta$&$=$&$T_{e_3}+R_{\overline{T_{e_3}({e_1})}}$&$=$&$0$\\
   
$R_{e_3}$&$=$&$\frac{1}{3}T_{e_3}$&$T_{e_3}^{\varepsilon\delta}$&$=$&$T_{e_3}^\varepsilon+R_{\overline{T_{e_3}^\varepsilon({e_1})}}$&$=$&$0$\\
\hline\end{longtable}

\begin{longtable}{lcllcllcl}

$V_{{e_1},{e_1}}$&$=$&$T_{e_1},$&$ V_{{e_1},{e_2}}$&$=$&$T_{e_2},$&$ V_{{e_1},{e_3}}$&$=$&$-T_{e_3},$\\ 
$V_{{e_2},{e_1}}$&$=$&$T_{e_2},$&$ V_{{e_2},{e_2}}$&$=$&$T_{e_2}, $&$ V_{{e_2},{e_3}}$&$=$&$0,$\\
$V_{{e_3},{e_1}}$&$=$&$T_{e_3},$&$ V_{{e_3},{e_2}}$&$=$&$0,$&$ V_{{e_3},{e_3}}$&$=$&$0.$
\end{longtable}

We are now in a position to determine the multiplication table of $\mathcal F({\rm A}_3).$

\begin{enumerate}[(I)]
    \item First, we define  $[\mathcal F_0, \mathcal F_{1}+\mathcal F_{2}],$ i.e., $[E,(x,s)]=(E(x), E^\delta(s)).$

\begin{longtable}{lclclcl}
$[\varepsilon_5,\varepsilon_1]$&$=$&$  [T_{e_1}, ({e_1},0)]$&$=$&$ 
\big(T_{e_1}({e_1}), T_{e_1}^\delta(0) \big)$&$=$&$({e_1},\ 0)\  =\ \varepsilon_1;$\\

$[\varepsilon_5,\varepsilon_2]$&$=$&$[T_{e_1}, ({e_2},0)]$&$=$&$
(T_{e_1}({e_2}), T_{e_1}^\delta(0))$&$=$&$
({e_2},\ 0)\ =\ \varepsilon_2;$\\

$[\varepsilon_5,\varepsilon_3]$&$=$&$[T_{e_1}, ({e_3},0)]$&$=$&$
(T_{e_1}({e_3}), T_{e_1}^\delta (0))$&$=$&$
({e_3},\ 0)\ =\  \varepsilon_3;$\\

$[\varepsilon_5,\varepsilon_4]$&$=$&$[T_{e_1}, (0,{e_3})]$&$=$&$(T_{e_1}(0), T_{e_1}^\delta (e_3))$&$=$&$(0, \ 2e_3) \ =\ 2\varepsilon_4;$\\

$[\varepsilon_6,\varepsilon_1]$&$=$&$[T_{e_2}, ({e_1},0)]$&$=$&$
(T_{e_2}({e_1}), T_{e_2}^\delta (0))$&$=$&$(e_2,\ 0)\ =\ \varepsilon_2;$\\

$[\varepsilon_6,\varepsilon_2]$&$=$&$[T_{e_2}, ({e_2},0)]$&$=$&$
(T_{e_2}({e_2}), T_{e_2}^\delta (0))$&$=$&$(e_2,\ 0) \ =\ \varepsilon_2;$\\

$[\varepsilon_6,\varepsilon_3]$&$=$&$[T_{e_2}, ({e_3},0)]$&$=$&$
(T_{e_2}({e_3}), T_{e_2}^\delta (0))$&$=$&$(0,\ 0)\ =\ 0;$\\
$[\varepsilon_6,\varepsilon_4]$&$=$&$[T_{e_2}, (0,{e_3})]$&$=$&$
(T_{e_2}(0),T_{e_2}^\delta ({e_3}))$&$=$&$(0,\ 0)\ =\ 0;$\\

$[\varepsilon_7,\varepsilon_1]$&$=$&$[T_{e_3}, ({e_1},0)]$&$=$&$
(T_{e_3}({e_1}), T_{e_3}^\delta (0))$&$=$&$(3{e_3},\ 0) \ =\ 3\varepsilon_3;$\\

$[\varepsilon_7,\varepsilon_2]$&$=$&$[T_{e_3}, ({e_2},0)]$&$=$&$
(T_{e_3}({e_2}), T_{e_3}^\delta(0))$&$=$&$(0,\ 0)\ =\ 0;$\\

$[\varepsilon_7,\varepsilon_3]$&$=$&$[T_{e_3}, ({e_3},0)]$&$=$&$
(T_{e_3}({e_3}), T_{e_3}^\delta (0))$&$=$&$(0,\ 0)\ =\ 0;$\\

$[\varepsilon_7,\varepsilon_4]$&$=$&$[T_{e_3}, (0,{e_3})]$&$=$&$
(T_{e_3}(0), T_{e_3}^\delta ({e_3}))$&$=$&$(0,\ 0)\ =\ 0.$
\end{longtable}

  \item Second, we define   $[\mathcal F_0, \mathcal F_{-2}+\mathcal F_{-1}],$ i.e., 
  $[{E}, \wideparen {(x,s)} ] =
\wideparen{({E}^\varepsilon(x), {E}^{\varepsilon\delta}(s))}.$

\begin{longtable}{lclclcl}
$[\varepsilon_5,\varepsilon_8]$&$=$&$[T_{e_1}, \wideparen{({e_1},0)}] $&$=$&$ \wideparen{(T_{e_1}^\varepsilon({e_1}),\  0)}$&$=$&$
\wideparen{(-T_{e_1}({e_1}), \ 0)}\ =\ 
\wideparen{(-{e_1}, \ 0)}
\ =\ -\varepsilon_8;$ \\

$[\varepsilon_5,\varepsilon_9]$&$=$&$[T_{e_1}, \wideparen{({e_2},0)}] $&$=$&$ \wideparen{(T_{e_1}^\varepsilon({e_2}),\  0)}$&$=$&$
\wideparen{(-T_{e_1}({e_2}), \ 0)}\ =\ 
\wideparen{(-{e_2}, \ 0)}
\ =\ -\varepsilon_9;$ \\

$[\varepsilon_5,\varepsilon_{10}]$&$=$&$[T_{e_1}, \wideparen{({e_3},0)}] $&$=$&$ \wideparen{(T_{e_1}^\varepsilon({e_3}),\  0)}$&$=$&$
\wideparen{(-T_{e_1}({e_3}), \ 0)}\ =\ \wideparen{(-{e_3}, \ 0)}
\ =\ -\varepsilon_{10};$ \\

$[\varepsilon_5,\varepsilon_{11}]$&$=$&$[T_{e_1}, \wideparen{(0,{e_3})} ]$&$=$&$
\wideparen{(0, T_{e_1}^{\varepsilon\delta} ({e_3}))} $&$=$&$\wideparen{(0,\ -2{e_3})} \ =\ -2\varepsilon_{11};$\\

$[\varepsilon_6,\varepsilon_8]$&$=$&$[T_{e_2}, \wideparen{({e_1},0)}] $&$=$&$ \wideparen{(T_{e_2}^\varepsilon({e_1}),\  0)}$&$=$&$\wideparen{(-{e_2}, \ 0)}
\ =\ -\varepsilon_9;$\\

$[\varepsilon_6,\varepsilon_9]$&$=$&$[T_{e_2}, \wideparen{({e_2},0)}] $&$=$&$ \wideparen{(T_{e_2}^\varepsilon({e_2}),\  0)}$&$=$&$\wideparen{(-e_2, \ 0)}
\ =\ -\varepsilon_9;$ \\

$[\varepsilon_6,\varepsilon_{10}]$&$=$&$[T_{e_2},\wideparen{({e_3},0)}] $&$=$&$ \wideparen{(T_{e_2}^\varepsilon({e_3}),\  0)}$&$=$&$\wideparen{(0, \ 0)}
\ =\ 0;$\\

$[\varepsilon_6,\varepsilon_{11}]$&$=$&$[T_{e_2}, \wideparen{(0,{e_3})} ]$&$=$&$\wideparen{(0, T_{e_2}^{\varepsilon\delta} ({e_3}))} $&$=$&$\wideparen{(0,\ 0)} \ =\ 0;$\\
$[\varepsilon_7,\varepsilon_8]$&$=$&$[T_{e_3}, \wideparen{({e_1},0)}] $&$=$&$ \wideparen{(T_{e_3}^\varepsilon({e_1}),\  0)}$&$=$&$\wideparen{(3{e_3}, \ 0)}
\ =\ 3\varepsilon_{10};$\\

$[\varepsilon_7,\varepsilon_9]$&$=$&$[T_{e_3}, \wideparen{({e_2},0)}] $&$=$&$ \wideparen{(T_{e_3}^\varepsilon({e_2}),\  0)}$&$=$&$\wideparen{(0, \ 0)}
\ =\ 0;$\\

$[\varepsilon_7,\varepsilon_{10}]$&$=$&$[T_{e_3}, \wideparen{({e_3},0)}] $&$=$&$ \wideparen{(T_{e_3}^\varepsilon({e_3}),\  0)}$&$=$&$\wideparen{(0, \ 0)}
\ =\ 0;$\\

$[\varepsilon_7,\varepsilon_{11}]$&$=$&$[T_{e_3}, \wideparen{(0,{e_3})} ]$&$=$&$
\wideparen{(0, T_{e_3}^{\varepsilon\delta} ({e_3}))} $&$=$&$
\wideparen{(0,\ 0)}\ =\ 0.$
\end{longtable}

\item Third, we define   $[ \mathcal F_{1}+\mathcal F_{2}, \mathcal F_{1}+\mathcal F_{2}],$ i.e., 
$[(x,s), (y,r)] = (0, x\overline{y} - y\overline{x}).$

\begin{longtable}{lclcl}

$[\varepsilon_1,\varepsilon_2]$&$=$&$ [({e_1},0), ({e_2},0)] $&$=$&$(0,\ 0) \ =\ 0;$\\
$[\varepsilon_1,\varepsilon_3]$&$=$&$[({e_1},0), ({e_3},0)] $&$=$&$(0,\ -2e_3) \ =\ -2\varepsilon_4;$\\

$[\varepsilon_2,\varepsilon_3]$&$=$&$[({e_2},0), ({e_3},0)] $&$=$&$(0,\ 0) \ =\ 0.$\\

\end{longtable}

\item Fourth, we define   $[ \mathcal F_{-2}+\mathcal F_{-1}, \mathcal F_{-2}+\mathcal F_{-1}],$ i.e., 
$[\wideparen{(x,s)}, \wideparen{(y,r)} ] = \wideparen{(0, x\overline{y} - y\overline{x})}.$

\begin{longtable}{lclcl}

$[\varepsilon_8,\varepsilon_9]$&$=$&$[\wideparen{({e_1},0)}, \wideparen{({e_2},0)}] $&$=$&$\wideparen{(0,\ 0)} \ =\ 0;$\\
$[\varepsilon_8,\varepsilon_{10}]$&$=$&$[\wideparen{({e_1},0)}, \wideparen{({e_3},0)}] $&$=$&$\wideparen{(0,\ -2e_3)} \ =\ -2\varepsilon_{11};$\\

$[\varepsilon_9,\varepsilon_{10}]$&$=$&$[\wideparen{({e_2},0)}, \wideparen{({e_3},0)}] $&$=$&$\wideparen{(0,\ 0)}\ =\ 0.$\\

\end{longtable}

\item Last, we define   $[ \mathcal F_{1}+\mathcal F_{2}, \mathcal F_{-2}+\mathcal F_{-1}],$ i.e., 
    $[(x,s), \wideparen{(y,r)}] = -\wideparen{(r x, 0)}  + V_{x,y} + L_s L_r+ (s y, 0).$

\begin{longtable}{lclcl}
    
$[\varepsilon_1,\varepsilon_8]$&$=$&$  [({e_1},0),\wideparen{({e_1},0)}]$&$=$&$
V_{{e_1},{e_1}}\ =\ T_{e_1}\ =\ \varepsilon_5;$\\
$[\varepsilon_1,\varepsilon_9]$&$=$&$  [({e_1},0),\wideparen{({e_2},0)}]$&$=$&$
V_{{e_1},{e_2}}\ =\ T_{e_2}\ =\ \varepsilon_6;$\\
$[\varepsilon_1,\varepsilon_{10}]$&$=$&$[({e_1},0),\wideparen{({e_3},0)}]$&$=$&$
V_{{e_1},{e_3}}\ =\ -T_{e_3}\ =\ -\varepsilon_7;$\\
$[\varepsilon_1,\varepsilon_{11}]$&$=$&$[({e_1},0),\wideparen{(0,{e_3})}]$&$=$&$
-\wideparen{({e_3},\ 0)} \ =\ -\varepsilon_{10};$\\
$[\varepsilon_2,\varepsilon_8]$&$=$&$ [({e_2},0),\wideparen{({e_1},0)}]$&$=$&$V_{{e_2},{e_1}}\ =\ T_{e_2}\ =\ \varepsilon_6;$\\

$[\varepsilon_2,\varepsilon_9]$&$=$&$  [({e_2},0),\wideparen{({e_2},0)}]$&$=$&$
V_{{e_2},{e_2}}\ =\ T_{e_2}\ =\ \varepsilon_6;$\\
$[\varepsilon_2,\varepsilon_{10}]$&$=$&$[({e_2},0),\wideparen{({e_3},0)}]$&$=$&$
V_{{e_2},{e_3}}\ =\ 0;$\\
$[\varepsilon_2,\varepsilon_{11}]$&$=$&$[({e_2},0),\wideparen{(0,{e_3})}]$&$=$&$
-\wideparen{({e_3}{e_2},\ 0)}\ =\ -\wideparen{(0,\ 0)} \ =\ 0;$\\
$[\varepsilon_3,\varepsilon_8]$&$=$&$[({e_3},0),\wideparen{({e_1},0)}]$&$=$&$V_{{e_3},{e_1}}\ =\ T_{e_3}\ =\ \varepsilon_7;$\\
$[\varepsilon_3,\varepsilon_9]$&$=$&$[({e_3},0),\wideparen{({e_2},0)}]$&$=$&$V_{{e_3},{e_2}}\ =\ 0;$\\
$[\varepsilon_3,\varepsilon_{10}]$&$=$&$[({e_3},0),\wideparen{({e_3},0)}]$&$=$&$V_{{e_3},{e_3}}\ =\ 0;$\\
$[\varepsilon_3,\varepsilon_{11}]$&$=$&$[({e_3},0),\wideparen{(0,{e_3})}]$&$=$&$-\wideparen{({e_3}{e_3},\ 0)}\ =\ -\wideparen{(0,\ 0)} \ =\ 0;$\\
$[\varepsilon_4,\varepsilon_8]$&$=$&$[(0,{e_3}),\wideparen{({e_1},0)}]$&$=$&$({e_3},\ 0)  \ =\ \varepsilon_3;$\\
 $[\varepsilon_4,\varepsilon_9]$&$=$&$[(0,{e_3}),\wideparen{({e_2},0)}]$&$=$&$
({e_3}{e_2}, 0)\ =\ (0,\ 0)\ =\ 0;$\\
$[\varepsilon_4,\varepsilon_{10}]$&$=$&$[(0,{e_3}),\wideparen{({e_3},0)}]$&$=$&$
({e_3}{e_3},\ 0)\ =\ (0,\ 0) \  =\ 0;$\\
$[\varepsilon_4,\varepsilon_{11}]$&$=$&$[(0,{e_3}),\wideparen{(0,{e_3})}]$&$=$&$L_{e_3}L_{e_3}\ =\ 0.$\\
\end{longtable}

\end{enumerate}
\end{proof}

\begin{remark}   $\mathcal{F}({\rm A}_3)$ is perfect
and  $\mathcal{F}({\rm A}_3)=S \ltimes R,$ 
where

\begin{itemize}
\item $S = \Big\langle\varepsilon_1,\ \varepsilon_2, \ 
\varepsilon_5, \ \varepsilon_6, \ 
\varepsilon_8, \ \varepsilon_9\Big\rangle
\cong  \mathfrak{sl}_2\oplus \mathfrak{sl}_2;$

    \item $R = \Big\langle 
    \varepsilon_3, \ 
    \varepsilon_4, \ 
    \varepsilon_7, \ 
    \varepsilon_{10}, \ 
    \varepsilon_{11}\Big\rangle$
is  a abelian radical.

\end{itemize}

\end{remark}

\subsection{$\rm{AK}$ construction for ${\rm A}_4$}

\begin{theorem}
$\mathcal{F}({\rm A}_4)$ is an $11$-dimensional Lie algebra  with    the multiplication defined by

\begin{longtable}{lcrlcrlcr}
$[\varepsilon_1,\varepsilon_3]$&$=$&$-2 \varepsilon_4,$&$
[\varepsilon_1,\varepsilon_5]$&$=$&$-\varepsilon_1, $&$ 
[\varepsilon_1,\varepsilon_6]$&$=$&$-\varepsilon_2,$ \\ 

$[\varepsilon_1,\varepsilon_7]$&$=$&$-3\varepsilon_3,$&$ 
[\varepsilon_1,\varepsilon_8]$&$=$&$\varepsilon_5,$&$ 
[\varepsilon_1,\varepsilon_9]$&$=$&$\varepsilon_6,$\\ 
$[\varepsilon_1,\varepsilon_{10}]$&$=$&$-\varepsilon_7,$&$
[\varepsilon_1,\varepsilon_{11}]$&$=$&$-\varepsilon_{10},$&$
[\varepsilon_2,\varepsilon_5]$&$=$&$-\varepsilon_2,$\\

$[\varepsilon_2,\varepsilon_6]$&$=$&$-\varepsilon_2, $&$
[\varepsilon_2,\varepsilon_8]$&$=$&$\varepsilon_6, $&$ 
[\varepsilon_2,\varepsilon_9]$&$=$&$\varepsilon_6,$\\
$[\varepsilon_3,\varepsilon_5]$&$=$&$ -\varepsilon_3,$&$
[\varepsilon_3,\varepsilon_7]$&$=$&$3\varepsilon_1-3\varepsilon_2,$&$ 
[\varepsilon_3,\varepsilon_8]$&$=$&$\varepsilon_7,$\\
$[\varepsilon_3,\varepsilon_{10}]$&$=$&$\varepsilon_5-\varepsilon_6,$&$ 
[\varepsilon_3,\varepsilon_{11}]$&$=$&$\varepsilon_8-\varepsilon_9,$&$
[\varepsilon_4,\varepsilon_5]$&$=$&$-2\varepsilon_4,$\\
$[\varepsilon_4,\varepsilon_8]$&$=$&$\varepsilon_3,$&$  [\varepsilon_4,\varepsilon_{10}]$&$=$&$-\varepsilon_1+\varepsilon_2,$&$
[\varepsilon_4,\varepsilon_{11}]$&$=$&$-\varepsilon_5+\varepsilon_6,$\\
$[\varepsilon_5,\varepsilon_8]$&$=$&$-\varepsilon_8,$&$ 
[\varepsilon_5,\varepsilon_9]$&$=$&$-\varepsilon_9,$&$
[\varepsilon_5,\varepsilon_{10}]$&$=$&$-\varepsilon_{10},$\\ $[\varepsilon_5,\varepsilon_{11}]$&$=$&$-2\varepsilon_{11},$&$ 
[\varepsilon_6,\varepsilon_8]$&$=$&$-\varepsilon_9,$&$
[\varepsilon_6,\varepsilon_9]$&$=$&$-\varepsilon_9,$\\
$[\varepsilon_7,\varepsilon_8]$&$=$&$3\varepsilon_{10},$&$
[\varepsilon_7,\varepsilon_{10}]$&$=$&$-3\varepsilon_8+3\varepsilon_9,$&$
[\varepsilon_8,\varepsilon_{10}]$&$=$&$-2\varepsilon_{11}.$\\
\end{longtable}

\end{theorem}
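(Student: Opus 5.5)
The computation follows the same $\rm{AK}$-algorithm of Subsection \ref{AKalg} used for ${\rm A}_1$, ${\rm A}_2$ and ${\rm A}_3$. By Proposition \ref{(ss)}, in ${\rm A}_4$ we have
$$T_{e_1}=\mathrm{Id},\qquad T_{e_2}=\begin{pmatrix}0&0&0\\1&1&0\\0&0&0\end{pmatrix},\qquad T_{e_3}=\begin{pmatrix}0&0&-3\\0&0&3\\3&0&0\end{pmatrix}.$$
Since ${\rm A}_4$ has no nonzero derivations, $\mathcal F_0=T_{\mathcal A}=\langle\varepsilon_5,\varepsilon_6,\varepsilon_7\rangle$. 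I would first verify that $[\mathcal F_0,\mathcal F_0]=0$, i.e. that these three matrices commute. This should be direct: $T_{e_2}T_{e_3}$ and $T_{e_3}T_{e_2}$ both have only the $(2,1)$-entry of $T_{e_3}T_{e_2}$ possibly nonzero, and one checks they agree.

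Next I would tabulate the auxiliary data. Because $e_1,e_2\in\mathcal H$, we get $T_{e_i}^\varepsilon=-T_{e_i}$, $T_{e_i}^\delta=2T_{e_i}$ and $T_{e_i}^{\varepsilon\delta}=-2T_{e_i}$ for $i=1,2$. For $e_3\in\mathcal S$, we get $T_{e_3}^\varepsilon=T_{e_3}$ and $T_{e_3}^\delta=T_{e_3}^{\varepsilon\delta}=0$. These formulas are identical to the earlier cases, since they depend only on $T_x(e_1)$ and $\overline{x}$.

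The new ingredients are $V_{e_3,e_3}$ and $L_{e_3}L_{e_3}$, which come from $e_3e_3=-e_1+e_2$. A direct evaluation gives $V_{e_3,e_3}=T_{e_1}-T_{e_2}$ and $L_{e_3}L_{e_3}=-T_{e_1}+T_{e_2}$. The remaining $V_{e_i,e_j}$ coincide with those for ${\rm A}_3$.

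With this data I would fill in the five blocks exactly as in the ${\rm A}_3$ proof.
\begin{enumerate}[(I)]
\item $[E,(x,s)]=(E(x),E^\delta(s))$. Here the only new value is $T_{e_3}(e_3)=3(e_2-e_1)$, which gives $[\varepsilon_3,\varepsilon_7]=3\varepsilon_1-3\varepsilon_2$.
\item $[E,\wideparen{(x,s)}]$. Here $T_{e_3}^\varepsilon(e_3)=3(e_2-e_1)$ gives $[\varepsilon_7,\varepsilon_{10}]=-3\varepsilon_8+3\varepsilon_9$.
\item and (IV) These brackets only involve $x\overline y-y\overline x$ and are unchanged from ${\rm A}_3$.
\setcounter{enumi}{4}
\item The mixed brackets produce the new terms:
\begin{itemize}
\item $[\varepsilon_3,\varepsilon_{10}]=V_{e_3,e_3}=\varepsilon_5-\varepsilon_6$;
\item $[\varepsilon_3,\varepsilon_{11}]=-\wideparen{(e_3e_3,0)}=\varepsilon_8-\varepsilon_9$;
\item $[\varepsilon_4,\varepsilon_{10}]=(e_3e_3,0)=-\varepsilon_1+\varepsilon_2$;
\item $[\varepsilon_4,\varepsilon_{11}]=L_{e_3}L_{e_3}=-\varepsilon_5+\varepsilon_6$.
\end{itemize}
\end{enumerate}
All other products involving $e_2$ reproduce the ${\rm A}_3$ table, because $e_2e_2=e_2$ and $e_2e_3=0$ in both algebras.

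The main obstacle is sign bookkeeping, above all in $V_{e_3,e_3}$ and in the $e_3$-entries of $T_{e_3}$. I would check $V_{e_3,e_3}(z)$ on each basis vector separately, with $\overline{e_3}=-e_3$:
$$V_{e_3,e_3}(z)=-(e_3e_3)z-(ze_3)e_3+(ze_3)e_3=-(e_3e_3)z=(e_1-e_2)z.$$
This equals $T_{e_1}-T_{e_2}$ because $e_1-e_2\in\mathcal H$. As a sanity check, I would compare the two mixed-block brackets $[\varepsilon_3,\varepsilon_{10}]$ and $[\varepsilon_4,\varepsilon_{11}]$ against a single Jacobi identity, for instance on $(\varepsilon_1,\varepsilon_3,\varepsilon_{11})$, before writing the final table.
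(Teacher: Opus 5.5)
Your proposal is correct and follows the paper's proof essentially verbatim. You use the same matrices $T_{e_1},T_{e_2},T_{e_3}$ and the same auxiliary data, with the new inputs $V_{e_3,e_3}=T_{e_1}-T_{e_2}$ and $L_{e_3}L_{e_3}=T_{e_2}-T_{e_1}$, and the same five-block computation; the six brackets you single out are exactly the ones that differ from ${\rm A}_3$.

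Two small remarks. First, in fact $T_{e_2}T_{e_3}=T_{e_3}T_{e_2}=0$. Second, the identities $T_{e_2}^\delta=2T_{e_2}$ and $T_{e_3}^\delta=0$ rely on ${\rm A}_4$ being commutative (via $R_{e_2}=T_{e_2}$ and $R_{e_3}=\tfrac13T_{e_3}$), not only on $T_x(e_1)$ and $\overline{x}$.
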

\begin{proof}
By means of the $\rm{AK}$-algorithm introduced in subsection \ref{AKalg}, we define 
$$\mathcal{F}_0=\left\langle
T_{e_1}:=\begin{pmatrix} 1 & 0 & 0 \\ 0 & 1& 0 \\ 0 & 0 & 1 \end{pmatrix}, \ 
T_{e_2}:=\begin{pmatrix}  0& 0 & 0 \\ 1& 1 & 0 \\ 0 & 0 & 0 \end{pmatrix},\ 
T_{e_3}:=\begin{pmatrix} 0 & 0 &-3\\ 0 & 0 & 3 \\ 3& 0 & 0 \end{pmatrix}  \right\rangle.$$

It is easy to see that   $[\mathcal F_0, \mathcal F_{0}]=0.$ 

It should be noted that $L_{e_3}L_{e_3}=T_{e_2}-T_{e_1};$ and  we list the elements 
$T_{e_i}^\varepsilon,$ 
$T_{e_i}^\delta,$ 
$T_{e_i}^{\varepsilon \delta},$ and 
$V_{e_i,e_j}$   below:

\begin{longtable}{|lcl|lclcl|}
\hline
   $T_{e_1}(e_{1})$&$=$&$e_1$&   $T_{e_1}^\varepsilon$&$=$&$T_{e_1}-T_{T_{e_1}(e_1)+\overline{T_{e_1}(e_1)}}$&$=$ &$-T_{e_1}$\\

$\overline{T_{e_1}({e_1})}$&$=$&$e_1$&$T_{e_1}^\delta$&$=$&$T_{e_1}+R_{\overline{T_{e_1}({e_1})}}$&$=$&$2T_{e_1}$\\

$R_{e_1}$&$=$&$T_{e_1}$&$T_{e_1}^{\varepsilon\delta}$&$=$&$T_{e_1}^\varepsilon+R_{\overline{T_{e_1}^\varepsilon({e_1})}}$&$=$&$-2T_{e_1}$\\
\hline 

 \hline
    
   $T_{e_2}({e_1})$&$=$&${e_2} $&$ T_{e_2}^\varepsilon$&$=$&$T_{e_2}-T_{T_{e_2}({e_1})+\overline{T_{e_2}({e_1})}}$&$=$&$-T_{e_2}$\\

$\overline{T_{e_2}({e_1})}$&$=$&${e_2}$&   
$T_{e_2}^\delta $&$=$&$T_{e_2}+R_{\overline{T_{e_2}({e_1})}}$&$=$&$2T_{e_2}$\\
   
$R_{e_2}$&$=$&$T_{e_2}$&$T_{e_2}^{\varepsilon\delta}$&$=$&$T_{e_2}^\varepsilon+R_{\overline{T_{e_2}^\varepsilon({e_1})}}$&$=$&$-2T_{e_2}$\\

   \hline
    
\hline
$T_{e_3}({e_1})$&$=$&$3{e_3}$&$    T_{e_3}^\varepsilon$&$=$&$T_{e_3}-T_{T_{e_3}({e_1})+\overline{T_{e_3}({e_1})}}$&$=$&$T_{e_3}$\\

$\overline{T_{e_3}({e_1}})$&$=$&$-3{e_3}$&$ T_{e_3}^\delta$&$=$&$T_{e_3}+R_{\overline{T_{e_3}({e_1})}}$&$=$&$0$\\
   
$R_{e_3}$&$=$&$\frac{1}{3}T_{e_3}$&$T_{e_3}^{\varepsilon\delta}$&$=$&$T_{e_3}^\varepsilon+R_{\overline{T_{e_3}^\varepsilon({e_1})}}$&$=$&$0$\\
\hline\end{longtable}

\begin{longtable}{lcllcllcl}

$V_{{e_1},{e_1}}$&$=$&$T_{e_1},$&$ V_{{e_1},{e_2}}$&$=$&$T_{e_2},$&$ V_{{e_1},{e_3}}$&$=$&$-T_{e_3},$\\ 
$V_{{e_2},{e_1}}$&$=$&$T_{e_2},$&$ V_{{e_2},{e_2}}$&$=$&$T_{e_2}, $&$ V_{{e_2},{e_3}}$&$=$&$0,$\\
$V_{{e_3},{e_1}}$&$=$&$T_{e_3},$&$ V_{{e_3},{e_2}}$&$=$&$0,$&$ V_{{e_3},{e_3}}$&$=$&$T_{e_1}-T_{e_2}.$
\end{longtable}

We are prepared to determine the multiplication table of $\mathcal F({\rm A}_4).$

\begin{enumerate}[(I)]
    \item First, we define   $[\mathcal F_0, \mathcal F_{1}+\mathcal F_{2}],$ i.e., $[E,(x,s)]=(E(x), E^\delta(s)).$

\begin{longtable}{lclclcl}
$[\varepsilon_5,\varepsilon_1]$&$=$&$  [T_{e_1}, ({e_1},0)]$&$=$&$ 
\big(T_{e_1}({e_1}), T_{e_1}^\delta(0) \big)$&$=$&$({e_1},\  0) \ =\ \varepsilon_1;$\\

$[\varepsilon_5,\varepsilon_2]$&$=$&$[T_{e_1}, ({e_2},0)]$&$=$&$
(T_{e_1}({e_2}), T_{e_1}^\delta(0))$&$=$&$({e_2},\ 0)\ =\ \varepsilon_2;$\\

$[\varepsilon_5,\varepsilon_3]$&$=$&$[T_{e_1}, ({e_3},0)]$&$=$&$
(T_{e_1}({e_3}), T_{e_1}^\delta (0))$&$=$&$({e_3},\ 0)\ =\  \varepsilon_3;$\\

$[\varepsilon_5,\varepsilon_4]$&$=$&$[T_{e_1}, (0,{e_3})]$&$=$&$
(T_{e_1}(0), T_{e_1}^\delta (e_3))$&$=$&$(0,\ 2e_3) \ =\ 2\varepsilon_4;$\\

$[\varepsilon_6,\varepsilon_1]$&$=$&$[T_{e_2}, ({e_1},0)]$&$=$&$(T_{e_2}({e_1}), T_{e_2}^\delta (0))$&$=$&$(e_2,\ 0) \ =\ \varepsilon_2;$\\

$[\varepsilon_6,\varepsilon_2]$&$=$&$[T_{e_2}, ({e_2},0)]$&$=$&$
(T_{e_2}({e_2}), T_{e_2}^\delta (0))$&$=$&$(e_2,\ 0)\  =\ \varepsilon_2;$\\

$[\varepsilon_6,\varepsilon_3]$&$=$&$[T_{e_2}, ({e_3},0)]$&$=$&$
(T_{e_2}({e_3}), T_{e_2}^\delta (0))$&$=$&$(0,\ 0) \ =\ 0;$\\
$[\varepsilon_6,\varepsilon_4]$&$=$&$[T_{e_2}, (0,{e_3})]$&$=$&$
(T_{e_2}(0),T_{e_2}^\delta ({e_3}))$&$=$&$(0,\ 0) \ =\ 0;$\\

$[\varepsilon_7,\varepsilon_1]$&$=$&$[T_{e_3}, ({e_1},0)]$&$=$&$
(T_{e_3}({e_1}), T_{e_3}^\delta (0))$&$=$&$(3{e_3},\ 0) \ =\ 3\varepsilon_3;$\\

$[\varepsilon_7,\varepsilon_2]$&$=$&$[T_{e_3}, ({e_2},0)]$&$=$&$
(T_{e_3}({e_2}), T_{e_3}^\delta(0))$&$=$&$(0,\ 0)\ =\ 0;$\\

$[\varepsilon_7,\varepsilon_3]$&$=$&$[T_{e_3}, ({e_3},0)]$&$=$&$
(T_{e_3}({e_3}), T_{e_3}^\delta (0))$&$=$&$
(-3e_1+3e_2, \ 0) \ =\ -3\varepsilon_1+3\varepsilon_2;$\\

$[\varepsilon_7,\varepsilon_4]$&$=$&$[T_{e_3}, (0,{e_3})]$&$=$&$
(T_{e_3}(0), T_{e_3}^\delta ({e_3}))$&$=$&$(0, \ 0)\ = \ 0.$
\end{longtable}

  \item Second, we define   $[\mathcal F_0, \mathcal F_{-2}+\mathcal F_{-1}],$ i.e., 
  $[{E}, \wideparen {(x,s)} ] =
\wideparen{({E}^\varepsilon(x), {E}^{\varepsilon\delta}(s))}.$

\begin{longtable}{lclclcl}
$[\varepsilon_5,\varepsilon_8]$&$=$&$[T_{e_1}, \wideparen{({e_1},0)}] $&$=$&$ \wideparen{(T_{e_1}^\varepsilon({e_1}),\  0)}$&$=$&$
\wideparen{(-T_{e_1}({e_1}), \ 0)}\ =\ \wideparen{(-{e_1}, \ 0)}
\ =\ -\varepsilon_8;$ \\

$[\varepsilon_5,\varepsilon_9]$&$=$&$[T_{e_1}, \wideparen{({e_2},0)}] $&$=$&$ \wideparen{(T_{e_1}^\varepsilon({e_2}),\  0)}$&$=$&$
\wideparen{(-T_{e_1}({e_2}), \ 0)}\  
=\ \wideparen{(-{e_2}, \ 0)}
\ =\ -\varepsilon_9;$ \\

$[\varepsilon_5,\varepsilon_{10}]$&$=$&$[T_{e_1}, \wideparen{({e_3},0)}] $&$=$&$ \wideparen{(T_{e_1}^\varepsilon({e_3}),\  0)}$&$=$&$
\wideparen{(-T_{e_1}({e_3}), \ 0)}\ =\ \wideparen{(-{e_3}, \ 0)}
\ =\ -\varepsilon_{10};$ \\

$[\varepsilon_5,\varepsilon_{11}]$&$=$&$[T_{e_1}, \wideparen{(0,{e_3})} ]$&$=$&$ \wideparen{(0, T_{e_1}^{\varepsilon\delta} ({e_3}))} $&$=$&$ 
\wideparen{(0,-2{e_3})} \ =\ -2\varepsilon_{11};$\\

$[\varepsilon_6,\varepsilon_8]$&$=$&$[T_{e_2}, \wideparen{({e_1},0)}] $&$=$&$  \wideparen{(T_{e_2}^\varepsilon({e_1}),\  0)}$&$=$&$ 
\wideparen{(-{e_2}, \ 0)}
\ =\ -\varepsilon_9;$\\

$[\varepsilon_6,\varepsilon_9]$&$=$&$[T_{e_2}, \wideparen{({e_2},0)}] $&$=$&$  \wideparen{(T_{e_2}^\varepsilon({e_2}),\  0)}$&$=$&$ 
\wideparen{(-e_2, \ 0)}
\ =\ -\varepsilon_9;$ \\

$[\varepsilon_6,\varepsilon_{10}]$&$=$&$[T_{e_2},\wideparen{({e_3},0)}] $&$=$&$  \wideparen{(T_{e_2}^\varepsilon({e_3}),\  0)}$&$=$&$ 
\wideparen{(0, \ 0)}
\ =\ 0;$\\

$[\varepsilon_6,\varepsilon_{11}]$&$=$&$[T_{e_2}, \wideparen{(0,{e_3})} ]$&$=$&$ \wideparen{(0, T_{e_2}^{\varepsilon\delta} ({e_3}))} $&$=$&$ 
\wideparen{(0,\ 0)}\  =\ 0;$\\
$[\varepsilon_7,\varepsilon_8]$&$=$&$[T_{e_3}, \wideparen{({e_1},0)}] $&$=$&$  \wideparen{(T_{e_3}^\varepsilon({e_1}),\  0)}$&$=$&$ 
\wideparen{(3{e_3}, \ 0)}
\ =\ 3\varepsilon_{10};$\\

$[\varepsilon_7,\varepsilon_9]$&$=$&$[T_{e_3}, \wideparen{({e_2},0)}] $&$=$&$  \wideparen{(T_{e_3}^\varepsilon({e_2}),\  0)}$&$=$&$ 
\wideparen{(0, \ 0)}
\ =\ 0;$\\

$[\varepsilon_7,\varepsilon_{10}]$&$=$&$[T_{e_3}, \wideparen{({e_3},0)}] $&$=$&$  \wideparen{(T_{e_3}^\varepsilon({e_3}),\  0)} $&$=$&$ 
\wideparen{(-3e_1+3{e_2}, \ 0)}
\ =\ -3\varepsilon_8+3\varepsilon_9;$\\

$[\varepsilon_7,\varepsilon_{11}]$&$=$&$[T_{e_3}, \wideparen{(0,{e_3})} ]$&$=$&$ \wideparen{(0, T_{e_3}^{\varepsilon\delta} ({e_3}))} $&$=$&$ \wideparen{(0,\ 0)}\ =\ 0.$
\end{longtable}

\item Third, we define  $[ \mathcal F_{1}+\mathcal F_{2}, \mathcal F_{1}+\mathcal F_{2}],$ i.e., 
$[(x,s), (y,r)] = (0, x\overline{y} - y\overline{x}).$

\begin{longtable}{lclcl}

$[\varepsilon_1,\varepsilon_2]$&$=$&$ [({e_1},0), ({e_2},0)]$&$=$&$ (0,\ 0) \ =\ 0;$\\
$[\varepsilon_1,\varepsilon_3]$&$=$&$[({e_1},0), ({e_3},0)] $&$=$&$ (0,\ -2e_3) \ =\ -2\varepsilon_4;$\\

$[\varepsilon_2,\varepsilon_3]$&$=$&$[({e_2},0), ({e_3},0)] $&$=$&$ (0,\ 0) \ =\ 0.$\\

\end{longtable}

\item Fourth, we define   $[ \mathcal F_{-2}+\mathcal F_{-1}, \mathcal F_{-2}+\mathcal F_{-1}],$ i.e., 
$[\wideparen{(x,s)}, \wideparen{(y,r)} ] = \wideparen{(0, x\overline{y} - y\overline{x})}.$

\begin{longtable}{lclcl}

$[\varepsilon_8,\varepsilon_9]$&$=$&$[\wideparen{({e_1},0)}, \wideparen{({e_2},0)}] $&$=$&$ \wideparen{(0,\ 0)} \ =\ 0;$\\
$[\varepsilon_8,\varepsilon_{10}]$&$=$&$[\wideparen{({e_1},0)}, \wideparen{({e_3},0)}] $&$=$&$ \wideparen{(0,\ -2e_3)} \ =\ -2\varepsilon_{11};$\\

$[\varepsilon_9,\varepsilon_{10}]$&$=$&$[\wideparen{({e_2},0)}, \wideparen{({e_3},0)}] $&$=$&$ \wideparen{(0,\ 0)} \ =\ 0.$\\

\end{longtable}

\item End, we define   $[ \mathcal F_{1}+\mathcal F_{2}, \mathcal F_{-2}+\mathcal F_{-1}],$ i.e., 
    $[(x,s), \wideparen{(y,r)}] = -\wideparen{(r x, 0)}  + V_{x,y} + L_s L_r+ (s y, 0).$

\begin{longtable}{lclcl}
    
$[\varepsilon_1,\varepsilon_8]$&$=$&$  [({e_1},0),\wideparen{({e_1},0)}]$&$=$&$ 
V_{{e_1},{e_1}}\ =\ T_{e_1}\ =\ \varepsilon_5;$\\
$[\varepsilon_1,\varepsilon_9]$&$=$&$  [({e_1},0),\wideparen{({e_2},0)}]$&$=$&$ 
V_{{e_1},{e_2}}\ =\ T_{e_2}\ =\ \varepsilon_6;$\\
$[\varepsilon_1,\varepsilon_{10}]$&$=$&$[({e_1},0),\wideparen{({e_3},0)}]$&$=$&$ 
V_{{e_1},{e_3}}\ =\ -T_{e_3}\ =\ -\varepsilon_7;$\\
$[\varepsilon_1,\varepsilon_{11}]$&$=$&$[({e_1},0),\wideparen{(0,{e_3})}]$&$=$&$ 
-\wideparen{({e_3},\ 0)} \ =\ -\varepsilon_{10};$\\

$[\varepsilon_2,\varepsilon_8]$&$=$&$ [({e_2},0),\wideparen{({e_1},0)}]$&$=$&$ 
V_{{e_2},{e_1}}\ =\ T_{e_2}\ =\ \varepsilon_6;$\\
$[\varepsilon_2,\varepsilon_9]$&$=$&$  [({e_2},0),\wideparen{({e_2},0)}]$&$=$&$ 
V_{{e_2},{e_2}}\ =\ T_{e_2}\ =\ \varepsilon_6;$\\
$[\varepsilon_2,\varepsilon_{10}]$&$=$&$[({e_2},0),\wideparen{({e_3},0)}]$&$=$&$ 
V_{{e_2},{e_3}}\ =\ 0;$\\
$[\varepsilon_2,\varepsilon_{11}]$&$=$&$[({e_2},0),\wideparen{(0,{e_3})}]$&$=$&$ 
-\wideparen{({e_3}{e_2},0)}\ =\ -\wideparen{(0,\ 0)} \ =\ 0;$\\

$[\varepsilon_3,\varepsilon_8]$&$=$&$[({e_3},0),\wideparen{({e_1},0)}]$&$=$&$ 
V_{{e_3},{e_1}}\ =\ T_{e_3}\ =\ \varepsilon_7;$\\
$[\varepsilon_3,\varepsilon_9]$&$=$&$[({e_3},0),\wideparen{({e_2},0)}]$&$=$&$ 
V_{{e_3},{e_2}}\ =\ 0;$\\
$[\varepsilon_3,\varepsilon_{10}]$&$=$&$[({e_3},0),\wideparen{({e_3},0)}]$&$=$&$ 
V_{{e_3},{e_3}}\ =\ T_{e_1}-T_{e_2}\ =\ \varepsilon_5-\varepsilon_6;$\\

$[\varepsilon_3,\varepsilon_{11}]$&$=$&$[({e_3},0),\wideparen{(0,{e_3})}]$&$=$&$ 
-\wideparen{({e_3}{e_3},0)}\ =\ -\wideparen{({-e_1+e_2},\ 0)} \ =\ \varepsilon_8-\varepsilon_9;$\\

$[\varepsilon_4,\varepsilon_8]$&$=$&$[(0,{e_3}),\wideparen{({e_1},0)}]$&$=$&$ 
({e_3},\ 0) \ =\ \varepsilon_3;$\\
 
$[\varepsilon_4,\varepsilon_9]$&$=$&$[(0,{e_3}),\wideparen{({e_2},0)}]$&$=$&$ 
({e_3}{e_2},\ 0)\ =\ (0,\ 0)\  =\ 0;$\\
$[\varepsilon_4,\varepsilon_{10}]$&$=$&$[(0,{e_3}),\wideparen{({e_3},0)}]$&$=$&$ 
({e_3}{e_3}, \ 0)\ =\ (-e_1+{e_2},\ 0) \  =\ -\varepsilon_1+\varepsilon_2;$\\

$[\varepsilon_4,\varepsilon_{11}]$&$=$&$[(0,{e_3}),\wideparen{(0,{e_3})}]$&$=$&$ 
L_{e_3}L_{e_3}\ =\ -T_{e_1}+T_{e_2}\ =\ -\varepsilon_5+\varepsilon_6.$\\
\end{longtable}

\end{enumerate}
\end{proof}

\begin{remark}
Consider the following change of basis in $\mathcal{F}({\rm A}_4)$:
\begin{longtable}{lcllcllcllcl}
$\xi_1 $&$=$ &$\varepsilon_1 - \varepsilon_2,$ &
$\xi_2 $&$=$ &$ \varepsilon_2,$ &
$\xi_3 $&$=$ &$ \varepsilon_3,$ &
$\xi_4 $&$=$ &$ \varepsilon_4,$ \\
$\xi_5 $&$=$ &$ \varepsilon_5 - \varepsilon_6,$ &
$\xi_6 $&$=$ &$ \varepsilon_6,$ &
$\xi_7 $&$=$ &$ \varepsilon_7,$ &
$\xi_8 $&$=$ &$ \varepsilon_8 - \varepsilon_9,$ \\
$\xi_9 $&$=$ &$ \varepsilon_9,$ &
$\xi_{10} $&$=$ &$ \varepsilon_{10},$ &
$\xi_{11} $&$=$ &$ \varepsilon_{11}.$
\end{longtable}\noindent
In this new basis, the nonzero multiplications become:
\begin{longtable}{lcrlcrlcr}
$[\xi_1,\xi_3]$&$=$&$-2\xi_4,$&$
[\xi_1,\xi_5]$&$=$&$-\xi_1,$&$
[\xi_1,\xi_7]$&$=$&$-3\xi_3,$\\
$[\xi_1,\xi_8]$&$=$&$\xi_5,$&$
[\xi_1,\xi_{10}]$&$=$&$-\xi_7,$&$
[\xi_1,\xi_{11}]$&$=$&$-\xi_{10},$\\
$[\xi_2,\xi_6]$&$=$&$-\xi_2,$&$
[\xi_2,\xi_9]$&$=$&$\xi_6,$&$
[\xi_3,\xi_5]$&$=$&$-\xi_3,$\\
$[\xi_3,\xi_7]$&$=$&$3\xi_1,$&$
[\xi_3,\xi_8]$&$=$&$\xi_7,$&$
[\xi_3,\xi_{10}]$&$=$&$\xi_5,$\\
$[\xi_3,\xi_{11}]$&$=$&$\xi_8,$&$
[\xi_4,\xi_5]$&$=$&$-2\xi_4,$&$
[\xi_4,\xi_8]$&$=$&$\xi_3,$\\
$[\xi_4,\xi_{10}]$&$=$&$-\xi_1,$&$
[\xi_4,\xi_{11}]$&$=$&$-\xi_5,$&$
[\xi_5,\xi_8]$&$=$&$-\xi_8,$\\
$[\xi_5,\xi_{10}]$&$=$&$-\xi_{10},$&$
[\xi_5,\xi_{11}]$&$=$&$-2\xi_{11},$&$
[\xi_6,\xi_9]$&$=$&$-\xi_9,$\\
$[\xi_7,\xi_8]$&$=$&$3\xi_{10},$&$
[\xi_7,\xi_{10}]$&$=$&$-3\xi_8,$&$
[\xi_8,\xi_{10}]$&$=$&$-2\xi_{11}.$
\end{longtable}\noindent 
This gives  $\mathcal{F}({\rm A}_4) \cong \mathfrak{sl}_2 \oplus\mathfrak{sl}_3,$ where
\begin{itemize}
\item $\mathfrak{sl}_2= \big\langle\xi_2,\  \xi_6, \ \xi_9\big\rangle;$
\item $\mathfrak{sl}_3=\big\langle\xi_1, \ \xi_3,\  \xi_4,\  \xi_5,\  \xi_7,\  \xi_8,\  \xi_{10}, \ \xi_{11}\big\rangle$.
\end{itemize}
\end{remark}

\subsection{$\rm{AK}$ construction for ${\rm A}_5$}

\begin{theorem}
$\mathcal{F}({\rm A}_5)$ is an $11$-dimensional Lie algebra  with the given product by

\begin{longtable}{lcrlcrlcr}
$[\varepsilon_1,\varepsilon_3]$&$=$&$-2\varepsilon_4,$&
$[\varepsilon_1,\varepsilon_5]$&$=$&$-\varepsilon_1,$&
$[\varepsilon_1,\varepsilon_6]$&$=$&$-\varepsilon_2,$\\

$[\varepsilon_1,\varepsilon_7]$&$=$&$-3\varepsilon_3,$&
$[\varepsilon_1,\varepsilon_8]$&$=$&$\varepsilon_5,$&
$[\varepsilon_1,\varepsilon_9]$&$=$&$\varepsilon_6,$\\

$[\varepsilon_1,\varepsilon_{10}]$&$=$&$-\varepsilon_7,$&
$[\varepsilon_1,\varepsilon_{11}]$&$=$&$-\varepsilon_{10},$&
$[\varepsilon_2,\varepsilon_5]$&$=$&$-\varepsilon_2,$\\

$[\varepsilon_2,\varepsilon_7]$&$=$&$-\varepsilon_2,$&
$[\varepsilon_2,\varepsilon_8]$&$=$&$\varepsilon_6,$&
$[\varepsilon_2,\varepsilon_{10}]$&$=$&$-\varepsilon_6,$\\

$[\varepsilon_2,\varepsilon_{11}]$&$=$&$\varepsilon_9,$&
$[\varepsilon_3,\varepsilon_5]$&$=$&$-\varepsilon_3,$&
$[\varepsilon_3,\varepsilon_6]$&$=$&$-\varepsilon_2,$\\

$[\varepsilon_3,\varepsilon_7]$&$=$&$-3\varepsilon_1,$&
$[\varepsilon_3,\varepsilon_8]$&$=$&$\varepsilon_7,$&
$[\varepsilon_3,\varepsilon_9]$&$=$&$-\varepsilon_6,$\\

$[\varepsilon_3,\varepsilon_{10}]$&$=$&$-\varepsilon_5,$&
$[\varepsilon_3,\varepsilon_{11}]$&$=$&$-\varepsilon_8,$&
$[\varepsilon_4,\varepsilon_5]$&$=$&$-2\varepsilon_4,$\\

$[\varepsilon_4,\varepsilon_8]$&$=$&$\varepsilon_3,$&
$[\varepsilon_4,\varepsilon_9]$&$=$&$-\varepsilon_2,$&
$[\varepsilon_4,\varepsilon_{10}]$&$=$&$\varepsilon_1,$\\

$[\varepsilon_4,\varepsilon_{11}]$&$=$&$\varepsilon_5,$&
$[\varepsilon_5,\varepsilon_8]$&$=$&$-\varepsilon_8,$&
$[\varepsilon_5,\varepsilon_9]$&$=$&$-\varepsilon_9,$\\

$[\varepsilon_5,\varepsilon_{10}]$&$=$&$-\varepsilon_{10},$&
$[\varepsilon_5,\varepsilon_{11}]$&$=$&$-2\varepsilon_{11},$&
$[\varepsilon_6,\varepsilon_7]$&$=$&$2\varepsilon_6,$\\

$[\varepsilon_6,\varepsilon_8]$&$=$&$-\varepsilon_9,$&
$[\varepsilon_6,\varepsilon_{10}]$&$=$&$-\varepsilon_9,$&
$[\varepsilon_7,\varepsilon_8]$&$=$&$3\varepsilon_{10},$\\

$[\varepsilon_7,\varepsilon_9]$&$=$&$\varepsilon_9,$&
$[\varepsilon_7,\varepsilon_{10}]$&$=$&$3\varepsilon_8,$&
$[\varepsilon_8,\varepsilon_{10}]$&$=$&$-2\varepsilon_{11}.$
\end{longtable}
\end{theorem}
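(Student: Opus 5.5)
We follow the $\rm{AK}$-algorithm of Subsection \ref{AKalg} exactly as in the cases ${\rm A}_1$--${\rm A}_4$. First, from Proposition \ref{(ss)} (with $\alpha,\beta,\gamma$ the coordinates of $x=\alpha e_1+\beta e_2+\gamma e_3$), we read off the basis of $\mathcal F_0$:
$$T_{e_1}=\mathrm{Id},\quad T_{e_2}=\begin{pmatrix}0&0&0\\ 1&0&1\\ 0&0&0\end{pmatrix},\quad T_{e_3}=\begin{pmatrix}0&0&3\\ 0&1&0\\ 3&0&0\end{pmatrix}.$$
Directly from ${\rm A}_5$ ($e_2e_3=e_2=-e_3e_2$, $e_3e_3=e_1$), $T_{e_2}(e_3)=e_2e_3+e_3e_2-e_3e_2=e_2$ and $T_{e_3}(e_2)=e_3e_2+e_2e_3+e_2e_3=e_2$. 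Unlike before, $\mathcal F_0$ is not abelian: $[T_{e_2},T_{e_3}]$ equals $2T_{e_2}$ up to sign, which will produce $[\varepsilon_6,\varepsilon_7]=2\varepsilon_6$. We then tabulate $T_{e_i}^\varepsilon$, $T_{e_i}^\delta$, $T_{e_i}^{\varepsilon\delta}$. These depend only on $T_{e_i}(e_1)$ and the involution, so they coincide with the earlier cases: $T_{e_1}^\varepsilon=-T_{e_1}$, $T_{e_1}^\delta=2T_{e_1}$, the same relations for $e_2$, $T_{e_3}^\varepsilon=T_{e_3}$ and $T_{e_3}^\delta=T_{e_3}^{\varepsilon\delta}=0$. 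The one point to recheck is that $R_{e_3}$ now differs from $\frac13T_{e_3}$, since $R_{e_3}(e_2)=-e_2$. We must therefore recompute $T_{e_3}^\delta=T_{e_3}-3R_{e_3}$ on $\mathcal S=\langle e_3\rangle$ only: $T_{e_3}(e_3)-3e_3e_3=3e_1-3e_1=0$. Next we compute all $V_{e_i,e_j}$ and express each in the basis $T_{e_k}$, together with $L_{e_3}L_{e_3}=\mathrm{Id}=T_{e_1}$.

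Second, we fill in the five blocks of brackets in the same order (I)--(V), adding the $[\mathcal F_0,\mathcal F_0]$ block:
\begin{itemize}
\item $[E,(x,s)]=(E(x),E^\delta(s))$. This gives, e.g., $[\varepsilon_7,\varepsilon_3]=3\varepsilon_1$ and $[\varepsilon_7,\varepsilon_2]=\varepsilon_2$.
\item $[E,\wideparen{(x,s)}]$ via $E^\varepsilon$ and $E^{\varepsilon\delta}$. This gives $[\varepsilon_7,\varepsilon_9]=\varepsilon_9$ and $[\varepsilon_7,\varepsilon_{10}]=3\varepsilon_8$.
\item The brackets inside $\mathcal F_1+\mathcal F_2$ and inside $\mathcal F_{-1}+\mathcal F_{-2}$, using $x\overline y-y\overline x$. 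For instance $e_2\overline{e_3}-e_3\overline{e_2}=-e_2e_3-e_3e_2=0$.
\item The mixed block $[(x,s),\wideparen{(y,r)}]=(sy,0)-\wideparen{(rx,0)}+V_{x,y}+L_sL_r$. Here nonzero products $e_3e_2$ and $e_3e_3$ give terms such as $[\varepsilon_2,\varepsilon_{11}]=\varepsilon_9$, $[\varepsilon_4,\varepsilon_9]=-\varepsilon_2$, $[\varepsilon_4,\varepsilon_{10}]=\varepsilon_1$ and $[\varepsilon_4,\varepsilon_{11}]=\varepsilon_5$.
\end{itemize}

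The main obstacle is the $V_{x,y}$ computations, in particular $V_{e_2,e_3}$, $V_{e_3,e_2}$ and $V_{e_3,e_3}$. These are genuinely new since ${\rm A}_5$ is noncommutative, and each must be written as a combination of $T_{e_1},T_{e_2},T_{e_3}$; for instance we expect $V_{e_3,e_3}=-T_{e_1}$ and $V_{e_2,e_3}=-T_{e_2}$. We will evaluate each $V_{e_i,e_j}$ on $e_1,e_2,e_3$ and match matrices. If a result falls outside $\langle T_{e_k}\rangle$, this signals an arithmetic error, since the construction guarantees $V_{x,y}\in{\rm Str}(\mathcal A)$ and $\overline{\mathfrak{Der}}({\rm A}_5)$ is known. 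As a final sanity check, we verify a few Jacobi identities involving $\varepsilon_6,\varepsilon_7$, for example on the triple $(\varepsilon_3,\varepsilon_7,\varepsilon_8)$. The sign conventions in the table follow anticommutativity, $[\varepsilon_i,\varepsilon_j]=-[\varepsilon_j,\varepsilon_i]$.
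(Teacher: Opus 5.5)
Your route is the paper's: take $\mathcal F_0=\langle T_{e_1},T_{e_2},T_{e_3}\rangle$, tabulate $T_{e_i}^{\varepsilon}$, $T_{e_i}^{\delta}$, $T_{e_i}^{\varepsilon\delta}$ and the $V_{e_i,e_j}$, then fill in the blocks, including $[\mathcal F_0,\mathcal F_0]$. Your values for the $V$'s, for $L_{e_3}L_{e_3}=T_{e_1}$ and for $[T_{e_2},T_{e_3}]=2T_{e_2}$ are correct, and so are the sample brackets you quote.

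\textbf{The tabulation step contains a concrete error.} You claim these operators depend only on $T_{e_i}(e_1)$ and the involution, so that ``the same relations for $e_2$'' hold: $T_{e_2}^\delta=2T_{e_2}$ and $T_{e_2}^{\varepsilon\delta}=-2T_{e_2}$. This is false.
\begin{itemize}
\item $E^\delta=E+R_{\overline{E(e_1)}}$ involves right multiplication. In ${\rm A}_1$--${\rm A}_4$ the simplification $T_{e_2}^\delta=2T_{e_2}$ came from commutativity, i.e.\ $R_{e_2}=L_{e_2}=T_{e_2}$.
\item In ${\rm A}_5$, $T_{e_2}(e_3)=e_2e_3=e_2$ but $R_{e_2}(e_3)=e_3e_2=-e_2$. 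Hence $T_{e_2}^\delta=T_{e_2}+R_{e_2}$ and $T_{e_2}^{\varepsilon\delta}=-T_{e_2}-R_{e_2}$, and both vanish on $\mathcal S=\langle e_3\rangle$.
\item With your relations you would get $[\varepsilon_6,\varepsilon_4]=(0,2e_2)$ and $[\varepsilon_6,\varepsilon_{11}]=\wideparen{(0,-2e_2)}$. These are not even elements of $\mathcal F({\rm A}_5)$, since $e_2\notin\mathcal S$. The theorem requires both brackets to be $0$.
\end{itemize}
The same left/right confusion appears in your recheck for $e_3$: $R_{e_3}(e_2)=e_2e_3=e_2$, not $-e_2$. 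That slip is harmless, since only $T_{e_3}^\delta(e_3)=3e_1-3e_1=0$ enters. The fix is to recompute every $E^\delta|_{\mathcal S}$ and $E^{\varepsilon\delta}|_{\mathcal S}$ directly from $R$ in the noncommutative algebra ${\rm A}_5$, as the paper does.

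\textbf{A smaller point: your sanity check on the $V_{e_i,e_j}$ is not valid as stated.} Since $\overline{\mathfrak{Der}}({\rm A}_5)=\langle d_1\rangle\neq 0$, a $V_{x,y}$ with a component outside $\langle T_{e_k}\rangle$ would not by itself signal an arithmetic error. Compare ${\rm S}_2$, where $V_{e_2,e_3}$ has a genuine $D_{e_2,e_3}$-component. That $\mathcal F_0$ is $3$-dimensional, and hence that $\dim\mathcal F({\rm A}_5)=11$, must come from the explicit computation showing every $V_{e_i,e_j}$ and $L_{e_3}L_{e_3}$ lies in $\langle T_{e_1},T_{e_2},T_{e_3}\rangle$. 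That computation does succeed here.
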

\begin{proof}
Following the $\rm{AK}$-algorithm given in subsection \ref{AKalg}, we define  
$$\mathcal{F}_0=\left\langle
T_{e_1}:=\begin{pmatrix} 1 & 0 & 0 \\ 0 & 1& 0 \\ 0 & 0 & 1 \end{pmatrix}, \ 
T_{e_2}:=\begin{pmatrix}  0& 0 & 0 \\ 1& 0 & 1 \\ 0 & 0 & 0 \end{pmatrix},\ 
T_{e_3}:=\begin{pmatrix} 0 & 0 &3\\ 0 &1 & 0 \\ 3& 0 & 0 \end{pmatrix}  \right\rangle.$$

 Let us note that $L_{e_3}L_{e_3}=T_{e_1}$ and   list the elements  
$T_{e_i}^\varepsilon,$ 
$T_{e_i}^\delta,$  
$T_{e_i}^{\varepsilon \delta},$
and 
$V_{e_i,e_j}$   below:

\begin{longtable}{|lcl|lclcl|}
\hline
   $T_{e_1}(e_{1})$&$=$&$e_1$&   $T_{e_1}^\varepsilon$&$=$&$T_{e_1}-T_{T_{e_1}(e_1)+\overline{T_{e_1}(e_1)}}$&$=$ &$-T_{e_1}$\\

$\overline{T_{e_1}({e_1})}$&$=$&$e_1$&$T_{e_1}^\delta$&$=$&$T_{e_1}+R_{\overline{T_{e_1}({e_1})}}$&$=$&$2T_{e_1}$\\

$R_{e_1}$&$=$&$T_{e_1}$&$T_{e_1}^{\varepsilon\delta}$&$=$&$T_{e_1}^\varepsilon+R_{\overline{T_{e_1}^\varepsilon({e_1})}}$&$=$&$-2T_{e_1}$\\
\hline 

 \hline
    
   $T_{e_2}({e_1})$&$=$&${e_2} $&$ T_{e_2}^\varepsilon$&$=$&$T_{e_2}-T_{T_{e_2}({e_1})+\overline{T_{e_2}({e_1})}}$&$=$&$-T_{e_2}$\\

$\overline{T_{e_2}({e_1})}$&$=$&${e_2}$&   
$T_{e_2}^\delta $&$=$&$T_{e_2}+R_{\overline{T_{e_2}({e_1})}}$&$=$&$T_{e_2}+R_{e_2}$\\
   
& & & $T_{e_2}^{\varepsilon\delta}$ &$=$&$T_{e_2}^\varepsilon+R_{\overline{T_{e_2}^\varepsilon({e_1})}}$&$=$&$-T_{e_2}-R_{e_2}$\\

   \hline
    
\hline
$T_{e_3}({e_1})$&$=$&$3{e_3}$&$    T_{e_3}^\varepsilon$&$=$&$T_{e_3}-T_{T_{e_3}({e_1})+\overline{T_{e_3}({e_1})}}$&$=$&$T_{e_3}$\\

$\overline{T_{e_3}({e_1}})$&$=$&$-3{e_3}$&$ T_{e_3}^\delta$&$=$&$T_{e_3}+R_{\overline{T_{e_3}({e_1})}}$&$=$&$T_{e_3}-3R_{e_3}$\\
   
&&&$T_{e_3}^{\varepsilon\delta}$&$=$&$T_{e_3}^\varepsilon+R_{\overline{T_{e_3}^\varepsilon({e_1})}}$&$=$&$T_{e_3}-3R_{e_3}$\\
\hline\end{longtable}

\begin{longtable}{lcllcllcl}

$V_{{e_1},{e_1}}$&$=$&$T_{e_1},$&$ V_{{e_1},{e_2}}$&$=$&$T_{e_2},$&$ V_{{e_1},{e_3}}$&$=$&$-T_{e_3},$\\ 
$V_{{e_2},{e_1}}$&$=$&$T_{e_2},$&$ V_{{e_2},{e_2}}$&$=$&$0, $&$ V_{{e_2},{e_3}}$&$=$&$-T_{e_2},$\\
$V_{{e_3},{e_1}}$&$=$&$T_{e_3},$&$ V_{{e_3},{e_2}}$&$=$&$-T_{e_2},$&$ V_{{e_3},{e_3}}$&$=$&$-T_{e_1}.$
\end{longtable}

Now we are ready to determine the multiplication table of $\mathcal F({\rm A}_5).$

\begin{enumerate}[(I)]
    \item First, we define  $[\mathcal F_0, \mathcal F_{1}+\mathcal F_{2}],$ i.e., $[E,(x,s)]=(E(x), E^\delta(s)).$

\begin{longtable}{lclclcl}
$[\varepsilon_5,\varepsilon_1]$&$=$&$  [T_{e_1}, ({e_1},0)]$&$=$&$ \big(T_{e_1}({e_1}), T_{e_1}^\delta(0) \big)$&$=$&$({e_1}, \ 0) \  =\ \varepsilon_1;$\\

$[\varepsilon_5,\varepsilon_2]$&$=$&$[T_{e_1}, ({e_2},0)]$&$=$&$
(T_{e_1}({e_2}), T_{e_1}^\delta(0))$&$=$&$
({e_2},\ 0)\ =\ \varepsilon_2;$\\

$[\varepsilon_5,\varepsilon_3]$&$=$&$[T_{e_1}, ({e_3},0)]$&$=$&$
(T_{e_1}({e_3}), T_{e_1}^\delta (0))$&$=$&$({e_3},\ 0)\ =\  \varepsilon_3;$\\
$[\varepsilon_5,\varepsilon_4]$&$=$&$[T_{e_1}, (0,{e_3})]$&$=$&$
(T_{e_1}(0), T_{e_1}^\delta (e_3))$&$=$&$(0, \ 2e_3)\  =\ 2\varepsilon_4;$\\

$[\varepsilon_6,\varepsilon_1]$&$=$&$[T_{e_2}, ({e_1},0)]$&$=$&$
(T_{e_2}({e_1}), T_{e_2}^\delta (0))$&$=$&$(e_2,\ 0)\  =\ \varepsilon_2;$\\

$[\varepsilon_6,\varepsilon_2]$&$=$&$[T_{e_2}, ({e_2},0)]$&$=$&$
(T_{e_2}({e_2}), T_{e_2}^\delta (0))$&$=$&$(0,\ 0) \ =\ 0;$\\

$[\varepsilon_6,\varepsilon_3]$&$=$&$[T_{e_2}, ({e_3},0)]$&$=$&$
(T_{e_2}({e_3}), T_{e_2}^\delta (0))$&$=$&$(e_2,\ 0) \ =\ \varepsilon_2;$\\
$[\varepsilon_6,\varepsilon_4]$&$=$&$[T_{e_2}, (0,{e_3})]$&$=$&$
(T_{e_2}(0),T_{e_2}^\delta ({e_3}))$&$=$&$(0,\ 0)\ =\ 0;$\\

$[\varepsilon_7,\varepsilon_1]$&$=$&$[T_{e_3}, ({e_1},0)]$&$=$&$
(T_{e_3}({e_1}), T_{e_3}^\delta (0))$&$=$&$
(3{e_3},\ 0) \ =\ 3\varepsilon_3;$\\

$[\varepsilon_7,\varepsilon_2]$&$=$&$[T_{e_3}, ({e_2},0)]$&$=$&$
(T_{e_3}({e_2}), T_{e_3}^\delta(0))$&$=$&$(e_2,\ 0)\ =\ \varepsilon_2;$\\

$[\varepsilon_7,\varepsilon_3]$&$=$&$[T_{e_3}, ({e_3},0)]$&$=$&$
(T_{e_3}({e_3}), T_{e_3}^\delta (0))$&$=$&$(3{e_1}, \ 0) \ =\ 3\varepsilon_1;$\\

$[\varepsilon_7,\varepsilon_4]$&$=$&$[T_{e_3}, (0,{e_3})]$&$=$&$
(T_{e_3}(0), T_{e_3}^\delta ({e_3}))$&$=$&$ (0,\  0)\ =\ 0.$
\end{longtable}

  \item Second, 
  we define   $[\mathcal F_0, \mathcal F_{-2}+\mathcal F_{-1}],$ i.e., 
  $[{E}, \wideparen {(x,s)} ] =
\wideparen{({E}^\varepsilon(x), {E}^{\varepsilon\delta}(s))}.$

\begin{longtable}{lclclcl}
$[\varepsilon_5,\varepsilon_8]$&$=$&$[T_{e_1}, \wideparen{({e_1},0)}] $&$=$&$ \wideparen{(T_{e_1}^\varepsilon({e_1}),\  0)}$&$=$&$
\wideparen{(-T_{e_1}({e_1}), \ 0)}\ =\ \wideparen{(-{e_1}, \ 0)}
\ =\ -\varepsilon_8;$ \\

$[\varepsilon_5,\varepsilon_9]$&$=$&$[T_{e_1}, \wideparen{({e_2},0)}] $&$=$&$ \wideparen{(T_{e_1}^\varepsilon({e_2}),\  0)}$&$=$&$
\wideparen{(-T_{e_1}({e_2}), \ 0)}\ =\ \wideparen{(-{e_2}, \ 0)}
\ =\ -\varepsilon_9;$ \\

$[\varepsilon_5,\varepsilon_{10}]$&$=$&$[T_{e_1}, \wideparen{({e_3},0)}] $&$=$&$ \wideparen{(T_{e_1}^\varepsilon({e_3}),\  0)}$&$=$&$
\wideparen{(-T_{e_1}({e_3}), \ 0)}\ =\ \wideparen{(-{e_3}, \ 0)}
\ =\ -\varepsilon_{10};$ \\

$[\varepsilon_5,\varepsilon_{11}]$&$=$&$[T_{e_1}, \wideparen{(0,{e_3})} ]$&$=$&$
\wideparen{(0, T_{e_1}^{\varepsilon\delta} ({e_3}))} $&$=$&$
\wideparen{(0,\ -2{e_3})} \ =\ -2\varepsilon_{11};$\\

$[\varepsilon_6,\varepsilon_8]$&$=$&$[T_{e_2}, \wideparen{({e_1},0)}] $&$=$&$ \wideparen{(T_{e_2}^\varepsilon({e_1}),\  0)}$&$=$&$
\wideparen{(-{e_2}, \ 0)}
\ =\ -\varepsilon_9;$\\

$[\varepsilon_6,\varepsilon_9]$&$=$&$[T_{e_2}, \wideparen{({e_2},0)}] $&$=$&$ \wideparen{(T_{e_2}^\varepsilon({e_2}),\  0)}$&$=$&$
\wideparen{(0, \ 0)}\ =\ 0;$ \\

$[\varepsilon_6,\varepsilon_{10}]$&$=$&$[T_{e_2},\wideparen{({e_3},0)}] $&$=$&$ \wideparen{(T_{e_2}^\varepsilon({e_3}),\  0)}$&$=$&$
\wideparen{(-e_2, \ 0)}
\ =\ -\varepsilon_9;$\\

$[\varepsilon_6,\varepsilon_{11}]$&$=$&$[T_{e_2}, \wideparen{(0,{e_3})} ]$&$=$&$
\wideparen{(0, T_{e_2}^{\varepsilon\delta} ({e_3}))} $&$=$&$\wideparen{(0,\ 0)}\  =\ 0;$\\

$[\varepsilon_7,\varepsilon_8]$&$=$&$[T_{e_3}, \wideparen{({e_1},0)}] $&$=$&$ \wideparen{(T_{e_3}^\varepsilon({e_1}),\  0)}$&$=$&$ 
\wideparen{(3{e_3}, \ 0)}
\ =\ 3\varepsilon_{10};$\\

$[\varepsilon_7,\varepsilon_9]$&$=$&$[T_{e_3}, \wideparen{({e_2},0)}] $&$=$&$ \wideparen{(T_{e_3}^\varepsilon({e_2}),\  0)}$&$=$&$
\wideparen{(e_2, \ 0)}
\ =\ \varepsilon_9;$\\

$[\varepsilon_7,\varepsilon_{10}]$&$=$&$[T_{e_3}, \wideparen{({e_3},0)}] $&$=$&$ \wideparen{(T_{e_3}^\varepsilon({e_3}),\  0)}$&$=$&$
\wideparen{(3{e_1}, \ 0)}
\ =\ 3\varepsilon_8;$\\

$[\varepsilon_7,\varepsilon_{11}]$&$=$&$[T_{e_3}, \wideparen{(0,{e_3})} ]$&$=$&$
\wideparen{(0, T_{e_3}^{\varepsilon\delta} ({e_3}))} $&$=$&$ \wideparen{(0,\ 0)} \ =\ 0.$
\end{longtable}

\item Third, we define  $[ \mathcal F_{1}+\mathcal F_{2}, \mathcal F_{1}+\mathcal F_{2}],$ i.e., 
$[(x,s), (y,r)] = (0, x\overline{y} - y\overline{x}).$

\begin{longtable}{lclcl}

$[\varepsilon_1,\varepsilon_2]$&$=$&$ [({e_1},0), ({e_2},0)] $&$=$&$(0,\ 0) \ =\ 0;$\\
$[\varepsilon_1,\varepsilon_3]$&$=$&$[({e_1},0), ({e_3},0)] $&$=$&$(0,\ -2e_3) \ =\ -2\varepsilon_4;$\\

$[\varepsilon_2,\varepsilon_3]$&$=$&$[({e_2},0), ({e_3},0)] $&$=$&$(0,\ 0) \ =\ 0.$\\

\end{longtable}

\item Fourth, we define   $[ \mathcal F_{-2}+\mathcal F_{-1}, \mathcal F_{-2}+\mathcal F_{-1}],$ i.e., 
$[\wideparen{(x,s)}, \wideparen{(y,r)} ] = \wideparen{(0, x\overline{y} - y\overline{x})}.$

\begin{longtable}{lclcl}

$[\varepsilon_8,\varepsilon_9]$&$=$&$[\wideparen{({e_1},0)}, \wideparen{({e_2},0)}] $&$=$&$
\wideparen{(0,\ 0)} \ =\ 0;$\\
$[\varepsilon_8,\varepsilon_{10}]$&$=$&$[\wideparen{({e_1},0)}, \wideparen{({e_3},0)}] $&$=$&$
\wideparen{(0,\ -2e_3)} \ =\ -2\varepsilon_{11};$\\

$[\varepsilon_9,\varepsilon_{10}]$&$=$&$[\wideparen{({e_2},0)}, \wideparen{({e_3},0)}] $&$=$&$\wideparen{(0,\ 0)} \ =\ 0.$\\

\end{longtable}

\item Fifth, we define   $[ \mathcal F_{1}+\mathcal F_{2}, \mathcal F_{-2}+\mathcal F_{-1}],$ i.e., 
    $[(x,s), \wideparen{(y,r)}] = -\wideparen{(r x, 0)}  + V_{x,y} + L_s L_r+ (s y, 0).$

\begin{longtable}{lclcl}
    
$[\varepsilon_1,\varepsilon_8]$&$=$&$  [({e_1},0),\wideparen{({e_1},0)}]$&$=$&$
V_{{e_1},{e_1}}\ =\ T_{e_1}\ =\ \varepsilon_5;$\\
$[\varepsilon_1,\varepsilon_9]$&$=$&$  [({e_1},0),\wideparen{({e_2},0)}]$&$=$&$
V_{{e_1},{e_2}}\ =\ T_{e_2}\ =\ \varepsilon_6;$\\
$[\varepsilon_1,\varepsilon_{10}]$&$=$&$[({e_1},0),\wideparen{({e_3},0)}]$&$=$&$
V_{{e_1},{e_3}}\ =\ -T_{e_3}\ =\ -\varepsilon_7;$\\
$[\varepsilon_1,\varepsilon_{11}]$&$=$&$[({e_1},0),\wideparen{(0,{e_3})}]$&$=$&$
-\wideparen{({e_3},\ 0)} \ =\ -\varepsilon_{10};$\\

$[\varepsilon_2,\varepsilon_8]$&$=$&$ [({e_2},0),\wideparen{({e_1},0)}]$&$=$&$
V_{{e_2},{e_1}}\ =\ T_{e_2}\ =\ \varepsilon_6;$\\

$[\varepsilon_2,\varepsilon_9]$&$=$&$  [({e_2},0),\wideparen{({e_2},0)}]$&$=$&$
V_{{e_2},{e_2}}\ =\ 0;$\\
$[\varepsilon_2,\varepsilon_{10}]$&$=$&$[({e_2},0),\wideparen{({e_3},0)}]$&$=$&$
V_{{e_2},{e_3}}\ =\ -T_{e_2}\ =\ -\varepsilon_6;$\\

$[\varepsilon_2,\varepsilon_{11}]$&$=$&$[({e_2},0),\wideparen{(0,{e_3})}]$&$=$&$-\wideparen{({e_3}{e_2},\ 0)}\ =\ \wideparen{(e_2,\ 0)} \ =\ \varepsilon_9;$\\

$[\varepsilon_3,\varepsilon_8]$&$=$&$[({e_3},0),\wideparen{({e_1},0)}]$&$=$&$
V_{{e_3},{e_1}}\ =\ T_{e_3}\ =\ \varepsilon_7;$\\
$[\varepsilon_3,\varepsilon_9]$&$=$&$[({e_3},0),\wideparen{({e_2},0)}]$&$=$&$
V_{{e_3},{e_2}}\ =\ -T_{e_2}\ =\ -\varepsilon_6;$\\
$[\varepsilon_3,\varepsilon_{10}]$&$=$&$[({e_3},0),\wideparen{({e_3},0)}]$&$=$&$
V_{{e_3},{e_3}}\ =\ -T_{e_1}\ =\ -\varepsilon_5;$\\

$[\varepsilon_3,\varepsilon_{11}]$&$=$&$[({e_3},0),\wideparen{(0,{e_3})}]$&$=$&$
-\wideparen{({e_3}{e_3},\ 0)}\ =\ -\wideparen{({e_1},0)}\ =\ -\varepsilon_8;$\\

$[\varepsilon_4,\varepsilon_8]$&$=$&$[(0,{e_3}),\wideparen{({e_1},0)}]$&$=$&$
({e_3},\ 0) \ =\ \varepsilon_3;$\\
 
$[\varepsilon_4,\varepsilon_9]$&$=$&$[(0,{e_3}),\wideparen{({e_2},0)}]$&$=$&$
({e_3}{e_2},\ 0)\ =\ (-e_2,\ 0) =-\varepsilon_2;$\\
$[\varepsilon_4,\varepsilon_{10}]$&$=$&$[(0,{e_3}),\wideparen{({e_3},0)}]$&$=$&$
({e_3}{e_3},\ 0)\ =\ ({e_1},\ 0) \ =\ \varepsilon_1;$\\

$[\varepsilon_4,\varepsilon_{11}]$&$=$&$[(0,{e_3}),\wideparen{(0,{e_3})}]$&$=$&$L_{e_3}L_{e_3}\ =\ T_{e_1}\ = \ \varepsilon_5.$\\

\end{longtable}

(VI)\ End, we define   $[ \mathcal F_{0}, \mathcal F_{0}].$
\begin{longtable}{lclcl}
    $[\varepsilon_5,\varepsilon_{6}]$&$=$&$[T_{e_1}, T_{e_2}]$&$=$&$0 .$ \\
    
$[\varepsilon_5,\varepsilon_{7}]$&$=$&$[T_{e_1}, T_{e_3}]$&$=$&$0 .$ \\
$[\varepsilon_6,\varepsilon_{7}]$&$=$&$[T_{e_2}, T_{e_3}]$&$=$&$2T_{e_2}\ =\ 2\varepsilon_6.$
\end{longtable}
\end{enumerate}
\end{proof}

\begin{remark}
$\mathcal{F}({\rm A}_5)$ is perfect and 
$\mathcal{F}({\rm A}_5)= S \ltimes R,$ 
where:
\begin{itemize}

\item $S = \big\langle\varepsilon_1, \ 
\varepsilon_3, \ 
\varepsilon_4, \ 
\varepsilon_5, \ 
\varepsilon_7, \ 
\varepsilon_8, \ 
\varepsilon_{10},\ 
\varepsilon_{11}\big\rangle \cong \mathfrak{sl}_3;$

\item $R=\big\langle\varepsilon_2, \ \varepsilon_6, \ \varepsilon_9\big\rangle$ is the abelian radical$.$

\end{itemize}
\end{remark}

\subsection{$\rm{AK}$ construction for ${\rm S}_1$}
$\mathcal{F}(\rm S_1)$ has dimension $13.$
Let us now fix the following notations :

\begin{longtable}{lclcl}
$\mathcal{F}_2$ & $=$ & $(0,\mathcal{S})$ & $=$ & $\big\langle \varepsilon_4:=(0,e_2), \ \varepsilon_5:=(0,e_3)\big\rangle$;\\

$\mathcal{F}_1$ & $=$ & $(\mathcal{A},0)$ & $=$ & $\big\langle \varepsilon_1:=(e_1,0), \ \varepsilon_2:=(e_2,0), \ \varepsilon_3:=(e_3,0)\big\rangle$;\\

$\mathcal{F}_0$ & $=$ & $\rm{Instr}(\mathcal{A})$ & $=$ & $\big\langle
\varepsilon_6:=T_{e_1}, \
\varepsilon_7:=T_{e_2}, \
\varepsilon_8:=T_{e_3} \big\rangle$;\\

$\mathcal{F}_{-1}$ & $=$ & $\wideparen{(\mathcal{A},0)}$ & $=$ & $\big\langle
\varepsilon_9:=\wideparen{(e_1,0)}, \
\varepsilon_{10}:=\wideparen{(e_2,0)}, \
\varepsilon_{11}:=\wideparen{(e_3,0)} \big\rangle$;\\

$\mathcal{F}_{-2}$ & $=$ & $\wideparen{(0,\mathcal{S})}$ & $=$ & $\big\langle
\varepsilon_{12}:=\wideparen{(0,e_2)}, \
\varepsilon_{13}:=\wideparen{(0,e_3)} \big\rangle$.
\end{longtable}

\begin{theorem}
$\mathcal{F}({\rm S}_1)$ is a  $13$-dimensional Lie algebra  with the given product by

\begin{longtable}{lcrlcrlcr}
$[\varepsilon_1,\varepsilon_2]$ & $=$ & $-2\varepsilon_4,$& $[\varepsilon_1,\varepsilon_3]$ & $=$ & $-2\varepsilon_5,$ & $[\varepsilon_1,\varepsilon_6]$ & $=$ & $-\varepsilon_1,$ \\
$[\varepsilon_1,\varepsilon_7]$ & $=$ & $-3\varepsilon_2,$ &

$[\varepsilon_1,\varepsilon_8]$ & $=$ & $-3\varepsilon_3,$ & $[\varepsilon_1,\varepsilon_9]$ & $=$ & $\varepsilon_6,$ \\
$[\varepsilon_1,\varepsilon_{10}]$ & $=$ & $-\varepsilon_7,$ &

$[\varepsilon_1,\varepsilon_{11}]$ & $=$ & $-\varepsilon_8,$ & $[\varepsilon_1,\varepsilon_{12}]$ & $=$ & $-\varepsilon_{10},$ \\
$[\varepsilon_1,\varepsilon_{13}]$ & $=$ & $-\varepsilon_{11},$ 
&
$[\varepsilon_2,\varepsilon_6]$ & $=$ & $-\varepsilon_2,$ &
$[\varepsilon_2,\varepsilon_9]$ & $=$ & $\varepsilon_7,$ \\

$[\varepsilon_3,\varepsilon_6]$ & $=$ & $-\varepsilon_3,$ &
$[\varepsilon_3,\varepsilon_9]$ & $=$ & $\varepsilon_8,$ &
$[\varepsilon_4,\varepsilon_6]$ & $=$ & $-2\varepsilon_4,$ \\

$[\varepsilon_4,\varepsilon_9]$ & $=$ & $\varepsilon_2,$ &
$[\varepsilon_5,\varepsilon_6]$ & $=$ & $-2\varepsilon_5,$ &
$[\varepsilon_5,\varepsilon_9]$ & $=$ & $\varepsilon_3,$ \\

$[\varepsilon_6,\varepsilon_9]$ & $=$ & $-\varepsilon_9,$ &
$[\varepsilon_6,\varepsilon_{10}]$ & $=$ & $-\varepsilon_{10},$ &
$[\varepsilon_6,\varepsilon_{11}]$ & $=$ & $-\varepsilon_{11},$ \\

$[\varepsilon_6,\varepsilon_{12}]$ & $=$ & $-2\varepsilon_{12},$ &
$[\varepsilon_6,\varepsilon_{13}]$ & $=$ & $-2\varepsilon_{13},$ &
$[\varepsilon_7,\varepsilon_9]$ & $=$ & $3\varepsilon_{10},$ \\

$[\varepsilon_8,\varepsilon_9]$ & $=$ & $3\varepsilon_{11},$ &
$[\varepsilon_9,\varepsilon_{10}]$ & $=$ & $-2\varepsilon_{12},$ &
$[\varepsilon_9,\varepsilon_{11}]$ & $=$ & $-2\varepsilon_{13}.$
\end{longtable}

\end{theorem}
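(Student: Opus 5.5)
Following the $\rm{AK}$-algorithm of Subsection \ref{AKalg}, exactly as for ${\rm A}_1,\dots,{\rm A}_5$, I first fix $\mathcal{F}_0$. Since ${\rm S}_1$ is the universal unital algebra of type $(1,2)$, we have $\overline{\mathfrak{Der}}({\rm S}_1)$ four-dimensional. Nevertheless, the statement's table uses only $\varepsilon_6,\varepsilon_7,\varepsilon_8$, so I take $\mathcal{F}_0=\langle T_{e_1},T_{e_2},T_{e_3}\rangle$, which is the span of the $V_{x,y}$ and $L_sL_r$ that actually occur. By Proposition \ref{(ss)}, $T_{e_1}=\mathrm{id}$, and $T_{e_2},T_{e_3}$ are the matrices sending $e_1\mapsto 3e_2$, respectively $e_1\mapsto 3e_3$, and killing $e_2,e_3$. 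These pairwise commute, so $[\mathcal F_0,\mathcal F_0]=0$.

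Next I tabulate the auxiliary operators. Since $e_2,e_3\in\mathcal S$, we get $T_{e_i}(e_1)=3e_i$ and $\overline{T_{e_i}(e_1)}=-3e_i$ for $i=2,3$. Hence $T_{e_i}^\varepsilon=T_{e_i}$, and $T_{e_i}^\delta=T_{e_i}-3R_{e_i}=0$; the last equality holds because $R_{e_i}$ sends $e_1\mapsto e_i$ and kills $e_2,e_3$, so $T_{e_i}=3R_{e_i}$. Similarly $T_{e_i}^{\varepsilon\delta}=0$. For $e_1$, as before, $T_{e_1}^\varepsilon=-T_{e_1}$, $T_{e_1}^\delta=2T_{e_1}$ and $T_{e_1}^{\varepsilon\delta}=-2T_{e_1}$.

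For $V_{e_i,e_j}$, all products of $e_2,e_3$ vanish, so only the terms involving $e_1$ survive. This gives $V_{e_1,e_1}=T_{e_1}$, $V_{e_1,e_j}=-T_{e_j}$ and $V_{e_j,e_1}=T_{e_j}$ for $j=2,3$, with all remaining $V$'s equal to $0$. I check the middle one directly: $V_{e_1,e_j}(z)=-z+\dots$ on $e_1$ yields $-e_j-e_j-e_j=-3e_j$. Finally $L_sL_r=0$ for $s,r\in\mathcal S$.

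I then run the five steps (I)–(V) in the same order as for ${\rm A}_1$.

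(I) Using $[E,(x,s)]=(E(x),E^\delta(s))$, I get $[\varepsilon_6,\varepsilon_i]=\varepsilon_i$ for $i=1,2,3$ and $[\varepsilon_6,\varepsilon_{4}]=2\varepsilon_4$, $[\varepsilon_6,\varepsilon_{5}]=2\varepsilon_5$. Also $[\varepsilon_7,\varepsilon_1]=3\varepsilon_2$ and $[\varepsilon_8,\varepsilon_1]=3\varepsilon_3$.

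(II) Dually, $[\varepsilon_6,\cdot]$ acts by $-1$ on $\mathcal F_{-1}$ and by $-2$ on $\mathcal F_{-2}$, and $[\varepsilon_7,\varepsilon_9]=3\varepsilon_{10}$, $[\varepsilon_8,\varepsilon_9]=3\varepsilon_{11}$.

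(III)–(IV) The formula $x\overline y-y\overline x$ gives $[\varepsilon_1,\varepsilon_2]=-2\varepsilon_4$, $[\varepsilon_1,\varepsilon_3]=-2\varepsilon_5$, and $[\varepsilon_2,\varepsilon_3]=e_2(-e_3)-e_3(-e_2)=0$. The brackets in $\mathcal F_{-1}$ are analogous.

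(V) Here $[\varepsilon_1,\varepsilon_{9}]=\varepsilon_6$, $[\varepsilon_1,\varepsilon_{10}]=-\varepsilon_7$ and $[\varepsilon_1,\varepsilon_{11}]=-\varepsilon_8$. Further, $[\varepsilon_2,\varepsilon_9]=\varepsilon_7$ and $[\varepsilon_3,\varepsilon_9]=\varepsilon_8$. From the terms $-\wideparen{(rx,0)}$ we get $[\varepsilon_1,\varepsilon_{12}]=-\varepsilon_{10}$ and $[\varepsilon_1,\varepsilon_{13}]=-\varepsilon_{11}$, and from $(sy,0)$ we get $[\varepsilon_4,\varepsilon_9]=\varepsilon_2$ and $[\varepsilon_5,\varepsilon_9]=\varepsilon_3$. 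All other brackets vanish because the products among $e_2,e_3$ are zero.

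Rewriting these with anticommutativity in the orientation of the statement, e.g. $[\varepsilon_1,\varepsilon_7]=-3\varepsilon_2$, yields exactly the table.

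The only genuinely delicate point is bookkeeping of signs for the skew elements: the sign in $V_{e_1,e_j}$ versus $V_{e_j,e_1}$, and the vanishing of $T^\delta_{e_j}$. I will double-check these against the ${\rm A}_1$ computation, where $e_3$ played the same role.
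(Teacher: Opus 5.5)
Your proposal is correct and follows essentially the same route as the paper. You take $\mathcal{F}_0=\langle T_{e_1},T_{e_2},T_{e_3}\rangle$, leaving out $\overline{\mathfrak{Der}}({\rm S}_1)$ exactly as the paper does, which is what gives dimension $13$. You then compute $T^\varepsilon_{e_i}$, $T^\delta_{e_i}$, $T^{\varepsilon\delta}_{e_i}$ and $V_{e_i,e_j}$ with the same values as the paper, notably $T^\delta_{e_j}=T^{\varepsilon\delta}_{e_j}=0$ and $V_{e_1,e_j}=-T_{e_j}$ for $j=2,3$, and run steps (I)--(V) to obtain exactly the stated table. Like the paper, you rely on the general fact that $\mathcal{F}(\mathcal{A})$ is a Lie algebra rather than checking the Jacobi identity directly.
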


\begin{proof}
Following the $\rm{AK}$-algorithm given in subsection \ref{AKalg}, we define  
$$\mathcal{F}_0=\left\langle
T_{e_1}:=\begin{pmatrix} 1 & 0 & 0 \\ 0 & 1& 0 \\ 0 & 0 & 1 \end{pmatrix}, \ 
T_{e_2}:=\begin{pmatrix}  0& 0 & 0 \\ 3& 0 & 0 \\ 0 & 0 & 0 \end{pmatrix},\ 
T_{e_3}:=\begin{pmatrix} 0 & 0 &0\\ 0 &0 & 0 \\ 3& 0 & 0 \end{pmatrix}  \right\rangle.$$

It is easy to see that  
$[\mathcal{F}_0,\mathcal{F}_0]=0.$

We give the list of elements
$T_{e_i}^\varepsilon,$ 
$T_{e_i}^\delta,$ 
$T_{e_i}^{\varepsilon \delta},$ and 
$V_{e_i,e_j}$   below:

\begin{longtable}{|lcl|lclcl|}
\hline
   $T_{e_1}(e_{1})$&$=$&$e_1$&   $T_{e_1}^\varepsilon$&$=$&$T_{e_1}-T_{T_{e_1}(e_1)+\overline{T_{e_1}(e_1)}}$&$=$ &$-T_{e_1}$\\

$\overline{T_{e_1}({e_1})}$&$=$&$e_1$&$T_{e_1}^\delta$&$=$&$T_{e_1}+R_{\overline{T_{e_1}({e_1})}}$&$=$&$2T_{e_1}$\\

$R_{e_1}$&$=$&$T_{e_1}$&$T_{e_1}^{\varepsilon\delta}$&$=$&$T_{e_1}^\varepsilon+R_{\overline{T_{e_1}^\varepsilon({e_1})}}$&$=$&$-2T_{e_1}$\\
\hline 

 \hline
    
   $T_{e_2}({e_1})$&$=$&$3{e_2} $&$ T_{e_2}^\varepsilon$&$=$&$T_{e_2}-T_{T_{e_2}({e_1})+\overline{T_{e_2}({e_1})}}$&$=$&$T_{e_2}$\\

$\overline{T_{e_2}({e_1})}$&$=$&${-3e_2}$&   
$T_{e_2}^\delta $&$=$&$T_{e_2}+R_{\overline{T_{e_2}({e_1})}}$&$=$&$0$\\
   
$R_{e_2}$&$=$&$\frac{1}{3}T_{e_2}$&$T_{e_2}^{\varepsilon\delta}$&$=$&$T_{e_2}^\varepsilon+R_{\overline{T_{e_2}^\varepsilon({e_1})}}$&$=$&$0$\\

   \hline
    
\hline
$T_{e_3}({e_1})$&$=$&$3{e_3}$&$    T_{e_3}^\varepsilon$&$=$&$T_{e_3}-T_{T_{e_3}({e_1})+\overline{T_{e_3}({e_1})}}$&$=$&$T_{e_3}$\\

$\overline{T_{e_3}({e_1}})$&$=$&$-3{e_3}$&$ T_{e_3}^\delta$&$=$&$T_{e_3}+R_{\overline{T_{e_3}({e_1})}}$&$=$&$0$\\
   
$R_{e_3}$&$=$&$\frac{1}{3}T_{e_3}$&$T_{e_3}^{\varepsilon\delta}$&$=$&$T_{e_3}^\varepsilon+R_{\overline{T_{e_3}^\varepsilon({e_1})}}$&$=$&$0$\\
\hline\end{longtable}

\begin{longtable}{lcllcllcl}

$V_{{e_1},{e_1}}$&$=$&$T_{e_1},$&$ V_{{e_1},{e_2}}$&$=$&$-T_{e_2},$&$ V_{{e_1},{e_3}}$&$=$&$-T_{e_3},$\\ 
$V_{{e_2},{e_1}}$&$=$&$T_{e_2},$&$ V_{{e_2},{e_2}}$&$=$&$0, $&$ V_{{e_2},{e_3}}$&$=$&$0,$\\
$V_{{e_3},{e_1}}$&$=$&$T_{e_3},$&$ V_{{e_3},{e_2}}$&$=$&$0,$&$ V_{{e_3},{e_3}}$&$=$&$0.$
\end{longtable}

Now we are ready to determine the multiplication table of $\mathcal F({\rm S}_1).$

\begin{enumerate}[(I)]
    \item First, we define   $[\mathcal F_0, \mathcal F_{1}+\mathcal F_{2}],$ i.e., $[E,(x,s)]=(E(x), E^\delta(s)).$

\begin{longtable}{lclclcl}
$[\varepsilon_6,\varepsilon_1]$&$=$&$  [T_{e_1}, ({e_1},0)]$&$=$&$ 
\big(T_{e_1}({e_1}), T_{e_1}^\delta(0) \big)$&$=$&$({e_1},\  0)\   =\ \varepsilon_1;$\\

$[\varepsilon_6,\varepsilon_2]$&$=$&$[T_{e_1}, ({e_2},0)]$&$=$&$
(T_{e_1}({e_2}), T_{e_1}^\delta(0))$&$=$&$({e_2},\ 0)\ =\ \varepsilon_2;$\\

$[\varepsilon_6,\varepsilon_3]$&$=$&$[T_{e_1}, ({e_3},0)]$&$=$&$
(T_{e_1}({e_3}), T_{e_1}^\delta (0))$&$=$&$({e_3},\ 0)\ =\  \varepsilon_3;$\\
$[\varepsilon_6,\varepsilon_4]$&$=$&$[T_{e_1}, (0,{e_2})]$&$=$&$
(T_{e_1}(0), T_{e_1}^\delta (e_2))$&$=$&$(0,\ 2e_2)\ =\ 2\varepsilon_4;$\\
$[\varepsilon_6,\varepsilon_5]$&$=$&$[T_{e_1}, (0,{e_3})]$&$=$&$
(T_{e_1}(0), T_{e_1}^\delta (e_3))$&$=$&$(0, \ 2e_3) \ =\ 2\varepsilon_5;$\\

$[\varepsilon_7,\varepsilon_1]$&$=$&$[T_{e_2}, ({e_1},0)]$&$=$&$
(T_{e_2}({e_1}), T_{e_2}^\delta (0))$&$=$&$(3e_2,\ 0)\  =\ 3\varepsilon_2;$\\

$[\varepsilon_7,\varepsilon_2]$&$=$&$[T_{e_2}, ({e_2},0)]$&$=$&$
(T_{e_2}({e_2}),\ T_{e_2}^\delta (0))$&$=$&$(0,\ 0) \ =\ 0;$\\

$[\varepsilon_7,\varepsilon_3]$&$=$&$[T_{e_2}, ({e_3},0)]$&$=$&$
(T_{e_2}({e_3}), T_{e_2}^\delta (0))$&$=$&$(0,\ 0)\ =\ 0;$\\
$[\varepsilon_7,\varepsilon_4]$&$=$&$[T_{e_2}, (0,{e_2})]$&$=$&$
(T_{e_2}(0),T_{e_2}^\delta ({e_2}))$&$=$&$(0,\ 0)\ =\ 0;$\\
$[\varepsilon_7,\varepsilon_5]$&$=$&$[T_{e_2}, (0,{e_3})]$&$=$&$
(T_{e_2}(0),T_{e_2}^\delta ({e_3}))$&$=$&$(0, \ 0)\ =\ 0;$\\
$[\varepsilon_8,\varepsilon_1]$&$=$&$[T_{e_3}, ({e_1},0)]$&$=$&$
(T_{e_3}({e_1}), T_{e_3}^\delta (0))$&$=$&$(3{e_3},\ 0) \ =\ 3\varepsilon_3;$\\

$[\varepsilon_8,\varepsilon_2]$&$=$&$[T_{e_3}, ({e_2},0)]$&$=$&$
(T_{e_3}({e_2}), T_{e_3}^\delta(0))$&$=$&$(0,\ 0)\ =\ 0;$\\

$[\varepsilon_8,\varepsilon_3]$&$=$&$[T_{e_3}, ({e_3},0)]$&$=$&$
(T_{e_3}({e_3}), T_{e_3}^\delta (0))$&$=$&$(0, \  0) \ =\ 0;$\\
$[\varepsilon_8,\varepsilon_4]$&$=$&$[T_{e_3}, (0,{e_2})]$&$=$&$
(T_{e_3}(0), T_{e_3}^\delta ({e_2}))$&$=$&$(0, \ 0)\ =\ 0.$\\
$[\varepsilon_8,\varepsilon_5]$&$=$&$[T_{e_3}, (0,{e_3})]$&$=$&$
(T_{e_3}(0), T_{e_3}^\delta ({e_3}))$&$=$&$(0,\  0)\ =\ 0.$
\end{longtable}

  \item Second, we define  $[\mathcal F_0, \mathcal F_{-2}+\mathcal F_{-1}],$ i.e., 
  $[{E}, \wideparen {(x,s)} ] =
\wideparen{({E}^\varepsilon(x), {E}^{\varepsilon\delta}(s))}.$

\begin{longtable}{lclclcl}
$[\varepsilon_6,\varepsilon_9]$&$=$&$[T_{e_1}, \wideparen{({e_1},0)}]$&$=$&$ \wideparen{(T_{e_1}^\varepsilon({e_1}),\  0)}$&$=$&$\wideparen{(-T_{e_1}({e_1}), \ 0)}\ =\ \wideparen{(-{e_1}, \ 0)}
\ =\ -\varepsilon_9;$ \\

$[\varepsilon_6,\varepsilon_{10}]$&$=$&$[T_{e_1}, \wideparen{({e_2},0)}] $&$=$&$ \wideparen{(T_{e_1}^\varepsilon({e_2}),\  0)}$&$=$&$
\wideparen{(-T_{e_1}({e_2}), \ 0)}\ =\ \wideparen{(-{e_2}, \ 0)}
\ =\ -\varepsilon_{10};$ \\

$[\varepsilon_6,\varepsilon_{11}]$&$=$&$[T_{e_1}, \wideparen{({e_3},0)}] $&$=$&$ \wideparen{(T_{e_1}^\varepsilon({e_3}),\  0)}$&$=$&$
\wideparen{(-T_{e_1}({e_3}), \ 0)}\ =\ 
\wideparen{(-{e_3}, \ 0)}
\  =\ -\varepsilon_{11};$ \\

$[\varepsilon_6,\varepsilon_{12}]$&$=$&$[T_{e_1}, \wideparen{(0,{e_2})} ]$&$=$&$\wideparen{(0, T_{e_1}^{\varepsilon\delta} ({e_2}))} $&$=$&$\wideparen{(0,\ -2{e_2})} \ = \ -2\varepsilon_{12};$\\
$[\varepsilon_6,\varepsilon_{13}]$&$=$&$[T_{e_1}, \wideparen{(0,{e_3})} ]$&$=$&$
\wideparen{(0, T_{e_1}^{\varepsilon\delta} ({e_3}))} $&$=$&$
\wideparen{(0,\ -2{e_3})} \ =\ -2\varepsilon_{13};$\\

$[\varepsilon_7,\varepsilon_9]$&$=$&$[T_{e_2}, \wideparen{({e_1},0)}] $&$=$&$ \wideparen{(T_{e_2}^\varepsilon({e_1}),\  0)}$&$=$&$
\wideparen{(3{e_2}, \ 0)}\ =\ 3\varepsilon_{10};$\\

$[\varepsilon_7,\varepsilon_{10}]$&$=$&$[T_{e_2}, \wideparen{({e_2},0)}]$&$=$&$ \wideparen{(T_{e_2}^\varepsilon({e_2}),\  0)}$&$=$&$
\wideparen{(0, \ 0)} \ =\ 0;$ \\

$[\varepsilon_7,\varepsilon_{11}]$&$=$&$[T_{e_2},\wideparen{({e_3},0)}] $&$=$&$ \wideparen{(T_{e_2}^\varepsilon({e_3}),\  0)}$&$=$&$\wideparen{(0, \ 0)} \ =\ 0;$\\

$[\varepsilon_7,\varepsilon_{12}]$&$=$&$[T_{e_2},\wideparen{({0},e_{2})}] $&$=$&$ \wideparen{(T_{e_2}^\varepsilon({e_2}),\  0)}$&$=$&$\wideparen{(0, \ 0)} \ =\ 0;$\\

$[\varepsilon_7,\varepsilon_{13}]$&$=$&$[T_{e_2}, \wideparen{(0,{e_3})} ]$&$=$&$
\wideparen{(0, T_{e_2}^{\varepsilon\delta} ({e_3}))} $&$=$&$\wideparen{(0,\ 0)}\ =\ 0;$\\
$[\varepsilon_8,\varepsilon_9]$&$=$&$[T_{e_3}, \wideparen{({e_1},0)}] $&$=$&$ \wideparen{(T_{e_3}^\varepsilon({e_1}),\  0)}$&$=$&$
\wideparen{(3{e_3}, \ 0)}\ =\ 3\varepsilon_{11};$\\

$[\varepsilon_8,\varepsilon_{10}]$&$=$&$[T_{e_3}, \wideparen{({e_2},0)}] $&$=$&$ \wideparen{(T_{e_3}^\varepsilon({e_2}),\  0)}$&$=$&$
\wideparen{(0, \ 0)}\ =\ 0;$\\

$[\varepsilon_8,\varepsilon_{11}]$&$=$&$[T_{e_3}, \wideparen{({e_3},0)}] $&$=$&$ \wideparen{(T_{e_3}^\varepsilon({e_3}),\  0)} $&$=$&$ 
\wideparen{(0, \ 0)} \ =\ 0;$\\

$[\varepsilon_8,\varepsilon_{12}]$&$=$&$[T_{e_3}, \wideparen{(0,{e_2})} ]$&$=$&$
\wideparen{(0, T_{e_3}^{\varepsilon\delta} ({e_2}))} $&$=$&$
\wideparen{(0,\ 0)} \ =\ 0;$\\

$[\varepsilon_8,\varepsilon_{13}]$&$=$&$[T_{e_3},\wideparen{(0,{e_3})} ]$&$=$&$
\wideparen{(0, T_{e_3}^{\varepsilon\delta} ({e_3}))}$&$=$&$\wideparen{(0,\ 0)}\ =\ 0.$
\end{longtable}

\item Third, we define   $[ \mathcal F_{1}+\mathcal F_{2}, \mathcal F_{1}+\mathcal F_{2}],$ i.e., 
$[(x,s), (y,r)] = (0, x\overline{y} - y\overline{x}).$

\begin{longtable}{lclcl}

$[\varepsilon_1,\varepsilon_2]$&$=$&$[({e_1},0), ({e_2},0)] $&$=$&$(0,\ -2e_2) \ =\ -2\varepsilon_4;$\\
$[\varepsilon_1,\varepsilon_3]$&$=$&$[({e_1},0), ({e_3},0)] $&$=$&$(0,\ -2e_3) \ =\ -2\varepsilon_5;$\\

$[\varepsilon_2,\varepsilon_3]$&$=$&$[({e_2},0), ({e_3},0)] $&$=$&$(0,\ 0) \ =\ 0.$\\

\end{longtable}

\item Fourth, we define   $[ \mathcal F_{-2}+\mathcal F_{-1}, \mathcal F_{-2}+\mathcal F_{-1}],$ i.e., 
$[\wideparen{(x,s)}, \wideparen{(y,r)} ] = \wideparen{(0, x\overline{y} - y\overline{x})}.$

\begin{longtable}{lclcl}

$[\varepsilon_{9},\varepsilon_{10}]$&$=$&$[\wideparen{({e_1},0)}, \wideparen{({e_2},0)}] $&$=$&$\wideparen{(0,\ -2e_2)}\ =\ -2e_{12};$\\
$[\varepsilon_{9},\varepsilon_{11}]$&$=$&$[\wideparen{({e_1},0)}, \wideparen{({e_3},0)}] $&$=$&$\wideparen{(0,\ -2e_3)} \ =\ -2\varepsilon_{13};$\\

$[\varepsilon_{10},\varepsilon_{11}]$&$=$&$[\wideparen{({e_2},0)}, \wideparen{({e_3},0)}] $&$=$&$\wideparen{(0,\ 0)}\ =\ 0.$\\

\end{longtable}

\item End, we define   $[ \mathcal F_{1}+\mathcal F_{2}, \mathcal F_{-2}+\mathcal F_{-1}],$ i.e., 
    $[(x,s), \wideparen{(y,r)}] = -\wideparen{(r x, 0)}  + V_{x,y} + L_s L_r+ (s y, 0).$

\begin{longtable}{lcl cl}
    
$[\varepsilon_1,\varepsilon_9]$&$=$&$  [({e_1},0),\wideparen{({e_1},0)}]$&$=$&$V_{{e_1},{e_1}}\ = \ T_{e_1}\ =\ \varepsilon_6;$\\
$[\varepsilon_1,\varepsilon_{10}]$&$=$&$  [({e_1},0),\wideparen{({e_2},0)}]$&$=$&$
V_{{e_1},{e_2}} \ =\ -T_{e_2}\ =\ -\varepsilon_7;$\\
$[\varepsilon_1,\varepsilon_{11}]$&$=$&$[({e_1},0),\wideparen{({e_3},0)}]$&$=$&$V_{{e_1},{e_3}}\ =\ -T_{e_3}\ =\ -\varepsilon_8;$\\
$[\varepsilon_1,\varepsilon_{12}]$&$=$&$[({e_1},0),\wideparen{(0,{e_2})}]$&$=$&$-\wideparen{({e_2},0)}\ =\ -\varepsilon_{10};$\\
$[\varepsilon_1,\varepsilon_{13}]$&$=$&$[({e_1},0),\wideparen{(0,{e_3})}]$&$=$&$-\wideparen{({e_3},0)} \ =\ -\varepsilon_{11};$\\

$[\varepsilon_2,\varepsilon_9]$&$=$&$ [({e_2},0),\wideparen{({e_1},0)}]$&$=$&$
V_{{e_2},{e_1}}\ =\ T_{e_2}\ =\ \varepsilon_7;$\\

$[\varepsilon_2,\varepsilon_{12}]$&$=$&$[({e_2},0),\wideparen{(0,{e_3})}]$&$=$&$
-\wideparen{({e_3}{e_2},\ 0)}\ =\ \wideparen{(0,\ 0)}\ =\ 0;$\\

$[\varepsilon_2,\varepsilon_{10}]$&$=$&$  [({e_2},0),\wideparen{({e_2},0)}]$&$=$&$V_{{e_2},{e_2}}\ =\ 0;$\\
$[\varepsilon_2,\varepsilon_{11}]$&$=$&$[({e_2},0),\wideparen{({e_3},0)}]$&$=$&$V_{{e_2},{e_3}}\ =\ 0;$\\
$[\varepsilon_2,\varepsilon_{12}]$&$=$&$[({e_2},0),\wideparen{(0,e_2)}]$&$=$&$0;$\\
$[\varepsilon_2,\varepsilon_{13}]$&$=$&$[({e_2},0),\wideparen{(0,e_3)}]$&$=$&$0;$\\
$[\varepsilon_3,\varepsilon_9]$&$=$&$[({e_3},0),\wideparen{({e_1},0)}]$&$=$&$V_{{e_3},{e_1}}\ =\ T_{e_3}\ =\ \varepsilon_8;$\\
$[\varepsilon_3,\varepsilon_{10}]$&$=$&$[({e_3},0),\wideparen{({e_2},0)}]$&$=$&$V_{{e_3},{e_2}}\ =\ 0;$\\
$[\varepsilon_3,\varepsilon_{11}]$&$=$&$[({e_3},0),\wideparen{({e_3},0)}]$&$=$&$V_{{e_3},{e_3}}\ =\ 0;$\\

$[\varepsilon_3,\varepsilon_{12}]$&$=$&$[({e_3},0),\wideparen{(0,{e_2})}]$&$=$&$-\wideparen{({e_3}{e_2},0)}\ =\ -\wideparen{(0,\ 0)} =0;$\\
$[\varepsilon_3,\varepsilon_{13}]$&$=$&$[({e_3},0),\wideparen{(0,{e_3})}]$&$=$&$
-\wideparen{({e_3}{e_3},\ 0)}\ =\ -\wideparen{(0,\ 0)} =0;$\\

$[\varepsilon_4,\varepsilon_9]$&$=$&$[(0,{e_3}),\wideparen{({e_1},0)}]$&$=$&$({e_2},\ 0)  \ =\  \varepsilon_2;$\\
 
$[\varepsilon_4,\varepsilon_{10}]$&$=$&$[(0,{e_3}),\wideparen{({e_2},0)}]$&$=$&$
({e_3}{e_2},\ 0)\ =\ (0,\ 0)\ =\ 0;$\\
$[\varepsilon_4,\varepsilon_{11}]$&$=$&$[(0,{e_3}),\wideparen{({e_3},0)}]$&$=$&$
({e_3}{e_3},\ 0)\ =\ (0,\ 0) \ =\ 0;$\\

$[\varepsilon_4,\varepsilon_{12}]$&$=$&$[(0,{e_2}),\wideparen{(0,{e_2})}]$&$=$&$L_{e_2}L_{e_2}\ =\ 0;$\\
$[\varepsilon_4,\varepsilon_{13}]$&$=$&$[(0,{e_3}),\wideparen{(0,{e_3})}]$&$=$&$L_{e_2}L_{e_3}\ =\ 0;$\\

$[\varepsilon_5,\varepsilon_9]$&$=$&$[(0,{e_3}),\wideparen{({e_1},0)}]$&$=$&$({e_3},\ 0)  \ =\ \varepsilon_3;$\\
 
$[\varepsilon_5,\varepsilon_{10}]$&$=$&$[(0,{e_3}),\wideparen{({e_2},0)}]$&$=$&$
({e_3}{e_2},\ 0)\ =\ (0,\ 0)\  =\ 0;$\\
$[\varepsilon_5,\varepsilon_{11}]$&$=$&$[(0,{e_3}),\wideparen{({e_3},0)}]$&$=$&$
({e_3}{e_3},\ 0)\ =\ (0,\ 0) \  =\ 0;$\\

$[\varepsilon_5,\varepsilon_{12}]$&$=$&$[(0,{e_3}),\wideparen{(0,{e_2})}]$&$=$&$L_{e_3}L_{e_2}\ =\ 0;$\\
$[\varepsilon_5,\varepsilon_{13}]$&$=$&$[(0,{e_3}),\wideparen{(0,{e_3})}]$&$=$&$L_{e_3}L_{e_3}\ =\ 0.$\\

\end{longtable}

\end{enumerate}
\end{proof}

\begin{remark} 
$\mathcal{F}({\rm S}_1)$ is perfect and  
$\mathcal{F}({\rm S}_1) = S \ltimes R,$ where
\begin{itemize}
\item $S = \Big\langle\varepsilon_1, \ 
\varepsilon_6,\ 
\varepsilon_9\Big\rangle \cong \mathfrak{sl}_2;$
    \item $R = \Big\langle\varepsilon_2,\  
\varepsilon_3, \ 
\varepsilon_4, \ 
\varepsilon_5, \ 
\varepsilon_7, \ 
\varepsilon_8, \ 
\varepsilon_{10}, \ 
\varepsilon_{11}, \ 
\varepsilon_{12},\ 
\varepsilon_{13}\Big\rangle$ is the abelian radical.

\end{itemize}
\end{remark}

\subsection{$\rm{AK}$ construction for ${\rm S}_2$}

$\mathcal{F}(\rm{S}_2)$ has dimension $14.$
Let us now fix the following notations:

\begin{longtable}{lclcl}
$\mathcal{F}_2$ & $=$ & $(0,\mathcal{S})$ & $=$ & $\big\langle \varepsilon_4:=(0,e_2), \ \varepsilon_5:=(0,e_3)\big\rangle$;\\

$\mathcal{F}_1$ & $=$ & $(\mathcal{A},0)$ & $=$ & $\big\langle \varepsilon_1:=(e_1,0), \ \varepsilon_2:=(e_2,0), \ \varepsilon_3:=(e_3,0)\big\rangle$;\\

$\mathcal{F}_0$ & $=$ & $\rm{Instr}(\mathcal{A})$ & $=$ & $\big\langle
\varepsilon_6:=T_{e_1}, \
\varepsilon_7:=T_{e_2}, \
\varepsilon_8:=T_{e_3}, \
\varepsilon_9:=D_{e_2,e_3} \big\rangle$;\\

$\mathcal{F}_{-1}$ & $=$ & $\wideparen{(\mathcal{A},0)}$ & $=$ & $\big\langle
\varepsilon_{10}:=\wideparen{(e_1,0)}, \
\varepsilon_{11}:=\wideparen{(e_2,0)}, \
\varepsilon_{12}:=\wideparen{(e_3,0)} \big\rangle$;\\

$\mathcal{F}_{-2}$ & $=$ & $\wideparen{(0,\mathcal{S})}$ & $=$ & $\big\langle
\varepsilon_{13}:=\wideparen{(0,e_2)}, \
\varepsilon_{14}:=\wideparen{(0,e_3)} \big\rangle$.
\end{longtable}
\begin{theorem}
$\mathcal{F}({\rm S}_2)$ is a  $14$-dimensional Lie algebra  with    the multiplication defined by
 
\begin{longtable}{lcrlcrlcr}

$[\varepsilon_1,\varepsilon_2]$&$=$&$-2\varepsilon_4,$&
$[\varepsilon_1,\varepsilon_3]$&$=$&$-2\varepsilon_5,$&
$[\varepsilon_1,\varepsilon_6]$&$=$&$-\varepsilon_1,$\\

$[\varepsilon_1,\varepsilon_7]$&$=$&$-3\varepsilon_2,$&
$[\varepsilon_1,\varepsilon_8]$&$=$&$-3\varepsilon_3,$&
$[\varepsilon_1,\varepsilon_{10}]$&$=$&$\varepsilon_6,$\\

$[\varepsilon_1,\varepsilon_{11}]$&$=$&$-\varepsilon_7,$&
$[\varepsilon_1,\varepsilon_{12}]$&$=$&$-\varepsilon_8,$&
$[\varepsilon_1,\varepsilon_{13}]$&$=$&$-\varepsilon_{11},$\\

$[\varepsilon_1,\varepsilon_{14}]$&$=$&$-\varepsilon_{12},$&
$[\varepsilon_2,\varepsilon_3]$&$=$&$-2\varepsilon_4,$&
$[\varepsilon_2,\varepsilon_6]$&$=$&$-\varepsilon_2,$\\

$[\varepsilon_2,\varepsilon_8]$&$=$&$-\varepsilon_2,$&
$[\varepsilon_2,\varepsilon_{10}]$&$=$&$\varepsilon_7,$&
$[\varepsilon_2,\varepsilon_{12}]$&$=$&$\frac13\varepsilon_7-\frac83\varepsilon_9,$\\

$[\varepsilon_2,\varepsilon_{14}]$&$=$&$\varepsilon_{11},$&
$[\varepsilon_3,\varepsilon_6]$&$=$&$-\varepsilon_3,$&
$[\varepsilon_3,\varepsilon_7]$&$=$&$\varepsilon_2,$\\

$[\varepsilon_3,\varepsilon_8]$&$=$&$-3\varepsilon_1,$&
$[\varepsilon_3,\varepsilon_9]$&$=$&$-\varepsilon_2,$&
$[\varepsilon_3,\varepsilon_{10}]$&$=$&$\varepsilon_8,$\\

$[\varepsilon_3,\varepsilon_{11}]$&$=$&$-\frac13\varepsilon_7+\frac83\varepsilon_9,$&
$[\varepsilon_3,\varepsilon_{12}]$&$=$&$-\varepsilon_6,$&
$[\varepsilon_3,\varepsilon_{13}]$&$=$&$-\varepsilon_{11},$\\

$[\varepsilon_3,\varepsilon_{14}]$&$=$&$-\varepsilon_{10},$&
$[\varepsilon_4,\varepsilon_6]$&$=$&$-2\varepsilon_4,$&
$[\varepsilon_4,\varepsilon_8]$&$=$&$2\varepsilon_4,$\\

$[\varepsilon_4,\varepsilon_{10}]$&$=$&$\varepsilon_2,$&
$[\varepsilon_4,\varepsilon_{12}]$&$=$&$\varepsilon_2,$&
$[\varepsilon_4,\varepsilon_{14}]$&$=$&$\frac13\varepsilon_7+\frac43\varepsilon_9,$\\

$[\varepsilon_5,\varepsilon_6]$&$=$&$-2\varepsilon_5,$&
$[\varepsilon_5,\varepsilon_7]$&$=$&$-2\varepsilon_4,$&
$[\varepsilon_5,\varepsilon_{9}]$&$=$&$-\varepsilon_4,$\\
$[\varepsilon_5,\varepsilon_{10}]$&$=$&$\varepsilon_3,$&
$[\varepsilon_5,\varepsilon_{11}]$&$=$&$-\varepsilon_2,$&
$[\varepsilon_5,\varepsilon_{12}]$&$=$&$\varepsilon_1,$\\
$[\varepsilon_5,\varepsilon_{13}]$&$=$&$-\frac13\varepsilon_7-\frac43\varepsilon_9,$&
$[\varepsilon_5,\varepsilon_{14}]$&$=$&$\varepsilon_6,$&
$[\varepsilon_6,\varepsilon_{10}]$&$=$&$-\varepsilon_{10},$\\
$[\varepsilon_6,\varepsilon_{11}]$&$=$&$-\varepsilon_{11},$&
$[\varepsilon_6,\varepsilon_{12}]$&$=$&$-\varepsilon_{12},$&
$[\varepsilon_6,\varepsilon_{13}]$&$=$&$-2\varepsilon_{13},$\\
$[\varepsilon_6,\varepsilon_{14}]$&$=$&$-2\varepsilon_{14},$&
$[\varepsilon_7,\varepsilon_8]$&$=$&$-2\varepsilon_7+8\varepsilon_9,$&
$[\varepsilon_7,\varepsilon_{10}]$&$=$&$3\varepsilon_{11},$\\
$[\varepsilon_7,\varepsilon_{12}]$&$=$&$-\varepsilon_{11},$&
$[\varepsilon_7,\varepsilon_{14}]$&$=$&$2\varepsilon_{13},$&
$[\varepsilon_8,\varepsilon_9]$&$=$&$-\varepsilon_7,$\\
$[\varepsilon_8,\varepsilon_{10}]$&$=$&$3\varepsilon_{12},$&
$[\varepsilon_8,\varepsilon_{11}]$&$=$&$\varepsilon_{11},$&
$[\varepsilon_8,\varepsilon_{12}]$&$=$&$3\varepsilon_{10},$\\
$[\varepsilon_8,\varepsilon_{13}]$&$=$&$-2\varepsilon_{13},$&
$[\varepsilon_9,\varepsilon_{12}]$&$=$&$\varepsilon_{11},$&
$[\varepsilon_9,\varepsilon_{14}]$&$=$&$\varepsilon_{13},$\\
$[\varepsilon_{10},\varepsilon_{11}]$&$=$&$-2\varepsilon_{13},$&
$[\varepsilon_{10},\varepsilon_{12}]$&$=$&$-2\varepsilon_{14},$&
$[\varepsilon_{11},\varepsilon_{12}]$&$=$&$-2\varepsilon_{13}.$\\
\end{longtable}
\end{theorem}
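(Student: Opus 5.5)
We follow the $\rm{AK}$-algorithm of Subsection \ref{AKalg}, exactly as in the previous subsections. The only new feature is that $\mathcal F_0$ is four-dimensional. The first step is to compute $\overline{\mathfrak{Der}}({\rm S}_2)=\big\langle d_1,d_2\big\rangle$, where $d_1(e_2)=e_2$ and $d_2(e_3)=e_2$. We then compare these with $T_{\mathcal S}$ and with $[T_{\mathcal S},T_{\mathcal S}]$ in order to express $\mathcal F_0={\rm Instr}({\rm S}_2)$ in the basis $\varepsilon_6=T_{e_1}$, $\varepsilon_7=T_{e_2}$, $\varepsilon_8=T_{e_3}$, $\varepsilon_9=D_{e_2,e_3}$. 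By Proposition \ref{(ss)} we have
\[T_{e_1}=I,\qquad T_{e_2}=3E_{21}-E_{23},\qquad T_{e_3}=3E_{13}+3E_{31}+E_{22}.\]
The derivation $D_{e_2,e_3}$ is the combination of $d_1,d_2$ obtained from $[L_{e_2},L_{e_3}]$-type terms. We fix it as an explicit matrix, and the fractions $\tfrac13,\tfrac83,\tfrac43$ in the statement indicate that $V_{e_2,e_3}$, $V_{e_3,e_2}$ and $L_{e_2}L_{e_3}$ decompose along $T_{e_2}$ and $D_{e_2,e_3}$ with such coefficients. With these matrices we compute $[\mathcal F_0,\mathcal F_0]$ directly as matrix commutators: $[T_{e_2},T_{e_3}]$, $[T_{e_3},D]$, and so on.

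Next we tabulate, for each basis element $E$ of $\mathcal F_0$, the operators $E^\delta$, $E^\varepsilon$ and $E^{\varepsilon\delta}$. For the derivation $\varepsilon_9$ all three equal $D$ itself. For $T_{e_1}$ we have $T^\varepsilon=-T_{e_1}$ and $T^\delta=2T_{e_1}$. For the skew elements $x=e_2,e_3$ we have $T_x^\varepsilon=-T_{\overline x}=T_x$ and $T_x^\delta=T_x-R_x\cdot(\text{scalar from }T_x(e_1)=3x)$, that is, $T_x^\delta(y)=xy+y\overline x=xy-yx$. Restricted to $\mathcal S$, this gives nonzero actions, for instance $T_{e_3}^\delta(e_2)=2e_2$, which produces $[\varepsilon_4,\varepsilon_8]=2\varepsilon_4$. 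We also compute all nine $V_{e_i,e_j}$ from $V_{x,y}(z)=(x\overline y)z+(z\overline y)x-(z\overline x)y$, together with the products $L_sL_r$ for $s,r\in\{e_2,e_3\}$, and write each of them in the basis of $\mathcal F_0$.

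With these tables, the bracket is filled in block by block in the same five steps (I)--(V) used for ${\rm A}_1$--${\rm S}_1$, plus the step for $[\mathcal F_0,\mathcal F_0]$ as in the ${\rm A}_5$ case:
\begin{itemize}
\item $[E,(x,s)]=(E(x),E^\delta(s))$;
\item $[E,\wideparen{(x,s)}]=\wideparen{(E^\varepsilon(x),E^{\varepsilon\delta}(s))}$;
\item $[(x,0),(y,0)]=(0,x\overline y-y\overline x)$, which gives for example $[\varepsilon_2,\varepsilon_3]=(0,-e_2e_3+e_3e_2)=-2\varepsilon_4$, and the analogous formula on the negative side;
\item the mixed bracket $[(x,s),\wideparen{(y,r)}]=(sy,0)-\wideparen{(rx,0)}+V_{x,y}+L_sL_r$.
\end{itemize}
Each entry is a one-line substitution into the tables.

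The main obstacle is the correct identification of $\mathcal F_0$. The operators $V_{e_i,e_j}$ and $L_sL_r$ must be shown to lie in $T_{\mathcal A}\oplus\overline{\mathfrak{Der}}$ and must be decomposed there uniquely; this is where the fractional coefficients arise and where sign errors are likely. I would verify each decomposition by evaluating both sides on $e_1,e_2,e_3$. As a final check, I would test the Jacobi identity on a few triples mixing $\mathcal F_{\pm2}$ with $\varepsilon_9$, for example $(\varepsilon_2,\varepsilon_3,\varepsilon_{14})$.
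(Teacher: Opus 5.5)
Your plan is essentially the paper's proof. The paper fixes $\mathcal F_0=\langle T_{e_1},T_{e_2},T_{e_3},D_{e_2,e_3}\rangle$ with $D_{e_2,e_3}=E_{23}=d_2$ ($d_1$ does not enter $\mathcal F_0$, and this normalization is exactly what yields the coefficients $\tfrac13,\tfrac83,\tfrac43$), tabulates $E^\varepsilon,E^\delta,E^{\varepsilon\delta}$, the $V_{e_i,e_j}$ and $L_sL_r$, and fills in the brackets block by block, including $[\mathcal F_0,\mathcal F_0]$. One sign slip in your illustration should be fixed: $T^\delta_{e_3}(e_2)=e_3e_2-e_2e_3=-2e_2$, so $[\varepsilon_8,\varepsilon_4]=-2\varepsilon_4$ and hence $[\varepsilon_4,\varepsilon_8]=2\varepsilon_4$; your value $2e_2$ would give the opposite sign, and your stated conclusion comes out right only because your inference from it also flips the sign, so the two errors cancel.
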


\begin{proof}
Following the $\rm{AK}$-algorithm given in subsection \ref{AKalg}, we define  
$$\mathcal{F}_0=\left\langle
T_{e_1}:=\begin{pmatrix} 1 & 0 & 0 \\ 0 & 1& 0 \\ 0 & 0 & 1 \end{pmatrix}, \ 
T_{e_2}:=\begin{pmatrix}  0& 0 & 0 \\3& 0 &-1 \\ 0 & 0 & 0 \end{pmatrix},\ 
T_{e_3}:=\begin{pmatrix} 0 & 0 &3\\ 0 &1 & 0 \\ 3& 0 & 0 \end{pmatrix} ,\
D_{e_2,e_3}:=\begin{pmatrix} 0 & 0 &0\\ 0 &0 & 1 \\ 0& 0 & 0 \end{pmatrix} \right\rangle.$$

$$\left\langle
R_{e_1}:=\begin{pmatrix} 1 & 0 & 0 \\ 0 & 1& 0 \\ 0 & 0 & 1 \end{pmatrix}, \ 
R_{e_2}:=\begin{pmatrix}  0& 0 & 0 \\1& 0 &-1 \\ 0 & 0 & 0 \end{pmatrix},\ 
R_{e_3}:=\begin{pmatrix} 0 & 0 &1\\ 0 &1 & 0 \\ 1& 0 & 0 \end{pmatrix} 
 \right\rangle.$$
\noindent Let us note that $L_{e_3}L_{e_3}=T_{e_1}$ and  give a list    
$T_{e_i}^\varepsilon,$  
$T_{e_i}^\delta,$ 
$T_{e_i}^{\varepsilon \delta},$ 
$D_{e_2,e_3}^\varepsilon,$  
$D_{e_2,e_3}^\delta,$ 
$D_{e_2,e_3}^{\varepsilon \delta},$
and 
$V_{e_i,e_j}$   below:

\begin{longtable}{|lcl|lclcl|}
\hline
   $T_{e_1}(e_{1})$&$=$&$e_1$&   $T_{e_1}^\varepsilon$&$=$&$T_{e_1}-T_{T_{e_1}(e_1)+\overline{T_{e_1}(e_1)}}$&$=$ &$-T_{e_1}$\\

$\overline{T_{e_1}({e_1})}$&$=$&$e_1$&$T_{e_1}^\delta$&$=$&$T_{e_1}+R_{\overline{T_{e_1}({e_1})}}$&$=$&$2T_{e_1}$\\

$R_{e_1}$&$=$&$T_{e_1}$&$T_{e_1}^{\varepsilon\delta}$&$=$&$T_{e_1}^\varepsilon+R_{\overline{T_{e_1}^\varepsilon({e_1})}}$&$=$&$-2T_{e_1}$\\
\hline 

 \hline
    
   $T_{e_2}({e_1})$&$=$&$3{e_2} $&$ T_{e_2}^\varepsilon$&$=$&$T_{e_2}-T_{T_{e_2}({e_1})+\overline{T_{e_2}({e_1})}}$&$=$&$T_{e_2}$\\

$\overline{T_{e_2}({e_1})}$&$=$&$-3{e_2}$&   
$T_{e_2}^\delta $&$=$&$T_{e_2}+R_{\overline{T_{e_2}({e_1})}}$&$=$&$2D_{e_2,e_3}$\\
   
&&&$T_{e_2}^{\varepsilon\delta}$&$=$&$T_{e_2}^\varepsilon+R_{\overline{T_{e_2}^\varepsilon({e_1})}}$&$=$&$2D_{e_2,e_3}$\\

   \hline
    
\hline
$T_{e_3}({e_1})$&$=$&$3{e_3}$&$    T_{e_3}^\varepsilon$&$=$&$T_{e_3}-T_{T_{e_3}({e_1})+\overline{T_{e_3}({e_1})}}$&$=$&$T_{e_3}$\\

$\overline{T_{e_3}({e_1}})$&$=$&$-3{e_3}$&$ T_{e_3}^\delta$&$=$&$T_{e_3}+R_{\overline{T_{e_3}({e_1})}}$&$=$&$T_{e_3}-3R_{e_3}$\\
   
&&&$T_{e_3}^{\varepsilon\delta}$&$=$&$T_{e_3}^\varepsilon+R_{\overline{T_{e_3}^\varepsilon({e_1})}}$&$=$&$T_{e_3}-3R_{e_3}$\\
\hline

   $D_{e_2,e_3}(e_{1})$&$=$&$0$&   $D_{e_2,e_3}^\varepsilon$&$=$&$D_{e_2,e_3}^{\varepsilon\delta}=\overline{D_{e_2,e_3}}$&$=$&$D_{e_2,e_3}$\\
\hline 
\end{longtable}

\begin{longtable}{lcllcllcl}

$V_{{e_1},{e_1}}$&$=$&$T_{e_1},$&$ V_{{e_1},{e_2}}$&$=$&$-T_{e_2},$&$ V_{{e_1},{e_3}}$&$=$&$-T_{e_3},$\\ 
$V_{{e_2},{e_1}}$&$=$&$T_{e_2},$&$ V_{{e_2},{e_2}}$&$=$&$0, $&$ V_{{e_2},{e_3}}$&$=$&$\frac{1}{3}T_{e_2}-\frac{8}{3}D_{e_2,e_3},$\\
$V_{{e_3},{e_1}}$&$=$&$T_{e_3},$&$ V_{{e_3},{e_2}}$&$=$&$-\frac{1}{3}T_{e_2}+\frac{8}{3}D_{e_2,e_3},,$&$ V_{{e_3},{e_3}}$&$=$&$-T_{e_1}.$
\end{longtable}

Now we are ready to determine the multiplication table of $\mathcal F({\rm S}_2).$

\begin{enumerate}[(I)]
    \item First, we define   $[\mathcal F_0, \mathcal F_{1}+\mathcal F_{2}],$ i.e., $[E,(x,s)]=(E(x), E^\delta(s)).$

\begin{longtable}{lclclcl}
$[\varepsilon_6,\varepsilon_1]$&$=$&$  [T_{e_1}, ({e_1},0)]$&$=$&$ \big(T_{e_1}({e_1}), T_{e_1}^\delta(0) \big)$&$=$&$
({e_1},\  0) \ =\ \varepsilon_1;$\\

$[\varepsilon_6,\varepsilon_2]$&$=$&$[T_{e_1}, ({e_2},0)]$&$=$&$
(T_{e_1}({e_2}), T_{e_1}^\delta(0))$&$=$&$({e_2},\ 0)\ =\ \varepsilon_2;$\\

$[\varepsilon_6,\varepsilon_3]$&$=$&$[T_{e_1}, ({e_3},0)]$&$=$&$
(T_{e_1}({e_3}), T_{e_1}^\delta (0))$&$=$&$
({e_3},\ 0)\ =\  \varepsilon_3;$\\
$[\varepsilon_6,\varepsilon_4]$&$=$&$[T_{e_1}, (0,{e_2})]$&$=$&$
(T_{e_1}(0), T_{e_1}^\delta (e_2))$&$=$&$
(0,\ 2e_2) \ =\ 2\varepsilon_4;$\\
$[\varepsilon_6,\varepsilon_5]$&$=$&$[T_{e_1}, (0,{e_3})]$&$=$&$(T_{e_1}(0), T_{e_1}^\delta (e_3))$&$=$&$(0, \ 2e_3)\ =\ 2\varepsilon_5;$\\

$[\varepsilon_7,\varepsilon_1]$&$=$&$[T_{e_2}, ({e_1},0)]$&$=$&$
(T_{e_2}({e_1}), T_{e_2}^\delta (0))$&$=$&$(3e_2,\ 0) \ =\ 3\varepsilon_2;$\\

$[\varepsilon_7,\varepsilon_2]$&$=$&$[T_{e_2}, ({e_2},0)]$&$=$&$
(T_{e_2}({e_2}), T_{e_2}^\delta (0))$&$=$&$(0,\ 0)\ =\ 0;$\\

$[\varepsilon_7,\varepsilon_3]$&$=$&$[T_{e_2}, ({e_3},0)]$&$=$&$
(T_{e_2}({e_3}), T_{e_2}^\delta (0))$&$=$&$(-e_2,\ 0) \ =\ -\varepsilon_2;$\\
$[\varepsilon_7,\varepsilon_4]$&$=$&$[T_{e_2}, (0,{e_2})]$&$=$&$
(T_{e_2}(0),T_{e_2}^\delta ({e_2}))$&$=$&$(0,\  0) \ =\ 0;$\\
$[\varepsilon_7,\varepsilon_5]$&$=$&$[T_{e_2}, (0,{e_3})]$&$=$&$
(T_{e_2}(0),T_{e_2}^\delta ({e_3}))$&$=$&$(0, \ 2e_2) \ =\ 2\varepsilon_4;$\\
$[\varepsilon_8,\varepsilon_1]$&$=$&$[T_{e_3}, ({e_1},0)]$&$=$&$
(T_{e_3}({e_1}), T_{e_3}^\delta (0))$&$=$&$(3{e_3},\ 0)\  =\ 3\varepsilon_3;$\\

$[\varepsilon_8,\varepsilon_2]$&$=$&$[T_{e_3}, ({e_2},0)]$&$=$&$
(T_{e_3}({e_2}), T_{e_3}^\delta(0))$&$=$&$(e_2,\ 0)\ =\ \varepsilon_2;$\\

$[\varepsilon_8,\varepsilon_3]$&$=$&$[T_{e_3}, ({e_3},0)]$&$=$&$
(T_{e_3}({e_3}), T_{e_3}^\delta (0))$&$=$&$(3{e_1},\  0)\  =\ 3\varepsilon_1;$\\
$[\varepsilon_8,\varepsilon_4]$&$=$&$[T_{e_3}, (0,{e_2})]$&$=$&$
(T_{e_3}(0), T_{e_3}^\delta ({e_2}))$&$=$&$(0,\ -2e_2)\ =\ -2\varepsilon_4;$\\
$[\varepsilon_8,\varepsilon_5]$&$=$&$[T_{e_3}, (0,{e_3})]$&$=$&$
(T_{e_3}(0), T_{e_3}^\delta ({e_3}))$&$=$&$(0,\ 0)\ =\ 0;$\\

$[\varepsilon_9,\varepsilon_1]$&$=$&$[D_{e_2,e_3}, ({e_1},0)]$&$=$&$
(D_{e_2,e_3}({e_1}), D_{e_2,e_3}^\delta (0))$&$=$&$(0,\ 0)\ =\ 0;$\\

$[\varepsilon_9,\varepsilon_2]$&$=$&$[D_{e_2,e_3}, ({e_2},0)]$&$=$&$
(D_{e_2,e_3}({e_2}), D_{e_2,e_3}^\delta(0))$&$=$&$(0,\ 0)\ =\ 0;$\\

$[\varepsilon_9,\varepsilon_3]$&$=$&$[D_{e_2,e_3}, ({e_3},0)]$&$=$&$
(D_{e_2,e_3}({e_3}), D_{e_2,e_3}^\delta (0))$&$=$&$({e_2}, \ 0)\  =\ \varepsilon_2;$\\
$[\varepsilon_9,\varepsilon_4]$&$=$&$[D_{e_2,e_3}, (0,{e_2})]$&$=$&$
(D_{e_2,e_3}(0), D_{e_2,e_3}^\delta ({e_2}))$&$=$&$(0, \ 0)\ =\ 0;$\\
$[\varepsilon_9,\varepsilon_5]$&$=$&$[D_{e_2,e_3}, (0,{e_3})]$&$=$&$
(D_{e_2,e_3}(0), D_{e_2,e_3}^\delta ({e_3}))$&$=$&$(0,\  e_2) \ =\ \varepsilon_4.$
\end{longtable}

  \item Second, we define   $[\mathcal F_0, \mathcal F_{-2}+\mathcal F_{-1}],$ i.e., 
  $[{E}, \wideparen {(x,s)} ] =
\wideparen{({E}^\varepsilon(x), {E}^{\varepsilon\delta}(s))}.$

\begin{longtable}{lclclcl}
$[\varepsilon_6,\varepsilon_{10}]$&$=$&$[T_{e_1}, \wideparen{({e_1},0)}] $&$=$&$ \wideparen{(T_{e_1}^\varepsilon({e_1}),\  0)}$&$=$&$
\wideparen{(-T_{e_1}({e_1}), \ 0)}\ =\ \wideparen{(-{e_1}, \ 0)}
\ =\ -\varepsilon_{10};$ \\

$[\varepsilon_6,\varepsilon_{11}]$&$=$&$[T_{e_1}, \wideparen{({e_2},0)}] $&$=$&$ \wideparen{(T_{e_1}^\varepsilon({e_2}),\  0)}$&$=$&$
\wideparen{(-T_{e_1}({e_2}), \ 0)}\ =\ \wideparen{(-{e_2}, \ 0)}
\ =\ -\varepsilon_{11};$ \\

$[\varepsilon_6,\varepsilon_{12}]$&$=$&$[T_{e_1}, \wideparen{({e_3},0)}]$&$=$&$ \wideparen{(T_{e_1}^\varepsilon({e_3}),\  0)}$&$=$&$
\wideparen{(-T_{e_1}({e_3}), \ 0)}\ = \ \wideparen{(-{e_3}, \ 0)}
\ =\ -\varepsilon_{12};$ \\

$[\varepsilon_6,\varepsilon_{13}]$&$=$&$[T_{e_1}, \wideparen{(0,{e_2})} ]$&$=$&$
\wideparen{(0, T_{e_1}^{\varepsilon\delta} ({e_2}))} $&$=$&$\wideparen{(0,\ -2{e_2})} \ =\ -2\varepsilon_{13};$\\

$[\varepsilon_6,\varepsilon_{14}]$&$=$&$[T_{e_1}, \wideparen{(0,{e_3})} ]$&$=$&$
\wideparen{(0, T_{e_1}^{\varepsilon\delta} ({e_3}))} $&$=$&$
\wideparen{(0,\ -2{e_3})} \ =\ -2\varepsilon_{14};$\\

$[\varepsilon_7,\varepsilon_{10}]$&$=$&$[T_{e_2}, \wideparen{({e_1},0)}] $&$=$&$ \wideparen{(T_{e_2}^\varepsilon({e_1}),\  0)}$&$=$&$
\wideparen{(3{e_2}, \ 0)} \ =\ 3\varepsilon_{11};$\\

$[\varepsilon_7,\varepsilon_{11}]$&$=$&$[T_{e_2}, \wideparen{({e_2},0)}] $&$=$&$ \wideparen{(T_{e_2}^\varepsilon({e_2}),\  0)}$&$=$&$\wideparen{(0, \ 0)}
\ =\ 0;$ \\

$[\varepsilon_7,\varepsilon_{12}]$&$=$&$[T_{e_2},\wideparen{({e_3},0)}] $&$=$&$ \wideparen{(T_{e_2}^\varepsilon({e_3}),\  0)}$&$=$&$
\wideparen{(-e_2, \ 0)}
\ =\ -\varepsilon_{11};$\\

$[\varepsilon_7,\varepsilon_{13}]$&$=$&$[T_{e_2},\wideparen{(0,e_2)}] $&$=$&$ \wideparen{(0,T_{e_2}^{\varepsilon\delta} ({e_2}))}$&$=$&$\wideparen{(0, \ 0)}
\ =\ 0;$\\ 

$[\varepsilon_7,\varepsilon_{14}]$&$=$&$[T_{e_2},\wideparen{(0,e_3)}] $&$=$&$ \wideparen{(0,T_{e_2}^{\varepsilon\delta} ({e_3}))}$&$=$&$
\wideparen{(0, 2e_2)}
\ =\ 2\varepsilon_{13};$\\

$[\varepsilon_8,\varepsilon_{10}]$&$=$&$[T_{e_3}, \wideparen{({e_1},0)}] $&$=$&$ \wideparen{(T_{e_3}^\varepsilon({e_1}),\  0)}$&$=$&$
\wideparen{(3{e_3}, \ 0)}
\ =\ 3\varepsilon_{12};$\\

$[\varepsilon_8,\varepsilon_{11}]$&$=$&$[T_{e_3}, \wideparen{({e_2},0)}] $&$=$&$ \wideparen{(T_{e_3}^\varepsilon({e_2}),\  0)}$&$=$&$\wideparen{(e_2, \ 0)}
\ =\ \varepsilon_{11};$\\

$[\varepsilon_8,\varepsilon_{12}]$&$=$&$[T_{e_3}, \wideparen{({e_3},0)}] $&$=$&$ \wideparen{(T_{e_3}^\varepsilon({e_3}),\  0)}$&$=$&$\wideparen{(3{e_1}, \ 0)}
\ =\ 3\varepsilon_{10};$\\

$[\varepsilon_8,\varepsilon_{13}]$&$=$&$[T_{e_3}, \wideparen{(0,e_2)}] $&$=$&$ \wideparen{(0,T_{e_3}^{\varepsilon\delta} ({e_2}))}$&$=$&$
\wideparen{(0, \ -2e_2)}
\ =\ -2\varepsilon_{13};$\\

$[\varepsilon_8,\varepsilon_{14}]$&$=$&$[T_{e_3}, \wideparen{(0,e_3)}] $&$=$&$ \wideparen{(0,T_{e_3}^{\varepsilon\delta} ({e_3}))}$&$=$&$
\wideparen{(0, \ 0)} \ =\ 0;$\\

$[\varepsilon_9,\varepsilon_{10}]$&$=$&$[D_{e_2,e_3}, \wideparen{({e_1},0)}] $&$=$&$\wideparen{(D_{e_2,e_3}^\varepsilon({e_1}),\  0)}$&$=$&$
\wideparen{(0, \ 0)}\ =\ 0;$\\

$[\varepsilon_9,\varepsilon_{11}]$&$=$&$[D_{e_2,e_3}, \wideparen{({e_2},0)}] $&$=$&$ \wideparen{(D_{e_2,e_3}^\varepsilon({e_2}),\  0)}$&$=$&$
\wideparen{(0, \ 0)}\ =\ 0;$\\

$[\varepsilon_9,\varepsilon_{12}]$&$=$&$[D_{e_2,e_3}, \wideparen{({e_3},0)}] $&$=$&$ \wideparen{(D_{e_2,e_3}^\varepsilon({e_3}),\  0)}$&$=$&$
\wideparen{({e_2}, \ 0)}\ =\ \varepsilon_{11};$\\

$[\varepsilon_9,\varepsilon_{13}]$&$=$&$[D_{e_2,e_3}, \wideparen{(0,e_2)}] $&$=$&$ \wideparen{(0,D_{e_2,e_3}^{\varepsilon\delta} ({e_2}))}$&$=$&$
\wideparen{(0, \ 0)} \ =\ 0;$\\

$[\varepsilon_9,\varepsilon_{14}]$&$=$&$[D_{e_2,e_3}, \wideparen{(0,e_3)}] $&$=$&$ \wideparen{(0, D_{e_2,e_3}^{\varepsilon\delta} ({e_3}))}$&$=$&$
\wideparen{(0, \ e_2)}\ =\ \varepsilon_{13}.$\\

\end{longtable}

\item Third, we define   $[ \mathcal F_{1}+\mathcal F_{2}, \mathcal F_{1}+\mathcal F_{2}],$ i.e., 
$[(x,s), (y,r)] = (0, x\overline{y} - y\overline{x}).$

\begin{longtable}{lclcl}

$[\varepsilon_1,\varepsilon_2]$&$=$&$ [({e_1},0), ({e_2},0)] $&$=$&$(0,\ -2e_2) \ =\ -2\varepsilon_4;$\\
$[\varepsilon_1,\varepsilon_3]$&$=$&$[({e_1},0), ({e_3},0)] $&$=$&$(0,\ -2e_3) \ =\ -2\varepsilon_5;$\\

$[\varepsilon_2,\varepsilon_3]$&$=$&$[({e_2},0), ({e_3},0)] $&$=$&$(0,\ -2e_2) \ =\ -2\varepsilon_4;$\\

\end{longtable}

\item Fourth, we define  $[ \mathcal F_{-2}+\mathcal F_{-1}, \mathcal F_{-2}+\mathcal F_{-1}],$ i.e., 
$[\wideparen{(x,s)}, \wideparen{(y,r)} ] = \wideparen{(0, x\overline{y} - y\overline{x})}.$

\begin{longtable}{lclcl}

$[\varepsilon_{10},\varepsilon_{11}]$&$=$&$[\wideparen{({e_1},0)}, \wideparen{({e_2},0)}] $&$=$&$\wideparen{(0,\ -2e_2)} \ =\ -2e_{13};$\\
$[\varepsilon_{10},\varepsilon_{12}]$&$=$&$[\wideparen{({e_1},0)}, \wideparen{({e_3},0)}] $&$=$&$\wideparen{(0,\ -2e_3)} \ =\ -2\varepsilon_{14};$\\

$[\varepsilon_{11},\varepsilon_{12}]$&$=$&$[\wideparen{({e_2},0)}, \wideparen{({e_3},0)}] $&$=$&$\wideparen{(0,\ -2e_2)}\  =\ -2\varepsilon_{13}.$\\

\end{longtable}

\item Fifth, we define   $[ \mathcal F_{1}+\mathcal F_{2}, \mathcal F_{-2}+\mathcal F_{-1}],$ i.e., 
    $[(x,s), \wideparen{(y,r)}] = -\wideparen{(r x, 0)}  + V_{x,y} + L_s L_r+ (s y, 0).$

\begin{longtable}{lclcl}
    
$[\varepsilon_1,\varepsilon_{10}]$&$=$&$  [({e_1},0),\wideparen{({e_1},0)}]$&$=$&$
V_{{e_1},{e_1}}\ =\ T_{e_1}\ =\ \varepsilon_6;$\\
$[\varepsilon_1,\varepsilon_{11}]$&$=$&$  [({e_1},0),\wideparen{({e_2},0)}]$&$=$&$
V_{{e_1},{e_2}}\ =\ -T_{e_2}\ =\ -\varepsilon_7;$\\
$[\varepsilon_1,\varepsilon_{12}]$&$=$&$[({e_1},0),\wideparen{({e_3},0)}]$&$=$&$
V_{{e_1},{e_3}}\ =\ -T_{e_3}\ =\ -\varepsilon_8;$\\
$[\varepsilon_1,\varepsilon_{13}]$&$=$&$[({e_1},0),\wideparen{(0,{e_2})}]$&$=$&$-\wideparen{({e_2},0)} \ =\ -\varepsilon_{11};$\\
$[\varepsilon_1,\varepsilon_{14}]$&$=$&$[({e_1},0),\wideparen{(0,{e_3})}]$&$=$&$-\wideparen{({e_3},0)} \ =\ -\varepsilon_{12};$\\

$[\varepsilon_2,\varepsilon_{10}]$&$=$&$ [({e_2},0),\wideparen{({e_1},0)}]$&$=$&$V_{{e_2},{e_1}}\ =\ T_{e_2}\ =\ \varepsilon_7;$\\

$[\varepsilon_2,\varepsilon_{11}]$&$=$&$  [({e_2},0),\wideparen{({e_2},0)}]$&$=$&$
V_{{e_2},{e_2}}\ =\ 0;$\\
$[\varepsilon_2,\varepsilon_{12}]$&$=$&$[({e_2},0),\wideparen{({e_3},0)}]$&$=$&$V_{{e_2},{e_3}}\ =\ 
\frac{1}{3}T_{e_2}-\frac{8}{3}D_{e_2,e_3}\ =\ 
\frac{1}{3}\varepsilon_7-\frac{8}{3}\varepsilon_9;$\\

$[\varepsilon_2,\varepsilon_{13}]$&$=$&$[({e_2},0),\wideparen{(0,{e_2})}]$&$=$&$
-\wideparen{({e_2}{e_2},0)}\ =\ -\wideparen{(0,\ 0)} \ =\ 0;$\\
$[\varepsilon_2,\varepsilon_{14}]$&$=$&$[({e_2},0),\wideparen{(0,{e_3})}]$&$=$&$
-\wideparen{({e_3}{e_2},\ 0)}\ =\ \wideparen{(e_2,\ 0)}\ =\ \varepsilon_{11};$\\

$[\varepsilon_3,\varepsilon_{10}]$&$=$&$[({e_3},0),\wideparen{({e_1},0)}]$&$=$&$
V_{{e_3},{e_1}}\ =\ T_{e_3}\ =\ \varepsilon_8;$\\
$[\varepsilon_3,\varepsilon_{11}]$&$=$&$[({e_3},0),\wideparen{({e_2},0)}]$&$=$&$
V_{{e_3},{e_2}}\ =\ -\frac{1}{3}T_{e_2}+\frac{8}{3}D_{e_2,e_3}\ =\ 
-\frac{1}{3}\varepsilon_7+\frac{8}{3}\varepsilon_9;$\\
$[\varepsilon_3,\varepsilon_{12}]$&$=$&$[({e_3},0),\wideparen{({e_3},0)}]$&$=$&$
V_{{e_3},{e_3}}\ =\ -T_{e_1}\ =\ -\varepsilon_6;$\\

$[\varepsilon_3,\varepsilon_{13}]$&$=$&$[({e_3},0),\wideparen{(0,{e_2})}]$&$=$&$-\wideparen{({e_2}{e_3},0)}\ =\ -\wideparen{({e_2},\ 0)} \ =\ -\varepsilon_{11};$\\
$[\varepsilon_3,\varepsilon_{14}]$&$=$&$[({e_3},0),\wideparen{(0,{e_3})}]$&$=$&$
-\wideparen{({e_3}{e_3},\ 0)} \ =\ -\wideparen{({e_1},\ 0)} \ =\ -\varepsilon_{10};$\\

$[\varepsilon_4,\varepsilon_{10}]$&$=$&$[(0,{e_2}),\wideparen{({e_1},0)}]$&$=$&$
({e_2},\ 0) \ =\ \varepsilon_2;$\\
 
$[\varepsilon_4,\varepsilon_{11}]$&$=$&$[(0,{e_2}),\wideparen{({e_2},0)}]$&$=$&$
({e_2}{e_2},\ 0)\ =\ (0,\ 0) \ =\ 0;$\\
$[\varepsilon_4,\varepsilon_{12}]$&$=$&$[(0,{e_2}),\wideparen{({e_3},0)}]$&$=$&$
({e_2}{e_3},\ 0)\ =\ ({e_2},\ 0) \  =\ \varepsilon_2;$\\
$[\varepsilon_4,\varepsilon_{13}]$&$=$&$[(0,{e_2}),\wideparen{(0,{e_2})}]$&$=$&$
L_{e_2}L_{e_2}\ =\ 0;$\\
$[\varepsilon_4,\varepsilon_{14}]$&$=$&$[(0,{e_2}),\wideparen{(0,{e_3})}]$&$=$&$L_{e_2}L_{e_3}\ =\ \frac{1}{3}T_{e_2}+\frac{4}{3}D_{e_2,e_3}\ =\ \frac{1}{3}\varepsilon_7+\frac{4}{3}\varepsilon_9;$\\

$[\varepsilon_5,\varepsilon_{10}]$&$=$&$[(0,{e_3}),\wideparen{({e_1},0)}]$&$=$&$({e_3},\ 0)  \ =\ \varepsilon_3;$\\
 
$[\varepsilon_5,\varepsilon_{11}]$&$=$&$[(0,{e_3}),\wideparen{({e_2},0)}]$&$=$&$({e_3}{e_2},0)\ =\ (-e_2,\ 0) \ =\ -\varepsilon_2;$\\
$[\varepsilon_5,\varepsilon_{12}]$&$=$&$[(0,{e_3}),\wideparen{({e_3},0)}]$&$=$&$({e_3}{e_3},\ 0)\ =\ ({e_1},\ 0)\ =\ \varepsilon_1;$\\
$[\varepsilon_5,\varepsilon_{13}]$&$=$&$[(0,{e_3}),\wideparen{(0,{e_2})}]$&$=$&$
L_{e_3}L_{e_2}\ =\ -\frac{1}{3}T_{e_2}-\frac{4}{3}D_{e_2,e_3}\ =\ 
-\frac{1}{3}\varepsilon_7-\frac{4}{3}\varepsilon_9;$\\
$[\varepsilon_5,\varepsilon_{14}]$&$=$&$[(0,{e_3}),\wideparen{(0,{e_3})}]$&$=$&$L_{e_3}L_{e_3}\ =\ T_{e_1}\ =\ \varepsilon_6;$\\
\end{longtable}

\item End, we define   $[ \mathcal F_{0}, \mathcal F_{0}].$

\begin{longtable}{lclcl}
$[\varepsilon_6,\varepsilon_{7}]$&$=$&$[T_{e_1}, T_{e_2}]$&$=$&$0 ;$ \\
$[\varepsilon_6,\varepsilon_{8}]$&$=$&$[T_{e_1}, T_{e_3}]$&$=$&$0 ;$ \\
$[\varepsilon_6,\varepsilon_{9}]$&$=$&$[T_{e_1}, D]$&$=$&$0 ;$ \\
$[\varepsilon_7,\varepsilon_{8}]$&$=$&$[T_{e_2},T_{e_3}]$&$=$&$-2T_{e_2}+8D_{e_2,e_3}\ =\ -2\varepsilon_7+8\varepsilon_9;$\\
$[\varepsilon_7,\varepsilon_{9}]$&$=$&$[T_{e_2}, D_{e_2,e_3}]$&$=$&$0;$ \\

$[\varepsilon_8,\varepsilon_{9}]$&$=$&$[T_{e_3},D_{e_2,e_3}]$&$=$&$-T_{e_2}\ =\ -\varepsilon_7.$\\

\end{longtable}

\end{enumerate}

\end{proof}

\begin{remark}
$\mathcal{F}({\rm S}_2)$ is perfect and 
$\mathcal{F}({\rm S}_2)=S \ltimes R,$ where
\begin{itemize}

\item $S=\big\langle\varepsilon_1, \ 
\varepsilon_3, \ 
\varepsilon_5, \ 
\varepsilon_6, \ 
\varepsilon_8, \ 
\varepsilon_{10}, \ 
\varepsilon_{12}, \ 
\varepsilon_{14}\big\rangle \cong \mathfrak{sl}_3;$

\item $
R = \big\langle\varepsilon_2, \ 
\varepsilon_4, \ 
\varepsilon_7, \ 
\varepsilon_9, \ 
\varepsilon_{11}, \ 
\varepsilon_{13}\big\rangle$ is 
 the abelian radical.\end{itemize}

\end{remark}

\end{document}